\def\Riem{\mathop{\rm Rm}}
\newtheorem{theorem}{Theorem}[section]
\newtheorem{proposition}[theorem]{Proposition}
\newtheorem{lemma}[theorem]{Lemma}
\newtheorem{corollary}[theorem]{Corollary}
\title[A Classification of toric instantons]{A classification of scalar-flat toric K\"ahler instantons in dimension 4}
\author{Brian Weber}
\begin{document}

\maketitle

\begin{abstract}
	We classify all scalar-flat toric K\"ahler 4-manifolds under either of two asymptotic conditions: that the action fields decay slowly (or at all), or that the curvature decay is quadratic; for example we fully classify instantons that have any of the ALE-F-G-H asymptotic types.
	The momentum functions satisfy a degenerate elliptic equation, and under either asymptotic condition the image of the moment map is closed.
	Using a recent Liouville theorem for degenerate-elliptic equations, we classify all possibilities for the momentum functions, and from this, all possible metrics.
\end{abstract}

\section{Introduction}

A K\"ahler manifold $(M^4,J,g)$ with a 2-torus action that preserves the symplectic form and the metric is said to be a toric K\"ahler 4-manifold.
This paper classifies all scalar-flat toric K\"ahler manifolds under an asymptotic condition that is satisfied, for instance, on manifolds with ALE, ALF, ALG or ALH ends, or on any manifold with $o(r^{-2})$ curvature decay.

Rather than a torus action we work within the equivalent but more flexible situation that $(M^4,J,g,\mathcal{X}^1,\mathcal{X}^2)$ is a K\"ahler manifold with symplectomorphic Killing fields $\mathcal{X}^1$, $\mathcal{X}^2$ that commute: $[\mathcal{X}^1,\mathcal{X}^2]=0$.
This allows us to take linear combinations of the generators without worrying whether or not they come from torus isomorphisms.
With the action of the fields being both symplectic and isometric, the symplectic reduction coincides with the Riemannian quotient, and either produces a 2-dimensional reduced manifold $(\Sigma^2,g_\Sigma)$.
From the Riemannian point of view, $\Sigma^2$ can be viewed as the leaf-space of the isometric $\mathcal{X}^1$-$\mathcal{X}^2$ action, and $\Sigma^2$ inherits a metric $g_\Sigma$.
From the symplectic point of view, one constructs action-angle coordinates $(\varphi^1,\varphi^2,\theta_1,\theta_2)$ consisting of {\it action} or {\it momentum} variables $\varphi^1$, $\varphi^2$ given (implicitly) by $d\varphi^i=i_{\mathcal{X}^i}\omega$ and {\it angle} variables $\theta_1$, $\theta_2$ given (implicitly) by $\frac{\partial}{\partial\theta_i}=\mathcal{X}^i$.
The reduction map $\Phi:M^4\rightarrow\Sigma^2\subseteq\mathbb{R}^2$ is $(\varphi^1,\varphi^2,\theta_1,\theta_2)\mapsto(\varphi^1,\varphi^2)$, and is called the {\it Arnold-Liouville reduction} \cite{Ar}.

The manifold $(\Sigma^2,g_\Sigma)$ encodes all topological, complex-analytic, symplectic, and differential-geometric information of the original manifold $(M^4,J,g)$, but since it is 2-dimensional it is easier to study.
In a wide class of the most natural examples, the manifold $\Sigma^2$ is a topologically closed convex polygon (Figure \ref{SubFigInitClosed}), but in some cases is neither closed (Figure \ref{SubFigInitTwoEnded}) nor even a polygon (Example \ref{SubsubsecNonpolygon}).
This paper uses a recent Liouville theorem for a class of boundary-degenerate PDE to classify all possibilities for the pair $(\Sigma^2,g_\Sigma)$ when $\Sigma^2$ is closed and the parent manifold $(M^4,g)$ is scalar flat.
\noindent\begin{figure}[h]
	\centering
	\noindent\begin{subfigure}[b]{0.45\textwidth} 
		\centerline{\includegraphics[scale=0.6]{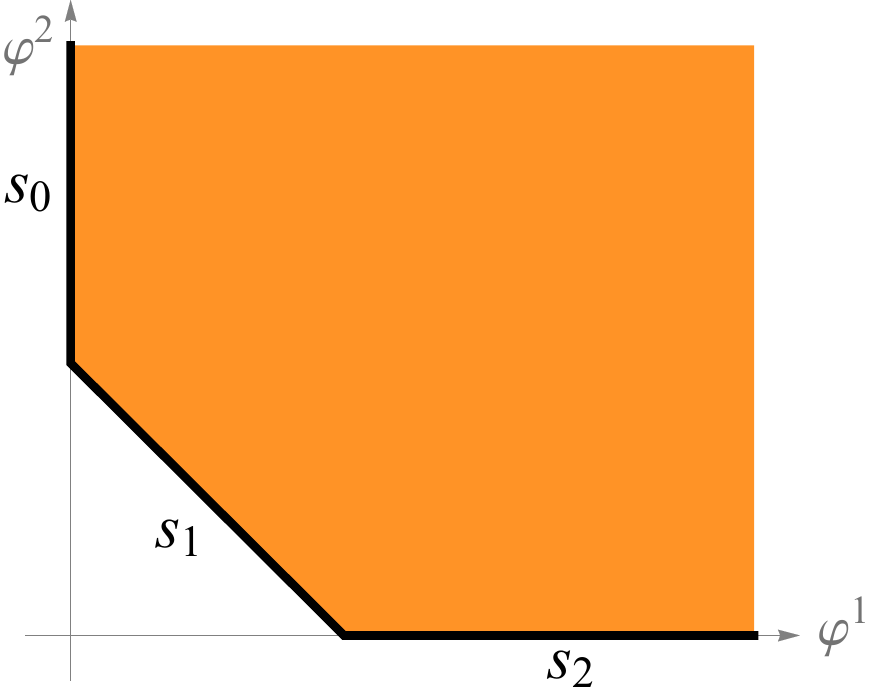}}
		\caption{Closed polygon.
			The polygon for the various ZSC toric K\"ahler metrics on $\mathcal{O}(-k)$, $k\ge1$, including the LeBrun metrics.
			The ``boundary conditions'' are the real numbers (not functions) $s_0$, $s_1$, $s_2$.
			See \S\S\ref{SecAnalytic} and \ref{SubsecBoundaryConds}, and the explicit computation in \S\ref{SubsubsecOk}.
			\label{SubFigInitClosed}}
	\end{subfigure}
	\hspace{0.25in}
	\begin{subfigure}[b]{0.45\textwidth} 
		\centerline{\includegraphics[scale=0.6]{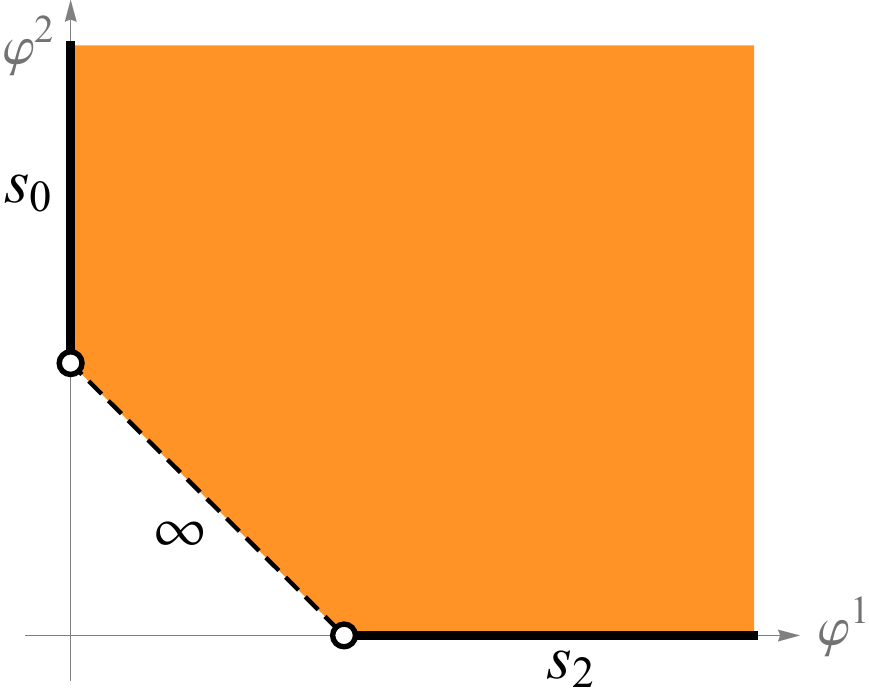}}
		\caption{Non-closed polygon.
			This is the polygon for certain 2-ended complete ZSC metrics on $\mathbb{C}^2\setminus\{0\}$.
			One end is ALE and the other resembles a sphere cross a pseudosphere; see \S\ref{SubsubsecTwoEnded}.
			The boundary condition, or label, on the missing segment is $\infty$.
			\label{SubFigInitTwoEnded}}
	\end{subfigure}
	
	\caption{\it Non-compact polygons, one closed and one not.}
	\label{FigTwoPolys}
\end{figure}

The closure condition on $\Sigma^2$ may seem technical, but it is a natural restriction on the kinds of ends\footnote{Recall that a {\it manifold end} of $M^4$ is an unbounded component of $M^4\setminus{}K$ where $K$ is a precompact domain.} the parent $M^4$ might have.
Polygon edges correspond to zeros of the Killing fields, and represent totally geodesic submanifolds of $M^4$.
The polygon's end corresponds to a manifold end.
A polygon ``missing'' segment may also correspond to a manifold end, but could potentially be pathological.
Assuming the parent satisfies either of the asymptotic conditions
\begin{itemize}
	\item[A1)] The action fields decay slowly (or not at all): $\sqrt{|\mathcal{X}^1|^2+|\mathcal{X}^2|^2}\;>\;C_1r^{-1+\epsilon}$ for some $\epsilon>0$ and $C_1>\infty$, as the distance function $r\rightarrow\infty$, or
	\item[A2)] Curvature decays quickly: $|\Riem|<(2-\epsilon)r^{-2}$, for any $\epsilon>0$, as $r\rightarrow\infty$
\end{itemize}
then its reduction $\Sigma^2$ is closed; see Proposition \ref{PropKillingDecay}.
Any manifold that happens to be ALE, ALF, ALG, ALH will certainly fit into category (A1), as such ends all have action fields that either grow linearly or else stabilize at a finite size.
However the common ALE-ALF-ALG-ALH schema (eg \cite{CK} \cite{Etesi}) is too confining, and our results extend beyond this.
This is true especially in the ALF case: many well-known ZSC toric K\"ahler manifolds have ends that are similar to ALF ends, but do not adhere to the usual ALF model; see Section \ref{SubSecInsufficiency}.
For this reason within category (A1) it is useful to create two subcategories: {\it asymptotically spheroidal} manifold ends, and {\it asymptotically toriodal} manifold ends.

{\bf Definition 1.}
A manifold end $M^4\setminus{}K$ is {\it asymptotically spheroidal} if it satsifies (A1),  the universal cover of $M^4\setminus{}K$ is diffeomorphic to $(R,\infty)\times\mathbb{S}^3$, and the lifts of the symmetry fields restrict to $\mathbb{S}^3$ where they produce a standard isometric torus action.

{\bf Definition 2.}
A manifold end $M^4\setminus{}K$ is {\it asymptotically toroidal} if it satisfies (A1), some cover of $M^4\setminus{}K$ is diffeomorphic to $(R,\infty)\times{}N^3$, and the lift of the end's symmetry fields restrict to $N^3$ where they produce an isometric free torus action on $N^3$.

We classify all scalar-flat toric K\"ahler manifolds with ends of type (A1) or (A2).
We show that, in the asymptotically toroidal cases including all ALG or ALH cases, such manifolds are flat.
ALE or ALF toric manifolds are clearly asymptotically spheroidal, and we classify these in the scalar flat case.

Finally we must say something about boundary conditions on $\Sigma^2$.
The boundary conditions come in an especially simple form: one positive real number for each edge, called its \textit{label}.
When $\Sigma^2$ is the reduction of some parent manifold, the labels are determined by $\mathcal{X}^1$ and $\mathcal{X}^2$.
We give two equivalent geometric explanations of the labels and how to compute them, the first in Section \ref{SubsecBoundaryConds} below and the second in Section \ref{SubsecOutlineMatching}.
In the scalar-flat case after labels are specified, we prove the metric is completely determined up to a 1- or 2-parameter family of possible variations.

If one does not impose \textit{any} restrictions on manifold ends, the reduction $(\Sigma^2,g_\Sigma)$ must still be convex, but we cannot say much else.
$\Sigma^2$ need not be closed or even be a polygon; see the examples in Section \ref{SecExamples}.
Some non-polygon examples are not pathological in the slightest, but some are very pathological.

\subsection{Statement of hypotheses}

We classify closed metric polygons $(\Sigma^2,g_\Sigma)$ that have a synthetic zero scalar curvature condition, whether $\Sigma^2$ comes from a larger manifold $M^4$ or not.
The classification is analytic, relying on a ``Liouville theorem'' for a certain degenerate-elliptic PDE.
This analysis takes place entirely on $(\Sigma^2,g_\Sigma)$ whether or not it is the K\"ahler reduction of some parent manifold, so it is useful to state our assumptions in terms of $(\Sigma^2,g_\Sigma)$ alone.
\begin{itemize}
	\item[A)] (Natural polygon condition)
	Within the $(\varphi^1,\varphi^2)$ coordinate plane $\Sigma^2$ is a convex closed polygon, which is not compact but has finitely many edges.
	\item[B)] (Boundary connectedness)
	The boundary $\partial\Sigma^2$ has one component or no components.
	\item[C)] (Polygon metric condition)
	The metric $g_\Sigma$ is positive definite, $C^\infty$ up to boundary segments, and Lipschitz at corners.
\end{itemize}
And the gradient fields $\nabla\varphi^1$, $\nabla\varphi^2$ satisfy
\begin{itemize}
	\item[D)] (Natural boundary conditions)
	The unit fields $\frac{\nabla\varphi^i}{|\nabla\varphi^i|}$ are smooth except at corner points, Lipschitz at corner points, and covariant-constant along boundary segments (in particular, boundary segments are totally geodesic).
	\item[E)] (Pseudo-toric conditions)
	The distribution $span\{\nabla\varphi^1,\nabla\varphi^1\}$ is 2-dimensional on the interior of $\Sigma^2$, $1$-dimensional on boundary segments, and $0$-dimensional on boundary vertices.
	Also $[\nabla\varphi^1,\nabla\varphi^2]=0$.
	\item[F)] (Pseudo-ZSC condition)
	Setting $\mathcal{V}\triangleq|\nabla\varphi^1|^2|\nabla\varphi^2|^2-\left<\nabla\varphi^1,\nabla\varphi^2\right>{}^2$, we have $\triangle_\Sigma\sqrt{\mathcal{V}}=0$.
\end{itemize}
Conditions (C), (D), and (E) are automatic when $(\Sigma^2,g_\Sigma)$ is the reduction of any smooth toric K\"ahler $M^4$.
They rule out pathologies that might exist on metric polygons but cannot exist on reductions of K\"ahler 4-manifolds.

Conditions (A) and (B) require a bit more discussion.
There do exist complete manifolds $(M^4,J,\omega,\mathcal{X}_1,\mathcal{X}_2)$ with associated polygons that violate them, although they are all unusual or pathological in one way or another.
If (A) is violated, the manifold might have infinite topological type, or it might have ends that are pathological or otherwise of unknown type.
Condition (B) is violated for one and only one manifold: $\mathbb{S}^2\times\mathbb{H}^2$ where one of the Killing fields gives a hyperbolic symmetry field on $\mathbb{H}^2$.
This is examined in \S\ref{SubsecSTimesH}.
This paper does not explore what the moduli of metrics could be in this case, mostly because our Proposition \ref{ThmIntoGlobalZ}---essential to our results---fails in this unique case.

Condition (F) is a synthetic scalar-flat condition, and is the point of the whole paper.
When $(\Sigma^2,g_\Sigma)$ in indeed the reduction of some $(M^4,J,g,\mathcal{X}^1,\mathcal{X}^2)$, condition (F) is equivalent to $(M^4,g)$ being scalar-flat.

\subsection{Statement of results}

First we state the only substantial result of this paper that does not use the ZSC-condition (F) but instead uses $\triangle_\Sigma\sqrt{\mathcal{V}}\le0$, a synthetic \textit{non-negative} scalar curvature condition.
\begin{theorem}[{\it cf.} Theorem \ref{ThmRTwoClassification}] \label{ThmIntroCompletePolygon}
	Assume $(\Sigma^2,g_\Sigma)$ is a geodesically complete 2-manifold in the $(\varphi^1,\varphi^2)$ plane that is convex but otherwise does not necessarily satisfy (A), does satisfy (B)-(E), and instead of (F) satisfies $\triangle_\Sigma\sqrt{\mathcal{V}}\le0$.
	Then the metric $g_\Sigma$ is flat.
\end{theorem}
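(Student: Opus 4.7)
The plan is to combine a Liouville-type rigidity for the superharmonic function $V:=\sqrt{\mathcal V}$ with J\"orgens's theorem on entire Monge--Amp\`ere solutions.

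First I would use geodesic completeness together with (B) and (D) to reduce to the essential case $\Sigma=\mathbb R^2$ without boundary. By (D) any boundary segment is totally geodesic and sits at finite $g_\Sigma$-distance from the interior (as already visible in the flat-quadrant reduction of $\mathbb R^4$, where the degenerate metric $\frac{1}{2\varphi^1}(d\varphi^1)^2+\frac{1}{2\varphi^2}(d\varphi^2)^2$ is isometric to a Euclidean quadrant). A geodesic from the interior therefore reaches such a segment in finite time and cannot be extended, violating completeness unless $\partial\Sigma=\emptyset$. Convexity in the plane then forces $\Sigma=\mathbb R^2$ (possibly after passing to a cover allowed by (B)).

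Next, on complete $\Sigma=\mathbb R^2$ the function $V$ is everywhere positive and superharmonic. I would apply the paper's degenerate-elliptic Liouville theorem --- or, equivalently, the parabolicity of $(\Sigma,g_\Sigma)$, which one should extract from the polynomial volume growth produced by the commuting gradient flows of $\nabla\varphi^1,\nabla\varphi^2$ --- to conclude that $V\equiv c$ for some positive constant $c$.

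With $V$ constant I would change to coordinates $(t_1,t_2)$ in which $\partial_{t_i}=\nabla\varphi^i$; by (E) these flows commute and by the previous step are complete, giving a diffeomorphism $\mathbb R^2\to\Sigma$. In these coordinates the matrix of $g_\Sigma$ is $G_{ij}=g^{ij}$; the symmetry $g^{ij}=g^{ji}$ combined with $[\nabla\varphi^1,\nabla\varphi^2]=0$ shows that each $1$-form $g^{ij}\,dt_j$ is closed and, on $\mathbb R^2$, exact, so $G_{ij}=\partial_{t_i}\partial_{t_j}f$ for a single scalar potential $f$. The constancy of $V$ becomes the Monge--Amp\`ere equation $\det\mathrm{Hess}\,f=c^2$ on all of $\mathbb R^2$; J\"orgens's theorem then forces $f$ to be a quadratic polynomial, making $G$ a constant positive-definite matrix and $g_\Sigma$ flat.

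The main obstacle will be the second step: verifying that $V$ is constant. Superharmonicity alone on a complete surface is not enough, since hyperbolic-type complete surfaces support non-constant positive superharmonic functions; one needs parabolicity or some equivalent control. I expect the proof either to invoke the paper's Liouville theorem directly at this point or first to establish quadratic (or sub-quadratic) volume growth from the two commuting gradient fields. Steps one and three are more structural and should follow from careful but routine use of (B)--(E) together with the completeness and convexity assumptions.
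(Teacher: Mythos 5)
Your overall architecture (reduce to the boundaryless case, show $\sqrt{\mathcal V}$ is constant, then rigidify the metric) parallels the paper's, and your endgame is a genuinely different route. The paper's Lemma \ref{LemmaTConstFlat} handles the constant-$\mathcal V$ case by producing holomorphic coordinates from the now-harmonic $\varphi^i$, computing $K_\Sigma=8\,|\nabla|\nabla\varphi^1||^2\ge0$, invoking Cheng--Yau parabolicity, and applying the classical Liouville theorem to the positive harmonic function $|\nabla\varphi^1|^{-2}$. Your Monge--Amp\`ere alternative is viable in principle, but not via entire J\"orgens as stated: your claim that convexity forces $\Sigma^2=\mathbb R^2$ as a subset of the $(\varphi^1,\varphi^2)$-plane is unjustified (a metrically complete surface can sit over a proper convex coordinate domain, e.g.\ a half-plane), so you would need Calabi's version of J\"orgens for complete Hessian metrics rather than the entire-solution version.

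The genuine gap is the step you yourself flag: proving $\sqrt{\mathcal V}$ is constant. Neither of your two suggestions closes it. The paper's degenerate-elliptic Liouville theorem is inapplicable here: it concerns the momentum functions on the closed half-plane with vanishing boundary data, presupposes the global volumetric coordinate $z$ (which needs $\triangle_\Sigma\sqrt{\mathcal V}=0$, not $\le0$, plus all of Section \ref{SecGlobalHarmonicCoordBehavior}), and in the present case there is no boundary at all. Polynomial volume growth of $(\Sigma^2,g_\Sigma)$ is likewise not available: the volume form is $\mathcal V^{-1/2}\,d\varphi^1\wedge d\varphi^2$, and nothing in (B)--(E) controls either the Euclidean area of $\Sigma^2$ or the size of $\mathcal V^{-1/2}$, so parabolicity of $g_\Sigma$ cannot be ``extracted'' from the commuting gradient flows as you propose. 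The paper's actual mechanism is the conformal change $\widetilde g_\Sigma=\sqrt{\mathcal V}\,g_\Sigma$: the pseudo-K\"ahler identity (\ref{EqnPseudoKahler}) together with $\triangle_\Sigma\sqrt{\mathcal V}\le0$ gives $\widetilde K_\Sigma\ge0$ (equation (\ref{EqnConformalScalar})), Lemma \ref{LemmaConformalComplete} shows by a nontrivial Laplacian-comparison argument that $\widetilde g_\Sigma$ is still complete, and Cheng--Yau then yields parabolicity (a conformal invariant in two dimensions), whence the positive superharmonic function $\sqrt{\mathcal V}$ is constant. Without this conformal trick, or an equivalent source of parabolicity, your proof does not close.
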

\begin{corollary}[{\it cf.} Corollary \ref{CorRTwoClassification} and Proposition \ref{PropToroidalFlat}] \label{ThmIntroM4CompletePolygon}
	Assume $(M^4,J,g,\mathcal{X}^1,\mathcal{X}^2)$ is a complete toric K\"ahler manifold with scalar curvature $s\ge0$, and $span\{\mathcal{X}^1,\mathcal{X}^2\}$ has rank 2 at every point.
	Then $(M^4,g)$ is flat.
\end{corollary}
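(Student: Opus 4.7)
The plan is to reduce to Theorem \ref{ThmIntroCompletePolygon} applied to the Arnold-Liouville quotient $\Sigma^2 = M^4/\langle\mathcal{X}^1,\mathcal{X}^2\rangle$, and then to invoke Proposition \ref{PropToroidalFlat} to promote flatness of $g_\Sigma$ to flatness of $g$ on $M^4$.

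First I would argue that $(\Sigma^2,g_\Sigma)$ is a geodesically complete $2$-manifold without boundary satisfying conditions (B)--(E). Because $\mathrm{span}\{\mathcal{X}^1,\mathcal{X}^2\}$ has rank $2$ everywhere, no orbit is degenerate; the reduction map $\Phi$ is a genuine Riemannian submersion onto a smooth $2$-manifold $\Sigma^2$ in the $(\varphi^1,\varphi^2)$-plane with empty boundary, and completeness of $M^4$ transfers to $\Sigma^2$ via horizontal lifts of geodesics. Conditions (B), (C), (D), (E) are then automatic: (B) vacuously (no boundary components), (C) by smoothness of the reduced metric, (D) vacuously (no boundary segments), and (E) because the rank-$2$ hypothesis gives $\mathrm{span}\{\nabla\varphi^1,\nabla\varphi^2\}$ two-dimensional at every interior point, with $[\nabla\varphi^1,\nabla\varphi^2]=0$ descending from $[\mathcal{X}^1,\mathcal{X}^2]=0$.

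Second I would convert the scalar curvature condition. The toric K\"ahler scalar curvature formula that underlies the equivalence of (F) with $s=0$ (as asserted in the hypotheses section) expresses $s_{M^4}$ as a negative multiple of $\triangle_\Sigma\sqrt{\mathcal{V}}/\sqrt{\mathcal{V}}$. Since $s\ge0$ and $\sqrt{\mathcal{V}}>0$ on the interior, we obtain $\triangle_\Sigma\sqrt{\mathcal{V}}\le0$. Theorem \ref{ThmIntroCompletePolygon} then gives that $g_\Sigma$ is flat.

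Finally, Proposition \ref{PropToroidalFlat} lifts this flatness back up: under the rank-$2$ (asymptotically toroidal) structure, a flat $2$-dimensional reduction with the ZSC-type data forces the parent $M^4$ to be flat. I expect this last step to be the main obstacle, since flatness of the $2$-dimensional base does not \emph{a priori} imply flatness of the total space; one must use the ZSC equation on the now-flat base (degenerating to harmonicity of $\sqrt{\mathcal{V}}$, and hence affine-linearity of the fiber metric $G^{ij}$ in the momentum coordinates) to rule out any non-flat structure in the torus direction. The other technical point worth care is the descent of geodesic completeness from $M^4$ to $\Sigma^2$, but this is standard for Riemannian submersions with complete total space once the rank-$2$ hypothesis guarantees the submersion has no singular fibers.
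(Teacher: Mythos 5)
Your first two steps match the paper's proof of Corollary \ref{CorRTwoClassification}: the rank-2 hypothesis makes the Arnold--Liouville projection a Riemannian submersion onto a complete boundaryless $(\Sigma^2,g_\Sigma)$, and Corollary \ref{CorReducedScalar} converts $s\ge0$ into $\triangle_\Sigma\sqrt{\mathcal{V}}\le0$, so Theorem \ref{ThmRTwoClassification} applies. The gap is in your final step, which you yourself flag as the main obstacle but then sketch incorrectly. First, Proposition \ref{PropToroidalFlat} is not the tool that lifts flatness from $\Sigma^2$ to $M^4$; that proposition only shows that an asymptotically toroidal end forces the reduction to be complete, and it defers the actual flatness conclusion back to Theorem \ref{ThmRTwoClassification}. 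Second, your proposed mechanism --- ``affine-linearity of the fiber metric $G^{ij}$ in the momentum coordinates'' --- would not close the argument: a flat base with merely affine (non-constant) $G^{ij}$ does not give a flat total space (the Taub--NUT family is exactly of this type: flat-looking reduction data with non-constant $G$ and non-flat $g$).

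What the paper actually proves, and what you need, is stronger than flatness of $g_\Sigma$: Lemma \ref{LemmaTConstFlat} shows that $g_\Sigma$ has \emph{constant} coefficients in $(\varphi^1,\varphi^2)$. The route is: on the complete parabolic surface ($\widetilde K_\Sigma\ge0$ forces biholomorphism with $\mathbb{C}$ by Cheng--Yau), the positive superharmonic function $\sqrt{\mathcal{V}}$ is constant (not merely harmonic); then Proposition \ref{PropDivOfVarphi} makes each $\varphi^i$ harmonic, the transition functions between the resulting holomorphic coordinates force $|\nabla\varphi^i|^{-2}$ and $\left<\nabla\varphi^1,\nabla\varphi^2\right>$ to be harmonic, and the classical Liouville theorem on $\mathbb{C}$ makes them constant. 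Once $G^{ij}=\left<\nabla\varphi^i,\nabla\varphi^j\right>$ is a constant matrix, the block formula (\ref{EqnsGJOmegaM}) exhibits $g$ as a constant-coefficient metric in the global action-angle frame, so $M^4$ is flat. Without the constancy (as opposed to affine-linearity) of $G$, the lift fails.
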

The proof of Theorem \ref{ThmIntroCompletePolygon} occupies Section \ref{SectionRTwoCase}, and uses methods different from the rest of the paper.
For a contrasting example---an instanton that obeys all hypotheses except $s\ge0$---see Example \ref{SubsubsecNonpolygon}.

All remaining results require Proposition \ref{ThmIntoGlobalZ}, which establishes a global holomorphic coordinate on $\Sigma^2$.
The synthetic ZSC condition $\triangle_\Sigma\sqrt{\mathcal{V}}=0$ allows us to create an analytic function $z:\Sigma^2\rightarrow\mathbb{C}$ first by setting $y=\sqrt{\mathcal{V}}$, then finding its harmonic conjugate by solving $dx=-J_\Sigma{}dy$ for $x$.
The resulting holomorphic function $z=x+\sqrt{-1}y$ we call the {\it volumetric normal function}, after the fact that $\mathcal{V}$ is a volume.
Proposition \ref{ThmIntoGlobalZ} asserts $z$ is unramified and maps $\Sigma^2$ bijectively onto the closed upper half-plane $\overline{H}{}^2\subset\mathbb{C}$.
See Section \ref{SecGlobalHarmonicCoordBehavior}.
\begin{proposition}[Volumetric normal coordinates, {\it cf.} Propositions \ref{ThmInjectivityOfZ} and \ref{ThmSurjectivityOfZ}] \label{ThmIntoGlobalZ}
	Assume $(\Sigma^2,g_\Sigma)$ satisfies (A)-(F), and let $z:\Sigma^2\rightarrow\overline{H}{}^2$ be the volumetric normal function.
	Then the analytic map $z:\Sigma^2\rightarrow\overline{H}{}^2$ is bijective.
\end{proposition}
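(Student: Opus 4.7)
The plan is to prove the two halves of the statement separately, following the citations to Propositions \ref{ThmInjectivityOfZ} and \ref{ThmSurjectivityOfZ}. Begin by checking that $z$ is well-defined and maps into $\overline{H}{}^2$: condition (F) says $y=\sqrt{\mathcal{V}}$ is harmonic on the interior, so the conjugate 1-form $-J_\Sigma dy$ is closed and admits a primitive $x$ (unique up to an additive constant) on the simply connected $\Sigma^2$, making $z=x+\sqrt{-1}y$ holomorphic with respect to $J_\Sigma$. By condition (E), the gradient distribution has rank 2 on the interior and rank $\le1$ on $\partial\Sigma^2$, so $\mathcal{V}>0$ in the interior while $\mathcal{V}=0$ on boundary segments and vertices. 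Thus $y>0$ on $\text{int}(\Sigma^2)$ and $y=0$ on $\partial\Sigma^2$, placing $z(\Sigma^2)\subseteq\overline{H}{}^2$ with the boundary going into $\mathbb{R}$.

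For injectivity, first analyze the boundary. Along any edge $y\equiv0$, so $dy$ restricted to the edge vanishes; the Hopf boundary-point principle applied to the harmonic function $y$ forces $\partial y/\partial\nu\ne0$ on the interior of each edge, so $dx=-J_\Sigma dy$ is a nonvanishing 1-form tangent to the edge, with a fixed sign along each edge. This shows $x$ is strictly monotone on each boundary segment, and, using the boundary connectedness of (B) to propagate the signs coherently around $\partial\Sigma^2$, one obtains a homeomorphism $\partial\Sigma^2\to\mathbb{R}$. Next one must exclude critical points of $z$ in the interior. A critical point is a saddle of the harmonic function $y$, where some level set $\{y=c\}$ crosses itself. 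Trace the resulting level arcs: each must either terminate on $\partial\Sigma^2$ (impossible since $y>c>0$ on it) or escape to infinity in $\Sigma^2$. A count of arcs, combined with the convexity of $\Sigma^2$ from (A) and connectedness of $\partial\Sigma^2$ from (B), forces two arcs to cobound a compact region on whose boundary $y$ is constant; the maximum principle then forces $y$ constant on this region, contradicting analyticity. Thus $dz$ is nonvanishing in the interior, so $z$ is a local biholomorphism from $\text{int}(\Sigma^2)$ to $H^2$ and restricts to a homeomorphism on $\partial\Sigma^2$.

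For surjectivity, the key ingredient is properness of $z$. Because $\Sigma^2$ has exactly one non-compact end (being a convex polygon with finitely many edges and connected boundary), I would show that $|z|\to\infty$ along this end; $y=\sqrt{\mathcal{V}}$ is harmonic and positive in the interior while its restriction to each unbounded edge vanishes, and an argument using the edge parametrization by $x$ plus a Phragm\'en--Lindel\"of-type comparison forces either $y\to\infty$ or $|x|\to\infty$ as one goes to the end. With $z$ proper and a local homeomorphism interior-to-interior, $z$ is a finite-sheeted branched cover onto $\overline{H}{}^2$, but we showed it is unbranched and that the boundary maps bijectively, so the sheet number is $1$, yielding bijectivity.

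The main obstacle, I expect, is step two above: ruling out interior critical points of the harmonic function $y=\sqrt{\mathcal{V}}$. A purely topological argument must invoke the specific global geometry encoded in (A)--(E), because on a general non-compact simply connected surface a positive harmonic function vanishing on the boundary can have saddles. The convexity of the polygon and the connectedness of $\partial\Sigma^2$ should play essential roles here, likely in conjunction with the degenerate-elliptic Liouville theorem flagged in the abstract, which constrains the global structure of $\sqrt{\mathcal{V}}$ beyond mere harmonicity. A secondary technical obstacle is the behavior of $z$ at polygon vertices, where $J_\Sigma dy$ is only Lipschitz by (D); handling these requires a local Schwarz-reflection across the two adjacent edges and a matching argument to confirm that $z$ extends as a proper local homeomorphism across the corner.
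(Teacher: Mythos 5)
Your proposal has genuine gaps at both of its load-bearing steps. First, the exclusion of interior critical points of $y$: the level arcs of $\{y=c\}$, $c>0$, emanating from a putative saddle cannot terminate on $\partial\Sigma^2$ (where $y=0$), so they must all escape to the single end of $\Sigma^2$; but then consecutive arcs cobound \emph{unbounded} regions, not compact ones, and the maximum principle alone gives no contradiction --- one would need growth control on $y$ at infinity, which is exactly what is unavailable at this stage. In fact the paper does not prove $dz\neq0$ in the interior a priori at all: it allows ramification points (showing only that they do not accumulate, Lemma \ref{LemmaRamificationNoAccum}), proves injectivity by a different mechanism, and only afterwards concludes $z$ is unramified. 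Second, the properness of $z$, which you invoke to run the covering-space count, is precisely the hard content of surjectivity: the danger is that coordinate-infinity of $\Sigma^2$ maps to a finite point of $\overline{H}{}^2$, and the ``Phragm\'en--Lindel\"of-type comparison'' you gesture at is not supplied. The Liouville theorem from the abstract cannot rescue this: it is a statement about solutions on $\overline{H}{}^2$ and becomes applicable only \emph{after} $z$ is known to be a global coordinate, so using it here would be circular.

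The missing idea is the use of the momentum functions themselves as comparison functions. The paper constructs solutions $\psi_{\mathcal{R}}$ of $y(\psi_{xx}+\psi_{yy})-\psi_y=0$ subordinate to regions $\mathcal{R}\subset\overline{H}{}^2$ meeting $\{y=0\}$ (itself delicate, since the operator degenerates there), pulls them back to $\Sigma^2$, and plays them against $\varphi^1+\varphi^2$ --- which after an affine normalization is non-negative with compact sublevel sets --- via an open-closed maximum-principle argument. This yields the ``One-Component Lemmas'': $z^{-1}(\mathcal{R})$ is connected for suitable $\mathcal{R}$. Injectivity then follows by enclosing a path between two putative preimages of the same point in such a region, and surjectivity follows from the annular version of the lemma together with a topological ``Disk Lemma.'' Your boundary analysis via the Hopf lemma does coincide with the paper's first step (the bijectivity zone near $\partial\Sigma^2$), but without the one-component machinery neither interior injectivity nor surjectivity can be closed. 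Note also that when hypothesis (B) fails (the strip, i.e.\ $\mathbb{S}^2\times\mathbb{H}^2$ with the hyperbolic field) the map $z$ really is $2$-to-$1$ with a ramification point, so any correct proof must use (B) and the global structure in an essential, not merely topological, way.
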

This proposition requires condition (B).
If $\Sigma^2$ is closed but the boundary $\partial\Sigma^2$ is disconnected---this is the unique case that $\Sigma^2$ is an infinite strip---then $z$ is generically 2-to-1 and actually does have a ramification point.
See Example \ref{SubsecSTimesH}.
Now that we know $z$ is a {\it global} analytic coordinate $z:\Sigma^2\rightarrow\overline{H}{}^2$, we can study the global behavior of the $\varphi^1$ and $\varphi^2$ on the half-plane instead of on $\Sigma^2$ itself.
There, a result of Donaldson \cite{Do1} states the $\varphi^i$ satisfy the degenerate-elliptic equations
\begin{eqnarray}
	y\left(\varphi^1_{xx}+\varphi^1_{yy}\right)\,-\,\varphi^1_y\;=\;0
	\quad\text{and}\quad
	y\left(\varphi^2_{xx}+\varphi^2_{yy}\right)\,-\,\varphi^2_y\;=\;0.
	\label{EqnIntroDegEllipt}
\end{eqnarray}
Now the harmonic variables $x$, $y$ satisfy an elliptic system and the momentum variables $\varphi^1$, $\varphi^2$ satisfy a degenerate-elliptic system.
The interplay between these systems creates such strong controls that we can completely classify solutions.
\begin{theorem}[The Liouville theorem, Corollary 1.11 of \cite{Web2}]
	Assume $\varphi\in{}C^0(\overline{H}{}^2)\cap{}C^2(H^2)$ satisfies
	\begin{eqnarray}
		y(\varphi_{xx}+\varphi_{yy})\,-\,\varphi_y\;=\;0
	\end{eqnarray}
	in $H^2$, with boundary condition $\varphi(x,0)=0$ on $\{y=0\}$ and lower bound $\varphi\ge0$.
	
	Then $\varphi=C_1y^2$ for some constant $C_1\ge0$.
\end{theorem}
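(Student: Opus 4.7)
The plan is to reinterpret the equation as the Laplace equation on $\mathbb{R}^5$ restricted to axisymmetric functions, and then apply a removable-singularities theorem together with Liouville's theorem. Writing points of $\mathbb{R}^5$ as $(x,Y)$ with $Y\in\mathbb{R}^4$, and setting $r = |Y|$ and $A = \{Y = 0\}$, I would define
\[
\tilde\psi(x,Y) \;:=\; \frac{\varphi(x,r)}{r^2}
\]
on $\mathbb{R}^5\setminus A$. The axisymmetric $5$-dimensional Laplacian is $\Delta_5 g = g_{xx} + g_{rr} + \tfrac{3}{r}\,g_r$, and a short computation gives
\[
\Delta_5 \tilde\psi \;=\; \frac{1}{r^3}\bigl(r(\varphi_{xx}+\varphi_{rr}) - \varphi_r\bigr),
\]
which vanishes identically by hypothesis. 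Thus $\tilde\psi$ is a non-negative harmonic function on $\mathbb{R}^5\setminus A$.

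The next step is to remove the axis. Because $A$ is closed and of Hausdorff dimension $1<5-2$, it is polar in $\mathbb{R}^5$; equivalently, it has zero Newtonian capacity. The classical removable-singularity theorem for non-negative harmonic functions across polar sets then produces an extension of $\tilde\psi$ to a non-negative superharmonic function on all of $\mathbb{R}^5$, and since polar sets carry no Riesz mass this extension is in fact harmonic (hence $C^\infty$ by elliptic regularity).

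The resulting function is a non-negative entire harmonic function on $\mathbb{R}^5$, which must be constant by the Harnack inequality applied on balls of arbitrarily large radius. Consequently $\tilde\psi\equiv C_1$ for some $C_1\ge0$, whence $\varphi(x,y) = C_1 y^2$ on $H^2$; continuity up to $\{y=0\}$ extends the identity to $\overline{H}{}^2$.

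The main obstacle is the removable-singularity step: a priori I have only the sign condition $\tilde\psi\ge0$ and no bound near $A$, so the elementary bounded-case removability does not apply, and one must invoke the more delicate potential-theoretic fact that polar sets are removable even for unbounded non-negative harmonic functions. A more hands-on alternative would be to first prove boundary regularity of $\varphi$ at $\{y=0\}$: the indicial ODE $yv''-v'=0$ has roots $\alpha=0,2$, so vanishing solutions ought to vanish to order $y^2$. A rigorous version of this---via Schauder estimates for boundary-degenerate operators, or via barriers built from the explicit solution $y^2$---would make $\tilde\psi$ bounded near $A$, after which the removability step is standard.
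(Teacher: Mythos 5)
The paper does not prove this statement itself---it is imported verbatim from \cite{Web2}---so your argument can only be judged on its own terms. The dimensional-reduction idea is sound: writing $\tilde\psi=\varphi/r^2$ with $r=|Y|$, $Y\in\mathbb{R}^4$, your identity $\Delta_5\tilde\psi=r^{-3}\bigl(r(\varphi_{xx}+\varphi_{rr})-\varphi_r\bigr)$ checks out, so $\tilde\psi$ is indeed a non-negative harmonic function on $\mathbb{R}^5\setminus A$, and the final Liouville/Harnack step for entire non-negative harmonic functions is fine.

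The genuine gap is the removability step. The extension theorem you invoke gives only a non-negative \emph{superharmonic} extension across the polar set $A$, and the claim that ``polar sets carry no Riesz mass'' is false: the Riesz measure of a superharmonic function can perfectly well be concentrated on a polar set (the Dirac mass, with potential $|x|^{2-n}$, is the basic example; only measures of finite energy fail to charge polar sets). In the present setting this is not a technicality but exactly where the theorem lives: $\varphi\equiv1$ satisfies the equation and $\varphi\ge0$, and gives $\tilde\psi=r^{-2}$, which is precisely (a constant times) the Newtonian potential of arc-length measure on the axis $A$---positive and harmonic off $A$, superharmonic on $\mathbb{R}^5$, and not harmonic across $A$. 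Since your removability argument nowhere uses the boundary condition $\varphi(x,0)=0$, it would apply verbatim to $\varphi\equiv1$ and ``prove'' $1=C_1y^2$. To close the gap you must use the boundary condition to kill the Riesz measure $\mu$ supported on $A$: for instance, from $r^2U^{\mu}(x,r)\ge c\,\mu([x-r,x+r])/r$ and $r^2U^{\mu}(x,r)\le\varphi(x,r)\to0$ one gets that $\mu$ has vanishing upper $1$-density everywhere, hence $\mu=0$ by a differentiation-of-measures argument. Your proposed fallback---first proving $\varphi=O(y^2)$ at $\{y=0\}$ so that $\tilde\psi$ is locally bounded and the elementary removability applies---would also work, but that quadratic boundary vanishing for the degenerate operator is essentially the main content of the cited result and cannot simply be asserted from the indicial roots.
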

\noindent This theorem, quoted from the PDE literature, is slightly inadequate so in Section \ref{SubsecImprLiouville} we loosen the requirement that $\varphi\ge{}0$, and still achieve the Liouville theorem.
\begin{theorem}[The improved Liouville theorem, {\it cf.} Theorem \ref{ThmImprLiouville}] \label{ThmImprLiouIntro}
	Assume $\varphi\in{}C^0(\overline{H}{}^2)\cap{}C^2(H^2)$ satisfies
	\begin{eqnarray}
		y(\varphi_{xx}+\varphi_{yy})\,-\,\varphi_y\;=\;0 \label{EqnPDEInitialStatement}
	\end{eqnarray}
	in $H^2$, with the boundary condition $\varphi(x,0)=0$ on $\{y=0\}$.
	If $\varphi$ has lower bounds that are first order $x$ and sub-quadratic in $y$, specifically
	\begin{eqnarray}
		\varphi\;>\;-A\,-\,B|x|\,-\,Cy^\delta
	\end{eqnarray}
	for some $A,B,C\in\mathbb{R}^{\ge0}$, $\delta\in[0,2)$, then $\varphi\ge0$ and $\varphi=C_1y^2$ for some $C_1\ge0$.
\end{theorem}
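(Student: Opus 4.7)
My plan is to reduce to the original Liouville theorem by establishing the pointwise lower bound $\varphi\ge 0$ on all of $H^2$; once this is shown, the original theorem produces $\varphi=C_1y^2$, and the sub-quadratic hypothesis forces $C_1\ge 0$, since any $C_1<0$ would make $\varphi(0,y)=C_1y^2\to-\infty$ faster than the permitted $-Cy^\delta$ for $\delta<2$.

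The central tool is a supersolution barrier assembled from three explicit functions for the operator $L=y(\partial_x^2+\partial_y^2)-\partial_y$: constants, which satisfy $L(1)=0$; the radial function $r=\sqrt{x^2+y^2}$, for which a direct computation gives $Lr=0$ in $H^2$; and $y^\delta$, for which $L(y^\delta)=\delta(\delta-2)y^{\delta-1}\le 0$ since $\delta\in[0,2)$. The combination $\Psi:=A+Br+Cy^\delta$ is then a supersolution, and the hypothesis rewrites as $\varphi+\Psi>B(r-|x|)\ge 0$ on $H^2$. Adding the auxiliary $\epsilon y^2$ (with $L(y^2)=0$) and applying the minimum principle on the half-disks $\Omega_R=H^2\cap\{x^2+y^2<R^2\}$---where the boundary values of $\varphi+\Psi+\epsilon y^2$ equal $A+B|x|$ on the flat base and strictly exceed $\epsilon y^2$ on the semicircle (since $r=R\ge|x|$ there)---yields $\varphi+\Psi+\epsilon y^2\ge 0$ on $\Omega_R$; sending $R\to\infty$ and then $\epsilon\to 0$ delivers the preliminary bound $\varphi\ge -A-Br-Cy^\delta$ on $H^2$.

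This preliminary bound is actually \emph{weaker} than the hypothesis (since $r\ge|x|$), so the direct barrier comparison has not yet made progress, and I expect this to be the main obstacle. The difficulty is the linear $|x|$ defect: no supersolution of $L$ on $H^2$ grows strictly faster than $|x|$ in the $x$-direction, since $r^\alpha$ is a supersolution only for $\alpha\le 1$, and the ansatz $y^\beta g(x)$ with $\beta\in(0,2)$ forces $g$ to be concave (and hence bounded on $\mathbb{R}$). To overcome this I would exploit extra rigidity: observe that $A+Br$ is itself a non-negative solution of $L=0$ with the very same boundary trace as the auxiliary supersolution, so the residual $v:=\varphi+Cy^\delta$ satisfies $Lv\le 0$, vanishes on $\{y=0\}$ (the case $\delta=0$ absorbs into $A$), and is bounded below by $-A-B|x|$. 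One can then either iterate the barrier scheme with the $Cy^\delta$ defect absorbed, or pass to the axisymmetric-harmonic avatar $\tilde v:=v/y^2$, which a direct calculation identifies as an axisymmetric harmonic function on $\mathbb{R}^5$; the classical polynomial Liouville theorem for harmonic functions with one-sided linear bounds then forces $\tilde v$ to be at most affine in $x$, and axisymmetry together with the sub-quadratic growth eliminates the linear coefficient, yielding $\tilde v$ constant and hence $\varphi=C_1y^2$ with $C_1\ge 0$ after tracing back through the reductions.
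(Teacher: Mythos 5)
Your setup is sound: $r=\sqrt{x^2+y^2}$ and $y^2$ solve $L\varphi:=y\Delta\varphi-\varphi_y=0$, $L(y^\delta)\le0$ for $\delta\in[0,2)$, and you correctly observe that the resulting comparison only yields $\varphi\ge -A-Br-Cy^\delta$, which is weaker than the hypothesis. But at the point where you diagnose the linear-in-$x$ defect as the main obstacle, the step that actually resolves it is missing, and it lives entirely inside the barrier scheme you already built. Since $L$ is translation-invariant in $x$, the paper runs the same comparison with the barrier $-M_0-M_1\sqrt{(x-x_0)^2+y^2}-M_2y^2$ centered at an \emph{arbitrary} $x_0$: for $M_1>B$ the region where this barrier could exceed $-A-B|x|-Cy^\delta$ is compact, the operator is uniformly elliptic there, and near $\{y=0\}$ the barrier is $\le-M_0<0=\varphi(x,0)$ so continuity handles a neighborhood of the degenerate boundary. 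Letting $M_0,M_2\searrow0$ and $M_1\searrow B$, and then taking $\sup_{x_0\in\mathbb{R}}\bigl(-B\sqrt{(x-x_0)^2+y^2}\bigr)=-By$, upgrades the bound to $\varphi\ge-By$, eliminating the $|x|$-growth entirely. A final barrier on the strip $\{0\le y\le B\epsilon^{-1}\}$ (the paper uses a separated solution $\cosh(cx)\cdot cy\,Y_1(cy)$) then gives $\varphi+\epsilon y^2\ge0$ for every $\epsilon>0$, hence $\varphi\ge0$, and the original Liouville theorem finishes.

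Your proposed substitute---passing to $\tilde v=v/y^2$ as an axisymmetric harmonic (or superharmonic) function on $\mathbb{R}^5$---does not close the gap. At the stage where you would invoke it you have no control of $v/y^2$ near the axis $\{y=0\}$: everything known there is consistent with $\varphi\sim-cy$ near the boundary, so $\tilde v$ may tend to $-\infty$ like $-c/y$ along the axis, and the transferred lower bound $(-A-B|x|)/y^2$ is neither one-sided linear nor even locally bounded near $\{y=0\}$. Knowing that $\tilde v$ is bounded below (or extends superharmonically) across the axis is essentially equivalent to knowing $\varphi=O(y^2)$ at the boundary, which is the conclusion; this is exactly why the nonnegativity hypothesis appears in the original Liouville theorem and why the improved version is not automatic. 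The alternative of ``iterating the barrier scheme'' is not developed enough to evaluate, and as stated it contains no mechanism for removing the $B|x|$ term; the translation-and-supremum device above is that mechanism.
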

Our classification results are corollaries of this Liouville theorem.
It is convenient to separate the classification into three cases; see Figure \ref{SubFigThreeCases}.

\begin{theorem}[Classification in the general case, {\it cf.} Theorem \ref{ThmGeneralCase}] \label{ThmIntroGeneralPoly}
	Let $(\Sigma^2,g_\Sigma)$ be a metric polygon satisfying (A)-(F), with $d$ many vertices.
	Assume $\Sigma^2$ has no parallel rays and is not the half-plane.
	Given $d+1$ labels $s_0,\dots,s_d$ on its edges, the metric $g_\Sigma$ is a member of a 2-parameter family of possible metrics.
	
	If labels are not specified, then $g_\Sigma$ is a member of a $(d+3)$-parameter family of possible metrics.
\end{theorem}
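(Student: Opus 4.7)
The strategy is to pass to the upper half-plane $\overline{H}^2$ via the volumetric normal coordinate $z=x+\sqrt{-1}y$ of Proposition \ref{ThmIntoGlobalZ}. There the momentum functions $\varphi^1,\varphi^2$ satisfy the Donaldson degenerate-elliptic system \eqref{EqnIntroDegEllipt}, and the $d$ polygon vertices map bijectively to $d$ points on $\{y=0\}$, splitting $\partial\overline{H}^2$ into $d+1$ intervals $I_0,\dots,I_d$ in correspondence with the edges $e_0,\dots,e_d$.

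On each $I_i$ the boundary restriction of $(\varphi^1,\varphi^2)$ to $\{y=0\}$ parametrizes edge $e_i$, and the label $s_i$ (as defined in \S\ref{SubsecBoundaryConds}) pins down the leading quadratic $y$-coefficient of the transverse momentum combination. I would first argue that prescribing the polygon shape together with all $d+1$ labels fixes the full Dirichlet data of $(\varphi^1,\varphi^2)$ on $\{y=0\}$ modulo a single $x$-translation gauge inherent to the harmonic conjugate defining $z$. Granted this, if $(\varphi^1,\varphi^2)$ and $(\tilde\varphi^1,\tilde\varphi^2)$ give rise to identically labeled polygons of the same combinatorial type, then after fixing the $x$-gauge the differences $\psi^i=\tilde\varphi^i-\varphi^i$ vanish on $\{y=0\}$, satisfy \eqref{EqnIntroDegEllipt}, and obey the linear-$x$, subquadratic-$y$ growth bound required by Theorem \ref{ThmImprLiouIntro}, thanks to condition (A). The improved Liouville theorem then forces $\psi^i=C_iy^2$, exhibiting the claimed 2-parameter family parametrized by $(C_1,C_2)\in\mathbb{R}^2$.

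For the unlabeled case, releasing each of the $d+1$ positive labels contributes one additional real parameter to the boundary data and hence, by the above rigidity, to the moduli of solutions; combined with the 2-parameter Liouville freedom this produces the total of $d+3$ parameters claimed by the theorem.

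The main obstacle is to show that equal labels on the same polygon really determine the boundary data of $(\varphi^1,\varphi^2)$ pointwise, not merely force the same transverse linear combination to vanish on each $I_i$. One must combine the tangential parametrization information encoded in the label $s_i$ with the non-parallel rays hypothesis (ensuring the edge normals rotate from one interval to the next, and excluding the degenerate half-plane and strip cases) so that the transverse vanishings across adjacent intervals couple to rigidify the parametrization on all of $\{y=0\}$. Equally delicate is the identification of $s_i$ with the precise second-order invariant of the momentum expansion near $I_i$, which relies on conditions (D) and (E) and the normalization set up in \S\ref{SubsecBoundaryConds}. Once this correspondence between geometric labels and analytic boundary data is in place, the improved Liouville theorem applied componentwise to $\psi^1$ and $\psi^2$ completes the proof.
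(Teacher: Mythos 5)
Your overall strategy is the paper's: pass to $\overline{H}{}^2$ via Proposition \ref{ThmIntoGlobalZ}, show that the polygon plus its labels determine the Dirichlet data of $(\varphi^1,\varphi^2)$ on $\{y=0\}$ up to an $x$-translation, and then apply the improved Liouville theorem componentwise to conclude the residual freedom is $C_1y^2$, $C_2y^2$. Two points need repair, however.

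First, you misidentify what the label is. You describe $s_i$ as pinning down ``the leading quadratic $y$-coefficient of the transverse momentum combination,'' and later call it a ``second-order invariant of the momentum expansion near $I_i$.'' It is not: by equation (\ref{EqnMarkings}), $s_i=\sqrt{(\varphi^1_x)^2+(\varphi^2_x)^2}$ along $I_i$, i.e.\ the first-order \emph{tangential} parameterization speed of the piecewise-linear outline map $x\mapsto(\varphi^1(x,0),\varphi^2(x,0))$ (whose piecewise linearity is Lemma \ref{LemmaPiecewise}, coming from $\varphi^i_{xx}=0$ on segments). The quadratic transverse coefficients are exactly what the labels do \emph{not} fix --- they are the free parameters $C_1,C_2$ surviving the Liouville theorem. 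What the labels and vertices do determine is the breakpoint sequence $x_{i+1}=x_i+|\vec p_{i+1}-\vec p_i|/s_i$, hence the full Dirichlet data modulo translation, which is the correct version of the step you call the ``main obstacle.''

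Second, there is a genuine gap in applying Theorem \ref{ThmImprLiouIntro} to the difference of two \emph{unknown} solutions: the hypothesis $\psi^i>-A-B|x|-Cy^\delta$ requires an a priori upper bound of linear-in-$x$, subquadratic-in-$y$ type on $\varphi^i$ itself, and condition (A) gives only the lower bound $\varphi^i\ge0$ (after moving $\Sigma^2$ into the first quadrant). The paper avoids this by comparing the unknown solution not to another unknown solution but to an \emph{explicitly constructed} one with the same boundary data: the outline-matching functions of Lemma \ref{LemmaOutlineLemma}, built from the Abreu--Sena-Dias blocks (\ref{EqnInterpFunction}), which manifestly grow at most linearly. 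Then $\varphi^i-\tilde\varphi{}^i\ge-\tilde\varphi{}^i\ge-A-B\sqrt{x^2+y^2}$ and the Liouville theorem applies. Routing your comparison through this explicit solution both closes the growth gap and supplies the existence side of the ``2-parameter family'' claim; the count $d+3$ in the unlabeled case then follows, as you say, by adding one positive real parameter per edge.
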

\begin{theorem}[Classification when $\Sigma^2$ has parallel rays, {\it cf.} Theorem \ref{ThmParallelRayCase}] \label{ThmIntroParallelPoly}
	Let $(\Sigma^2,g_\Sigma)$ be a metric polygon satisfying (A)-(F), with $d$ many vertices, and assume $\Sigma^2$ has parallel rays.
	Given $d+1$ labels $s_0,\dots,s_d$ on its edges, then the metric $g_\Sigma$ is a member of a 1-parameter family of possible metrics.
	
	If labels are not specified, then $g_\Sigma$ is a member of a $(d+2)$-parameter family of possible metrics.
\end{theorem}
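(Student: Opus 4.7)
The plan is to work in the global half-plane coordinates provided by Proposition \ref{ThmIntoGlobalZ}: this identifies $\Sigma^2$ with $\overline{H}{}^2$, sends the $d$ vertices to real points $p_1 < \cdots < p_d$, and turns each edge into a segment or ray of the real axis. The momentum functions $\varphi^1, \varphi^2$ then satisfy the degenerate-elliptic equations (\ref{EqnIntroDegEllipt}) on $H^2$. The strategy is to write down a candidate $(\varphi^1,\varphi^2)$ from explicit fundamental solutions, use the improved Liouville theorem (Theorem \ref{ThmImprLiouIntro}) to pin down every other solution modulo a finite-dimensional freedom, and then exhibit the parallel-ray hypothesis as the single extra linear constraint that cuts the general-case 2-parameter family (Theorem \ref{ThmIntroGeneralPoly}) down to a 1-parameter family.

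First I would record the fundamental solutions $1$, $x$, $y^2$, and $r_i := \sqrt{(x-p_i)^2 + y^2}$ of $y(\varphi_{xx}+\varphi_{yy}) - \varphi_y = 0$; the radial case reduces the PDE to $y f''(r) = 0$. Their boundary traces on $\{y=0\}$ are $1,\ x,\ 0$, and $|x-p_i|$ respectively, so the ansatz
\begin{equation*}
    \varphi^\alpha \;=\; a^\alpha \,+\, b^\alpha x \,+\, c^\alpha y^2 \,+\, \sum_{i=1}^d e^\alpha_i\, r_i, \qquad \alpha = 1, 2,
\end{equation*}
has piecewise-linear boundary trace with slope jumps $2 e^\alpha_i$ at $x=p_i$. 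Matching this trace to the prescribed polygon edges determines $b^\alpha$ and the $e^\alpha_i$ in terms of edge directions and vertex positions, while the $d+1$ labels $s_0,\dots,s_d$---each encoding the $y^2$-coefficient, near the corresponding real-axis segment, of the momentum combination that vanishes along that edge---fix the remaining linear combinations among $\{a^\alpha, c^\alpha\}$ and give the candidate solution.

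Second, for any other solution $(\hat\varphi^1,\hat\varphi^2)$ realising the same polygon and labels, the differences $\delta^\alpha := \hat\varphi^\alpha - \varphi^\alpha$ solve the PDE and vanish identically on $\{y=0\}$. Provided one can establish linear $x$-growth and subquadratic $y$-growth for $\delta^\alpha$ from (A), (E) and convexity of $\Sigma^2$, Theorem \ref{ThmImprLiouIntro} forces $\delta^\alpha = C^\alpha y^2$; this yields at most a 2-parameter family $(C^1, C^2)$, which is precisely the general-case statement of Theorem \ref{ThmIntroGeneralPoly}.

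Finally, in the parallel-ray case, let $\vec n = (n^1, n^2)$ be the common inward normal of the two infinite edges and set $\psi := n^1 \varphi^1 + n^2 \varphi^2$. Since $\psi|_{y=0}$ is constant on both $(-\infty, p_1)$ and $(p_d, +\infty)$, both parallel edges impose their label constraints on the same second-$y$-derivative of $\psi$ at each end; adding $(C^1 y^2, C^2 y^2)$ shifts this by $2(n^1 C^1 + n^2 C^2)$, so compatibility with the labels on both parallel ends forces $n^1 C^1 + n^2 C^2 = 0$, collapsing the 2-parameter $(C^1,C^2)$-family to the 1-parameter subspace transverse to $\vec n$. When labels are not specified, each of the $d+1$ labels contributes one additional modulus, raising the count to $d+2$. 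The main obstacle is establishing the subquadratic $y$-growth of $\delta^\alpha$ needed for Theorem \ref{ThmImprLiouIntro}: in the parallel-ray setting the $r_i$ are each asymptotic to $|x|$ at both ends, and one must carefully track the cancellations in $\sum_i e^\alpha_i\, r_i$ that make $\varphi^\alpha$ bounded in the direction normal to the rays, which is precisely where the parallel-ray hypothesis enters the asymptotic analysis.
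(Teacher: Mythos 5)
Your overall architecture (explicit fundamental solutions $1$, $x$, $y^2$, $\sqrt{(x-p_i)^2+y^2}$ for outline matching, then the Improved Liouville theorem applied to the difference) is the same as the paper's, and your growth estimates for $\delta^\alpha$ are fine in this case. But the step that actually distinguishes the parallel-ray case from the general case --- your claim that ``compatibility with the labels on both parallel ends forces $n^1C^1+n^2C^2=0$'' --- rests on a misreading of what the labels are, and as stated it is false. The label $s_i$ is the parameterization speed $\sqrt{(\varphi^1_x)^2+(\varphi^2_x)^2}$ along the edge (equation (\ref{EqnMarkings})), i.e.\ a tangential $x$-derivative on $\{y=0\}$; it is not the $y^2$-coefficient of the normal momentum combination near that edge (that coefficient, $\tfrac12\psi_{yy}(x,0)$, is not even constant along an edge --- for the quarter-plane block it decays like $s_0/(4x)$). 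Since adding $C^\alpha y^2$ leaves the boundary trace and its $x$-derivatives untouched, it changes no label whatsoever. If the labels could detect the $y^2$ additions in the way you propose, the same argument would collapse the $2$-parameter family in the general case of Theorem \ref{ThmIntroGeneralPoly} as well, which is precisely what does \emph{not} happen (the generalized Taub--NUT family (\ref{Eqn4ParameterQuarterPlane}) has fixed labels $s_0,s_1$ for all $C_1,C_2\ge0$).

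The correct mechanism --- and the one the paper uses in Section \ref{SubsecClassPar} --- is a boundedness constraint coming from the shape of the polygon, not from the labels. After an affine normalization putting the two terminal rays parallel to the $\varphi^1$-axis, the polygon lies in a strip $\{0\le\varphi^2\le h\}$, so the true momentum function $\varphi^2$ is bounded; since $y$ ranges over all of $[0,\infty)$ on $\overline{H}{}^2$ (by surjectivity of $z$, Proposition \ref{ThmIntoGlobalZ}), any nonzero summand $C_2y^2$ would make $\varphi^2$ unbounded and push the image out of $\Sigma^2$. Hence $C_2=0$, and only the tangential parameter $C_1$ survives, giving the $1$-parameter family. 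Your linear constraint $n^1C^1+n^2C^2=0$ is the right conclusion, but you need to derive it from the boundedness of $\psi=n^1\varphi^1+n^2\varphi^2$ on the strip, not from label compatibility. With that substitution the rest of your argument, including the count $1+(d+1)=d+2$ for unspecified labels, goes through.
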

\begin{theorem}[Classification when $\Sigma^2$ is the half-plane, {\it cf.} \ref{PropHalfPlaneFlat}] \label{ThmIntroHalfPlanePoly}
	Let $(\Sigma^2,g_\Sigma)$ be a metric polygon satisfying (A)-(F), and assume $\Sigma^2$ is the half-plane.
	Given a label on the bounding line, the metric $g_\Sigma$ is a member of a 3-parameter family of possible metrics.
	
	If no label is specified, $g_\Sigma$ is a member of a $4$-parameter family of possible metrics.
\end{theorem}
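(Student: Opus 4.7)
The plan is to pass the problem from $\Sigma^2$ to the upper half-plane $\overline{H}{}^2$ using Proposition \ref{ThmIntoGlobalZ}, then classify the momentum functions $\varphi^1,\varphi^2$ there via the degenerate-elliptic equation and the Liouville theorems. After an affine change of coordinates in the target $\mathbb{R}^2$, assume $\Sigma^2=\{\varphi^2\ge0\}$ with the single edge $\{\varphi^2=0\}$ corresponding under $z$ to $\{y=0\}$. Both momenta satisfy (\ref{EqnIntroDegEllipt}) on $H^2$, and $\varphi^2\ge0$ with $\varphi^2(x,0)=0$; the Liouville theorem of \cite{Web2} applied to $\varphi^2$ gives $\varphi^2=C_0y^2$ for some $C_0>0$, and the edge label pins $C_0$ down.

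For $\varphi^1$, the $y\to0$ limits of (\ref{EqnPDEInitialStatement}) and of its $y$-derivative force $\varphi^1_y(x,0)=0$ and $\varphi^1_{xx}(x,0)=0$, so $\varphi^1(x,0)=\alpha+\beta x$ is affine with $\beta>0$ (by bijectivity of $z|_{\{y=0\}}$). The main step is to prove that in the interior
\[
\varphi^1(x,y)\;=\;\alpha+\beta x+(C_2+Dx)\,y^2,
\]
for constants $C_2\in\mathbb{R}$ and $D\ge0$. Expanding $\varphi^1=(\alpha+\beta x)+\sum_{k\ge1}a_{2k}(x)y^{2k}$, equation (\ref{EqnPDEInitialStatement}) yields a recursion expressing each $a_{2k+2}$ as a multiple of $a_{2k}''$, so the higher coefficients depend only on $a_2$. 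Strict positivity $\varphi^1_x>0$ on $\overline{H}{}^2$, a consequence of the bijectivity of $z$, should force $a_2$ to be affine: for polynomial $a_2$ of degree $\ge2$, the alternating-sign recursion makes $\varphi^1_x=\beta+\sum_{k\ge1}a_{2k}'(x)y^{2k}$ negative somewhere; for non-polynomial analytic $a_2$, the improved Liouville theorem (Theorem \ref{ThmImprLiouIntro}) applied to a suitable auxiliary function built from $\varphi^1$ rules out exponential and Bessel-type modes, whose growth violates the linear-in-$|x|$ and sub-quadratic-in-$y$ lower bound. The resulting metric in isothermal coordinates is $g_\Sigma=2C_0(\beta+Dy^2)(dx^2+dy^2)$.

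The main obstacle will be this last step: the polynomial part of the argument is a direct calculation, but ruling out transcendental modes in $a_2$ requires combining Theorem \ref{ThmImprLiouIntro} with the convexity and bijectivity data, since the required lower bound on the auxiliary function is delicate. Once the form of $\varphi^1$ is in hand, the parameter count is immediate: the five real parameters $(C_0,\alpha,\beta,C_2,D)$ with $C_0,\beta>0$ and $D\ge0$ describe the classification, and the one-parameter group of translations $\varphi^1\mapsto\varphi^1+c$ preserves the polygon while absorbing $\alpha$, leaving a $4$-parameter family; fixing the edge label fixes $C_0$ and yields the $3$-parameter family.
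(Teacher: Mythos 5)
Your architecture is the right one and your final normal form $\varphi^1=\alpha+\beta x+(C_2+Dx)y^2$, $\varphi^2=C_0y^2$ agrees with the paper's (\ref{EqnsHalfPlaneTransitions2}), as does the parameter count. The treatment of $\varphi^2$ via the unimproved Liouville theorem is exactly the paper's Step 1. But the central step for $\varphi^1$ is left open: you reduce everything to showing that the coefficient $a_2$ in a $y$-power-series expansion is affine, and then say that transcendental modes are excluded by ``the improved Liouville theorem applied to a suitable auxiliary function,'' which you yourself flag as the main obstacle. That auxiliary function is the whole proof, and it is not the series expansion at all. The paper's argument is: since the operator $y\triangle-\partial_y$ has $x$-independent coefficients, $\partial_x\varphi^1$ satisfies the same equation; by the piecewise-linearity of the outline (your own observation that $\varphi^1_{xx}(x,0)=0$), $\varphi^1_x\equiv s_0$ on $\{y=0\}$; and by positive-definiteness of $g_\Sigma=\varphi^1_x\,(dx\otimes dx+dy\otimes dy)$ (from (\ref{EqnMetricConstruction}) with $\varphi^2=\tfrac12y^2$) one has $\varphi^1_x>0$ everywhere. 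Hence $\tilde\varphi=\varphi^1_x-s_0$ vanishes on the boundary, solves the equation, and is bounded below by the \emph{constant} $-s_0$, so Theorem \ref{ThmImprLiouIntro} (with $B=C=0$) gives $\varphi^1_x=s_0+My^2$, $M\ge0$. Integrating in $x$ and solving $yC_{yy}-C_y=0$ for the $y$-only remainder yields the normal form directly.

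Two further cautions. First, your series ansatz $\varphi^1=(\alpha+\beta x)+\sum_{k\ge1}a_{2k}(x)y^{2k}$ presumes even-power analyticity in $y$ up to the degenerate boundary, which is not free for solutions of $y\triangle\varphi-\varphi_y=0$ and is nowhere established; the equation degenerates exactly at $\{y=0\}$, so one cannot invoke interior analytic regularity there. Second, the separated-variable (``exponential and Bessel-type'') modes you propose to exclude do not obviously violate a linear-in-$|x|$, sub-quadratic-in-$y$ lower bound on $\varphi^1$ itself --- the function you need the bound for is $\varphi^1_x$, not $\varphi^1$, and the bound you actually have there is the constant one coming from metric positivity. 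Without making that identification explicit, the exclusion argument does not close.
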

\noindent\begin{figure}[h]
	\centering
	\includegraphics[scale=0.5]{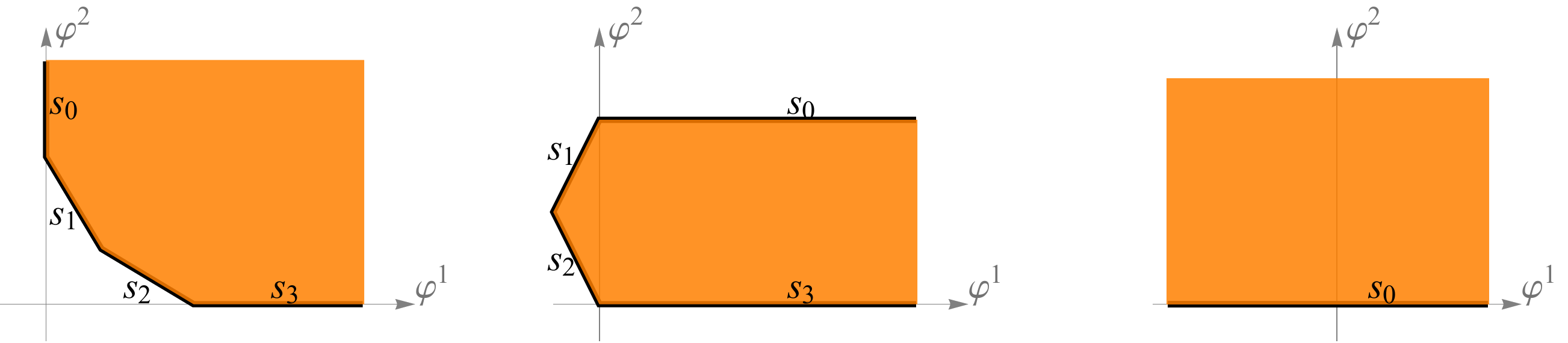}
	\caption{From left to right: the general case (Theorem \ref{ThmIntroGeneralPoly}), the case that $\Sigma^2$ has parallel rays (Theorem \ref{ThmIntroParallelPoly}), and the case that $\Sigma^2$ is the closed half-plane (Theorem \ref{ThmIntroHalfPlanePoly}).}
	\label{SubFigThreeCases}
\end{figure}

{\bf Remark}.
In Sections \ref{SubsecClassGen}, \ref{SubsecClassPar}, and \ref{SubsecClassHP} we produce the promised family of metrics $g_\Sigma$ quite explicitly.
Given a polygon with $d$ vertices and $d+1$ labels $s_0,\dots,s_d$ on its edges, the constructions of Section \ref{SubsecOutlineMatching} first produce a unique pair of comparison moment functions via a ``boundary-matching'' technique (as was done in \cite{AS}).
The values at the boundary matching up, the Liouville theorem states the true moment functions must differ from the comparison functions by at worst a term of the form $Cy^2$.
One such term for each moment variable produces either a 1- or 2-parameter variation for the pair.
From this, the metric $g_\Sigma$ is given by equation (\ref{EqnMetricConstruction}).

\begin{corollary}[Classification of one-ended, ZSC toric K\"ahler 4-manifolds; {\it cf.} Corollary \ref{CorOneEnded}] \label{CorIntroOneEnded}
	Assume $(M^4,J,g,\mathcal{X}^1,\mathcal{X}^2)$ is a scalar-flat toric K\"ahler manifold of finite topology that satisfies either of the asymptotic conditions (A1) or (A2), or otherwise has closed reduction $(\Sigma^2,g_\Sigma)$.
	
	Then $(\Sigma^2,g_\Sigma)$ is either an infinite closed strip or else satisfies conditions (A)-(F), and one of the following holds:
	\begin{itemize}
		\item[{\it{i}})] $(M^4,g)$ is flat,
		\item[{\it{ii}})] $\Sigma^2$ is an infinite closed strip in the $\varphi^1$-$\varphi^2$ plane,
		\item[{\it{iii}})] $(M^4,g)$ is the exceptional half-plane instanton,
		\item[{\it{iv}})] $\Sigma^2$ has parallel rays, and for given boundary values the metric belongs to a 1-parameter family of possibilities,
		\item[{\it{v}})] or $M^4$ is asymptotically spheroidal (never toroidal), and is
		\begin{itemize}
			\item[\textit{a})] Asymptotically locally Euclidean---and for any set of labels there is precisely one such metric---or
			\item[\textit{b})] Asymptotically spheriodal and asymptotically equivalent to a Taub-NUT, chiral Taub-NUT, or exceptional Taub-NUT; after labels are determined, the metric belongs to a 2-parameter family of possibilities.
		\end{itemize}
	\end{itemize}
\end{corollary}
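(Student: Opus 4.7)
The plan is to verify hypotheses (A)-(F) on the reduction, isolate the exceptional disconnected-boundary case, apply the three classification theorems, and finally identify the asymptotic model of $M^4$ in each surviving case.

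Either by assumption or via Proposition \ref{PropKillingDecay} applied to (A1) or (A2), the reduction $(\Sigma^2,g_\Sigma)$ is closed. Conditions (C), (D), (E) are automatic for the reduction of any smooth toric K\"ahler 4-manifold; condition (F) is the synthetic form of zero scalar curvature on $M^4$; and finite topology of $M^4$ provides finitely many edges, so (A) holds as well. Only (B) can fail, and as discussed prior to the statement, this happens exactly when the parent is $\mathbb{S}^2\times\mathbb{H}^2$ and $\Sigma^2$ is an infinite closed strip---giving case \textit{(ii)}.

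Assuming (A)-(F), the polygon $\Sigma^2$ falls into one of the three configurations of Figure \ref{SubFigThreeCases}. If $\Sigma^2$ is the half-plane, Theorem \ref{ThmIntroHalfPlanePoly} furnishes a 3-parameter family at fixed label; the non-flat member is the exceptional half-plane instanton of case \textit{(iii)}, and the degenerate members are flat, contributing to case \textit{(i)}. If $\Sigma^2$ has parallel rays, Theorem \ref{ThmIntroParallelPoly} gives the 1-parameter family of case \textit{(iv)}, with flat members again absorbed into case \textit{(i)}. Otherwise Theorem \ref{ThmIntroGeneralPoly} produces a 2-parameter family at fixed labels.

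It remains to identify the asymptotic behavior on $M^4$ in the general case. Proposition \ref{PropToroidalFlat} (and Corollary \ref{ThmIntroM4CompletePolygon}) shows that an asymptotically toroidal end forces $(M^4,g)$ to be flat, so any nontrivial representative must be asymptotically spheroidal, as claimed in \textit{(v)}. ALE asymptotics impose additional decay on the volumetric normal function at the polygon's end, cutting the 2-parameter freedom down to a single metric per choice of labels (case \textit{(v.a)}); the remaining spheroidal ends, which correspond to Taub-NUT, chiral Taub-NUT, or exceptional Taub-NUT model geometries, sweep out the full 2-parameter family of case \textit{(v.b)}. The main obstacle is this last identification: matching each 2-parameter family constructed via the boundary-matching technique of Section \ref{SubsecOutlineMatching} to the corresponding asymptotic model of $M^4$, and verifying that the ALE condition uniquely selects its representative while the Taub-NUT variants each retain the full 2-parameter family.
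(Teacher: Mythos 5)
Your overall architecture matches the paper's: verify (A)--(F) via Proposition \ref{PropKillingDecay}, peel off the strip as the unique failure of (B), and then run the three classification theorems case by case. The parameter counts in cases (\textit{iii}), (\textit{iv}), and (\textit{v}) are correctly sourced. However, the step you yourself flag as "the main obstacle" is a genuine gap, and it is exactly the content the corollary's case (\textit{v}) requires. Two things are missing. First, you infer "asymptotically spheroidal" only negatively, by ruling out the toroidal case via Proposition \ref{PropToroidalFlat}; but "not toroidal" does not by itself give the spheroidal structure. The paper establishes it positively: in the general case $\Sigma^2$ is asymptotically a wedge with non-parallel terminal rays, so no single Killing field vanishes on both rays, and the level sets of a Busemann function are unions of two solid tori, i.e.\ lens spaces --- which is precisely Definition 1.

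Second, the identification of the asymptotic models in (\textit{v.a}) and (\textit{v.b}) is not a matter of "additional decay on the volumetric normal function"; it comes directly from the explicit boundary-matched momentum functions of Lemma \ref{LemmaOutlineLemma}. As $x^2+y^2\to\infty$, every intermediate interpolation summand of the form $\frac{m_{i+1}-m_i}{2(x_{i+1}-x_i)}\bigl(\cdots\bigr)$ tends to a constant, so only the first and last terms plus the Liouville corrections $C_1y^2$, $C_2y^2$ survive; the resulting asymptotic pair is, up to constants, exactly the quarter-plane family (\ref{EqnMomTaubNut}). The ALE case is then the single member with $C_1=C_2=0$ (one metric per label set), and any nonzero $(C_1,C_2)$ produces an end asymptotic to a Taub-NUT, chiral Taub-NUT, or exceptional Taub-NUT, sweeping out the full 2-parameter family. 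Without this computation the dichotomy between (\textit{v.a}) and (\textit{v.b}), and in particular the uniqueness claim in (\textit{v.a}), is unproven.
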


\subsection{Interpretation of the boundary conditions} \label{SubsecBoundaryConds}

The ``boundary conditions'' or ``labels'' come in the form of a single positive number given to each segment or ray of a closed polygon.
Because the fields $\mathcal{X}^1$, $\mathcal{X}^2$ on $M^4$ need not be {\it standard} generators of a torus action, the Arnold-Liouville reduction might not produce the same results as the momentum construction of symplectic geometry.
The polygon need not be Delzant, and so the polygon itself does not suffice to reconstruct the parent manifold.
The labels fix this problem.
\noindent\begin{figure}[h!]
	\includegraphics[scale={0.4}]{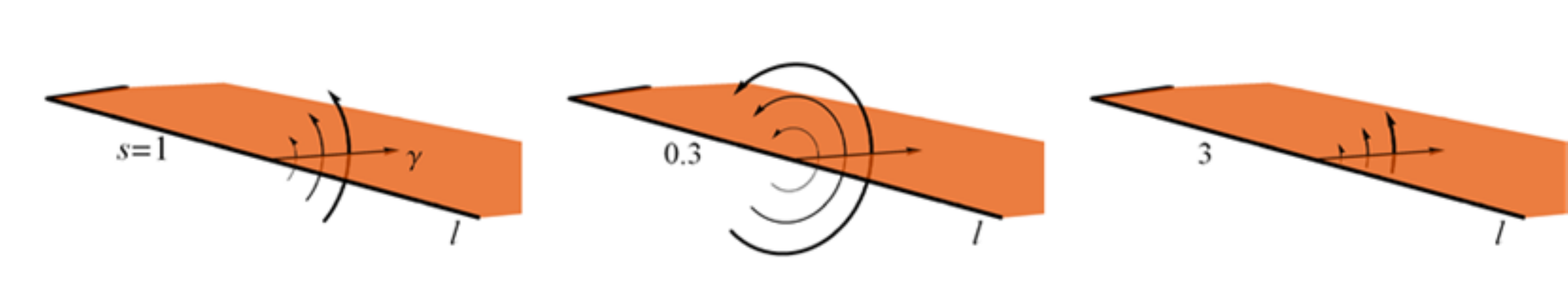}
	\caption{Geometric interpretation of the boundary conditions.}
	\label{FigEdgePar}
\end{figure}

To explain how they work, let $l_i$ be a boundary edge or ray with label $s_i$.
The edge $l_i\in\Sigma^2$ represents an embedded totally-geodesic submanifold $L_i^2\subset{}M^4$ on which $\{\mathcal{X}^1,\mathcal{X}^2\}$ spans a 1-dimensional instead of a 2-dimensional distribution.
Thus some linear combination $\mathcal{X}=c_1\mathcal{X}^1+c_2\mathcal{X}^2$ is a Killing field with zeros along $L_i^2$.
Let $\gamma(t)$ be a unit-speed geodesic perpendicular to this submanifold, and consider the vector field $\mathcal{X}(t)\triangleq\mathcal{X}_{\gamma(t)}$ along this path.
The label $s_i$ is precisely the quantity
\begin{eqnarray}
	s_i\;=\;\left(\left.{\frac{d}{dt}}^+\right|_{t=0}|\mathcal{X}(t)|\right)^{-1}.
	\label{EqnInterpretationOfMarkings}
\end{eqnarray}
Compare with Section \ref{SecAnalytic}, particularly equation (\ref{EqnMarkings}), where we give our second (equivalent but slightly more technical) interpretation of the labels.
The fact that $s_i$ is a constant along $l_i$ is simply the fact that $\mathcal{X}$ generates a circle action, and this circle must close off in the same way everywhere along the edge $l_i$.
Letting $\mathcal{X}$ be the Killing field that vanishes along $l_i$, after choosing a transversal the action of $\mathcal{X}$ creates a variable $\theta$ on $M^4$ which closes off at $\theta=\theta_i$, so $\theta\in[0,\theta_i)$.
Then the cone angle along $l_i$ is $\theta_i/s_i$---this might produce an orbifold or conifold along the edge $l_i$; see Figure \ref{FigOrbifolds}, and compare with Example \ref{SubsubsecOk} the discussion in Section \ref{SecAnalytic}.

When reconstructing $M^4$ from $\Sigma^2$, the fields $\mathcal{X}^1$, $\mathcal{X}^2$ can be considered images of generators of the lie algebra $\mathfrak{g}$ of an action torus $G$; choosing $G$, if it is not already known, produces coordinate ranges $[0,\theta_i)$ for the two angle variables.
Such choices might produce orbifold or conifold points along the edges.
One attempts to find a quotient of the torus by some discrete subgroup that simultaneously resolves all orbifolds.
If such a choice is possible, it is uniquely determined by the values $s_i$.
See Example \ref{SubsubsecOk} to see this done explicitly.

These labels are similar, but not identical, to the Lerman-Tolman labels \cite{LeTo} on symplectic orbifolds.
The difference is that our polygons need not be Delzant, and need not come from any specific torus action, so our labels have a somewhat different interpretation, for the reason that we use action fields rather than action tori.
Our labels are positive real numbers rather than positive integers, and can accommodate conifolds in addition to orbifolds and manifolds.

\subsection{The half-plane and quarter-plane metrics} \label{SubsecHPAndTaubNUT}

The half-plane and quarter-plane cases are exceptional because the polygons $\Sigma^2$ themselves are scale-invariant in the $\varphi^1$-$\varphi^2$ plane.
This reduces the number of degrees of freedom the metrics may take in these cases.
Given any polygon, we may, if we wish, perform a constant-coefficient linear recombination of the fields $\mathcal{X}^1$, $\mathcal{X}^2$ without changing anything essential.
This performs an affine mapping of the $\varphi^1$-$\varphi^2$ plane, alters how the polygon sits within the plane, and alters the labels.

Alternatively, one may create the volumetric coordinates $x$, $y$ first, and then make an affine recombination of $\varphi^1$, $\varphi^2$ without changing $x$ and $y$.
This creates a homothetic transformation of the metric $g_\Sigma$.
This is due to the formula (\ref{EqnMetricConstruction}) which expresses $g_\Sigma$ in terms of the transition between the two coordinate systems:
\begin{eqnarray}
	g_\Sigma\;=\;y^{-1}det(A)\left(dx\otimes{}dx\,+\,dy\otimes{}dy\right) \label{EqnGIntoExpression}
\end{eqnarray}
where $A$ is the coordinate transition matrix $A^i_j=\frac{\partial\varphi^i}{\partial{}x^j}$.
As in Figure \ref{FigTwoTransforms}, we can map any ``wedge'' to the first quadrant, and any half-plane to the upper half-plane.
\noindent\begin{figure}[h!]
	\includegraphics[scale={0.45}]{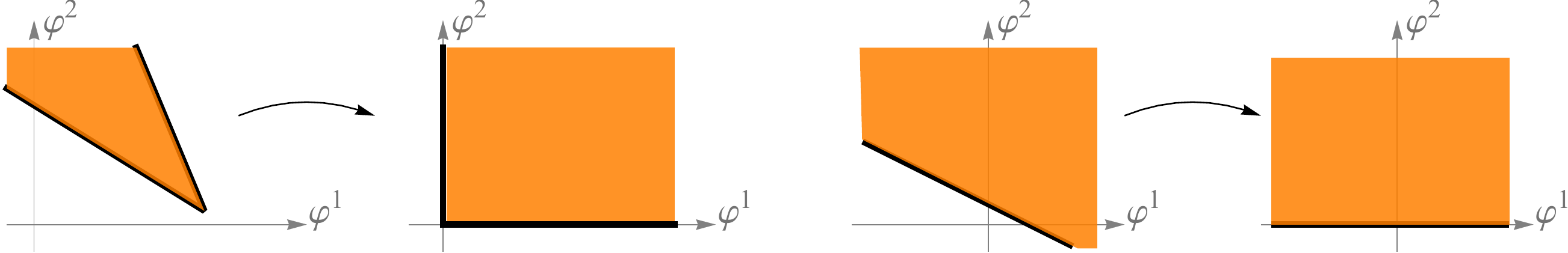}
	\caption{Affine transformations bringing wedges and half-planes into the quarter-plane and the upper half-plane.}
	\label{FigTwoTransforms}
\end{figure}

When $\Sigma^2$ is the half-plane, our classification states the momentum functions are
\begin{eqnarray}
	\varphi^1=C_1x+M_1xy^2+C_3y^2, \quad
	\varphi^2\;=\;C_2y^2.
\end{eqnarray} 
This is (\ref{EqnsHalfPlaneTransitions2}), and we see the specific 4-parameter family of variation the pair $\varphi^1,\varphi^2$ may take.
Leaving $x$, $y$ unchanged and making the linear transformation
\begin{eqnarray}
	\left(\begin{array}{c}
		\varphi^1 \\ \varphi^2
	\end{array}\right)
	\;\longmapsto\;
	\left(\begin{array}{cc}
		1/C_1 & -C_3/C_1C_2 \\
		0 & 1/2C_2
	\end{array}
	\right)
	\left(\begin{array}{c}
		\varphi^1 \\ \varphi^2
	\end{array}\right) \label{EqnHPCoordChange}
\end{eqnarray}
we obtain new functions $\varphi^1=x+Mxy^2$, $\varphi^2=\frac12y^2$ where $M=M_1/2C_1$, and only a single parameter $M$ remains.
However $M$ another homothetic parameter, as can be seen by making the transformation $x\mapsto{}x/M$ and $y\mapsto{}y/\sqrt{M}$.
Now the moment functions are $\varphi^1=\frac{1}{M}\left(x+xy^2\right)$ and $\varphi^2=\frac{1}{2M}y^2$, so multiplying these functions by $M$ while fixing $x$ and $y$---a homothetic transformation---produces the functions
\begin{eqnarray}
	\varphi^1=x+xy^2, \quad
	\varphi^2=\frac12y^2 \label{EqnHPStandard}
\end{eqnarray}
and therefore (\ref{EqnHPStandard}) is, up to homothety, the {\it only} possibility for moment variables when the polygon is a half-plane.

Next we consider polygons with one vertex.
After a possible affine transformation we can assume the polygon is the quarter-plane (see Figure \ref{FigTwoTransforms}).
Let $s_0,s_1>0$ be its two boundary labels.
The classification from Section \ref{SubsecClassGen} shows the $\varphi^1$-$\varphi^2$ pair must be among the 2-parameter family of variations
\begin{eqnarray}
	\begin{aligned}
	\varphi^1&=\frac{s_0}{2}\left(-x+\sqrt{x^2+y^2}\right)+C_1y^2 \\
	\varphi^2&=\frac{s_1}{2}\left(x+\sqrt{x^2+y^2}\right)+C_2y^2
	\end{aligned} \label{Eqn4ParameterQuarterPlane}
\end{eqnarray}
where $C_1,C_2\ge0$ are arbitrary constants.
The transformation $\varphi^1\mapsto\frac{\sqrt{2}}{s_0}\varphi^1$, $\varphi^2\mapsto\frac{\sqrt{2}}{s_1}\varphi^2$ produces moment functions
\begin{eqnarray}
	\begin{aligned}
	\varphi^1&=\frac{1}{\sqrt{2}}\left(-x+\sqrt{x^2+y^2}\right)
	+\frac{M(1+k)}{\sqrt{2}}y^2 \\
	\varphi^2&=\frac{1}{\sqrt{2}}\left(x+\sqrt{x^2+y^2}\right)
	+\frac{M(1-k)}{\sqrt{2}}y^2
	\end{aligned} \label{Eqn2ParameterQuarterPlane}
\end{eqnarray}
where the constants $M=\frac14\left(\frac{C_1}{s_0}+\frac{C_2}{s_1}\right)$, $k=(\frac{C_1}{s_0}-\frac{C_2}{s_1})/(\frac{C_1}{s_0}+\frac{C_2}{s_1})$ obey $M\ge0$ and $k\in[-1,1]$.
We have reduced the 4-parameter family of (\ref{Eqn4ParameterQuarterPlane}) to the 2-parameter family of (\ref{Eqn2ParameterQuarterPlane}).
But we can reduce this even further, as again $M$ is a homothetic parameter.
To see this, make the transformation $x\mapsto{}x/M$, $y\mapsto{}y/M$ and then multiply through by $M$ to obtain the moment functions
\begin{eqnarray}
	\begin{aligned}
	\varphi^1&=\frac{1}{\sqrt{2}}\left(-x+\sqrt{x^2+y^2}\right)
	+\frac{1}{\sqrt{2}}(1-k)y^2 \\
	\varphi^2&=\frac{1}{\sqrt{2}}\left(x+\sqrt{x^2+y^2}\right)
	+\frac{1}{\sqrt{2}}(1+k)y^2
	\end{aligned} \label{EqnMomTaubNut}
\end{eqnarray}
and we have reduced the 4 degrees of freedom to just one, that of a single constant $k$.
One might wonder if $k\in[-1,1]$ is also a homothetic variable, or if it is something different.
Indeed it is different, as formulas (\ref{EqnMetricConstruction}) and (\ref{EqnCompKSigma}) give
\begin{eqnarray}
	\begin{aligned}
	&g_\Sigma\;=\;\frac{1+2\left(kx+\sqrt{x^2+y^2}\right)}{\sqrt{x^2+y^2}}\big(dx\otimes{}dx+dy\otimes{}dy\big), \\
	&K_\Sigma
	\;=\;\frac{-1+2k\left(x+k\sqrt{x^2+y^2} \right)}{\left(1+2\left(kx+\sqrt{x^2+y^2}\right)\right)^3} \label{EqnFirstGKtaubnut}
	\end{aligned}
\end{eqnarray}
where $K_\Sigma$ is the polygon's Guassian curvature.
Now it is clear that changing $k$ makes qualitative changes to the metric.
For instance when $k=0$ we see cubic curvature falloff (at least as measured in coordinates) whereas when $k=-1$ we see a ray along which there is no curvature falloff whatsoever (the ray along the positive $x$-axis).
The number $k\in[-1,1]$ is called the instanton's {\it chirality number}, and there are three critical values: when $k=0$ the metric is the standard Taub-NUT metric, which is Ricci-flat.
When $k=1$ or $k=-1$ the metric is called the {\it exceptional Taub-NUT} metric---these two cases are isometric by exchanging the $\varphi^1$ and $\varphi^2$ variables.
In all other cases where the chirality number is in $(-1,0)\cup(0,1)$, we have the {\it generalized Taub-NUT} metrics of Donaldson's \cite{Do2}.
These are scalar-flat metrics with cubic volume growth and quadratic curvature decay.
They are not Ricci-flat.
The generalized Taub-NUTs, the exceptional Taub-NUT, and the exceptional half-plane instantons were studied in detail in \cite{Web3}.

These considerations establish the following corollary.
\begin{corollary}
	Assume $(M^4,J,g,\mathcal{X}^1,\mathcal{X}^2)$ is a scalar-flat toric 4-manifold satisfying either asymptotic condition (A1) or (A2), or otherwise has closed reduction $\Sigma^2$.
	Assume either that $\mathcal{X}^1$ and $\mathcal{X}^2$ have a single common zero (meaning its polygon has a single vertex), or no common zeros.
	Then, up to affine transformation of $\mathcal{X}^1$, $\mathcal{X}^2$ and a scale factor, $M^4$ is either
	\begin{itemize}
		\item[{\it{i}})] flat,
		\item[{\it{ii}})] the Taub-NUT instanton,
		\item[{\it{iii}})] one of the generalized Taub-NUT instantons,
		\item[{\it{iv}})] the exceptional Taub-NUT instanton,
		\item[{\it{v}})] or is the exceptional half-plane instanton.
	\end{itemize}
\end{corollary}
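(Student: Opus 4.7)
The plan is to extract this corollary directly from Corollary \ref{CorIntroOneEnded} together with the explicit normal-form calculations already carried out in Section \ref{SubsecHPAndTaubNUT}, so that essentially no new analysis is required.

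First, I translate the hypothesis that $\mathcal{X}^1$ and $\mathcal{X}^2$ have at most one common zero into a statement about the reduction: by condition (E), common zeros of the action fields correspond exactly to polygon vertices, so $\Sigma^2$ has at most one vertex. A convex polygon satisfying (A)--(F) with no vertex must be the half-plane---a strip is excluded by (B); and in any case the strip corresponds to $\mathbb{S}^2\times\mathbb{H}^2$, which fails both (A1) and (A2) since its curvature is bounded and non-decaying, and its hyperbolic action field neither decays nor grows in the right way. A convex polygon with exactly one vertex must be a wedge, because any other configuration would either introduce a second vertex (so that an edge is bounded) or force collinear boundary rays (which degenerates to the half-plane). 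A linear recombination of $\mathcal{X}^1$ and $\mathcal{X}^2$ corresponds to an affine map of the $\varphi^1$-$\varphi^2$ plane, so after such a recombination, per Figure \ref{FigTwoTransforms}, we may assume $\Sigma^2$ is either the upper half-plane or the closed first quadrant.

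In the half-plane case, Theorem \ref{ThmIntroHalfPlanePoly} together with the reduction in Section \ref{SubsecHPAndTaubNUT} puts the moment functions, after a further affine recombination and a homothety, into the normal form $\varphi^1 = x + Mxy^2$, $\varphi^2 = \tfrac{1}{2}y^2$ with $M\ge 0$. Substituting into (\ref{EqnGIntoExpression}) gives $g_\Sigma=dx^2+dy^2$ exactly when $M=0$, so this case yields either a flat $M^4$ (case (i)) or, when $M>0$, the exceptional half-plane instanton (case (v)) by definition. In the one-vertex case, Theorem \ref{ThmIntroGeneralPoly} with $d=1$ together with the construction of Section \ref{SubsecHPAndTaubNUT} yields the two-parameter family (\ref{Eqn4ParameterQuarterPlane}), which modulo a further affine recombination and a homothety absorbing the parameter $M$ reduces to (\ref{EqnMomTaubNut}), indexed solely by the chirality number $k\in[-1,1]$. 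The degenerate limit $M=0$ (i.e., $C_1=C_2=0$) delivers, by a direct computation of (\ref{EqnGIntoExpression}), a flat cone metric on the quarter-plane, so again $M^4$ is flat (case (i)); $k=0$ with $M>0$ is the Taub-NUT instanton (case (ii)); $k\in(-1,0)\cup(0,1)$ is the generalized Taub-NUT family (case (iii)); and $k=\pm 1$ is the exceptional Taub-NUT (case (iv)), the two signs yielding isometric metrics via the swap of $\varphi^1$ and $\varphi^2$.

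I expect no serious obstacle: the analytic heart of the argument---the Liouville theorem, bijectivity of the volumetric coordinate, and the normal-form reductions---has already been carried out upstream, and what remains here is essentially bookkeeping across the two polygon shapes. The only pieces of independent verification are that the strip case is incompatible with the asymptotic hypotheses and that the $M=0$ degenerations of both normal forms really do produce a flat parent $M^4$; both are short, direct computations using the metric formula (\ref{EqnGIntoExpression}).
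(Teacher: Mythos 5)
Your route is the paper's route: the corollary is stated immediately after the normal-form computations of \S\ref{SubsecHPAndTaubNUT} and is proved there by exactly the bookkeeping you describe---reduce to the half-plane or the quarter-plane by an affine recombination, invoke Proposition \ref{PropHalfPlaneFlat} (resp.\ Theorem \ref{ThmGeneralCase} with $d=1$), absorb the homothety parameter $M$, and read off cases (\textit{i})--(\textit{v}) according to whether $M=0$ and, in the quarter-plane case, the value of the chirality number $k$. One small omission in your enumeration of vertex-free reductions: condition (B) also permits $\partial\Sigma^2=\varnothing$, i.e.\ $\Sigma^2=\mathbb{R}^2$, which is exactly the situation where the fields are nowhere collinear; this is disposed of as flat by Corollary \ref{ThmIntroM4CompletePolygon} and should be listed alongside the half-plane.

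The one step where you go beyond the paper---excluding the strip---does not work as written. First, $\mathbb{S}^2\times\mathbb{H}^2$ with the hyperbolic field \emph{does} satisfy (A1): the norm of a hyperbolic Killing field on $\mathbb{H}^2$ attains a positive minimum on its axis and grows off the axis, so $\sqrt{|\mathcal{X}^1|^2+|\mathcal{X}^2|^2}$ is bounded below by a positive constant and certainly exceeds $C_1r^{-1+\epsilon}$ (only (A2) genuinely fails, since the curvature does not decay). Second, and decisively, the hypothesis of the corollary is a disjunction whose last clause is ``or otherwise has closed reduction $\Sigma^2$,'' and the strip \emph{is} closed, so no asymptotic argument can rule it out. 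Since the fields have no common zero in the strip case, the strip is a genuine logical possibility under the stated hypotheses; the paper's own considerations also leave it unaddressed (it appears as case (\textit{ii}) of Corollary \ref{CorIntroOneEnded} but is silently absent here, the underlying reason being that Proposition \ref{ThmIntoGlobalZ} fails when condition (B) fails). To make your proof complete you must either add connectedness of $\partial\Sigma^2$ (condition (B)) to the hypotheses or carry the strip along as an explicit exceptional case; the proposed exclusion via (A1)/(A2) is not a valid substitute.
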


\subsection{Organization}

In Section \ref{SectionReduction} we recall the Arnold-Liouville reduction and the basics of K\"ahler toric 4-manifolds---this is mostly standard theory but there is new material in Section \ref{SubSecConfChangeOfMetric}.
Section \ref{SectionRTwoCase} proves Theorem \ref{ThmIntroCompletePolygon}.

Sections \ref{SecGlobalHarmonicCoordBehavior} and \ref{SecAnalytic} are the heart of the paper.
Section \ref{SecGlobalHarmonicCoordBehavior} proves the volumetric normal function $z$ is an unramified, global analytic coordinate.
In Section \ref{SecAnalytic} we use the Liouville theorem to find that, after a boundary-matching procedure, each momentum function is determined up to (at most) a one-parameter variation.

In Section \ref{SecAsymptotics} we consider the asymptotic conditions (A1) and (A2), and prove that metrics which meet either criteria are captured within the classification of this paper.
We show asymptotically toroidal metrics are flat.
In \S\ref{SubSecInsufficiency} we show why the ALE-ALF-ALG-ALH schema is inadequate for scalar-flat instantons.

Section \ref{SecExamples} details some pathological and non-pathological examples that illustrate the results of this paper.
We show the details of the ZSC LeBrun metrics \cite{LebOK} on the total spaces of the $O(-k)$ bundles over $\mathbb{P}^1$, and compute the labels explicitly.

{\bf Remark.} To expand this work to cases where $\Sigma^2$ need not be closed, one would have to contend with the pathologies of \ref{SubsecPathExamples} and with non-polygons.
We conclude with two conjectures.

{\bf Conjecture 1.}
If $(M^4,J,g,\mathcal{X}^1,\mathcal{X}^2)$ is scalar-flat, then $\Sigma^2$ includes its two terminal rays, and its closure in the $(\varphi^1,\varphi^2)$-plane is a polygon.

{\bf Conjecture 2.}
Assume $(M^4,J,g,\mathcal{X}^1,\mathcal{X}^2)$ is scalar-flat, and assume its reduction $\Sigma^2$ is not the strip.
After assigning boundary values, including ``$\infty$'' to missing segments, then $g$ is defined up to at most a 2-parameter family of possibilities.

\section{The K\"ahler Reduction} \label{SectionReduction}

The relationship between a toric K\"ahler instanton $(M^4,J,\omega,\mathcal{X}{}^1,\mathcal{X}{}^2)$ and its metric reduction $(\Sigma^2,g_\Sigma)$ has been developed by a number of authors; see for instance \cite{G} \cite{Ab1} \cite{Do1} \cite{Do2} \cite{AS} \cite{CDG} and references therein.
This section is for setting notation and reader convenience, as we will cite this material frequently.
Only Section \ref{SubSecConfChangeOfMetric} contains anything new.

We indicate this section's milestones.
In \S\ref{SecFundamentals} we perform the Arnold-Liouville reduction \cite{Ar}, and relate the symplectic coordinates $(\varphi^1,\varphi^2,\theta_1,\theta_2)$ to the instanton's complex-analytic coordinates $(z_1,z_2)$.
The holomorphic volume form in $(z_1,z_2)$ coordinates form is a multiple of the parallelotope volume, which is
\begin{eqnarray}
	\mathcal{V}
	\;\triangleq\;|\mathcal{X}{}^1|^2|\mathcal{X}{}^2|^2
	-\left<\mathcal{X}{}^1,\mathcal{X}{}^2\right>^2
	\;=\;|\nabla\varphi^1|^2|\nabla\varphi^2|^2
	-\left<\nabla\varphi^1,\nabla\varphi^2\right>^2. \label{EqnDefoOfV}
\end{eqnarray}
In \S\ref{SubSectionReductionToPolytope} we show in the inherited metric $g_\Sigma$ that $\triangle_\Sigma\mathcal{V}^{\frac12}+\frac12s\mathcal{V}^{\frac12}=0$ (this is the Trudinger-Wang reduction \cite{TW} and can also be considered a version of the Abreu equation, equation (10) of \cite{Ab1}).
When $M^4$ is scalar-flat, $\sqrt{\mathcal{V}}$ is harmonic on $(\Sigma^2,g_\Sigma)$, so we create the volumetric normal coordinates by setting $y=\sqrt{\mathcal{V}}$ and letting $x$ be its harmonic conjugate.
In \S\ref{SubSectionReductionToPolytope} we create degenerate-elliptic equations on $\Sigma^2$ for the moment functions $\varphi^1$ and $\varphi^2$.
This completes the loop: the harmonic functions $x$, $y$ satisfy an elliptic condition in $\varphi^1$ and $\varphi^2$, and the momentum functions $\varphi^1$, $\varphi^2$ satisfy a degenerate-elliptic equation in $x$ and $y$.

In \S\ref{SubSectionReconstructionOfTheMetric} we show how to reconstruct the metrics $g_\Sigma$ and $g$ simply by knowing $\varphi^1$ and $\varphi^2$ as functions of $x$ and $y$.
We compute the Gaussian curvature $K_\Sigma$ of $(\Sigma^2,g_\Sigma)$.
Lastly in \S\ref{SubSecConfChangeOfMetric} we calculate $K_\Sigma$ a second way, and perform a conformal change of metric to $\widetilde{g}{}_\Sigma=\mathcal{V}^{\frac12}g_\Sigma$ and show the Gaussian curvature of $\tilde{g}_\Sigma$ is non-negative.

\subsection{Fundamentals} \label{SecFundamentals}

We have a simply connected K\"ahler 4-manifold $(M^4,J,\omega)$ with commuting symplectomorphic Killing fields $\mathcal{X}^1$ and $\mathcal{X}^2$.
Because $\mathcal{L}_{\mathcal{X}^i}\omega=0$ we have $di_{\mathcal{X}^i}\omega=0$, so there are functions $\varphi^i$ with $d\varphi^i=\omega(\mathcal{X}^i,\cdot)$, traditionally called {\it momentum variables} or {\it action variables}.
To complete the coordinate system, two additional functions $\theta_1$, $\theta_2$, called {\it cyclic variables} or {\it angle variables}, are defined by taking a transversal to the $\{\mathcal{X}^1,\mathcal{X}^2\}$ distribution and then pushing the natural $\mathbb{R}^2$ variables forward along the action.
The values of $\theta_1$, $\theta_2$ are not canonical due to the choice of a transversal, but their fields $\frac{\partial}{\partial\theta_1}$, $\frac{\partial}{\partial\theta_2}$ are canonical, and are equal to $\mathcal{X}^1$, $\mathcal{X}^2$, respectively.
This construction yields the {\it action-angle} system $\{\varphi^1,\varphi^2,\theta_1,\theta_2\}$; see \cite{Ar}.
The coordinate fields are $\frac{\partial}{\partial\varphi^i}=G_{ij}\nabla\varphi^i$ and $\frac{\partial}{\partial\theta_i}=\mathcal{X}^i$, where $G^{ij}=\left<\nabla\varphi^i,\nabla\varphi^j\right>=\big<\frac{\partial}{\partial\theta_i},\frac{\partial}{\partial\theta_j}\big>$.
In the ordered frame $ \frac{\partial}{\partial\varphi^1},\frac{\partial}{\partial\varphi^2},\frac{\partial}{\partial\theta_1},\frac{\partial}{\partial\theta_2}$ we have metric, complex structure, and symplectic form
\begin{eqnarray}
\quad
	g\;=\;\left(\begin{array}{c|c}
		G_{ij} & 0 \\
		\hline
		0 & G^{ij}
		\end{array}\right), 
	\;
	J\;=\;\left(\begin{array}{c|c}
		0 & -G^{ij} \\
		\hline
		G_{ij} & 0
	\end{array}\right), 
	\;
	\omega\;=\;\left(\begin{array}{c|c}
		0 & -Id \\
		\hline
		Id\; & 0
	\end{array}\right). \label{EqnsGJOmegaM}
\end{eqnarray}

{\bf Remark}.
Compare (\ref{EqnsGJOmegaM}) to (4.8) of \cite{G} or (2.2), (2.3) of \cite{Ab2}.
A common construction shows $G_{ij}=\frac{\partial^2{\bf{u}}}{\partial\varphi^i\partial\varphi^j}$ for a function ${\bf{u}}$ called the symplectic potential, but this is less useful at present.
In our formulation, it may be objected that expressing the metric in terms of inner products is redundant.
But the specific forms which $g$, $J$, and $\omega$ take will be important below.

\begin{lemma}[Symplectic and holomorphic coordinates on $M^4$] \label{LemmaHoloSymplCoordRelation}
	The angle coordinates are pluriharmonic, meaning $dJd\theta_i=0$.
	These lead to local holomorphic coordinates $(z_1,z_2):M^4\rightarrow\mathbb{C}^2$ of the form
	\begin{eqnarray*}
		\begin{aligned}
			z_1\;=\;f_1(\varphi^1,\,\varphi^2)\,+\,\sqrt{-1}\,\theta_1, \quad
			z_2\;=\;f_2(\varphi^1,\,\varphi^2)\,+\,\sqrt{-1}\,\theta_2
		\end{aligned}
	\end{eqnarray*}
	where $f_1$, $f_2$ are functions with $df_i=Jd\theta_i$.
\end{lemma}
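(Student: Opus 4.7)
The strategy is to construct the holomorphic coordinates first, from a pair of commuting $(1,0)$ vector fields, and to read off pluriharmonicity as an immediate consequence of the existence of $f_i$ with $df_i = Jd\theta_i$.

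First I would observe that each $\mathcal{X}^i$ preserves $J$. Indeed $\mathcal{X}^i$ preserves both $g$ (Killing) and $\omega$ (symplectomorphic), and $J$ is uniquely recovered from these via $\omega(\,\cdot\,,\,\cdot\,) = g(J\,\cdot\,,\,\cdot\,)$, so $\mathcal{X}^i$ preserves $J$ as well and is therefore a real holomorphic Killing field. Reading off the matrix form (\ref{EqnsGJOmegaM}) gives the K\"ahler Hamiltonian identity $J\mathcal{X}^i = \nabla\varphi^i$.

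Next I form the complex $(1,0)$ vector fields $Z^i \triangleq \mathcal{X}^i - \sqrt{-1}\,J\mathcal{X}^i$. Each is holomorphic because $\mathcal{X}^i$ is real holomorphic, and I claim $[Z^1, Z^2] = 0$: expanding the complex bracket yields four real brackets, handled term by term. $[\mathcal{X}^1, \mathcal{X}^2] = 0$ by hypothesis; $[\mathcal{X}^i, J\mathcal{X}^j] = J[\mathcal{X}^i, \mathcal{X}^j] = 0$ because $\mathcal{X}^i$ preserves $J$; and $[J\mathcal{X}^1, J\mathcal{X}^2] = 0$ follows from the vanishing of the Nijenhuis tensor of $J$ combined with the two previous identities. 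Since $Z^1, Z^2$ are linearly independent, commuting, holomorphic vector fields on the complex $2$-manifold $M^4$, the holomorphic straightening theorem produces local holomorphic coordinates $w^1, w^2$ with each $Z^i$ a constant rescaling of $\partial/\partial w^i$.

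After that rescaling, set $z_i \triangleq 2\sqrt{-1}\,w^i$ and decompose $z_i = f_i + \sqrt{-1}\,\theta'_i$ into real and imaginary parts. The Cauchy-Riemann condition $J\,dz_i = \sqrt{-1}\,dz_i$ is equivalent to $df_i = J\,d\theta'_i$, while pairing $dz_i$ with $Z^j$ gives $d\theta'_i(\mathcal{X}^j) = \delta^j_i$ and $d\theta'_i(J\mathcal{X}^j) = 0$. These are precisely the defining properties of the author's angle coordinate $\theta_i$, so $\theta'_i - \theta_i$ has zero differential on a spanning set of vector fields and is therefore locally constant; absorbing the constant into $f_i$ identifies $\theta'_i$ with $\theta_i$. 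The function $f_i$ is then invariant under the $\mathcal{X}^j$-flows (since $\mathcal{X}^j(f_i) = df_i(\mathcal{X}^j) = d\theta_i(J\mathcal{X}^j) = 0$), hence depends only on $(\varphi^1, \varphi^2)$; the relation $df_i = J\,d\theta_i$ holds by construction; and pluriharmonicity $dJd\theta_i = d(df_i) = 0$ is automatic.

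The only bookkeeping point I expect to require care is matching the $\theta'_i$ extracted from the holomorphic coordinates with the author's $\theta_i$ (which depends on a transversal); as just noted, the discrepancy is only an additive constant per variable, so this is a normalization rather than an essential obstacle.
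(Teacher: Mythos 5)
Your proof is correct, but it takes a genuinely different route from the paper's. The paper's argument is a two-line computation: $L_{\mathcal{X}^i}J=0$ is equivalent to $dJd\theta_i=0$, which gives local primitives $f_i$ of the closed form $Jd\theta_i$, and then one checks $\bar\partial z_i=0$ directly from $\bar\partial=\frac12(d+\sqrt{-1}Jd)$. You instead build the coordinates structurally: form the commuting holomorphic $(1,0)$ fields $Z^i=\mathcal{X}^i-\sqrt{-1}J\mathcal{X}^i$ (the Nijenhuis computation for $[J\mathcal{X}^1,J\mathcal{X}^2]=0$ is right), invoke holomorphic straightening, and only afterward read off the form $z_i=f_i+\sqrt{-1}\theta_i$ and pluriharmonicity. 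Your version invokes heavier machinery (holomorphic Frobenius) but makes the origin of the coordinates transparent and delivers $df_i=Jd\theta_i$ and $dJd\theta_i=0$ as corollaries rather than inputs; the paper's version is shorter and needs only the Lie-derivative identity plus a pointwise $\bar\partial$ check. Two small bookkeeping points in yours: the identification $d\theta_i(J\mathcal{X}^j)=0$ presumes the transversal defining the angle variables is tangent to $\mathrm{span}\{\nabla\varphi^1,\nabla\varphi^2\}$ (this is implicit in the paper's formula $\frac{\partial}{\partial\varphi^i}=G_{ij}\nabla\varphi^j$, and is available since $[\nabla\varphi^1,\nabla\varphi^2]=0$ makes that distribution integrable); and the locally constant discrepancy $\theta'_i-\theta_i$ sits in the imaginary part, so it is removed by translating $z_i$ by an imaginary constant rather than by absorbing it into $f_i$. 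Neither affects the argument.
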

{\bf Remark}. Although we shall not need this fact, from \cite{G} the transition between $(\varphi^1,\varphi^2)$ and $(f_1,f_2)$ is the Legendre transform on $\Sigma^2$ by the convex function ${\bf{u}}$.
Indeed $f_i=\frac{\partial{\bf{u}}}{\partial\varphi^i}$ where ${\bf{u}}$ is the symplectic potential mentioned above.
\begin{proof}
The fields $\mathcal{X}^i$ preserve $g$, $\omega$, and $J$, and $L_{\mathcal{X}^i}J=0$ is equivalent to $dJd\theta_i=0$.
Using $\bar\partial=\frac12\left(d+\sqrt{-1}Jd\right)$, one directly verifies that $\bar\partial{z}_i=0$.
\end{proof}

One easily determines the holomorphic frame and coframe to be
\begin{eqnarray}
	\begin{aligned}
	\frac{\partial}{\partial{z}_1}
	&\;=\;\frac12\left(\nabla\varphi^1-\sqrt{-1}\,\mathcal{X}^1\right), \quad
	\frac{\partial}{\partial{z}_2}
	\;=\;\frac12\left(\nabla\varphi^2-\sqrt{-1}\,\mathcal{X}^2\right) \\
	dz_1&\;=\;Jd\theta_1 +\sqrt{-1}\,d\theta_1, \quad\quad\quad
	dz_2\;=\;Jd\theta_2+\sqrt{-1}\,d\theta_2
\end{aligned} \label{EqnCxCoords}
\end{eqnarray}
so in the holomorphic frame the Hermitian metric and volume element are
\begin{eqnarray}
	\begin{aligned}
	&h^{i\bar\jmath}\;=\;
	\frac12
	\left(\begin{array}{cc}
	|\mathcal{X}_1|^2 & \left<\mathcal{X}_1,\,\mathcal{X}_2\right> \\
	\left<\mathcal{X}_1,\,\mathcal{X}_2\right> & |\mathcal{X}_2|^2
	\end{array}\right)
	\;=\;\frac12G^{-1}, \quad\quad
	\det\,h^{i\bar\jmath}\;=\;\frac14\mathcal{V}. \label{EqnHermitianM}
	\end{aligned}
\end{eqnarray}
where we have used the convention $h^{i\bar\jmath}=h\left(\frac{\partial}{\partial{z}_i},\frac{\partial}{\partial\overline{z_j}}\right)$.
\begin{proposition} \label{PropRicciFormAndScalarOnM}
	The Ricci form and scalar curvature of $(M^4,J,\omega)$ are
	\begin{eqnarray}
	\begin{aligned}
	\rho\;=\;-\sqrt{-1}\partial\bar\partial\log\,\mathcal{V},
	\quad\quad
	s\;=\;-\triangle\log\,\mathcal{V}.
	\end{aligned} \label{EqnsRicScal}
	\end{eqnarray}
\end{proposition}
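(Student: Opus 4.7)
The plan is to invoke two standard identities for a K\"ahler metric and then substitute the explicit determinant already computed in equation (\ref{EqnHermitianM}). Recall that for any K\"ahler manifold with Hermitian metric $h^{i\bar\jmath}$ in local holomorphic coordinates, the Ricci form is given by
\begin{eqnarray*}
\rho \;=\; -\sqrt{-1}\,\partial\bar\partial\,\log\,\det(h^{i\bar\jmath}),
\end{eqnarray*}
and the scalar curvature is twice the trace of $\rho$ with respect to $\omega$, which in terms of the complex Laplacian reads $s = -\triangle\log\det(h^{i\bar\jmath})$, because on a K\"ahler manifold $\triangle f = 2 h_{i\bar\jmath}\partial_i\partial_{\bar\jmath}f$ on functions.

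First I would invoke Lemma \ref{LemmaHoloSymplCoordRelation} to guarantee that $(z_1,z_2)$ from (\ref{EqnCxCoords}) really are holomorphic coordinates, so that the above identities apply directly. Then I would substitute the formula $\det h^{i\bar\jmath}=\tfrac{1}{4}\mathcal{V}$ from (\ref{EqnHermitianM}). Since $\partial\bar\partial$ annihilates the additive constant $-\log 4$, this gives
\begin{eqnarray*}
\rho \;=\; -\sqrt{-1}\,\partial\bar\partial\,\log\,\mathcal{V}.
\end{eqnarray*}
Similarly, $-\triangle\log\det(h^{i\bar\jmath}) = -\triangle\log\mathcal{V}$, yielding the scalar curvature formula.

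The only point requiring mild care is the sign and normalization convention of the Laplacian and the scalar curvature; I would spell out which convention is in force (so that $\triangle f = 2h_{i\bar\jmath}\partial_i\partial_{\bar\jmath}f$ is consistent with the real Laplace--Beltrami operator on $(M^4,g)$) and check against the factor of $2$ coming from the relation $s = 2\,\text{tr}_\omega\rho$. Apart from this bookkeeping, the proof is a direct substitution into the standard K\"ahler identities and does not require anything specific to the toric situation beyond (\ref{EqnHermitianM}). The main obstacle, if any, is purely notational: ensuring that the sign in the scalar curvature formula matches the sign convention implicit in (\ref{EqnHermitianM}) (where $h^{i\bar\jmath}=h(\partial_{z_i},\partial_{\bar z_j})$ is the inverse rather than the covariant Hermitian matrix), so that no hidden factor of $-1$ or $2$ creeps in.
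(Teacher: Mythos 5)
Your proof is correct and takes essentially the same approach as the paper, whose entire argument is ``Using (\ref{EqnHermitianM}), these are textbook formulas.'' The one worry you raise resolves itself: despite the upper indices, the paper's $h^{i\bar\jmath}=h\big(\tfrac{\partial}{\partial z_i},\tfrac{\partial}{\partial\overline{z_j}}\big)$ is by definition the covariant Hermitian matrix in the holomorphic frame (one checks directly from (\ref{EqnCxCoords}) that $h\big(\tfrac{\partial}{\partial z_1},\tfrac{\partial}{\partial\overline{z_1}}\big)=\tfrac12|\mathcal{X}^1|^2$), so the standard identity $\rho=-\sqrt{-1}\,\partial\bar\partial\log\det h$ applies with no hidden sign.
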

\begin{proof}
	Using (\ref{EqnHermitianM}), these are textbook formulas.
\end{proof}

\subsection{Reduction of $M^4$ to its metric polygon} \label{SubSectionReductionToPolytope}

The map $\Phi:M^4\rightarrow\mathbb{R}^2$ given in coordinates by $\Phi(\varphi^1,\varphi^2,\theta_1,\theta_2)=(\varphi^1,\varphi^2)$ is called the \textit{Arnold-Liouville reduction}, or sometimes the \textit{moment map} (although this abuses the term); the image of $\Phi$ is called $\Sigma^2$.
Supposing the image of $\Phi$ is topologically closed---this is always true if $M^4$ is compact but not always true if $(M^4,g)$ is complete, see Example \ref{SubsecSTimesH}---then this image is known to be a polygon \cite{Delzant}.
Because the action of $\mathcal{X}^1$, $\mathcal{X}^2$ is isometric, the metric passes down along Arnold-Liouville map and produces a metric $g_\Sigma$ on $\Sigma^2$.
Indeed just $g_\Sigma(\frac{\partial}{\partial\varphi^i},\frac{\partial}{\partial\varphi^i})=g(\frac{\partial}{\partial\varphi^i},\frac{\partial}{\partial\varphi^i})$ so that $g_{\Sigma,ij}=g_{ij}=G_{ij}$ as in (\ref{EqnsGJOmegaM}).

For clarity, objects on $\Sigma^2$ will be indicated with a subscript, so for instance $s_{\Sigma}$ and $s$ indicate respectively the scalar curvatures on $(\Sigma^2,g_{\Sigma})$ and $(M^4,J,g)$.
We note that $(\Sigma^2,g_\Sigma)$ has a natural complex structure---this is {\it not} inherited from $(M^4,J)$ but rather is the (dual of the) Hodge-$*$ of $(\Sigma^2,g_\Sigma)$.
One computes that
\begin{equation}
	J_\Sigma\;=\;\mathcal{V}^{-\frac12}\left(\begin{array}{cc}
	\left<\mathcal{X}{}^1,\,\mathcal{X}{}^2\right> & -|\mathcal{X}{}^1|^2 \\
	|\mathcal{X}{}^2|^2 & -\left<\mathcal{X}{}^1,\,\mathcal{X}{}^2\right>
	\end{array}\right). \label{EqnJSigma}
\end{equation}
\begin{proposition}[$\Sigma^2$ and $M^4$ Laplacians] \label{PropPojectionLaplacian}
	If $f:M^4\rightarrow\mathbb{C}$ is any $\mathcal{X}^1$-$\mathcal{X}^2$ invariant function, then $\triangle{f}$ is $\mathcal{X}^1$-$\mathcal{X}^2$ invariant, so $f$ and $\triangle{f}$ pass to functions on the leaf space $\Sigma^2$.
	On $(\Sigma^2,g_\Sigma)$ the function $\triangle{}f$ and the Laplacian $\triangle_\Sigma{}f$ are related by
	\begin{eqnarray}
	\triangle{}f
	&=&\triangle_\Sigma{f}\,+\,\left<\nabla_{\Sigma}\log{\mathcal{V}}^{\frac12},\,\nabla_{\Sigma}f\right>_\Sigma.
	\end{eqnarray}
\end{proposition}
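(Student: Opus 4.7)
The strategy is to compute both sides in the action-angle coordinate system $(\varphi^1,\varphi^2,\theta_1,\theta_2)$ and directly compare, making essential use of the block structure (\ref{EqnsGJOmegaM}). The whole argument is bookkeeping with determinants; the conceptual point is that in action-angle coordinates the four-dimensional volume element is trivial, so the entire discrepancy between $\triangle$ and $\triangle_\Sigma$ is carried by the non-trivial volume element on the reduced space.

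First I would check that $\triangle f$ descends to a function on $\Sigma^2$. The fields $\mathcal{X}^i$ are Killing, so they commute with $\triangle$; if $f$ is $\mathcal{X}^1$-$\mathcal{X}^2$ invariant, so is $\triangle f$. Both therefore pull back from functions depending only on $\varphi^1,\varphi^2$.

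Next I would expand the $M^4$-Laplacian in these coordinates using $\triangle f = (\det g)^{-1/2}\partial_a\bigl(\sqrt{\det g}\,g^{ab}\partial_b f\bigr)$. From (\ref{EqnsGJOmegaM}), the $\varphi\varphi$ and $\theta\theta$ blocks of $g$ are mutually inverse $2\times 2$ matrices $G_{ij}$ and $G^{ij}$, hence
\begin{eqnarray*}
\det g \;=\; \det(G_{ij})\cdot\det(G^{ij}) \;=\; 1.
\end{eqnarray*}
The inverse metric $g^{ab}$ then has $G^{ij}$ in the $\varphi$-block and $G_{ij}$ in the $\theta$-block. Since $f$ is independent of $\theta$, only the $\varphi$-block contributes and the formula collapses to $\triangle f = \partial_i(G^{ij}\partial_j f)$, summed over $i,j\in\{1,2\}$.

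On $\Sigma^2$ the metric is $g_{\Sigma,ij}=G_{ij}$, so $g_\Sigma^{ij}=G^{ij}$ and, using (\ref{EqnDefoOfV}) to identify $\mathcal{V}=\det G^{ij}$, we get $\sqrt{\det g_\Sigma}=\sqrt{\det G_{ij}}=\mathcal{V}^{-1/2}$. Expanding the reduced Laplacian via the same divergence formula gives
\begin{eqnarray*}
\triangle_\Sigma f \;=\; \partial_i(G^{ij}\partial_j f) \;+\; G^{ij}\bigl(\partial_i \log \sqrt{\det G_{ij}}\bigr)\partial_j f \;=\; \triangle f \;-\; \bigl\langle \nabla_\Sigma \log \mathcal{V}^{1/2},\,\nabla_\Sigma f\bigr\rangle_\Sigma,
\end{eqnarray*}
where the inner product is computed using $g_\Sigma^{ij}=G^{ij}$. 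Rearranging yields the claim. The only place where a sign error could slip in is in matching $\log\sqrt{\det G_{ij}} = -\log\mathcal{V}^{1/2}$, which is the main thing to verify carefully; once that identification is made, the proof is complete.
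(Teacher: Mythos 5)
Your proposal is correct and follows essentially the same route as the paper: both expand the two Laplacians via the divergence formula in $(\varphi^1,\varphi^2)$ coordinates, use $\det g=1$ and $\det g_\Sigma=\mathcal{V}^{-1}$ from the block structure (\ref{EqnsGJOmegaM}), and the discrepancy term arises exactly from $\log\sqrt{\det g_\Sigma}=-\log\mathcal{V}^{1/2}$, with the signs working out as you indicate.
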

{\it Proof}. A function $f:M^4\rightarrow\mathbb{C}$ is invariant under $\mathcal{X}_1$, $\mathcal{X}_2$ if and only if it is a function of $\varphi^1$, $\varphi^2$ only.
Noting that $\det(g)=1$ and $\det(g_\Sigma)=\mathcal{V}^{-1}$ we have
\begin{eqnarray}
	\begin{aligned}
	\triangle{f}
	&\;=\;\frac{\partial}{\partial\varphi^i}\left(g^{ij}\frac{\partial{f}}{\partial\varphi^j}\right)
	\;=\;\frac{\partial}{\partial\varphi^i}\left(g_\Sigma^{ij}\frac{\partial{f}}{\partial\varphi^j}\right) \\
	&\;=\;\mathcal{V}^{\frac12}\frac{\partial}{\partial\varphi^i}\left(g_\Sigma^{ij}\mathcal{V}^{-\frac12}\frac{\partial{f}}{\partial\varphi^j}\right)
	\;+\;\,g_\Sigma^{ij}\frac{\partial\log{\mathcal{V}}^{\frac12}}{\partial\varphi^i}\frac{\partial{f}}{\partial\varphi^j} \\
	&\;=\;\triangle_\Sigma{f}\,+\,\frac12\left<\nabla_\Sigma\log\mathcal{V},\,\nabla_\Sigma{f}\right>_\Sigma.
	\end{aligned}
\end{eqnarray}
\qed

\begin{corollary} \label{CorReducedScalar}
	The scalar curvature $s$ on $M^4$ passes to a function on $\Sigma^2$. There,
	\begin{eqnarray}
	\triangle_\Sigma\mathcal{V}^{\frac12}\,+\,\frac12s\,\mathcal{V}^{\frac12}\;=\;0. \label{EllipticEqnForV}
	\end{eqnarray}
\end{corollary}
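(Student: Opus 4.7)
The plan is to combine the two immediately preceding results. From Proposition \ref{PropRicciFormAndScalarOnM} we already have the formula $s = -\triangle\log\mathcal{V}$ on $M^4$. Because $\mathcal{X}^1$ and $\mathcal{X}^2$ are Killing (so isometries pass through to $s$) and because $\mathcal{V}$ is built from the inner products $\langle\mathcal{X}^i,\mathcal{X}^j\rangle$ of invariant objects, both $s$ and $\mathcal{V}$ are $\mathcal{X}^1$-$\mathcal{X}^2$ invariant and therefore descend to functions on the leaf space $\Sigma^2$. This takes care of the first sentence of the corollary.

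Next I would apply Proposition \ref{PropPojectionLaplacian} to the invariant function $f = \log\mathcal{V}$, which gives
\begin{eqnarray*}
\triangle\log\mathcal{V} \;=\; \triangle_\Sigma\log\mathcal{V} \,+\, \left<\nabla_\Sigma\log\mathcal{V}^{1/2},\,\nabla_\Sigma\log\mathcal{V}\right>_\Sigma
\;=\; 2\triangle_\Sigma\log\mathcal{V}^{1/2} \,+\, 2|\nabla_\Sigma\log\mathcal{V}^{1/2}|_\Sigma^2,
\end{eqnarray*}
after absorbing the factor of $2$ coming from $\log\mathcal{V} = 2\log\mathcal{V}^{1/2}$. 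Then I would use the standard identity $\triangle_\Sigma\log u = u^{-1}\triangle_\Sigma u - u^{-2}|\nabla_\Sigma u|^2$ applied to $u=\mathcal{V}^{1/2}$, which rewrites $\triangle_\Sigma\log\mathcal{V}^{1/2}$ as $\mathcal{V}^{-1/2}\triangle_\Sigma\mathcal{V}^{1/2} - |\nabla_\Sigma\log\mathcal{V}^{1/2}|_\Sigma^2$.

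Substituting this into the previous display causes the two gradient-squared terms to cancel exactly, leaving the clean identity $\triangle\log\mathcal{V} = 2\,\mathcal{V}^{-1/2}\triangle_\Sigma\mathcal{V}^{1/2}$. Combining with $s = -\triangle\log\mathcal{V}$ from Proposition \ref{PropRicciFormAndScalarOnM} and multiplying through by $\mathcal{V}^{1/2}/2$ yields the stated equation $\triangle_\Sigma\mathcal{V}^{1/2} + \tfrac12 s\,\mathcal{V}^{1/2} = 0$.

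There is essentially no obstacle here; the only thing worth being careful about is confirming that the change of Laplacian in Proposition \ref{PropPojectionLaplacian} is being applied where $\mathcal{V}$ is strictly positive (i.e.\ on the interior of $\Sigma^2$, by hypothesis (E)), so that $\log\mathcal{V}^{1/2}$ and the algebraic identity for $\triangle_\Sigma\log u$ are valid. The boundary behavior is not at issue for the corollary as stated, since it is the pointwise PDE on the interior that we need.
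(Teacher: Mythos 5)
Your proposal is correct and is exactly the paper's argument: the paper's proof is the one-line citation of $s=-\triangle\log\mathcal{V}$ together with Proposition \ref{PropPojectionLaplacian}, and your computation (including the cancellation via $\triangle_\Sigma\log\mathcal{V}^{1/2}+|\nabla_\Sigma\log\mathcal{V}^{1/2}|^2=\mathcal{V}^{-1/2}\triangle_\Sigma\mathcal{V}^{1/2}$, an identity the paper itself uses later in \S\ref{SubSecConfChangeOfMetric}) simply spells out the details. No issues.
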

\begin{proof}
	This follows from $s=-\triangle\log\mathcal{V}$ and Proposition \ref{PropPojectionLaplacian}.
\end{proof}

{\bf Remark}. In the scalar-flat case Corollary \ref{CorReducedScalar} is the Trudinger-Wang reduction, equation (1.4) of \cite{TW}, from their study of fourth-order nonlinear elliptic equations.

\begin{proposition}[The $\varphi^i$ Elliptic Equations] \label{PropDivOfVarphi}
	On $\Sigma^2$, $d\left(\mathcal{V}^{-\frac12}J_\Sigma{d}\varphi^i\right)=0$.
\end{proposition}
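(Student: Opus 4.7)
The cleanest route is to recognize each $\mathcal{V}^{-1/2} J_\Sigma d\varphi^i$ as (up to sign) the exact 1-form $df_j$ furnished by Lemma \ref{LemmaHoloSymplCoordRelation}, where $df_j = Jd\theta_j$. Once this identification is established, closedness is automatic. Everything reduces to two short matrix computations: one producing $Jd\theta_i$ on $\Sigma^2$ in the basis $d\varphi^1, d\varphi^2$, the other producing $\mathcal{V}^{-1/2}J_\Sigma d\varphi^i$ in the same basis.

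\textbf{Step 1: Descent of $Jd\theta_i$.} By Lemma \ref{LemmaHoloSymplCoordRelation}, $f_i = f_i(\varphi^1,\varphi^2)$ satisfies $df_i = Jd\theta_i$, so $df_i$ descends to an exact 1-form on $\Sigma^2$. Reading the matrix form of $J$ in (\ref{EqnsGJOmegaM}) gives $J(\partial/\partial\varphi^j) = G_{kj}\,\partial/\partial\theta_k$; then with the convention $(J\alpha)(X) = -\alpha(JX)$ one finds
\begin{equation*}
Jd\theta_i \;=\; -G_{i1}\,d\varphi^1 \,-\, G_{i2}\,d\varphi^2.
\end{equation*}
Thus, as a 1-form on $\Sigma^2$, $df_i = -G_{ij}d\varphi^j$.

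\textbf{Step 2: Computation of $\mathcal{V}^{-1/2}J_\Sigma d\varphi^i$.} Using (\ref{EqnJSigma}) and the inversion identities $G_{11} = |\mathcal{X}^2|^2/\mathcal{V}$, $G_{22} = |\mathcal{X}^1|^2/\mathcal{V}$, $G_{12} = -\langle \mathcal{X}^1,\mathcal{X}^2\rangle/\mathcal{V}$, a direct calculation yields
\begin{align*}
\mathcal{V}^{-1/2} J_\Sigma d\varphi^1 &\;=\; G_{12}\,d\varphi^1 + G_{22}\,d\varphi^2 \;=\; -Jd\theta_2 \;=\; -df_2, \\
\mathcal{V}^{-1/2} J_\Sigma d\varphi^2 &\;=\; -G_{11}\,d\varphi^1 - G_{12}\,d\varphi^2 \;=\; Jd\theta_1 \;=\; df_1.
\end{align*}
Both right-hand sides are exact on $\Sigma^2$, hence closed, which establishes $d(\mathcal{V}^{-1/2}J_\Sigma d\varphi^i) = 0$.

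\textbf{Main obstacle.} There is no conceptual obstacle; the proof is a short computation organized around the observation that the pluriharmonicity of $\theta_i$ (Lemma \ref{LemmaHoloSymplCoordRelation}) is the integrability condition $\partial_j G_{ik} = \partial_k G_{ij}$, which is exactly what is needed to kill $d(\mathcal{V}^{-1/2}J_\Sigma d\varphi^i)$. The only care required is bookkeeping with signs: the convention for $J$ on covectors versus vectors, and making sure the formula (\ref{EqnJSigma}) for $J_\Sigma$ is applied with the matching orientation.
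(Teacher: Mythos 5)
Your proof is correct and follows essentially the same route as the paper: both identify $\mathcal{V}^{-1/2}J_\Sigma d\varphi^i$ with the exact form $\pm df_j=\pm Jd\theta_j$ from Lemma \ref{LemmaHoloSymplCoordRelation} and conclude closedness immediately. The only differences are sign/index bookkeeping conventions (which $f_j$ appears, and with which sign), and these do not affect the conclusion.
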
 
\begin{proof}
	Combining (\ref{EqnsGJOmegaM}) and (\ref{EqnJSigma}), from $df_i=Jd\theta_i$ we compute $df_i=\mathcal{V}^{-\frac12}J_\Sigma{}d\varphi^i$, where the $f_i$ are the functions from Lemma \ref{LemmaHoloSymplCoordRelation}.
	We conclude that $0=d(\mathcal{V}^{-\frac12}J_\Sigma{}d\varphi^i)$.
	The projection along the Arnold-Liouville map $\Phi$ preserves closure, so we retain $0=d(\mathcal{V}^{-\frac12}J_\Sigma{}d\varphi^i)$ on $\Sigma^2$.
\end{proof}

Wherever the functions $x$ and $y$ form non-singular coordinate, which is to say where $z=x+\sqrt{-1}y$ is unramified and single-valued, the equation $d(\mathcal{V}^{-\frac12}J_\Sigma{}d\varphi)=0$ is precisely the degenerate-elliptic equation $y(\varphi_{xx}+\varphi_{yy})-\varphi_y=0$.
This follows easily from the fact that $\mathcal{V}^{-\frac12}=y^{-1}$.

The following simple theorem illustrates a contrast between the compact case, where $s\le0$ is forbidden, and the open case, where $s\le0$ is mandatory (at one point at least; see Theorem \ref{ThmIntroCompletePolygon}).
\begin{corollary} \label{CorCompactFlatPolytope}
	If $(\Sigma^2,g_\Sigma)$ is compact, it is impossible that $\triangle_\Sigma\sqrt{\mathcal{V}}\ge0$.
\end{corollary}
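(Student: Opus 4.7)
The plan is to derive a contradiction by applying the maximum principle to the continuous function $\sqrt{\mathcal{V}}$ on the compact manifold $\Sigma^2$, after pinning down the behavior of $\mathcal{V}$ on the boundary versus the interior.

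First I would verify the boundary behavior of $\mathcal{V}$. By condition (E) the distribution $\mathrm{span}\{\nabla\varphi^1,\nabla\varphi^2\}$ is 2-dimensional on the interior of $\Sigma^2$, so the Gram determinant
\[
\mathcal{V} \;=\; |\nabla\varphi^1|^2|\nabla\varphi^2|^2 - \langle\nabla\varphi^1,\nabla\varphi^2\rangle^2
\]
is strictly positive there. The same condition (E) forces the distribution to have rank $1$ along boundary edges and rank $0$ at vertices, so $\mathcal{V}\equiv 0$ on $\partial\Sigma^2$. Consequently $\sqrt{\mathcal{V}}$ is a continuous nonnegative function on compact $\Sigma^2$ which vanishes on $\partial\Sigma^2$ and is strictly positive on the open interior; in particular its maximum is attained at some interior point and is strictly greater than its boundary values.

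Now suppose toward contradiction that $\triangle_\Sigma\sqrt{\mathcal{V}}\ge 0$, i.e.\ $\sqrt{\mathcal{V}}$ is $g_\Sigma$-subharmonic on the interior. The strong maximum principle then forces $\sqrt{\mathcal{V}}$ either to attain its maximum on the boundary or to be constant, contradicting the previous paragraph. This is the desired contradiction.

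The one subtlety to handle is the applicability of the maximum principle up to $\partial\Sigma^2$. Because $g_\Sigma$ has components $G_{ij}=(G^{ij})^{-1}$ that may blow up as $\mathcal{V}\to 0$, one should either (i) apply the strong maximum principle on an exhaustion by precompact interior subdomains $\Omega_k$ compactly contained in $\mathrm{int}\,\Sigma^2$, observing that $\sqrt{\mathcal{V}}|_{\partial\Omega_k}\to 0$ uniformly while $\max_{\Omega_k}\sqrt{\mathcal{V}}$ stabilizes at the positive interior maximum; or (ii) invoke the standard asymptotic expansion along a geodesic normal to an edge (with $\varphi^1\sim r^2/2$ and $|\mathcal{X}^1|\sim r$, so $g_\Sigma\sim dr\otimes dr+d\varphi^2\otimes d\varphi^2$), which shows $g_\Sigma$ extends to a genuine Riemannian metric with totally geodesic boundary up to the polygon edges, whence the classical boundary maximum principle applies directly. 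Either route suffices.
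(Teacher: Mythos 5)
Your argument is correct and is essentially the paper's own proof: the maximum principle applied to the subharmonic function $\sqrt{\mathcal{V}}$, which vanishes on $\partial\Sigma^2$ by condition (E), forces $\mathcal{V}\equiv0$, contradicting the rank-2 condition on the interior. Your extra care about the possible degeneracy of $g_\Sigma$ at the boundary (handled via the exhaustion by sublevel sets or the normal expansion) is a detail the paper's one-line proof elides, but the route is the same.
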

{\it Proof}.
	With $\mathcal{V}=0$ on $\partial\Sigma^2$ the maximum principle gives $\mathcal{V}\equiv0$.
	But $\mathcal{V}\equiv0$ means $\mathcal{X}_1$, $\mathcal{X}_2$ are co-linear throughout, an impossibility.
\qed

\begin{corollary} \label{CorCompactFlatFourManifold}
	If $(M^4,J,\omega,\mathcal{X}{}^1,\mathcal{X}{}^2)$ is compact, it is impossible that $s\le0$.
\end{corollary}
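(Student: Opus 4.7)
The plan is to reduce this statement directly to Corollary \ref{CorCompactFlatPolytope}, using the previously established identity relating the scalar curvature of $M^4$ to the Laplacian of $\sqrt{\mathcal{V}}$ on the reduced polygon.

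First I would observe that $\Sigma^2$ inherits compactness from $M^4$: the Arnold-Liouville map $\Phi:M^4\to\Sigma^2$ is continuous, so $\Sigma^2=\Phi(M^4)$ is a compact subset of the $(\varphi^1,\varphi^2)$-plane, equipped with the induced metric $g_\Sigma$. Since $\mathcal{X}^1$ and $\mathcal{X}^2$ are Killing, the scalar curvature $s$ is $\mathcal{X}^1$-$\mathcal{X}^2$ invariant and hence descends to a well-defined function on $\Sigma^2$ (this is exactly the context of Corollary \ref{CorReducedScalar}).

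Next I would invoke Corollary \ref{CorReducedScalar}, which gives the pointwise identity
\begin{eqnarray*}
\triangle_\Sigma \sqrt{\mathcal{V}} \;=\; -\tfrac{1}{2}\, s\, \sqrt{\mathcal{V}}
\end{eqnarray*}
on $\Sigma^2$. Since $\sqrt{\mathcal{V}}\ge 0$ always, the hypothesis $s\le 0$ would force $\triangle_\Sigma\sqrt{\mathcal{V}}\ge 0$ throughout $\Sigma^2$. But Corollary \ref{CorCompactFlatPolytope} rules this out on any compact reduced polygon, giving the desired contradiction.

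There is no real obstacle here; the only thing to check carefully is that the scalar curvature truly passes through the reduction (a point already made in Corollary \ref{CorReducedScalar}), and that we can apply the polygon-level Corollary \ref{CorCompactFlatPolytope} without extra regularity worries, since everything needed has already been set up in Section \ref{SubSectionReductionToPolytope}.
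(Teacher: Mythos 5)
Your proposal is correct and is essentially identical to the paper's own proof, which also deduces $\triangle_\Sigma\sqrt{\mathcal{V}}\ge0$ from $s\le0$ via Corollary \ref{CorReducedScalar} and then applies Corollary \ref{CorCompactFlatPolytope}. Your added remark that $\Sigma^2$ is compact as the continuous image of a compact manifold is a detail the paper leaves implicit.
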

{\it Proof}.
	If $s\le0$ then $\triangle_\Sigma\sqrt{V}\ge0$, so the previous corollary applies.
\qed

\subsection{Reconstruction of the $M^4$ metric, assuming $s=0$} \label{SubSectionReconstructionOfTheMetric}

We show how to reconstruct the metrics on $(M^4,J,\omega)$ and $(\Sigma^2,g_\Sigma)$ from the relationship between the momentum and the volumetric normal coordinates on $\Sigma^2$.
When $s=0$ Corollary \ref{CorReducedScalar} states $\triangle_\Sigma{}\sqrt{\mathcal{V}}=0$.
Defining $y=\sqrt{\mathcal{V}}$ and letting $x$ be an harmonic dual (meaning $dx=J_\Sigma{}dy$), the pair $(x,y)$ are isothermal coordinates on $\Sigma^2$, called the {\it volumetric normal coordinates}.
The formula (\ref{EqnJSigma}) for $J_\Sigma$ is
\begin{eqnarray}
	\begin{aligned}
	&\frac{1}{y}\left<\nabla\varphi^i,\,\nabla\varphi^j\right>\,d\varphi^1\wedge{}d\varphi^2\;=\;-d\varphi^i\wedge{}J_\Sigma{}d\varphi^j.
	\end{aligned} \label{EqnFirstInnerProdEqn}
\end{eqnarray}
Changing variables to $(x,y)$, we have
\begin{eqnarray}
	\begin{aligned}
	d\varphi^1\wedge{}d\varphi^2
	&\;=\;\left(
	\frac{d\varphi^1}{dx}\frac{d\varphi^2}{dy}\,-\,\frac{d\varphi^2}{dx}\frac{d\varphi^1}{dy}
	\right)\,dx\wedge{}dy, \\
	-d\varphi^i\wedge{}J_{\Sigma}d\varphi^j
	&\;=\;\left(\frac{d\varphi^i}{dx}\frac{d\varphi^j}{dx}\,+\,
	\frac{d\varphi^i}{dy}\frac{d\varphi^j}{dy}\right)dx\wedge{}dy,
	\end{aligned}
\end{eqnarray}
so from (\ref{EqnFirstInnerProdEqn}) we have
\begin{eqnarray}
	g_{\Sigma}^{ij}&=&\left<\nabla\varphi^i,\,\nabla\varphi^j\right>
	\;=\;
	y\,\cdot\,\frac{\frac{d\varphi^i}{dx}\frac{d\varphi^j}{dx}+
	\frac{d\varphi^i}{dy}\frac{d\varphi^j}{dy}}
	{\frac{d\varphi^1}{dx}\frac{d\varphi^2}{dy}-\frac{d\varphi^2}{dx}\frac{d\varphi^1}{dy}}. \label{EqnFirstSigmaMetricExpresssion}
\end{eqnarray}
Letting
$A=
\left(\frac{\partial\{\varphi^1,\varphi^2\}}{\partial\{x,y\}}\right)$,
$B=
\left(\frac{\partial\{x,y\}}{\partial\{\varphi^1,\varphi^2\}}\right)$ be the coordinate transitions, this is
\begin{eqnarray}
	\left(g_{\Sigma}{}^{ij}\right)\;=\;\frac{y}{\det(A)}\,A^T\,A, \quad\quad
	\left(g_{\Sigma,ij}\right)\;=\;\frac{y^{-1}}{\det(B)}\,B\,B^T.
\end{eqnarray}
In components,
\begin{eqnarray}
	\begin{aligned}
	\;\;\;\;
	g_\Sigma
	=\frac{1}{y\det(B)}\delta_{kl}B^k_iB^l_j\,d\varphi^i\otimes{}d\varphi^j
	=y^{-1}\det(A)\left(dx\otimes{}dx+dy\otimes{}dy\right).
	\end{aligned} \label{EqnMetricConstruction}
\end{eqnarray}
In particular the coordinate transition matrices fully determine $g$ and $g_\Sigma$.
A simple expression for the Gaussian curvature of $(\Sigma^2,g_\Sigma)$ is
\begin{eqnarray}
	\begin{aligned}
	K_\Sigma
	&\;=\;-\frac{y}{\det(A)}\left(\left(\frac{\partial}{\partial{x}}\right)^2+\left(\frac{\partial}{\partial{y}}\right)^2\right)\log\sqrt{y^{-1}\det(A)}.
	\end{aligned} \label{EqnCompKSigma}
\end{eqnarray}

\subsection{A conformal change of the metric} \label{SubSecConfChangeOfMetric}

Here we make a second computation of the curvature of $(\Sigma^2,g_\Sigma)$, and show the effect of a certain conformal change of the metric on the Gaussian curvature.

The metric $g_\Sigma$ has the ``pseudo-K\"ahler'' property
\begin{eqnarray}
\frac{\partial{g_\Sigma}{}_{ij}}{\partial\varphi^k}\;=\;\frac{\partial{g}_\Sigma{}_{ik}}{\partial\varphi^j}. \label{EqnPseudoKahler}
\end{eqnarray}
This was noted in \cite{AS}; it is equivalent to both $[\nabla\varphi^i,\nabla\varphi^j]=0$ and $d(\mathcal{V}^{-\frac12}J_\Sigma{d}\varphi^i)=0$.
The Christoffel symbols are
\begin{eqnarray}
	\begin{aligned}
	\Gamma_\Sigma{}_{ij}^k\;=\;\frac12\frac{\partial{g}_\Sigma{}_{ij}}{\partial\varphi^s}g_\Sigma{}^{sk}, \quad
	\Gamma_\Sigma{}^k
	\;\triangleq\;g_\Sigma{}^{ij}\Gamma_\Sigma{}_{ij}^k
	\;=\;-g_\Sigma{}^{ks}\frac{\partial}{\partial\varphi^s}\log\mathcal{V}^{\frac12}.
	\end{aligned} \label{EqnChristoSymbs}
\end{eqnarray}
The usual formula for scalar curvature in terms of Christoffel symbols is
\begin{eqnarray}
s_{\Sigma}=
g_\Sigma{}^{ij}\frac{\partial\Gamma_\Sigma{}_{ij}^s}{\partial\varphi^s}
-g_\Sigma{}^{ij}\frac{\partial\Gamma_\Sigma{}_{sj}^s}{\partial\varphi^i}
+g_\Sigma{}_{st}\Gamma_\Sigma{}^s\Gamma_\Sigma{}^t
-g_\Sigma{}^{is}g_\Sigma{}^{jt}g_\Sigma{}_{kl}\Gamma_\Sigma{}_{ij}^k\Gamma_\Sigma{}_{st}^l.
\end{eqnarray}
The pseudo-K\"ahler relation implies $\frac{\partial}{\partial\varphi^s}\left(g_\Sigma{}^{ij}\Gamma_\Sigma{}_{ij}^s\right)=\frac{\partial}{\partial\varphi^i}\left(g_\Sigma{}^{ij}\Gamma_\Sigma{}_{sj}^s\right)$.
Using this,
\begin{eqnarray}
	\begin{aligned}
	s_{\Sigma}
	&\;=\;
	g_\Sigma{}^{is}g_\Sigma{}^{jt}g_\Sigma{}_{kl}\Gamma_\Sigma{}_{ij}^k\Gamma_\Sigma{}_{st}^l-g_\Sigma{}_{st}\Gamma_\Sigma{}^s\Gamma_\Sigma{}^t \\
	&\;=\;
	\left|\Gamma_\Sigma{}_{ij}^k\right|^2\,-\,\left|\Gamma_\Sigma{}^k\right|^2
	\;=\;
	\left|\Gamma_\Sigma{}_{ij}^k\right|^2\,-\,\left|\nabla_\Sigma\log\mathcal{V}^{\frac12}\right|^2.
	\end{aligned} \label{EqnCoordSigmaScalarComp}
\end{eqnarray}
Modify the metric by $\widetilde{g}_{\Sigma}=\mathcal{V}^{\frac12}{g}_\Sigma$.
The usual conformal-change formula gives
\begin{eqnarray}
	\begin{aligned}
	\widetilde{s}_\Sigma
	&\;=\;\mathcal{V}^{-\frac12}\left(s_\Sigma\,-\,\triangle_\Sigma\log\mathcal{V}^{\frac12}\right) \\
	&\;=\;\mathcal{V}^{-\frac12}\left(
	\left|\Gamma_\Sigma{}_{ij}^k\right|^2\,-\,\left|\nabla_\Sigma{}\log\mathcal{V}^{\frac12}\right|^2
	\,-\,\triangle_\Sigma\log\mathcal{V}^{\frac12}\right).
	\end{aligned} \label{EqnConfChangeScalar}
\end{eqnarray}
But $\triangle_\Sigma\log\mathcal{V}^{\frac12}+|\nabla_\Sigma{}\log\mathcal{V}^{\frac12}|^2=\mathcal{V}^{-\frac12}\triangle_\Sigma\mathcal{V}^{\frac12}$.
Therefore
\begin{eqnarray}
	\begin{array}{ll}
	\widetilde{s}_\Sigma
	&\;=\;\mathcal{V}^{-\frac12}\left(\left|\Gamma_\Sigma{}_{ij}^k\right|^2\,-\,\mathcal{V}^{-\frac12}\triangle_\Sigma\mathcal{V}^{\frac12}\right)
	\;=\;\mathcal{V}^{-\frac12}\left(\left|\Gamma_\Sigma{}_{ij}^k\right|^2\,+\,\frac12s\right).
	\end{array} \label{EqnConformalScalar}
\end{eqnarray}
In particular, if $s\ge0$ then $\widetilde{s}_\Sigma\ge0$.

\section{The case that the polygon has no edges} \label{SectionRTwoCase}

Here we prove Theorem \ref{ThmIntroCompletePolygon}, that if $\triangle_\Sigma\sqrt{\mathcal{V}}\le0$ and the polygon $\Sigma^2$ is geodesically complete (not necessarily coordinate-complete), then it is flat.
This works because the conformal change $\tilde{g}_\Sigma=\sqrt{\mathcal{V}}g_\Sigma$ has two vital properties: $\tilde{g}_\Sigma$ remains complete, and its sectional curvature $\widetilde{K}_\Sigma$ is non-negative by (\ref{EqnConformalScalar}).
Below we speak of biholomorphisms between $\Sigma^2$ and $\mathbb{C}$; for $\Sigma^2$ we always use the complex structure $J_\Sigma:T\Sigma^2\rightarrow{}T\Sigma^2$ given by dualizing the Hodge-star $*{}_\Sigma:\bigwedge{}^1_\Sigma\rightarrow\bigwedge{}^1_\Sigma$.
It is well-known that this is integrable and gives $(\Sigma^2,g_\Sigma,J_\Sigma)$ a K\"ahler structure.
We proceed in steps.
First, if we somehow know $\mathcal{V}$ is constant, then $g_\Sigma$ is flat.
\begin{lemma} \label{LemmaTConstFlat}
	Assume $(\Sigma^2,g_\Sigma)$ is geodesically complete.
	If $\mathcal{V}$ is constant, then $(\Sigma^2,g_\Sigma)$ is biholomorphic to $\mathbb{C}$.
	Further, $g_\Sigma$ is flat, and in fact $g_\Sigma$ has constant coefficients when expressed in $\varphi^1$, $\varphi^2$ coordinates.
\end{lemma}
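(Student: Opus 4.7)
The plan is to recognise $\mathcal{V}=$ const as a constant-coefficient Monge--Amp\`ere equation for the symplectic potential, and then apply a classical rigidity theorem to conclude directly that $g_{\Sigma,ij}$ has constant coefficients.

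First, the pseudo-K\"ahler identity \eqref{EqnPseudoKahler}, $\partial_k g_{\Sigma,ij} = \partial_j g_{\Sigma,ik}$, says that for each fixed $i$ the 1-form $g_{\Sigma,ij}\,d\varphi^j$ is closed on the simply-connected convex domain $\Sigma^2\subseteq\mathbb{R}^2$, hence equals $dh_i$ for some $h_i$; the symmetry $g_{\Sigma,ij}=g_{\Sigma,ji}$ then forces $\partial_j h_i=\partial_i h_j$, whence $h_i=\partial_i u$ for a single strictly convex function $u:\Sigma^2\to\mathbb{R}$, the symplectic potential, with $g_{\Sigma,ij}=\partial^2 u/\partial\varphi^i\partial\varphi^j$. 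Since $\det g_\Sigma^{ij}=\mathcal{V}$ by definition, the hypothesis gives
\[
\det D^2 u \;=\; \mathcal{V}^{-1} \;=\; \text{const on } \Sigma^2,
\]
the standard constant-coefficient Monge--Amp\`ere equation on an open convex domain, whose associated Hessian metric $g_\Sigma$ is geodesically complete by assumption.

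Next I would invoke the classical rigidity result: a strictly convex $C^2$ function on an open convex $\Omega\subseteq\mathbb{R}^n$ whose Hessian metric is geodesically complete and whose Hessian determinant is constant is a quadratic polynomial, and $\Omega$ must be all of $\mathbb{R}^n$. For $n=2$ this is J\"orgens's theorem, promoted from $\Omega=\mathbb{R}^2$ to complete Hessian domains by Calabi--Pogorelov and Cheng--Yau via the identification of the graph of $u$ as a complete parabolic affine $2$-sphere. The conclusion is $\Sigma^2=\mathbb{R}^2$ and $u$ quadratic, so $g_{\Sigma,ij}=\partial_i\partial_j u$ has constant coefficients in $(\varphi^1,\varphi^2)$, which is the main assertion.

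Constant coefficients of $g_\Sigma$ in $\varphi$-coordinates make the Christoffel symbols in \eqref{EqnChristoSymbs} vanish, so $g_\Sigma$ is flat; since $J_\Sigma$ is parallel on this flat simply-connected complete metric, an $\mathbb{R}$-linear change of the $\varphi$-coordinates diagonalising $g_\Sigma$ to $\delta_{ij}$ identifies $(\Sigma^2,g_\Sigma,J_\Sigma)$ isometrically and biholomorphically with the standard K\"ahler $\mathbb{R}^2\cong\mathbb{C}$, completing the lemma.

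The principal obstacle is the invocation of the Cheng--Yau / J\"orgens rigidity in precisely this form, with completeness of the Hessian metric (rather than an a~priori hypothesis $\Omega=\mathbb{R}^n$) as the key input; extracting $\Sigma^2=\mathbb{R}^2$ from completeness passes through the Legendre-transform duality in the affine-sphere picture. A more self-contained two-dimensional alternative uses \eqref{EllipticEqnForV} to deduce $s\equiv 0$ synthetically from $\mathcal{V}\equiv\mathrm{const}$, and then \eqref{EqnConformalScalar} to obtain $\widetilde{s}_\Sigma=\mathcal{V}^{-1/2}|\Gamma_{\Sigma,ij}^k|^2\ge 0$, so $(\Sigma^2,g_\Sigma)$ is a complete simply-connected surface with $K_\Sigma\ge 0$; Huber's theorem forces parabolicity, from which the Monge--Amp\`ere equation can be closed directly on $\mathbb{C}$.
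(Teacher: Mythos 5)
Your argument is correct, but it takes a genuinely different route from the paper's. You pass to the symplectic potential $\mathbf{u}$ (via the pseudo-K\"ahler identity (\ref{EqnPseudoKahler}), which the paper does make available), observe that $\mathcal{V}\equiv\mathrm{const}$ is exactly $\det D^2\mathbf{u}=\mathrm{const}$, and invoke the J\"orgens--Calabi rigidity theorem for complete parabolic affine spheres, with completeness of the \emph{Hessian} metric (which is $g_\Sigma$ itself) as the hypothesis; this delivers at one stroke that $\mathbf{u}$ is quadratic and that the domain is all of $\mathbb{R}^2$, from which constant coefficients, flatness, and $\Sigma^2\cong\mathbb{C}$ follow. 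The paper instead stays inside elementary Riemann-surface theory: with $\mathcal{V}$ constant, Proposition \ref{PropDivOfVarphi} makes $\varphi^1,\varphi^2$ harmonic, so they complete to holomorphic functions $z,w$ whose transition $dw/dz$ has harmonic imaginary part $-\sqrt{\mathcal{V}}\,|\nabla\varphi^1|^{-2}$; the Hermitian metric coefficient is then a positive harmonic function, $K_\Sigma=8\,|\nabla|\nabla\varphi^1||^2\ge0$ forces parabolicity by Cheng--Yau, and the classical Liouville theorem on $\mathbb{C}$ makes the inverse-metric entries $|\nabla\varphi^i|^{-2}$ and $\langle\nabla\varphi^1,\nabla\varphi^2\rangle$ constant. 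Your route is shorter but imports a substantially deeper external theorem (Calabi's classification of complete improper affine spheres, with intrinsic rather than Euclidean completeness --- for $n=2$ this is indeed available), whereas the paper's is self-contained modulo the Cheng--Yau parabolicity criterion it already uses elsewhere, and avoids the potential formulation altogether. One caution on your closing ``more self-contained alternative'': parabolicity identifies $\Sigma^2$ with $\mathbb{C}$ through a \emph{holomorphic} coordinate, not through $(\varphi^1,\varphi^2)$, so it does not by itself put you in the classical $\Omega=\mathbb{R}^2$ setting of J\"orgens; closing the Monge--Amp\`ere equation from there would in effect require the paper's harmonic-function argument anyway.
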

\begin{proof}
	If $\mathcal{V}$ is constant, then Proposition \ref{PropDivOfVarphi} gives $d(J_\Sigma{}d\varphi^k)=0$.
	Therefore $\varphi^1$ and $\varphi^2$ are harmonic, and each determines a holomorphic function: $z=\varphi^1+\sqrt{-1}\eta^1$, $w=\varphi^2+\sqrt{-1}\eta^2$ (where $d\eta^k=-J_\Sigma{}d\varphi^k$).
	Away from possible ramification points, $z$ and $w$ are each a holomorphic coordinate on $\Sigma^2$, with respective coordinate fields
	\begin{eqnarray}
		\begin{aligned}
		\frac{d}{dz}&\;=\;\frac{1}{2|\nabla\varphi^1|^2}\left(\nabla\varphi^1\,+\,\sqrt{-1}J_\Sigma\nabla\varphi^1\right) \\
		\frac{d}{dw}&\;=\;\frac{1}{2|\nabla\varphi^2|^2}\left(\nabla\varphi^2\,+\,\sqrt{-1}J_\Sigma\nabla\varphi^2\right).
		\end{aligned}
	\end{eqnarray}
	We easily compute the transition function:
	\begin{eqnarray}
		\frac{dw}{dz}
		\;=\;g\left(\frac{d}{dz},\,\overline{\nabla}{w}\right)
		\;=\;\frac{\left<\nabla\varphi^1,\,\nabla\varphi^2\right>}{|\nabla\varphi^1|^2}\,-\,\sqrt{-1}\frac{\sqrt\mathcal{V}}{|\nabla\varphi^1|^2}.
	\end{eqnarray}
	But $\frac{dw}{dz}$ is holomorphic, so in particular its imaginary part is harmonic, so $|\nabla\varphi^1|^{-2}$ is harmonic.
	In the $z$-coordinate the Hermitian metric is $h_\Sigma=\left|\frac{d}{dz}\right|^2=\frac12|\nabla\varphi^1|^{-2}$, so $h_\Sigma$ itself is an harmonic function.
	Then using $\triangle_\Sigma{}h_\Sigma=0$, we find the Gaussian curvature to be
	\begin{eqnarray}
		K_\Sigma
		\;=\;-h_\Sigma^{-1}\triangle_\Sigma\log{h}_\Sigma\;=\;8|\nabla|\nabla\varphi^1||^2
	\end{eqnarray}
	which is non-negative, forcing the complete manifold $(\Sigma^2,g_\Sigma)$ to be parabolic (this is due to the Cheng-Yau condition for parabolicity; see \cite{CY} or the remark below).
	This is equivalent to the complete, simply connected manifold $(\Sigma^2,J_{\Sigma})$ being biholomorphic to $\mathbb{C}$.
	Thus the classical Liouville theorem says the non-negative harmonic function $|\nabla\varphi^1|^{-2}$ is constant.
	Similarly $|\nabla\varphi^2|^{-2}$ and $\left<\nabla\varphi^1,\nabla\varphi^2\right>$ are constant.
	
	Because $g_\Sigma{}^{ij}=\left<\nabla\varphi^i,\nabla\varphi^j\right>$ we have that all components of the metric are constants when measured in $(\varphi^1,\varphi^2)$ coordinates.
	In particular $K_\Sigma=0$.
\end{proof}

Now we begin to use $\triangle_\Sigma\sqrt{\mathcal{V}}\le0$.
If, in addition to this, we somehow know $\Sigma^2$ is biholomorphic to $\mathbb{C}$, then $\sqrt{\mathcal{V}}$ is constant and $g_\Sigma$ is flat.

\begin{lemma} \label{LemmaBiholoToCFlat}
	Assume $\Sigma^2$ is geodesically complete, biholomorphic to $\mathbb{C}$, and  $\triangle_\Sigma\sqrt{\mathcal{V}}\le0$.
	Then $g_\Sigma$ is flat, and has constant coefficients in $\varphi^1$, $\varphi^2$ coordinates.
\end{lemma}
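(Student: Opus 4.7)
The strategy is to show that $\sqrt{\mathcal{V}}$ is constant on $\Sigma^2$, after which Lemma \ref{LemmaTConstFlat} delivers the conclusion directly.

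First I would observe that $\sqrt{\mathcal{V}}$ is strictly positive everywhere on $\Sigma^2$. The lemma concerns the case in which $\Sigma^2$ has no edges, so there are no boundary points; by the pseudo-toric condition (E), the distribution $\mathrm{span}\{\nabla\varphi^1,\nabla\varphi^2\}$ is $2$-dimensional at every point, which combined with the Cauchy--Schwarz identity $\mathcal{V}=|\nabla\varphi^1|^2|\nabla\varphi^2|^2-\langle\nabla\varphi^1,\nabla\varphi^2\rangle^2$ forces $\mathcal{V}>0$ throughout $\Sigma^2$. By hypothesis $\triangle_\Sigma\sqrt{\mathcal{V}}\le0$, so $\sqrt{\mathcal{V}}$ is also superharmonic on $(\Sigma^2,g_\Sigma)$.

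Next I would transport $\sqrt{\mathcal{V}}$ through the given biholomorphism $\Phi:\Sigma^2\to\mathbb{C}$ to obtain a function $u=\sqrt{\mathcal{V}}\circ\Phi^{-1}:\mathbb{C}\to(0,\infty)$. In dimension two the sign of the Laplacian is a conformal invariant (under $g'=e^{2\phi}g$ one has $\triangle_{g'}=e^{-2\phi}\triangle_g$), so the fact that $\sqrt{\mathcal{V}}$ is superharmonic on $(\Sigma^2,g_\Sigma)$ passes to the statement that $u$ is a positive superharmonic function on $\mathbb{C}$ equipped with the standard flat metric. By the classical Liouville theorem for positive superharmonic functions on $\mathbb{R}^2$ (equivalently, the Ahlfors--Blanc--Nakai characterization of parabolic Riemann surfaces, or direct Riesz decomposition: the greatest harmonic minorant is positive and hence constant by the harmonic Liouville theorem, while the residual Green potential vanishes because $\mathbb{C}$ carries no positive Green's function), $u$ must be constant. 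Therefore $\sqrt{\mathcal{V}}$ is constant on $\Sigma^2$, and Lemma \ref{LemmaTConstFlat} then asserts that $g_\Sigma$ is flat with constant coefficients in $(\varphi^1,\varphi^2)$ coordinates.

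The only subtle point is the invocation of the Liouville principle for positive superharmonic functions on $\mathbb{C}$; this is the step where both hypotheses enter in an essential way (the biholomorphism to $\mathbb{C}$ selects the parabolic conformal class, and $\triangle_\Sigma\sqrt{\mathcal{V}}\le0$ together with $\mathcal{V}>0$ supplies a candidate function to which the principle applies). I do not anticipate that completeness of $g_\Sigma$ itself is needed for this lemma, since it enters the argument only through the assumption that $(\Sigma^2,J_\Sigma)$ is biholomorphic to $\mathbb{C}$ rather than to the disk; this distinction is where the ``conformal parabolicity versus hyperbolicity'' dichotomy is decided.
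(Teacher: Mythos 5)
Your proof is correct and follows essentially the same route as the paper, which likewise observes that $\sqrt{\mathcal{V}}$ is a superharmonic function bounded below on $\mathbb{C}$, concludes it is constant by the Liouville principle for parabolic surfaces, and then invokes Lemma \ref{LemmaTConstFlat}; your additional details (conformal invariance of superharmonicity, positivity of $\mathcal{V}$) merely flesh this out. One small caveat to your closing aside: geodesic completeness is not entirely dispensable here, since it is a hypothesis of Lemma \ref{LemmaTConstFlat}, whose proof uses the Cheng--Yau parabolicity criterion.
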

\begin{proof}
	Since the function $\mathcal{V}^{\frac12}$ is superharmonic and bounded from below on $\mathbb{C}$, it is constant, and Lemma \ref{LemmaTConstFlat} provides the result.
\end{proof}

Later we prove that $\triangle_\Sigma\sqrt{\mathcal{V}}\le0$ alone implies $\Sigma^2\approx\mathbb{C}$, which will utilize the conformal change $\tilde{g}_\Sigma=\sqrt{\mathcal{V}}g_\Sigma$.
First we verify this conformal change does not make a complete metric into an incomplete metric.

\begin{lemma} \label{LemmaConformalComplete}
	Assume $(\Sigma^2,g_\Sigma)$ is geodesically complete and $\triangle_\Sigma\sqrt{\mathcal{V}}\le0$.
	Setting $\widetilde{g}_\Sigma=\mathcal{V}^{\frac12}g_\Sigma$, then $(\Sigma^2,\widetilde{g}_\Sigma)$ is also complete.
\end{lemma}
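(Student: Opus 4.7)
The plan is to argue by contradiction. Suppose $(\Sigma^{2},\widetilde{g}_{\Sigma})$ is geodesically incomplete. Since $g_{\Sigma}$ is complete and both metrics induce the same topology on $\Sigma^{2}$, there exists a rectifiable path $\gamma\colon[0,T)\to\Sigma^{2}$, parametrized by $\widetilde{g}_{\Sigma}$-arclength with $T<\infty$, that leaves every $g_{\Sigma}$-compact subset. Writing $u\triangleq\mathcal{V}^{1/2}$, the conversion $d\ell_{g_{\Sigma}}/dt=u^{-1/2}$ combined with the $g_{\Sigma}$-divergence of $\gamma$ gives
\begin{eqnarray*}
\int_{0}^{T}u(\gamma(t))^{-1/2}\,dt\;=\;\infty,
\end{eqnarray*}
which forces $u\to 0$ along $\gamma$ in at least an averaged sense.

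The first key observation is that in dimension two the conformal change rescales the Laplacian as $\widetilde{\triangle}_{\Sigma}=\mathcal{V}^{-1/2}\triangle_{\Sigma}$, so the hypothesis $\triangle_{\Sigma}u\le 0$ is equivalent to $\widetilde{\triangle}_{\Sigma}u\le 0$ and $u>0$ is superharmonic for \emph{both} metrics. If $u$ were constant, then Lemma \ref{LemmaTConstFlat} would give $g_{\Sigma}$-constant coefficients, making $\widetilde{g}_{\Sigma}$ a positive multiple of $g_{\Sigma}$ and trivially complete; so I may assume $u$ is non-constant, whereupon the strong minimum principle forbids $u$ from attaining any interior minimum.

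The second step is to realize the missing endpoint of $\gamma$ as a boundary point $p^{\ast}$ of the $\widetilde{g}_{\Sigma}$-metric completion $\overline{\Sigma}^{\,\widetilde{g}}$. Because $\widetilde{g}_{\Sigma}$ is conformal to $g_{\Sigma}$, the complex structure $J_{\Sigma}$ extends analytically to a neighborhood of $p^{\ast}$ and one can choose $\widetilde{g}_{\Sigma}$-isothermal coordinates $\zeta$ on a punctured disk about $p^{\ast}$ in which the length estimate above gives $u(\zeta)\to 0$ as $\zeta\to 0$. A positive superharmonic function on a punctured disk whose limit at the puncture is zero extends, by the classical removable-singularity theorem for superharmonic functions, to a superharmonic function on the full disk with $u(p^{\ast})=0$. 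This makes $p^{\ast}$ an interior minimum of a non-constant superharmonic function, contradicting the strong minimum principle.

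The main obstacle is the third step: certifying that the $\widetilde{g}_{\Sigma}$-completion genuinely adjoins a Riemann-surface point at which $u$ extends continuously to zero, rather than a pathological accumulation. The crucial ingredient is the computation of Section \ref{SubSecConfChangeOfMetric}, which yields $\widetilde{K}_{\Sigma}\ge 0$; this bounded-curvature control (via a Toponogov-type comparison) keeps $\widetilde{g}_{\Sigma}$-balls near $p^{\ast}$ from degenerating and supplies the smooth isothermal chart the removable-singularity theorem requires. Once that is in hand, the three ingredients—the averaged decay $u\to 0$, the superharmonicity of $u$ in both metrics, and the removable-singularity extension—combine to close the contradiction.
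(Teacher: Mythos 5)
Your setup is sound and matches the paper's: take an inextendable $\widetilde{g}_\Sigma$-path $\gamma$ of finite $\widetilde{g}_\Sigma$-length, note that by completeness of $g_\Sigma$ it must escape every compact set and hence have infinite $g_\Sigma$-length, so that $\int_0^T\mathcal{V}^{-1/4}\,dt=\infty$ forces $\mathcal{V}\to0$ along $\gamma$. The gap is in your third step, and it is not a technicality you can wave away with ``Toponogov-type comparison.'' The $\widetilde{g}_\Sigma$-metric completion of an incomplete conformal metric on an end is not automatically a Riemann surface near the added point: the ideal point $p^\ast$ need not possess any neighborhood whose complement of $p^\ast$ is conformally a punctured disk (the end could be conformally a finite annulus, the completion could adjoin a continuum rather than an isolated point, and distinct escaping paths could limit to distinct ideal points). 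A one-sided bound $\widetilde{K}_\Sigma\ge0$ controls volumes and triangles but does not manufacture an isothermal chart centered at an ideal boundary point, which is what the removable-singularity theorem for superharmonic functions requires. Worse, showing that the end is conformally a punctured disk is essentially the parabolicity statement that this whole section is driving toward via Cheng--Yau (Theorem \ref{ThmRTwoClassification}), so your route risks assuming a form of the conclusion.

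The paper uses $\widetilde{K}_\Sigma\ge0$ in a completely different, quantitative way that avoids the completion altogether: Laplacian comparison gives $\widetilde{\triangle}_\Sigma\tilde{r}\le\tilde{r}^{-1}$ and hence $\widetilde{\triangle}_\Sigma\tilde{r}^{-1}\ge\tilde{r}^{-3}$, so on the annulus $B_p(R)\setminus B_p(R/2)$ the function $\epsilon(\tilde{r}^{-1}-R^{-1})$ is a lower barrier for the $\widetilde{g}_\Sigma$-superharmonic function $\mathcal{V}^{1/2}$. The resulting explicit bound $\mathcal{V}^{1/2}\ge\epsilon(\tilde{r}^{-1}-R^{-1})$ converts directly into $|\dot\gamma|_{g_\Sigma}\le\epsilon^{-1/2}(tR)^{1/2}(R-t)^{-1/2}$, whose integral over $[R/2,R]$ is finite; thus $\gamma$ has finite $g_\Sigma$-length, stays in a compact set, and is extendable --- the contradiction. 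If you want to salvage your approach, you would need to replace your Step 3 by an actual construction of the conformal chart at $p^\ast$; the shorter path is to replace Steps 2--3 with the annulus barrier argument, which uses exactly the ingredients you already identified ($\widetilde{K}_\Sigma\ge0$ and superharmonicity of $\mathcal{V}^{1/2}$) but turns them into a length estimate rather than a boundary-regularity claim.
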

\begin{proof}
	From a point $p\in\Sigma^2$ let $\gamma:[0,R]\rightarrow\Sigma^2$ be a shortest unit-speed geodesic in the $\widetilde{g}_\Sigma$ metric that is inextendable.
	For a contradiction, we show that $\gamma$ continues to have finite length in the $g$ metric, so the endpoint $\gamma(R)$ remains in the interior of $\Sigma^2$, where the metric $g_\Sigma$ is still smooth.
	It is therefore extendable in both the $g_\Sigma$ and $\tilde{g}_\Sigma$ metrics.
	
	Our main technical step is use Laplacian comparison to create a lower bound for $\mathcal{V}$ in the $\tilde{g}_\Sigma$-ball $B_p(R)$.	
	Let $\tilde{r}=dist_{\tilde{g}_\Sigma}(p,\cdot)$.
	Because $\widetilde{K}_\Sigma\ge0$ by (\ref{EqnConformalScalar}), the standard Bochner-style Laplacian comparison (eg. \cite{SY}) allows us to compare the Laplacian of the distance function to its flat-space counterpart, and we obtain $\widetilde\triangle_\Sigma\tilde{r}\le\tilde{r}{}^{-1}$ (in the barrier sense) and therefore $\widetilde\triangle_\Sigma\tilde{r}{}^{-1}\ge\tilde{r}{}^{-3}$.
	Let $\mathcal{A}$ be the annulus $\mathcal{A}=B_p(R)\setminus{}B_p(R/2)$---the metric $\tilde{g}_\Sigma$ is still smooth in the interior of $B_p(R)$, because of how we chose $R$.
	On the inner boundary $\partial{B}_p(R/2)$ we have $\mathcal{V}>0$ so there is some $\epsilon>0$ with $\mathcal{V}^\frac12\ge2\epsilon{}R^{-1}$, and on the outer boundary $\mathcal{V}^\frac12\ge0$.
	This means
	\begin{eqnarray}
		\begin{aligned}
		&\mathcal{V}^{\frac12}\,-\,\epsilon\left(\tilde{r}{}^{-1}\,-\,R^{-1}\right)
		 \;\ge\;0 \quad \text{on $\partial\mathcal{A}$, and} \\
		&\widetilde{\triangle}_\Sigma\left(\mathcal{V}^{\frac12}
		\,-\,\epsilon\left(\tilde{r}{}^{-1}\,-\,R^{-1}\right)\right)
		\;\le\;-\epsilon\tilde{r}{}^{-3}\;<\;0 \quad \text{on $\mathcal{A}$} \label{EqnBochnerBarrierCreation}
		\end{aligned}
	\end{eqnarray}
	so by the maximum principle $\mathcal{V}^{\frac12}\ge\epsilon\left(\tilde{r}{}^{-1}\,-\,R^{-1}\right)$ on the closure of $\mathcal{A}$.
	Because the path $\gamma$ lies within the annulus $\mathcal{A}$ for $t\in[R/2,R]$ we have
	\begin{eqnarray}
		\begin{aligned}
		\left(\mathcal{V}{}^{\frac12}\circ\gamma\right)(t)
		&\;\ge\;\epsilon\left(t^{-1}-R^{-1}\right),
		\quad \text{on $t\in[R/2,R]$, which is} \\
		\left(\mathcal{V}{}^{-\frac14}\circ\gamma\right)(t)
		&\;\le\;\epsilon^{-\frac12}\left(t^{-1}-R^{-1}\right)^{-\frac12}
		\;=\;\epsilon^{-\frac12}(tR)^{\frac12}\left(R-t\right)^{-\frac12}.
		\end{aligned}
	\end{eqnarray}
	This allows us to estimate the $g_\Sigma$-length of $\gamma$ from its $\tilde{g}_\Sigma$-length:
	\begin{eqnarray}
		\begin{aligned}
			|\dot\gamma|_{g_\Sigma}
			&\;=\;\left(\mathcal{V}{}^{-\frac14}\circ\gamma\right)\cdot|\dot\gamma|_{\tilde{g}_\Sigma}
			\;\le\;\epsilon^{-\frac12}(tR)^{\frac12}\left(R-t\right)^{-\frac12}
		\end{aligned}
	\end{eqnarray}
	on $t\in[R/2,R]$, where we used that $\gamma$ is a $\tilde{g}_\Sigma$-geodesic so $|\dot\gamma|_{\tilde{g}_\Sigma}=1$.
	We get
	\begin{eqnarray}
		\begin{aligned}
			Len_{g_\Sigma}(\gamma)
			&=\int_{R/2}^R|\dot\gamma|_{g_\Sigma}dt
			\le
			\epsilon^{-\frac12}R^{\frac12}\int_{R/2}^Rt^{\frac12}(R-t)^{-\frac12}dt
			=
			\frac{2+\pi}{4\sqrt{\epsilon}}R^{\frac32}.
		\end{aligned}
	\end{eqnarray}
	Thus $\gamma$ has finite length in the $g_\Sigma$ metric, the sought-for contradiction.
\end{proof}

\begin{theorem}[{\it cf.} Theorem \ref{ThmIntroCompletePolygon}] \label{ThmRTwoClassification}
	Assume $(\Sigma^2,g_\Sigma)$ is geodesically complete and $\triangle_\Sigma\sqrt{\mathcal{V}}\le0$.
	Then $(\Sigma^2,g_\Sigma)$ and $(\Sigma^2,\widetilde{g}_\Sigma)$ are both flat Riemannian manifolds.
\end{theorem}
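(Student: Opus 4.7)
The plan is to reduce the theorem to Lemma \ref{LemmaBiholoToCFlat} by showing that $(\Sigma^2, J_\Sigma)$ is biholomorphic to $\mathbb{C}$, and then to bootstrap flatness of $g_\Sigma$ into flatness of $\widetilde{g}_\Sigma$. The three previous lemmas are designed precisely to make this reduction possible: Lemma \ref{LemmaConformalComplete} provides completeness of the conformally changed metric, equation (\ref{EqnConformalScalar}) provides its curvature sign, and Lemma \ref{LemmaBiholoToCFlat} provides the flatness conclusion once the biholomorphism is in hand.

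First I would observe that the hypothesis $\triangle_\Sigma\sqrt{\mathcal{V}}\le 0$ is equivalent, via equation (\ref{EllipticEqnForV}), to $s\ge 0$. Plugging this into the conformal scalar curvature formula (\ref{EqnConformalScalar}) gives
\begin{eqnarray*}
\widetilde{s}_\Sigma \;=\; \mathcal{V}^{-\frac12}\left(\left|\Gamma_\Sigma{}_{ij}^k\right|^2 + \tfrac12 s\right)\;\ge\;0,
\end{eqnarray*}
so the conformally changed surface $(\Sigma^2, \widetilde{g}_\Sigma)$ has non-negative Gaussian curvature $\widetilde{K}_\Sigma = \tfrac12 \widetilde{s}_\Sigma \ge 0$. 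By Lemma \ref{LemmaConformalComplete}, this surface is also geodesically complete. Since $\Sigma^2$ is a convex subset of $\mathbb{R}^2$ it is simply connected, so $(\Sigma^2,\widetilde{g}_\Sigma)$ is a complete simply-connected Riemannian surface with $\widetilde{K}_\Sigma\ge 0$.

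Next I would invoke the Cheng--Yau parabolicity criterion \cite{CY} (as is already used inside Lemma \ref{LemmaTConstFlat}): a complete simply-connected Riemannian surface with non-negative Gaussian curvature is parabolic as a Riemann surface, hence biholomorphic to $\mathbb{C}$. Because in real dimension 2 a conformal change preserves the underlying complex structure, the Riemann surfaces $(\Sigma^2, J_\Sigma)$ and $(\Sigma^2, \widetilde{J}_\Sigma)$ coincide, so $(\Sigma^2, J_\Sigma)$ is also biholomorphic to $\mathbb{C}$. This is the main ``hard'' step of the argument — it's what forces the uniformization to produce $\mathbb{C}$ rather than the disk, and if the curvature sign had gone the other way the whole strategy would collapse.

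With $(\Sigma^2, J_\Sigma)\cong \mathbb{C}$ established, Lemma \ref{LemmaBiholoToCFlat} applies directly and yields that $g_\Sigma$ is flat, with constant coefficients in $\varphi^1,\varphi^2$ coordinates. Inspecting the proof of Lemma \ref{LemmaBiholoToCFlat} (via its use of Lemma \ref{LemmaTConstFlat}), one extracts more: the superharmonic function $\sqrt{\mathcal{V}}$ is bounded below on $\mathbb{C}$ and hence constant, so $\mathcal{V}$ is constant on $\Sigma^2$. Therefore $\widetilde{g}_\Sigma = \sqrt{\mathcal{V}}\, g_\Sigma$ is a constant multiple of the flat metric $g_\Sigma$, and is itself flat, completing the proof.
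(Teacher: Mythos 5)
Your proposal is correct and follows essentially the same route as the paper: completeness of $\widetilde{g}_\Sigma$ via Lemma \ref{LemmaConformalComplete}, non-negative curvature via (\ref{EqnConformalScalar}), Cheng--Yau parabolicity to get a biholomorphism with $\mathbb{C}$, and then constancy of the positive superharmonic function $\sqrt{\mathcal{V}}$ feeding into Lemma \ref{LemmaTConstFlat} (your invocation of Lemma \ref{LemmaBiholoToCFlat} is just the packaged form of that last step). The only cosmetic difference is that you pass through $s\ge 0$ rather than reading $\widetilde{s}_\Sigma\ge0$ directly off the first line of (\ref{EqnConformalScalar}); both are fine.
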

\begin{proof}
	By Lemma \ref{LemmaConformalComplete}, $(\Sigma^2,\widetilde{g}_\Sigma)$ is also complete, and by (\ref{EqnConformalScalar}) $\widetilde{K}_\Sigma\ge0$.
	The Cheng-Yau criterion for parabolicity states that $(\Sigma^2,\widetilde{g}_\Sigma)$ is parabolic and therefore biholomorphic to $\mathbb{C}$.
	Then $\mathcal{V}^{\frac12}$ is a positive superharmonic function on $\mathbb{C}$, so it is constant by the classical Liouville theorem.
	Lemma \ref{LemmaTConstFlat} now gives the conclusion.
\end{proof}

\begin{corollary} [{\it cf.} Corollary \ref{ThmIntroM4CompletePolygon}] \label{CorRTwoClassification}
	Assume $(M^4,J,\omega,\mathcal{X}_1,\mathcal{X}_2)$ has $s\ge0$, and assume the distribution $\{\mathcal{X}_1,\mathcal{X}_2\}$ is everywhere rank 2 (meaning $\mathcal{V}$ is nowhere zero).
	Then $M^4$ is a flat Riemannian manifold.
\end{corollary}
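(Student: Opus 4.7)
The plan is to apply Theorem~\ref{ThmRTwoClassification} to the Arnold-Liouville reduction and then pass flatness back up. The rank-$2$ hypothesis on $\{\mathcal{X}_1,\mathcal{X}_2\}$ is equivalent to $\mathcal{V}>0$ everywhere on $M^4$, so the image $\Sigma^2$ of the moment map carries no boundary edges (such edges correspond exactly to the locus $\mathcal{V}=0$). On this boundary-free $\Sigma^2$ the projection $\Phi\colon M^4\to\Sigma^2$ is a Riemannian submersion, and completeness of $(M^4,g)$ transfers to completeness of $(\Sigma^2,g_\Sigma)$: given an inextendable unit-speed geodesic $\gamma$ in $\Sigma^2$, horizontally lift it to a geodesic $\widetilde\gamma$ in $M^4$; completeness of $M^4$ forces $\widetilde\gamma$ to extend indefinitely, and its projection then extends $\gamma$.

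Next I would invoke Corollary~\ref{CorReducedScalar}: since $s\ge 0$ and $\mathcal{V}>0$,
\[
\triangle_\Sigma\sqrt{\mathcal{V}}\;=\;-\tfrac12\,s\,\sqrt{\mathcal{V}}\;\le\;0.
\]
Theorem~\ref{ThmRTwoClassification} now forces $(\Sigma^2,g_\Sigma)$ to be flat. Its proof in fact passes through Lemma~\ref{LemmaTConstFlat}, which provides the sharper statement that $\mathcal{V}$ is a positive constant on $\Sigma^2$ and that the components $G_{ij}=g_{\Sigma,ij}$ are constants in the $(\varphi^1,\varphi^2)$ chart.

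Finally I would lift these facts back to $M^4$. Constancy of $\mathcal{V}$ combined with Proposition~\ref{PropRicciFormAndScalarOnM} gives $s=-\triangle\log\mathcal{V}\equiv 0$. In the action-angle frame, the block formula~(\ref{EqnsGJOmegaM}) reads $g=G_{ij}\,d\varphi^i\otimes d\varphi^j+G^{ij}\,d\theta_i\otimes d\theta_j$, and $(G^{ij})$ is the inverse of the constant matrix $(G_{ij})$, hence itself constant. Thus $g$ has constant coefficients in the action-angle coordinates $(\varphi^1,\varphi^2,\theta_1,\theta_2)$, and any such metric on an open subset of $\mathbb{R}^4$ is flat.

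The main obstacle is justifying completeness of the quotient metric. One appeals to O'Neill's classical fact that a horizontal lift of a base geodesic in a Riemannian submersion is itself a geodesic upstairs; this does not require the fibers (torus orbits) to be totally geodesic, only that $\Phi$ be a Riemannian submersion on the locus $\mathcal{V}>0$, which follows from $\mathcal{X}^1$ and $\mathcal{X}^2$ being Killing. Once completeness is in hand, the remainder is a direct chain of references to Theorem~\ref{ThmRTwoClassification}, Lemma~\ref{LemmaTConstFlat}, and Proposition~\ref{PropRicciFormAndScalarOnM}.
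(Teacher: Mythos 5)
Your proposal is correct and follows essentially the same route as the paper: reduce to the complete boundary-free quotient $(\Sigma^2,g_\Sigma)$, apply Corollary \ref{CorReducedScalar} and Theorem \ref{ThmRTwoClassification} (via Lemma \ref{LemmaTConstFlat}) to get constant coefficients $G_{ij}$, and lift back through (\ref{EqnsGJOmegaM}). The only difference is that you spell out the completeness transfer via horizontal lifts, which the paper leaves implicit.
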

\begin{proof}
	Because the distribution $\{\mathcal{X}_1,\mathcal{X}_2\}$ has rank $2$, $\{d\varphi^1,d\varphi^2\}$ also has rank 2 and so the Arnold Liouville projection $(\varphi^1,\varphi^2,\theta_1,\theta_2)\rightarrow(\varphi^1,\varphi^2)$ is a Riemannian submersion.
	Therefore the metric polygon $(\Sigma^2,g_\Sigma)$ is complete.
	By Corollary \ref{CorReducedScalar} we have $\triangle_\Sigma\sqrt{\mathcal{V}}\le0$, so
	Theorem \ref{ThmRTwoClassification} implies $g_\Sigma$ is flat, and $g_\Sigma$ has constant coefficients in $\varphi^1$-$\varphi^2$.
	From equations (\ref{EqnsGJOmegaM}) we see $G$ is a constant matrix, so $g$ and $J$ are constant on $M^4$.
	Therefore $M^4$ is flat.
\end{proof}

{\bf Remark.}
Crucial to the proofs of Lemma \ref{LemmaTConstFlat} and Theorem \ref{ThmRTwoClassification} is the fact that a complete, simply connected $\Sigma^2$ with $K_\Sigma\ge0$ is biholomorphic to $\mathbb{C}$.
This is a simple consequence of Cheng-Yau criterion for parabolicity; see proposition 3 and corollary 1 of \cite{CY}.
The assertion that a simply connected, complete Riemann surface is parabolic if and only if it is actually $\mathbb{C}$ is a consequence of uniformization.
The study of parabolicity of Riemannian manifolds has received a great deal of attention; for a tiny sampling of this large subject see for example \cite{LT1} \cite{LT2} \cite{HK} \cite{GM} \cite{Web1}.

\section{Global behavior of the analytic coordinate system} \label{SecGlobalHarmonicCoordBehavior}

To establish our classification we want to use off-the-shelf analytic results from \cite{Web2}, but before this becomes available, a good coordinate system is required against which the momentum functions can be measured.
This is done by showing the volumetric normal function $z:\Sigma^2\rightarrow\overline{H}{}^2$ is a global coordinate, in particular that the analytic function $z=x+\sqrt{-1}y$ is unramified and surjective.
\begin{proposition}[Injectivity of $z$, {\it cf.} \S\ref{SubsecInjectivity}] \label{ThmInjectivityOfZ}
	Assume the metric polygon satisfies (A)-(F).
	Then the map $z:\Sigma^2\rightarrow\overline{H}{}^2$ is injective.
\end{proposition}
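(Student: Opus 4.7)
The plan rests on three structural facts: $y=\sqrt{\mathcal{V}}$ is harmonic on the interior of $\Sigma^2$ by hypothesis (F) and vanishes precisely on $\partial\Sigma^2$ by hypothesis (E); the domain $\Sigma^2$ is simply connected as a convex subset of $\mathbb{R}^2$, so the harmonic conjugate $x$ defined by $dx=-J_\Sigma dy$ is globally well-defined; and $\partial\Sigma^2$ is a single connected curve by hypothesis (B). Together these yield the global holomorphic map $z=x+\sqrt{-1}\,y:\Sigma^2\to\overline{H}{}^2$ sending $\partial\Sigma^2$ into the real axis.

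First I would analyze $z|_{\partial\Sigma^2}$. On each boundary edge, the Hopf boundary point lemma applied to the harmonic nonnegative $y$ with zero boundary data gives $\partial_\nu y > 0$ strictly. The identity $dx=-J_\Sigma dy$ then forces $dx/ds$ to have a definite nonzero sign along each edge, where $s$ is an arc-length parameter in a fixed orientation of the boundary curve. Since this sign is determined by the global orientation of $J_\Sigma$, it is the same along every edge; the Lipschitz regularity of the unit fields at corners (condition (D)) ensures no pathology at vertices. Therefore $x$ is strictly monotonic along the single connected boundary curve $\partial\Sigma^2$, and in particular $z|_{\partial\Sigma^2}$ is injective.

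Second, to rule out interior coincidences $z(p)=z(q)$ for distinct interior points $p\ne q$, I would examine the level set $L_c=\{y=c\}$ with $c=y(p)=y(q)>0$. The maximum principle forbids closed-loop components of $L_c$: a loop would bound a region on which the harmonic $y$ had constant boundary value $c$, forcing $y\equiv c$ there and, by real-analytic unique continuation, on all of $\Sigma^2$, contradicting $y=0$ on $\partial\Sigma^2$. On any non-compact component of $L_c$ away from critical points of $y$, the identity $|\nabla x|=|\nabla y|>0$ makes $x$ strictly monotonic along the component, so two preimages of $z(p)$ cannot lie on a single such arc. The remaining possibility---that $p$ and $q$ lie on distinct non-compact components (or that the level set passes through interior critical points of $y$, i.e., ramification points of $z$)---I would exclude by an argument-principle count for the holomorphic function $z-z(p)$ on an exhaustion of $\Sigma^2$ by bounded subdomains: the winding number of the boundary image, controlled by the monotonicity just established, forces at most one interior zero counted with multiplicity.

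The main obstacle will be the final argument-principle count. The non-compactness of $\Sigma^2$ and the unbounded ends of the polygon mean the exhausting subdomains must be chosen so that the closing interior arcs contribute controllably to the winding number, and keeping track of this in the presence of potentially multiple non-compact components of $L_c$ requires care. Hypothesis (A)---finitely many edges and thus at most two terminal rays---is what will make the combinatorics manageable, bounding how many ways $L_c$ can escape to infinity and allowing the degree count to close at one.
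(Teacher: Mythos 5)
Your boundary analysis is sound and matches the paper's first step (Lemma \ref{LemmaPreBijectiveZone}): the Hopf lemma gives $|dy|>0$ along $\partial\Sigma^2$, hence strict monotonicity of $x$ along the connected boundary and injectivity of $z|_{\partial\Sigma^2}$. Your level-set observations (no compact components of $\{y=c\}$, monotonicity of $x$ along a regular arc of $\{y=c\}$) are also correct. The gap is exactly where you flag it: the argument-principle count over an exhaustion cannot be closed from hypotheses (A)--(F). The winding number of $z-z(p)$ around $\partial\Omega_n$ is controlled on the portion of $\partial\Omega_n$ lying in $\partial\Sigma^2$, but the closing interior arcs run out toward the end of $\Sigma^2$, where you have no a priori control whatsoever on $z$: the harmonic function $y$ could oscillate at infinity, the level set $L_c$ could have infinitely many non-compact components, and the image of a closing arc could wind around $z(p)$ arbitrarily often. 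Condition (A) bounds the combinatorics of $\partial\Sigma^2$, not the interior behavior of $y$ near the end, so it does not "bound how many ways $L_c$ can escape to infinity." Establishing such control at infinity is essentially equivalent to the injectivity statement itself, so the degree count is circular as proposed.

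The paper's route around this is the missing idea: rather than controlling $z$ at infinity, it controls preimages of small sets. For a precompact, contractible $\mathcal{R}\subset\overline{H}{}^2$ meeting $\{y=0\}$ in one segment, one solves a degenerate Dirichlet problem for $y\triangle\psi-\psi_y=0$ on $\mathcal{R}$ (built from explicit Bessel-type barriers near the degenerate boundary) to get $\psi_{\mathcal{R}}$ with value $1$ on $\mathcal{R}\cap\{y=0\}$ and $0$ elsewhere on $\partial\mathcal{R}$. Pulling back, $\Psi=\psi_{\mathcal{R}}\circ z$ is bounded by $1$ and has zero boundary values on any component $K'$ of $z^{-1}(\mathcal{R})$ not meeting $\partial\Sigma^2$; comparing $C\Psi$ against the \emph{proper, nonnegative} momentum function $\varphi^1+\varphi^2$ (after placing $\Sigma^2$ in the first quadrant) for every $C\ge0$ via an open--closed argument kills $K'$. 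This ``One-Component Lemma'' applied to a thin neighborhood of a non-self-intersecting path from $\{y=0\}$ to $z(p)=z(q)$ yields the contradiction. The properness of the momentum coordinates is what substitutes for the control at infinity your winding-number count would need; without importing that ingredient (or an equivalent), your final step does not close.
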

\begin{proposition}[Surjectivity of $z$, {\it cf.} \S\ref{SubsecSurjectivity}] \label{ThmSurjectivityOfZ}
	Assume the metric polygon satisfies (A)-(F).
	Then the map $z:\Sigma^2\rightarrow\overline{H}{}^2$ is surjective.
\end{proposition}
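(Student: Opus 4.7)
The plan is to extend $z$ by Schwarz reflection to a doubled Riemann surface, identify that double with $\mathbb{C}$ via parabolicity, and then use the classical Liouville theorem for injective entire functions to conclude surjectivity.

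First I would construct the double. By conditions (A), (B), and (D), $\Sigma^2$ is a simply connected surface whose boundary $\partial\Sigma^2$ is connected (I take it to be non-empty, as otherwise $\Sigma^2=\mathbb{R}^2$ and the statement must be interpreted separately) and consists of finitely many totally-geodesic edges. Glue two copies of $\Sigma^2$ along $\partial\Sigma^2$ to form the simply connected Riemann surface $\widetilde\Sigma=\Sigma^2\cup_{\partial\Sigma^2}\Sigma^2$; because edge interiors are totally geodesic, $J_\Sigma$ extends smoothly across them, and at vertices we get at worst conical singularities that are removable for the Riemann-surface structure. Since $y=\mathrm{Im}\,z=\sqrt{\mathcal{V}}$ vanishes precisely on $\partial\Sigma^2$, Schwarz reflection extends $z$ to a holomorphic map $\tilde z:\widetilde\Sigma\to\mathbb{C}$ satisfying $\tilde z\circ\sigma=\overline{\tilde z}$, where $\sigma$ is the involution exchanging the two copies. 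Combining Proposition \ref{ThmInjectivityOfZ} with the fact that $\sigma$ fixes $\partial\Sigma^2$ pointwise, $\tilde z$ is injective on $\widetilde\Sigma$: two points in the same copy are separated by $z$, two points in opposite copies have $\tilde z$-images in the closed upper and closed lower half-planes respectively and agree only if both lie on $\mathbb{R}$, forcing them onto the fixed set of $\sigma$ where injectivity of $z$ again applies.

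The heart of the argument is showing $\widetilde\Sigma\cong\mathbb{C}$ (rather than the unit disk), via parabolicity. Use the conformally modified metric $\widetilde g_\Sigma=\sqrt{\mathcal{V}}\,g_\Sigma$ of Section \ref{SubSecConfChangeOfMetric}, whose Gaussian curvature is non-negative under condition (F) by equation (\ref{EqnConformalScalar}). Reflecting this metric across $\partial\Sigma^2$ produces a metric $\widetilde g^{\mathrm{dbl}}$ on $\widetilde\Sigma$ that is smooth away from vertex preimages and has non-negative Gaussian curvature there; any conical defects at vertex preimages contribute non-negatively to the curvature measure, which only helps. A Bochner-type barrier construction in the spirit of Lemma \ref{LemmaConformalComplete}, now applied on $\widetilde\Sigma$, yields geodesic completeness of $\widetilde g^{\mathrm{dbl}}$: if a unit-speed $\widetilde g^{\mathrm{dbl}}$-geodesic has a finite maximal interval of definition, the Laplacian comparison $\widetilde\triangle\tilde r\le\tilde r^{-1}$ produces a lower barrier for $\sqrt{\mathcal{V}}$ on an annulus, which is then leveraged into a finite-length bound in the original metric, contradicting inextendibility. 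By the Cheng-Yau criterion already invoked in Theorem \ref{ThmRTwoClassification}, $(\widetilde\Sigma,\widetilde g^{\mathrm{dbl}})$ is parabolic, so being simply connected it is biholomorphic to $\mathbb{C}$.

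With $\widetilde\Sigma\cong\mathbb{C}$ in hand, the conclusion follows at once: $\tilde z:\mathbb{C}\to\mathbb{C}$ is an injective entire function and is therefore affine, in particular surjective onto $\mathbb{C}$. Since $z(\Sigma^2)=\tilde z(\widetilde\Sigma)\cap\overline{H}{}^2$, we obtain $z(\Sigma^2)=\overline{H}{}^2$. The hard part will be the parabolicity step: one must verify completeness of $\widetilde g^{\mathrm{dbl}}$ despite potentially delicate asymptotic behavior of $\mathcal{V}$ at infinite ends of $\Sigma^2$, and confirm that vertex cone singularities have non-negative defect and are thus compatible with the Cheng-Yau argument rather than an obstruction to it. Both issues amount to running the barrier argument of Lemma \ref{LemmaConformalComplete} carefully in the presence of the reflected boundary and the isolated cone points.
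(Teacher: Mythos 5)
Your route---double $\Sigma^2$ across $\partial\Sigma^2$, extend $z$ by Schwarz reflection, prove the double is biholomorphic to $\mathbb{C}$, and invoke the fact that injective entire functions are affine---is genuinely different from the paper's proof, which for each target point $z_0$ uses the one-component lemma for the two-segment domain $\mathcal{D}_{y'}$ (Lemma \ref{LemmaTwoBoxLemma}) to produce a loop in $z(\Sigma^2)$ encircling $z_0$ and then applies the Disk Lemma \ref{LemmaDiskAroundPBPoint}. Your reflection and injectivity-of-$\tilde z$ steps are fine (the cone angles at doubled vertices are $<2\pi$, as you say), and the final deduction $z(\Sigma^2)=\overline{H}{}^2$ from $\tilde z(\widetilde\Sigma)=\mathbb{C}$ is correct. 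The entire weight of the argument therefore rests on parabolicity of the double, and that is where there is a genuine gap.

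The metric you propose to reflect, $\widetilde{g}_\Sigma=\mathcal{V}^{\frac12}g_\Sigma$, vanishes identically along $\partial\Sigma^2$, because condition (E) forces $\mathcal{V}=0$ exactly on the boundary. So the reflected tensor $\widetilde{g}^{\mathrm{dbl}}$ is degenerate along the entire gluing locus, not merely at the isolated vertex preimages; it is not a Riemannian metric on $\widetilde\Sigma$, and the Cheng--Yau criterion has nothing to act on. Worse, even on $\Sigma^2$ alone the metric $\widetilde{g}_\Sigma$ is \emph{incomplete} toward $\partial\Sigma^2$: near a smooth boundary point the Hopf lemma gives $y\sim c\,d$ with $d$ the $g_\Sigma$-distance to the boundary, so the $\widetilde g_\Sigma$-distance to the boundary is $\int_0^\epsilon\sqrt{cd}\,\mathrm{d}d<\infty$. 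This is precisely why the paper deploys the conformal change of \S\ref{SubSecConfChangeOfMetric} and Lemma \ref{LemmaConformalComplete} only in Section \ref{SectionRTwoCase}, where $\partial\Sigma^2=\varnothing$ and $\mathcal{V}>0$ everywhere. The obvious substitute, the doubled $g_\Sigma$, is complete and smooth across edges but its curvature $K_\Sigma$ has no sign (see (\ref{EqnFirstGKtaubnut})), so no curvature-based parabolicity criterion applies to it either. Note finally that, given injectivity (Proposition \ref{ThmInjectivityOfZ}), parabolicity of the double is essentially \emph{equivalent} to the surjectivity you are trying to prove---if $\widetilde\Sigma$ were the disk, $\tilde z$ could be a bounded injection and $z$ would fail to be onto---so without a working mechanism for this step the argument reformulates the proposition rather than proving it. To repair it you would need a completeness-plus-curvature (or volume-growth, or capacity) estimate for some genuine metric in the conformal class of the double, which is not supplied by the lemmas in the paper.
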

The proofs work by exploiting the interplay between the coordinate systems $(x,y)$ and $(\varphi^1,\varphi^2)$ on $\Sigma^2$, each of which satisfy and elliptic system in terms of the other.
Recall that $y\triangleq\sqrt{\mathcal{V}}\ge0$, and that $x$ is defined as its harmonic conjugate: $dx=J_\Sigma{}dy$.
Three immediate facts about the analytic map $z$ are
\begin{itemize}
	\item[{\it{i}})] $z$ has no poles
	\item[{\it{ii}})] $z$ maps boundary to boundary: $z:\partial\Sigma^2\rightarrow\{y=0\}$
	\item[{\it{iii}})] $z$ maps interior to interior:
	 $z:Int(\Sigma^2)\rightarrow{}H^2$.
\end{itemize}
Briefly, ({\it{i}}) is true because $y=\sqrt{\mathcal{V}}$ is never infinite, and ({\it{ii}}) and ({\it{iii}}) are the same as condition (E), although ({\it{iii}}) is also a consequence of the open mapping theorem.

The interplay between the coordinate systems is mediated by the barrier functions $\psi_{\mathcal{R}}$ and $\psi^1_{\mathcal{D}_{y'}}$ built in Section \ref{SubsecTheBarriers}.
The difficulty in creating these barriers is that the elliptic operator $y\triangle-\partial_y$ degenerates at the boundary $\{y=0\}$, so it is not clear whether we can assign boundary values there.
We show we can.
Then we use these barriers to prove our ``one-component'' lemmas, which states certain kinds of regions in $\overline{H}{}^2$ have just one component under $z^{-1}$.
This establishes injectivity.

For surjectivity, if coordinate-infinity in $\Sigma^2$ maps to a finite location in the $z$-plane, the image of the momentum functions must resemble an isolated pole.
But poles are much too structured for this to occur.
For example they have a winding number which can be determined within the pre-image, and the pre-image in this case (being $\Sigma^2$) does not allow this---there can be no ``loop'' around coordinate-infinity.
The main technical result expressing this idea is the ``Disk Lemma'' \ref{LemmaDiskAroundPBPoint}.

But the crucial first step, before the powerful ``one-component'' lemmas can be proved, is proving $z:\Sigma^2\rightarrow\overline{H}{}^2$ is bijective when restricted to the boundary itself, $z:\partial\Sigma^2\rightarrow\{y=0\}$.
Then using some classical elliptic theory we move outwards just a little and show there is a neighborhood $\Omega$ of the boundary $\partial\Sigma^2$ on which $z:\Omega\rightarrow{}z(\Omega)$ remains bijective.
We call this the ``bijectivity zone'' for $z$; see \S\ref{EqnBijectivityZone}.

\subsection{Construction of the barrier functions} \label{SubsecTheBarriers}

The two coordinate systems interact through the elliptic equations they satisfy.
The functions $\varphi^1$, $\varphi^2$ satisfy
\begin{equation}
	d\left(\mathcal{V}^{-1/2}J_\Sigma{}d\varphi^i\right)=0 \label{EqnMomentGlobalDegEq}
\end{equation}
which is Proposition \ref{PropDivOfVarphi}.
This is a degenerate-elliptic equation; the degeneration occurs where $\mathcal{V}=0$, which is at polygon edges.
The functions $x$, $y$ satisfy $dJ_\Sigma{}dx=dJ_{\Sigma}{}dy=0$ by Corollary \ref{CorReducedScalar}.
On any simply connected subdomain on which $z=x+\sqrt{-1}y$ is unramified, (\ref{EqnMomentGlobalDegEq}) is
\begin{eqnarray}
	\begin{aligned}
	&y\left(\frac{\partial^2\varphi^i}{\partial{}x^2}+\frac{\partial^2\varphi^i}{\partial{}y^2}\right)\,-\,\frac{\partial\varphi^i}{\partial{y}}\;=\;0.
	\end{aligned}
\end{eqnarray}
Our tool for establishing global relationships between the $(x,y)$ and $(\varphi^1,\varphi^2)$ systems is the use of barrier functions.
The first barrier we construct has support within the rectangle
\begin{eqnarray}
	\mathcal{R}_{x_0,y_0}
	=
	\left\{(x,y)\in\overline{H}{}^2\;\big|\;x\in[-x_0,x_0],\,y\in[0,y_0] \right\}
	\label{EqnBoxDef}
\end{eqnarray}
and has the explicit definition
\begin{eqnarray}
	\begin{aligned}
	&\psi_{x_0,y_0}(x,\,y)
	=
	\frac{1}{C_1}
	\cos\left(\frac{\pi}{2x_0}x\right)y\,K_1\left(\frac{\pi}{2x_0}y\right)
	-\frac{y_0}{C_1}\,K_1\left(\frac{\pi{}y_0}{2x_0}\right)
	\end{aligned} \label{EqnDefZeroBarrier}
\end{eqnarray}
where $C_1=\frac{2x_0}{\pi}-y_0K_1\left(\frac{\pi{}y_0}{2x_0}\right)$ and $K_\nu$ is the familiar modified Bessel function of the second kind.
We define $\psi_{x_0,y_0}$ to be zero whenever the expression in (\ref{EqnDefZeroBarrier}) is negative or when $(x,y)$ is outside the box $\mathcal{R}_{x_0,y_0}$.
Figure \ref{SubFig2} depicts $\psi_{x_0,y_0}$.


\begin{lemma}[Use of $\psi_{x_0,y_0}$ as a barrier] \label{LemmaBarrierProps}
	The function $\psi_{x_0,y_0}$ is $C^\infty$ on the interior of its support and satisfies
	\begin{eqnarray}
		y\left(\psi_{xx}+\psi_{yy}\right)\,-\,\psi_y\;=\;0.
	\end{eqnarray}
	It is $C^{1,\alpha}$ on the boundary $\{y=0\}$, and achieves a maximum of $1$ at $(0,0)$.
	
	Assume $f\ge0$ is any bounded function on $\mathcal{R}_{x_0,y_0}$ with $y\left(f_{xx}+f_{yy}\right)-f_y=0$.	
	If $\psi_{x_0,y_0}<f$ on the edge
	$\left\{(x,0)\,\big|\,x\in[-x_0,x_0]\right\}$,
	then $\psi_{x_0,y_0}<{}f$ on $\mathcal{R}_{x_0,y_0}$.
\end{lemma}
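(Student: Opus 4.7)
The plan is to verify the four assertions in sequence. The first two are a direct computation using the modified Bessel equation; the regularity at $\{y=0\}$ comes from the standard asymptotic expansion of $K_1$; and the comparison assertion reduces to the weak and strong maximum principles for the degenerate-elliptic operator $L\triangleq y(\partial_{xx}+\partial_{yy})-\partial_y$. The main technical obstacle will be applying the maximum principle at the characteristic line $\{y=0\}$, where $L$ degenerates.

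For the PDE, write $\psi_{x_0,y_0}=C_1^{-1}X(x)Y(y)-C_1^{-1}y_0 K_1(\pi y_0/(2x_0))$ with $X(x)=\cos(kx)$, $Y(y)=yK_1(ky)$, and $k=\pi/(2x_0)$. Then $X''=-k^2X$, and the modified Bessel equation $t^2W''+tW'-(t^2+1)W=0$ satisfied by $W=K_1$ at $t=ky$ rearranges to $yY''-Y'=k^2yY$. Combining, $L(XY)=yX''Y+X(yY''-Y')=-k^2yXY+k^2yXY=0$, and additive constants lie in $\ker L$, so $L\psi_{x_0,y_0}=0$ on $\{\psi_{x_0,y_0}>0\}$. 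Smoothness there is immediate from the real-analyticity of $K_1$ on $(0,\infty)$. Boundary regularity and the maximum at $(0,0)$ follow from the expansion $K_1(t)=1/t+\tfrac{t}{2}\ln(t/2)+O(t)$ as $t\to 0^+$: this gives $yK_1(ky)=1/k+O(y^2\ln y)$, so the $y$-derivative is $O(y\ln y)$, H\"older continuous of every exponent $\alpha<1$. In particular $yK_1(ky)\to 2x_0/\pi$ as $y\to 0^+$, so $\psi_{x_0,y_0}(0,0)=C_1^{-1}\bigl(2x_0/\pi-y_0K_1(\pi y_0/(2x_0))\bigr)=1$; since $|\cos(kx)|\le 1$ and $yK_1(ky)$ is decreasing in $y$, this is the global maximum.

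For the comparison, set $\Omega\triangleq\{\psi_{x_0,y_0}>0\}$ and $g\triangleq\psi_{x_0,y_0}-f$. The boundary of $\Omega$ decomposes into a subinterval of $\{y=0\}$ on which $g<0$ by hypothesis, and the zero-level set $\{\psi_{x_0,y_0}=0\}\cap\{y>0\}$ on which $\psi_{x_0,y_0}=0$ and hence $g=-f\le 0$. On $\Omega\cap\{y>0\}$ the operator $L$ is uniformly elliptic and $Lg=0$, so the weak maximum principle yields $g\le 0$ on $\overline\Omega$, and the strong maximum principle upgrades this to $g<0$ in $\Omega$, because on each connected component $g$ is strictly negative on part of the boundary and hence cannot be constant. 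Outside $\Omega$ we have $\psi_{x_0,y_0}\equiv 0\le f$, and strict inequality there follows by applying the strong maximum principle to $f$ itself: $f\ge 0$ satisfies $Lf=0$ and $f>\psi_{x_0,y_0}>0$ on part of the bottom edge, so $f>0$ throughout the interior of $\mathcal{R}_{x_0,y_0}$.

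The principal obstacle is that the weak and strong maximum principles must be applied at the characteristic boundary $\{y=0\}$, where $L$ degenerates and classical Hopf-type arguments do not directly apply. I would handle this either by invoking Fichera's boundary classification for degenerate-elliptic operators, which certifies that $\{y=0\}$ is a ``Dirichlet part'' of the boundary on which prescribed continuous data yield the standard maximum principle, or, more elementarily, by working on the shifted domain $\Omega\cap\{y\ge\varepsilon\}$, where $L$ is uniformly elliptic and classical comparison applies, and then letting $\varepsilon\to 0$ using the $C^{1,\alpha}$ continuity established above.
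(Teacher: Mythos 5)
Your proposal is correct and follows essentially the same route as the paper, whose entire proof is the observation that $\psi_{x_0,y_0}\le 0$ on the three non-degenerate sides of $\mathcal{R}_{x_0,y_0}$ plus the maximum principle. You simply supply the details the paper leaves implicit: the verification of $L\psi_{x_0,y_0}=0$ via the modified Bessel equation, the $C^{1,\alpha}$ boundary regularity from the small-argument expansion of $K_1$, and the handling of the degenerate line $\{y=0\}$ by clipping to $\{y\ge\varepsilon\}$ and passing to the limit --- which is exactly the device the paper itself uses in the proof of Lemma \ref{LemmaBarrierR}.
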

\begin{proof}
	This is a consequence of the maximum principle, along with the fact that $\psi_{x_0,y_0}\le0$ on three sides of the rectangle $\mathcal{R}_{x_0,y_0}$.
\end{proof}

In addition to the barrier $\psi_{x_0,y_0}$ on the box $\mathcal{R}_{x_0,y_0}$, we require similar barriers on more general domains (see Figures \ref{SubFig3} and \ref{SubFig4}).
Let $\mathcal{R}\subset\overline{H}{}^2$ be any domain on the upper half-plane with the following three characteristics:
\begin{itemize}
	\item[{\it{1}})] $\mathcal{R}\subset\overline{H}{}^2$ is precompact,
	\item[{\it{2}})] $\mathcal{R}$ is contractible, and
	\item[{\it{3}})] the closure $\overline{\mathcal{R}}$ intersects the boundary $\{y=0\}$ in a single line segment.
\end{itemize}
\begin{lemma}[Construction of a barrier $\psi_\mathcal{R}$ subordinate to a region $\mathcal{R}$] \label{LemmaBarrierR}
	Assuming $\mathcal{R}$ satisfies (1)-(3) above, there exists a function $\psi_{\mathcal{R}}$ with the following properties:
	\begin{itemize}
		\item[{\it{i}})] $y(\psi_{xx}+\psi_{yy})-\psi_y=0$ in $\mathcal{R}\setminus\{y=0\}$
		\item[{\it{ii}})] $\psi=0$ on the non-degenerate boundary $\partial\mathcal{R}\setminus\{y=0\}$
		\item[{\it{iii}})] $\psi=1$ on the degenerate boundary $\partial\mathcal{R}\cap\{y=0\}$
	\end{itemize}
\end{lemma}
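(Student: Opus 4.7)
The plan is to solve the Dirichlet problem for $L\triangleq y\partial_{xx}+y\partial_{yy}-\partial_y$ on $\mathcal{R}$ with mixed data---value 1 on the degenerate edge $E_d\triangleq\partial\mathcal{R}\cap\{y=0\}$ and value 0 on $E_n\triangleq\partial\mathcal{R}\setminus\{y=0\}$---by exhausting $\mathcal{R}$ with non-degenerate subdomains, and then using the Bessel-based barriers $\psi_{x_0,y_0}$ of Lemma \ref{LemmaBarrierProps} to certify attainment of the value 1 along $E_d$.

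First I would exhaust by $\mathcal{R}_\epsilon\triangleq\mathcal{R}\cap\{y\ge\epsilon\}$. Since $\mathcal{R}$ is contractible and intersects $\{y=0\}$ in a single segment, for all sufficiently small $\epsilon>0$ the set $\mathcal{R}_\epsilon$ is a precompact, simply connected domain in $H^2$ on which $L$ is uniformly elliptic (the coefficient of the Laplacian is at least $\epsilon$) with Lipschitz boundary. Solve the classical Dirichlet problem for $L\psi_\epsilon=0$ on $\mathcal{R}_\epsilon$ with $\psi_\epsilon=1$ on $\mathcal{R}\cap\{y=\epsilon\}$ and $\psi_\epsilon=0$ on the remainder of $\partial\mathcal{R}_\epsilon$; the data is discontinuous at the two corners where $\{y=\epsilon\}$ meets $E_n$, but one can use the bounded Perron solution there, or first mollify the data and pass to a limit. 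The maximum principle gives $0\le\psi_\epsilon\le 1$, and interior Schauder estimates give uniform $C^{2,\alpha}$ bounds on every compact subset of $\mathcal{R}\setminus\{y=0\}$. Extract a subsequential limit $\psi_\mathcal{R}=\lim\psi_\epsilon$ which is smooth on $\mathcal{R}\setminus\{y=0\}$ and satisfies (i) automatically.

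For (ii), near any point of $E_n$ the operator $L$ is uniformly elliptic and the artificial edge $\{y=\epsilon\}$ where boundary value 1 is prescribed recedes as $\epsilon\to 0$. Standard elliptic barriers (exterior-cone barriers at Lipschitz corners of $\mathcal{R}$, or explicit affine barriers on smooth pieces) show $\psi_\epsilon\to 0$ uniformly near each point of $E_n$, yielding (ii).

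The main obstacle---and the reason Lemma \ref{LemmaBarrierProps} was developed in advance---is (iii), the attainment of value 1 at the degenerate edge, where $L$ loses ellipticity. Fix $(x_*,0)$ in the relative interior of $E_d$. Choose $x_0,y_0>0$ so small that the translated rectangle $\mathcal{R}'\triangleq(x_*,0)+\mathcal{R}_{x_0,y_0}$ is contained in $\mathcal{R}$, and set $\widetilde\psi(x,y)\triangleq\psi_{x_0,y_0}(x-x_*,y)$. By Lemma \ref{LemmaBarrierProps}, $\widetilde\psi$ satisfies $L\widetilde\psi=0$ in the interior of $\mathcal{R}'$, vanishes on the three non-degenerate sides of $\partial\mathcal{R}'$, is $\le 1$ everywhere, and equals $1$ at $(x_*,0)$. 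On the intersection $\mathcal{R}'\cap\mathcal{R}_\epsilon$, the parts of $\partial(\mathcal{R}'\cap\mathcal{R}_\epsilon)$ lying in $\partial\mathcal{R}'$ satisfy $\widetilde\psi=0\le\psi_\epsilon$, while the parts lying in $\{y=\epsilon\}$ satisfy $\widetilde\psi\le 1=\psi_\epsilon$. The maximum principle on the uniformly elliptic region $\{y\ge\epsilon\}$ gives $\widetilde\psi\le\psi_\epsilon$ throughout $\mathcal{R}'\cap\mathcal{R}_\epsilon$; letting $\epsilon\to 0$ yields $\widetilde\psi\le\psi_\mathcal{R}\le 1$ on $\mathcal{R}'$. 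Since $\widetilde\psi(x_*,0)=1$, we conclude $\psi_\mathcal{R}\to 1$ as $(x,y)\to(x_*,0)$, giving (iii) at every interior point of $E_d$. (At the two endpoints of $E_d$, where $E_d$ meets $E_n$, attainment may fail in the usual discontinuous-data manner, but this is harmless for the stated conclusion.)

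The substantive difficulty is entirely in (iii); everything else is standard uniformly elliptic Dirichlet theory applied to the exhaustion. The explicit Bessel-function form of $\psi_{x_0,y_0}$ is exactly what converts the degenerate boundary condition into an enforceable one via the comparison principle.
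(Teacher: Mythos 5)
Your proposal is correct and follows essentially the same route as the paper: clip the domain at $\{y=\epsilon\}$, solve the uniformly elliptic Dirichlet problem with value $1$ on the artificial edge, pass to the limit, and use the explicit Bessel barrier $\psi_{x_0,y_0}$ of Lemma \ref{LemmaBarrierProps} as a lower comparison function to force attainment of the value $1$ along the degenerate edge. The only cosmetic difference is that the paper assembles the translated box barriers into a single lower semicontinuous subfunction $\Psi$ by taking a supremum over base points, whereas you apply the comparison pointwise; your added care with the discontinuous corner data is a welcome refinement but not a substantive departure.
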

\begin{proof}
	The ellipticity of the operator $y\triangle-\partial_y$ breaks down near the degenerate boundary $\{y=0\}$, so the classic Dirichlet theory can't be quoted.
	We can proceed using a domain-clipping method: for each $\delta>0$ let $\mathcal{R}_\delta=\mathcal{R}\cap\{y>\delta\}$, which is just clipping away a small strip from $\mathcal{R}$ near the degenerate boundary.
	In particular the operator $y\triangle-\partial_y$ is uniformly elliptic on $\mathcal{R}_\delta$.
	Let $\psi_\delta$ be the solution to the Dirichlet problem $y\triangle\psi_\delta-\partial_y\psi_\delta=0$ on $\mathcal{R}_\delta$ with boundary values $\psi_\delta=1$ on $\partial\mathcal{R}_\delta\cap\{y=\delta\}$ and $\psi_\delta=0$ on $\partial\mathcal{R}_\delta\setminus\{y=\delta\}$.
	Set $\psi_\mathcal{R}\;=\;\lim_{\delta\searrow0}\psi_\delta$.
	
	Due to ellipticity away from $\{y=0\}$, we certainly have $\psi_\mathcal{R}=0$ on $\partial{\mathcal{R}}\setminus\{y=0\}$ and $0\le\psi_{\mathcal{R}}\le1$.
	However we don't know that $\psi_\mathcal{R}$ is continuous at the degenerate boundary, or, if continuous, what its boundary values are.
	These issues are rectified if we can find a lower barrier $\Psi$ that equals $1$ at the degenerate boundary.
	To finish the lemma, we construct such a $\Psi$.
	
	Let $x'$ be any value so that the point $(x',0)$ is in the degenerate boundary $\partial\mathcal{R}\cap\{y=0\}$.
	Then there is some $\epsilon=\epsilon(x')>0$ so that the box $\{(x,y)\,|\,x\in(x'-\epsilon,x'+\epsilon),\,y\in(0,\epsilon)\}$ lies within $\mathcal{R}$.
	Subordinate to each such box is the function given by $\psi_{x'}(x,y)=\psi_{\epsilon,\epsilon}(x-x',y)$ where $\psi_{\epsilon,\epsilon}$ is the barrier of Lemma \ref{LemmaBarrierProps}.
	By construction, we have the support $supp\,\psi_{x'}$ within $\mathcal{R}$, but also that $\psi_{x'}$ is continuous at the boundary and in fact $\psi_{x'}(x',0)=1$.
	
	Then define $\Psi$ to be the supremum of all such barriers.
	That is:
	\begin{eqnarray}
		\Psi(x,y)\;=\;\sup_{x'\in\partial\mathcal{R}\cap\{y=0\}}\left\{\psi_{x'}(x,y),\,0\right\}. \label{EqnLowerSubBarrierDef}
	\end{eqnarray}
	Then $\Psi$ a lower semicontinuous subfunction, and $0\le\Psi\le1$.
	Its support is a neighborhood of the degenerate boundary portion within $\mathcal{R}$.
	Using any $\psi_{x'}$ as a lower barrier for $\Psi$, clearly $\Psi=1$ at the degenerate boundary.
	By construction of $\psi_\delta$ we have $\Psi<\psi_\delta\le1$ (on their common support), so sending $\delta\rightarrow0$ gives $\Psi\le\psi\le1$.
	This produces a sandwich at the degenerate boundary, so $\psi_{\mathcal{R}}$ is continuous and equals 1 there.
\end{proof}

\noindent\begin{figure}[h!]
	\centering
	\vspace{-0.15in}
	\hspace{-0.05in}
	\noindent\begin{subfigure}[b]{0.4\textwidth} 
		\centerline{\includegraphics[scale=0.25]{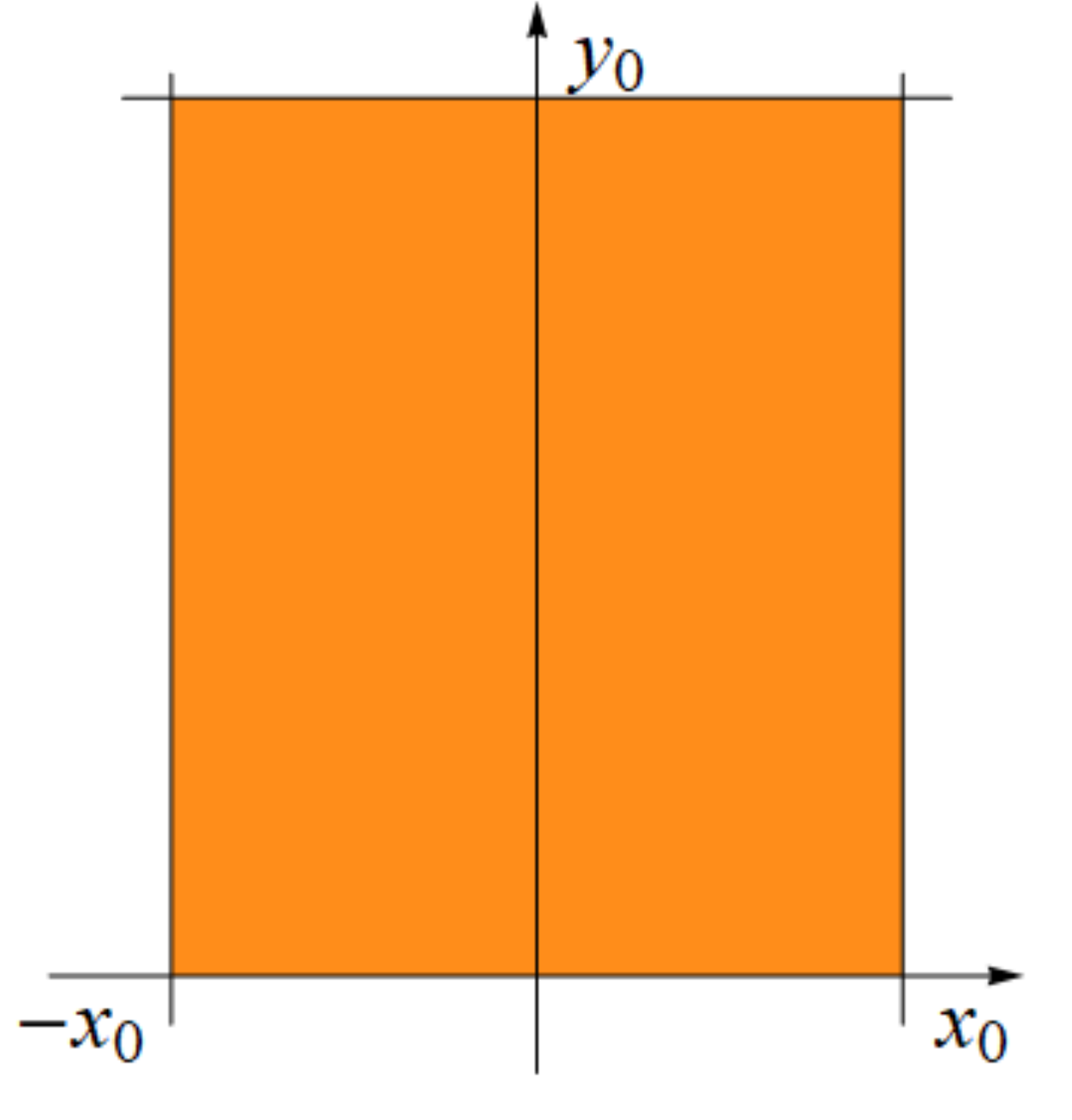}}
		\caption{The box $\mathcal{R}_{x_0,y_0}$.
			\label{SubFig1}}
	\end{subfigure}
	\hspace{0.05in}
	\begin{subfigure}[b]{0.55\textwidth} 
		\centerline{\includegraphics[scale=0.55]{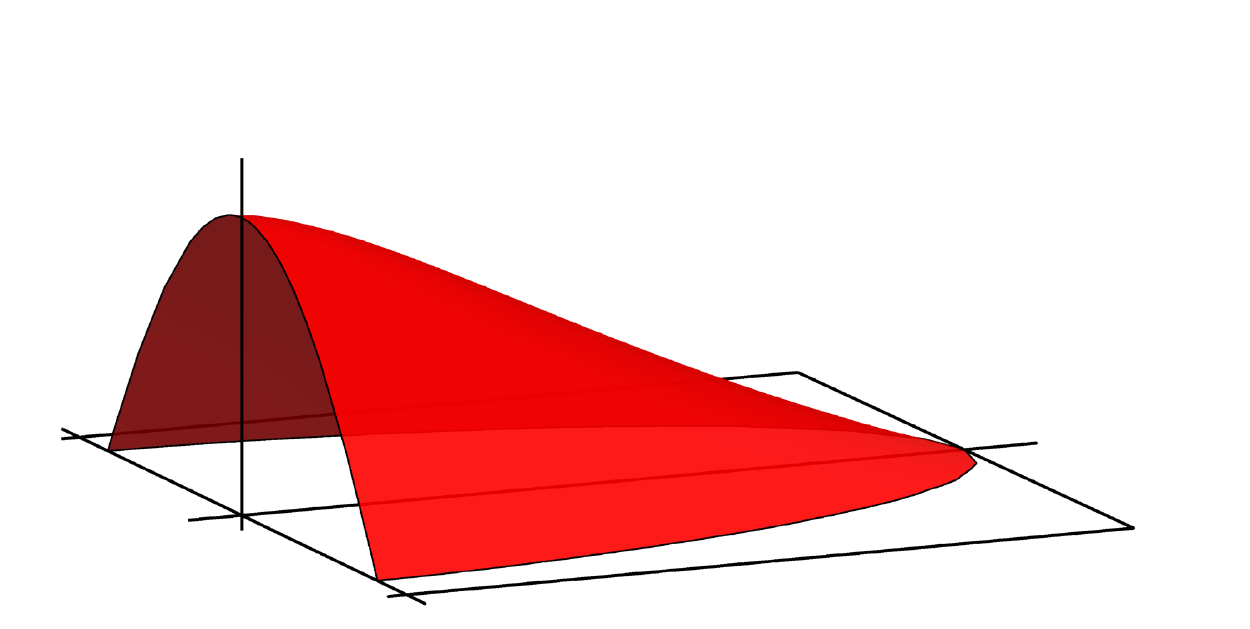}}
		\caption{Solution $\psi_{x_0,y_0}$ subordinate to $\mathcal{R}_{x_0,y_0}$
			\label{SubFig2}}
	\end{subfigure}
	
	\begin{subfigure}[b]{0.4\textwidth} 
		\centerline{\includegraphics[scale=0.25]{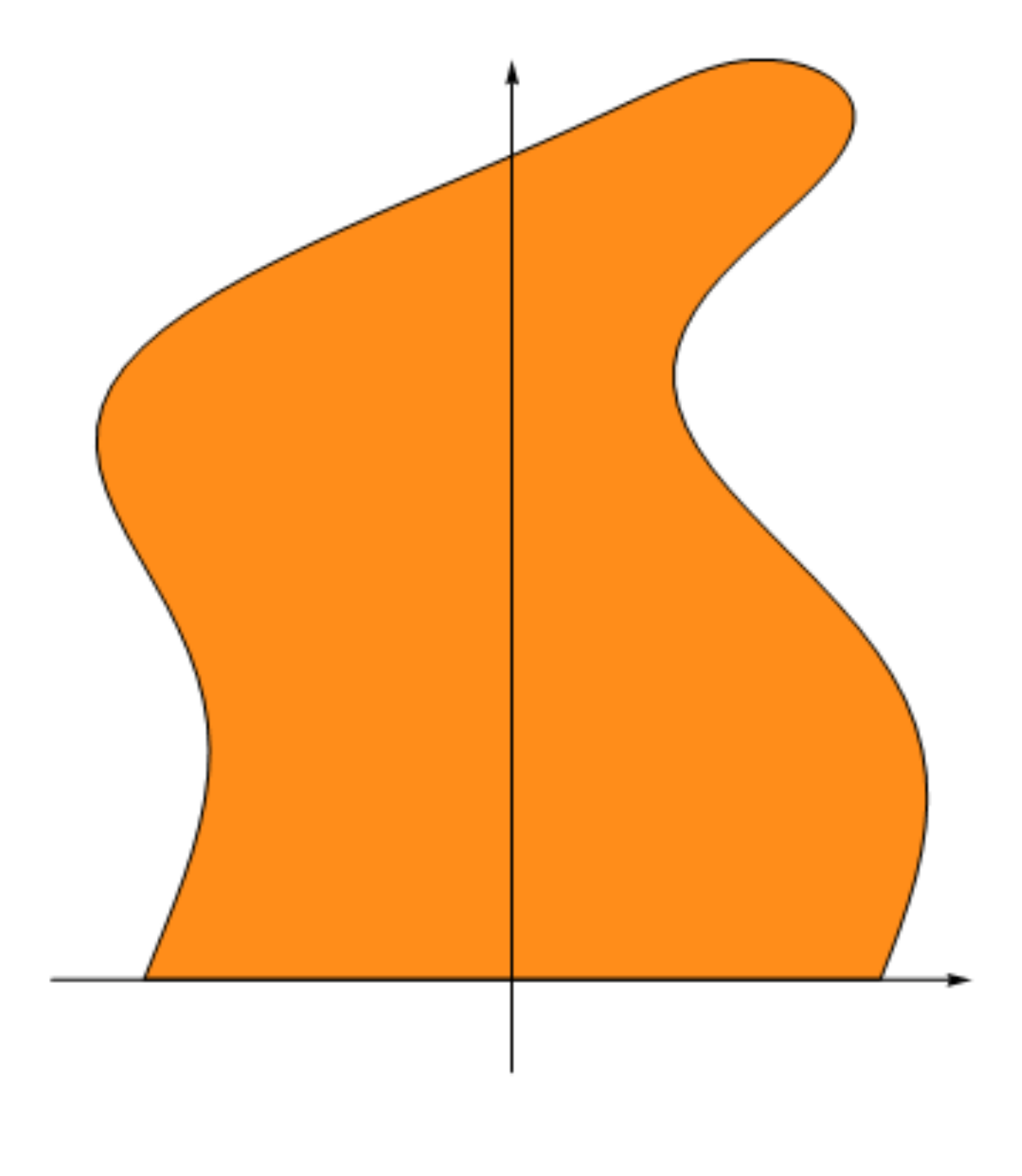}}
		\caption{A region $\mathcal{R}$ satisfying (1)-(3). \label{SubFig3}}
	\end{subfigure}
	\hspace{0.05in}
	\begin{subfigure}[b]{0.55\textwidth} 
		\centerline{\includegraphics[scale=0.45]{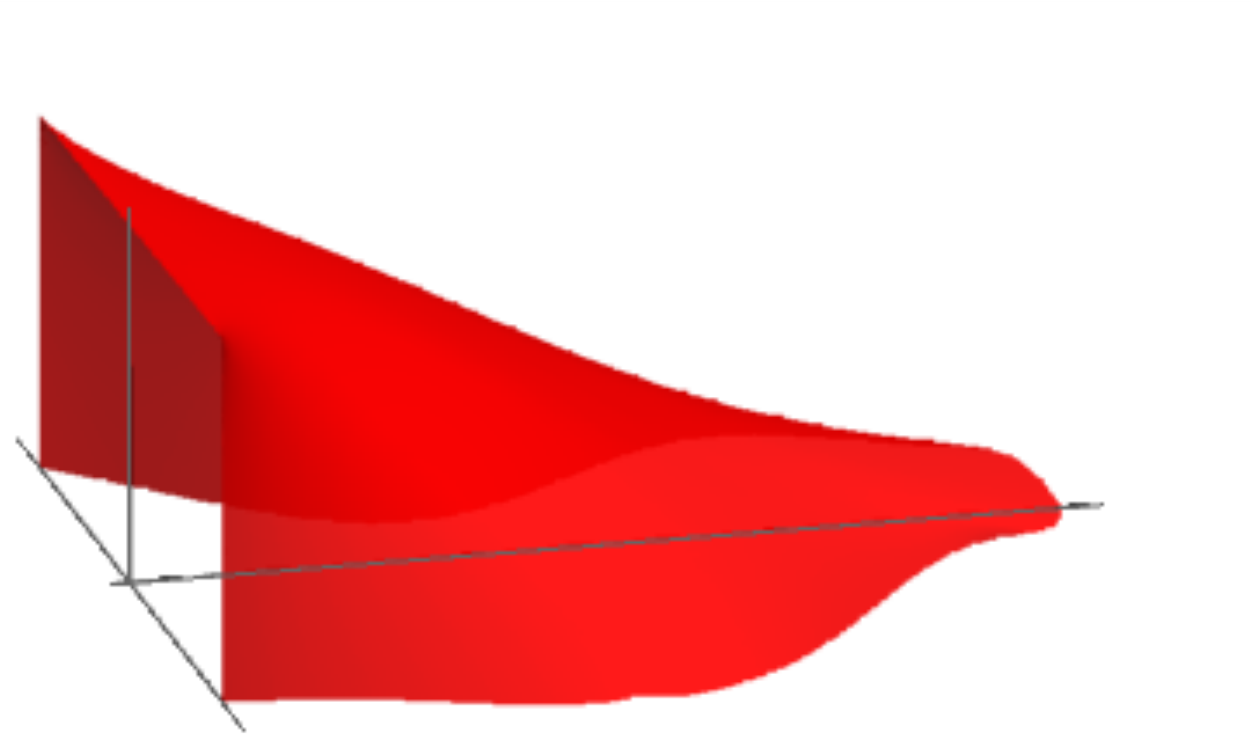}}
		\caption{Solution $\psi_{\mathcal{R}}$ subordinate to $\mathcal{R}$.
			\label{SubFig4}}
	\end{subfigure}
	
	\begin{subfigure}[b]{0.4\textwidth} 
		\centerline{\includegraphics[scale=0.25]{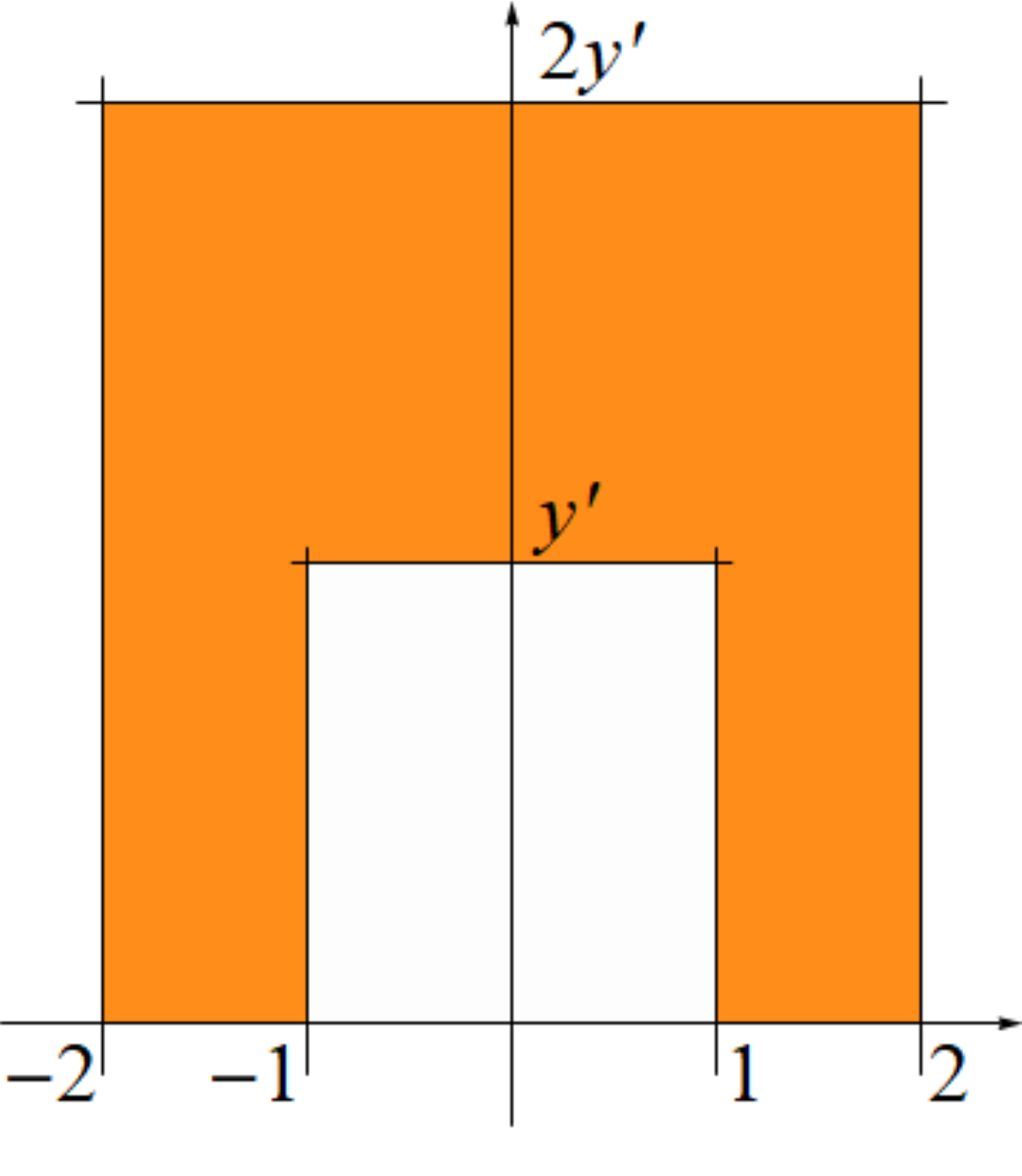}}
		\caption{The domain $\mathcal{D}_{y'}$.
			\label{SubFigRectDom}}
	\end{subfigure}
	\hspace{0.05in}
	\begin{subfigure}[b]{0.55\textwidth} 
		\centerline{\includegraphics[scale=0.5]{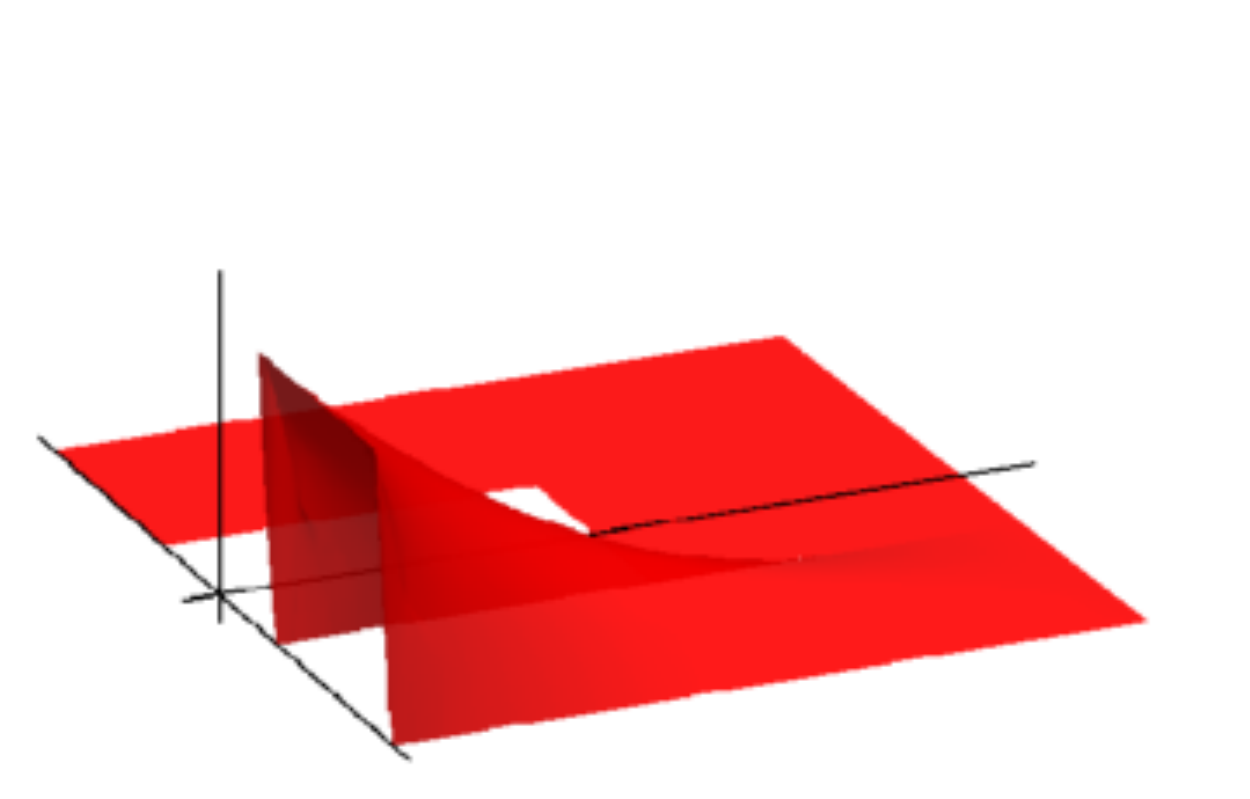}}
		\caption{Solution $\psi^2_{\mathcal{D}{}_{y'}}$ subordinate to $\mathcal{D}_{y'}$
			\label{SubFig6}}
	\end{subfigure}
	
	\caption{\it The three barrier types developed in Section \ref{SubsecTheBarriers}. }\label{FigBarriers}
\end{figure}

We shall require a barrier on a region that intersects the degenerate boundary along two different segments.
Specifically, let $\mathcal{D}_{y'}$ be the domain
\begin{eqnarray}
	\begin{aligned}
		\mathcal{D}_{y'}
		\;=\;\mathcal{R}_{2,2y'}\,\setminus\,\overline{\mathcal{R}_{1,y'}}.
	\end{aligned} \label{DomainTwoIntsWithBoundary}
\end{eqnarray}
The closure of this region has two line segments that intersect the degenerate boundary: $l_1=\{y=0,x\in[-2,-1]\}$ and $l_2=\{y=0,x\in[1,2]\}$.
We define two barriers: $\psi^1_{\mathcal{D}_{y'}}$ which is $1$ along $l_1$ and $0$ along $l_2$ and $\psi^2_{\mathcal{D}_{y'}}$ which is $1$ along $l_2$ and $0$ along $l_1$, and both are zero at the non-generate part of $\partial\mathcal{D}_{y'}$.
See Figures \ref{SubFigRectDom} and \ref{SubFig6}.
\begin{lemma}[Two barriers subordinate to $\mathcal{D}_{y'}$] \label{LemmaBarrierD}
	There is a function $\psi^1_{\mathcal{D}_{y'}}$ satisfying $y\triangle\psi-\partial_y\psi=0$ in $\mathcal{D}_{y'}$ with the boundary data that $\psi^1_{\mathcal{D}_{y'}}$ equals $1$ on $l_1$ and equals zero on all other boundary points of $\mathcal{D}_{y'}$.
	
	There is a function $\psi^2_{\mathcal{D}_{y'}}$ satisfying $y\triangle\psi-\partial_y\psi=0$ in $\mathcal{D}_{y'}$ with the boundary data that $\psi^2_{\mathcal{D}_{y'}}$ equals $1$ on $l_2$ and equals zero on all other boundary points of $\mathcal{D}_{y'}$.
\end{lemma}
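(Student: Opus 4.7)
I would follow the same clipping strategy as in the proof of Lemma \ref{LemmaBarrierR}, using the domains $\mathcal{D}_{y',\delta} = \mathcal{D}_{y'} \cap \{y > \delta\}$ on which the operator $y\triangle - \partial_y$ is uniformly elliptic. For each $\delta > 0$ and each $j \in \{1,2\}$, solve the standard Dirichlet problem for $\psi^j_\delta$ with boundary data $1$ on the portion of $\partial\mathcal{D}_{y',\delta} \cap \{y=\delta\}$ lying above $l_j$ and $0$ elsewhere on $\partial \mathcal{D}_{y',\delta}$, then set $\psi^j_{\mathcal{D}_{y'}} = \lim_{\delta \to 0^+} \psi^j_\delta$. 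As in Lemma \ref{LemmaBarrierR}, $0 \le \psi^j_\delta \le 1$, and standard elliptic theory gives existence and monotonicity of the limit, smoothness and the correct PDE on the interior, and the correct boundary values $0$ on $\partial \mathcal{D}_{y'} \setminus \{y=0\}$. The only new issue is verifying that $\psi^j_{\mathcal{D}_{y'}} = 1$ on $l_j$ while simultaneously $\psi^j_{\mathcal{D}_{y'}} = 0$ on the other segment $l_{3-j}$.

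To show $\psi^j_{\mathcal{D}_{y'}} = 1$ on $l_j$, I would mimic the lower-subbarrier argument of Lemma \ref{LemmaBarrierR} essentially verbatim. Define $\Psi_j$ as the pointwise supremum of translated box barriers $\psi_{\epsilon,\epsilon}(\cdot - (x',0))$ over points $(x',0) \in l_j$ and $\epsilon > 0$ small enough that the support box $(x'-\epsilon, x'+\epsilon) \times (0,\epsilon)$ lies inside $\mathcal{D}_{y'}$. Since each translate has values $\le 1$ that are dominated by $\psi^j_\delta$ on the clipped boundary, the maximum principle gives $\Psi_j \le \psi^j_\delta$ for every $\delta$, hence $\Psi_j \le \psi^j_{\mathcal{D}_{y'}} \le 1$. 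Because $\Psi_j$ is continuous and identically $1$ on $l_j$, so is $\psi^j_{\mathcal{D}_{y'}}$.

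The essential new step is upper-barrier control at $l_{3-j}$. For an interior point $(x',0) \in l_{3-j}$ choose $\eta>0$ small enough that $B_{x',\eta} = (x'-\eta,x'+\eta) \times (0,\eta)$ lies in $\mathcal{D}_{y'}$, and consider $f_\eta(x,y) = 1 - \psi_{\eta,\eta}(x-x', y)$. Since both the constant $1$ and $\psi_{\eta,\eta}$ solve $y\triangle\psi - \partial_y\psi = 0$, so does $f_\eta$, and by Lemma \ref{LemmaBarrierProps} we have $f_\eta \equiv 1$ on the three non-degenerate sides of $B_{x',\eta}$, $f_\eta \ge 0$ on the degenerate side, and $f_\eta(x',0) = 0$. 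Comparing on $\partial(B_{x',\eta} \cap \{y > \delta\})$: on top and lateral sides $\psi^j_\delta \le 1 = f_\eta$, and on the clipped bottom $\{y=\delta\}$, which sits above $l_{3-j}$, the boundary data force $\psi^j_\delta = 0 \le f_\eta$. The maximum principle gives $\psi^j_\delta \le f_\eta$ inside $B_{x',\eta}$, and passing $\delta \to 0$ and then $(x,y) \to (x',0)$ yields $\psi^j_{\mathcal{D}_{y'}}(x',0) = 0$.

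The main obstacle is this last upper-barrier step: one must ensure the comparison box lies entirely inside $\mathcal{D}_{y'}$ with its clipped bottom entirely above $l_{3-j}$. This is automatic for $x'$ in the interior of $l_{3-j}$ provided $\eta$ is smaller than the distance from $x'$ to the endpoints of $l_{3-j}$ and smaller than $y'$, but the endpoints where $l_{3-j}$ meets the non-degenerate boundary of $\mathcal{D}_{y'}$ require a separate corner argument; this follows from $\psi^j_{\mathcal{D}_{y'}}$ already being known to vanish on both boundary pieces meeting at that corner together with a standard barrier bound from the clipped problems. With both boundary values established, $\psi^j_{\mathcal{D}_{y'}}$ is continuous up to the entire boundary with the claimed data, completing the construction.
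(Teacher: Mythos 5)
Your proposal is correct and follows the same route as the paper: clip the domain to $\{y>\delta\}$, solve the uniformly elliptic Dirichlet problems there, pass to the limit $\delta\searrow0$, and recover the value $1$ on $l_j$ by taking the supremum of translated box barriers exactly as in Lemma \ref{LemmaBarrierR}. Your additional upper-barrier step---comparing $\psi^j_\delta$ against $1-\psi_{\eta,\eta}(x-x',y)$ on small boxes over interior points of $l_{3-j}$ to force the boundary value $0$ there---is a verification the paper's one-line proof leaves implicit, and you carry it out correctly; the residual vagueness at the two corner points where $l_{3-j}$ meets the non-degenerate boundary is minor and no worse than what the paper itself omits.
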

\begin{proof}
	This follows after using the exhaustion method from Lemma \ref{LemmaBarrierR}.
	To recap, we solve the Dirichlet problem on the ``clipped'' region $\mathcal{D}\cap\{\delta>0\}$, and send $\delta\searrow0$.
	Then one must prove $\psi^1_{\mathcal{D}_{y'}}$ or $\psi^2_{\mathcal{D}_{y'}}$ converges to a solution with the correct boundary values.
	This is achieved by once again constructing a lower barrier at the boundary as was done in (\ref{EqnLowerSubBarrierDef}).
\end{proof}

{\bf Remark}. 
The barrier $\psi_{\mathcal{R}}$ is used to prove the ``one-component lemma'' \ref{LemmaBoxLemma}, which is crucial for global injectivity.
The barrier $\psi^1_{\mathcal{D}_{y'}}$ is used to prove our second ``one-component lemma'' \ref{LemmaTwoBoxLemma}, which is crucial for global surjectivity.
The only one of our barriers that was explicit, $\psi_{x_0,y_0}$, will not be used any further.

\subsection{The bijectivity zone near $\partial\Sigma^2$.} \label{EqnBijectivityZone}

The behaviors of the coordinate systems are most tightly constrained at the boundaries $\partial\Sigma^2$ and $\{y=0\}$.
A simple argument using the classical Hopf Lemma \cite{GT} shows that a certain ``bijectivity zone'' $\Omega$ of $z$ must extend inward from the boundary some small way; see Figure \ref{FigBijZone}.
The starting point for global bijectivity is establishing the bijectivity of $z$ on this small zone.
\noindent\begin{figure}[h!]
	\centering
	\includegraphics[scale=0.35]{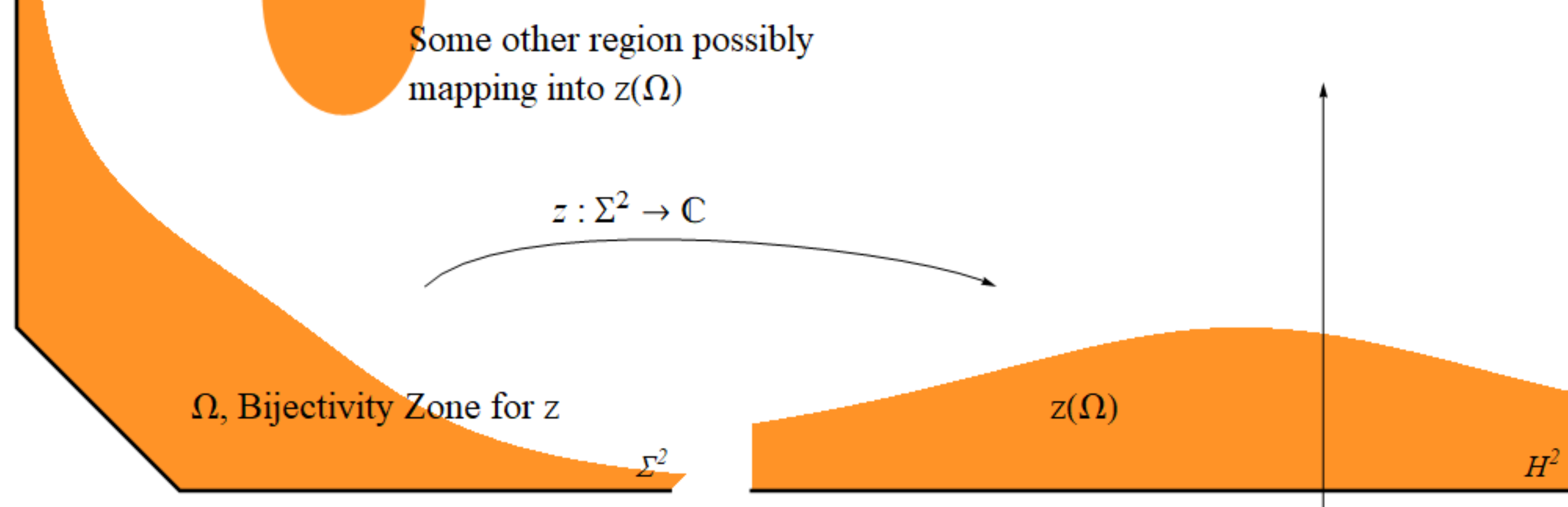}
	\caption{\it Depiction of the ``bijectivity zone for $z$'' established in Lemmas \ref{LemmaPreBijectiveZone}, \ref{LemmaPiecewise}, and \ref{LemmaBijectiveNearBoundary}.
	Later, Lemma \ref{LemmaCloseBijectivityZInv} will remove the possibility---depicted in the upper left---of any other region that might also map into $z(\Omega)$.}\label{FigBijZone}
\end{figure}
\begin{lemma} \label{LemmaPreBijectiveZone}
	Assume $(\Sigma^2,g_\Sigma)$ satisfies (A)-(F) of the introduction.
	Then $z$ maps $\partial\Sigma^2$ injectively into the boundary $\{y=0\}$.
	There is a neighborhood $\Omega$ of $\partial\Sigma^2$---which we call the ``bijectivity zone''---on which $z$ remains injective.
\end{lemma}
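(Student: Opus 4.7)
The plan is to establish three claims: (i) $z(\partial\Sigma^2)\subset\{y=0\}$; (ii) $z\vert_{\partial\Sigma^2}$ is injective; and (iii) $z$ is injective on some open neighborhood of $\partial\Sigma^2$. Claim (i) is immediate from property (E), which forces $\mathcal V=0$ along each boundary segment, hence $y=\sqrt{\mathcal V}=0$ on $\partial\Sigma^2$. For (ii), I would first show that $x=\mathrm{Re}(z)$ is strictly monotone along each open edge. By condition (F), $y$ is $g_\Sigma$-harmonic; by (E) it is strictly positive in $\mathrm{Int}(\Sigma^2)$; and by (D) the edges are smooth, so the classical Hopf boundary-point lemma applies and gives $\partial y/\partial n>0$ at every interior edge point. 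Since $dx=J_\Sigma\,dy$ rotates $dy$ by a right angle, $dx$ restricts to a nowhere-vanishing tangential form along each open edge, so $x$ is strictly monotone there.

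To transfer monotonicity across corners, I would use that $z$ is holomorphic and hence orientation-preserving. Traversing $\partial\Sigma^2$ positively (with $\mathrm{Int}(\Sigma^2)$ on the left), $z$ carries the boundary to $\{y=0\}$ traversed with $H^2$ on its left, i.e.\ in a single globally fixed direction; continuity of $z$ across corners then forces the direction of monotonicity of $x$ to match from one edge to the next. Condition (B) says $\partial\Sigma^2$ has at most one connected component, so $x$ is strictly monotone along the entire boundary curve, giving injectivity of $z\vert_{\partial\Sigma^2}$.

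For (iii), I would build the bijectivity zone pointwise and then glue. At a non-corner point $p\in\partial\Sigma^2$, $dy(p)\neq 0$ gives $dz(p)\neq 0$, so the inverse function theorem yields a half-disk neighborhood $U_p\subset\Sigma^2$ on which $z$ is a diffeomorphism onto a half-disk in $\overline{H}{}^2$. At a corner $p$ of interior angle $\alpha\in(0,\pi)$, I would apply standard elliptic regularity on a wedge (with homogeneous Dirichlet data for $y$ on the two adjacent edges, which hold by (E)) to obtain the leading asymptotics $y\sim c\, r^{\pi/\alpha}\sin(\pi\theta/\alpha)$ with $c>0$ in a conformal straightening chart, so that $z$ takes the local normal form $w\mapsto w^{\pi/\alpha}$; this is injective on a small wedge neighborhood $U_p$ of $p$ even though $dz(p)=0$. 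The union $\Omega=\bigcup_{p}U_p$ is open, and after shrinking each $U_p$, the boundary injectivity of (ii) together with continuity of $z$ force the images $z(U_p)$ to be pairwise disjoint outside their common boundary trace, yielding injectivity of $z$ on $\Omega$.

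The hard part will be the corner analysis: justifying the asymptotics $y\sim c\, r^{\pi/\alpha}\sin(\pi\theta/\alpha)$ using only conditions (D)--(F) and Lipschitz regularity at corners, and confirming that these asymptotics truly force the local form $z\sim w^{\pi/\alpha}$ with no spurious additional ramification. A secondary technical point is the non-compactness of $\partial\Sigma^2$: the neighborhoods $U_p$ may have to shrink toward the ends of $\Sigma^2$, but since only openness of $\Omega$ (not uniform thickness) is claimed, this causes no difficulty.
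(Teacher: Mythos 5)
Your claims (i) and (ii) follow the paper's own route: $y=\sqrt{\mathcal V}$ vanishes on $\partial\Sigma^2$ by (E), the Hopf lemma gives $|dy|>0$ at interior edge points since $y$ is $g_\Sigma$-harmonic by (F) and positive in the interior, and $dx=J_\Sigma\,dy$ then makes $x$ strictly monotone along each edge. Your orientation argument for propagating the direction of monotonicity across the finitely many corners is a legitimate filling-in of a step the paper asserts without detail ("therefore $z:\partial\Sigma^2\to\{y=0\}$ is injective"). Your corner analysis in (iii) is heavier machinery than the paper uses (the paper simply asserts a semi-disk of local injectivity at every boundary point), and, as you flag yourself, extracting $z\sim w^{\pi/\alpha}$ from merely Lipschitz data and ruling out extra ramification coming from the error terms is not routine; but that is not where the real problem lies.

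The genuine gap is the gluing. You assert that after shrinking, "the images $z(U_p)$ are pairwise disjoint outside their common boundary trace." For two overlapping semi-disks this is false as stated, since they share interior points and hence their images overlap in an open subset of $H^2$; and even on the charitable reading $z(U_p)\cap z(U_q)=z(U_p\cap U_q)$, injectivity of $z$ on $U_p\cup U_q$ does not follow from injectivity on each piece plus injectivity on $\partial\Sigma^2$: a point of $U_p\setminus U_q$ and a point of $U_q\setminus U_p$ could still share an image. This is precisely the difficulty the paper's proof is built around. The paper takes a locally finite cover $\{\Omega_{p_i}\}$, inductively forms $\Omega_{i+1}=\Omega_i\cup\bigl(\Omega_{p_{i+1}}\setminus z^{-1}\bigl(\overline{z(\Omega_i)}\bigr)\bigr)$ so that injectivity on $\Omega_{i+1}$ holds by construction, and then uses the already-established injectivity of $z$ on $\partial\Sigma^2$ to check that this excision never removes a boundary point, so the union remains a neighborhood of $\partial\Sigma^2$. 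Your shrinking idea can likely be repaired (global monotonicity of $x$ along the connected boundary separates the images of non-adjacent arcs, and adjacent neighborhoods can be forced into a common injectivity semi-disk by refining the cover), but as written the decisive step of the lemma is asserted rather than proved.
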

\begin{proof}
	By Hypothesis (C), $g$ is differentiable up to smooth points of $\partial\Sigma^2$ and Lipschitz at corner points.
	The harmonic function $y=\sqrt{\mathcal{V}}$ on $\Sigma^2$ is zero precisely on $\partial\Sigma^2$, by Hypothesis (E), meaning $z:\partial\Sigma^2\rightarrow\{y=0\}$ and $z^{-1}(\{y=0\})\subseteq\partial\Sigma^2$.
	Then because the Laplacian has either smooth or at worst has Lipschitz coefficents (at the corner points), $y$ is smooth everywhere except possibly at corner points where it is Lipschitz.
	Because $y\equiv0$ on $\partial\Sigma^2$ and $y\ge0$ on $\Sigma^2$, the classical Hopf Lemma \cite{GT} states that $|dy|>0$ at the boundary.
	
	Because $z=x+\sqrt{-1}y$ is an analytic function we have $|dx|=|dy|$ on $\Sigma^2$ so in particular $|dx|>0$ on $\partial\Sigma^2$.
	Therefore $z:\partial\Sigma^2\rightarrow\{y=0\}$ is injective.
	This map between 1-dimensional manifolds is smooth on segments of $\partial\Sigma^2$ and continuous at corner points.
	By smoothness $|dz|>0$ on some neighborhood $\Omega'$ of the boundary.
	In particular $z$ is locally injective on $\Omega'$, meaning point $p\in\partial\Sigma^2$ has a precompact neighborhood $\Omega_p\subset\Sigma^2$ which is a semi-disk on which $z:\Omega_p\rightarrow{}z(\Omega_p)$ is injective.
	
	We shall create a subset $\Omega\subset\Omega'$ on which $z$ is injective, by piecing together refinements of the neighborhoods $\Omega_p$.
	This will be tied to an exhaustion of $\partial\Sigma^2$, where $z$ is already known to be injective.
	To build the exhaustion of $\partial\Sigma^2$, cover $\partial\Sigma^2$ with countably many of the semi-disks $\{\Omega_{p_i}\}_i$ in such a way that any compact subset of $\partial\Sigma^2$ intersects just finitely many of these semi-disks.
	Set $L_i=\bigcup_{j=1}^i\Omega_{p_j}\cap\partial\Sigma^2$.
	Then $L_1\subset{}L_2\subset\dots$ is our exhaustion of the boundary.
	
	To create $\Omega$, first set $\Omega_1=\Omega_{p_1}$.
	For an induction argument, assume nested open sets $\Omega_{1}\subset\dots\subset\Omega_i$ have been created so that $L_i\subset\Omega_i$ and so that $z:\Omega_i\rightarrow\overline{H}{}^2$ is injective.
	To create $\Omega_{i+1}$, first set $\Omega'_{i+1}=\Omega_i\cup\Omega_{p_i}$
	Certainly $L_{i+1}\subset\Omega'_{i+1}$ and $z$ is locally injective on $\Omega_{i+1}$, but possibly it is no longer globally injective.
	To fix this, set $\Omega_{i+1}=\Omega_i\cup\left(\Omega_{p_i}\setminus{}z^{-1}\big(\overline{z(\Omega_i)}\big)\right)$.
	Now $z:\Omega_{i+1}\rightarrow\mathbb{C}$ is certainly injective and we retain the nesting $\Omega_i\subseteq\Omega_{i+1}$.	
	However, we might have removed too much: $\Omega_{i+1}$ is still open, but we might have removed points of $L_{i+1}$.
	
	To rule this out, we use the injectivity of $z$ on $\partial\Sigma^2$ itself.
	If some point $p\in{}L_{i+1}\setminus\overline{L_i}$ was removed, this means $z(p)\in\overline{z(\Omega_i)}$.
	By continuity and the fact that $z^{-1}(\{y=0\})\subset\partial\Sigma^2$ (which is condition (\textit{ii}) above), necessarily $z(p)\in{}z(L_i)$.
	But $z$ is injective along the boundary, so $p\in{}L_i$, contradicting the fact that $p\in{}L_{i+1}\setminus\overline{L_i}$.
	Therefore the open set $\Omega_{i+1}$ still contains $L_{i+1}$.
	
	Because $z$ is injective on each $\Omega_i$, it is injective on the open set $\Omega=\bigcup_i\Omega_i$.
	Because $\Omega_i$ contains $L_i$, certainly $\Omega$ contains $\partial\Sigma^2$.
	This concludes the proof.
\end{proof}

\begin{lemma}[Piecewise linearity at the boundary] \label{LemmaPiecewise}
	Assume $(\Sigma^2,g_\Sigma)$ satisfies (A)-(F) of the introduction.
	The map $z:\partial\Sigma^2\rightarrow\{y=0\}$ is surjective.
	Both this map and its inverse, the ``outline map'' $(\varphi^1,\varphi^2):\{y=0\}\rightarrow\partial\Sigma^2$, are piecewise linear with finitely many Lipschitz points.
\end{lemma}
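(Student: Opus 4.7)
The plan is to work inside the bijectivity zone $\Omega$ from Lemma \ref{LemmaPreBijectiveZone}, where $z=x+\sqrt{-1}y$ is a genuine smooth coordinate near $\partial\Sigma^2$ and the momentum functions satisfy the degenerate-elliptic equation $y(\varphi^i_{xx}+\varphi^i_{yy})-\varphi^i_y=0$ of (\ref{EqnMomentGlobalDegEq}). I would first establish piecewise linearity of the outline map on each smooth edge by evaluating this PDE and one of its $y$-derivatives on the boundary $\{y=0\}$, and then deduce surjectivity by combining that linearity with the injectivity of Lemma \ref{LemmaPreBijectiveZone} together with the non-compact structure of $\partial\Sigma^2$ supplied by (A) and (B).

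For linearity, fix a smooth (non-corner) point $p\in\partial\Sigma^2$. Condition (C) combined with interior elliptic regularity (the operator $y\triangle-\partial_y$ is strictly elliptic away from $\{y=0\}$) provides enough smoothness of $\varphi^i$ up to the smooth boundary segment to evaluate first- and second-order derivatives there. Restricting the PDE to $\{y=0\}$ gives $\varphi^i_y=0$ on that segment; differentiating the PDE once in $y$ and then restricting to $\{y=0\}$ gives $\varphi^i_{xx}=0$ on the segment. Hence $x\mapsto\varphi^i(x,0)$ is affine on each edge. By (A) there are finitely many edges, so the outline map is piecewise linear with finitely many break points; boundary injectivity from Lemma \ref{LemmaPreBijectiveZone} forces adjacent affine pieces to match continuously at corner images with generically different slopes, producing the claimed Lipschitz points.

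For surjectivity, pushing the boundary Taylor expansion one order further (the odd-order $y$-coefficients are forced to vanish by the PDE) gives $\varphi^i=\alpha_i x+\beta_i+c_i(x)y^2+O(y^4)$ along each smooth edge. Then the Jacobian $A=\partial(\varphi^1,\varphi^2)/\partial(x,y)$ satisfies $\det(A)\sim C(x)y$ near the boundary with $C(x)=2(c_2(x)\alpha_1-c_1(x)\alpha_2)$, so by (\ref{EqnMetricConstruction}) the induced metric on the edge is comparable to $C(x)(dx^2+dy^2)$ and arclength is strictly monotone in $x$. Injectivity of $z$ on the connected boundary curve (Lemma \ref{LemmaPreBijectiveZone}) now forces $x$ to be globally strictly monotone along $\partial\Sigma^2$, and since by (A) the two ends of $\partial\Sigma^2$ are infinite rays, the $x$-image must be unbounded in both directions, hence equal to all of $\{y=0\}$.

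The main obstacle is justifying the boundary regularity needed to evaluate the degenerate PDE at $\{y=0\}$: because $y\triangle-\partial_y$ loses uniform ellipticity there, Schauder estimates up to the boundary are not directly available. However, (C) gives smoothness of $g_\Sigma$ up to boundary segments, and the proof of Lemma \ref{LemmaPreBijectiveZone} already produced $|dy|>0$ on $\partial\Sigma^2$ via the Hopf lemma, so $(x,y)$ is a smooth coordinate system up to the boundary of $\Omega$. Pulling $\varphi^i$ back through this change of coordinates transfers the needed smoothness, and the PDE evaluations above become legitimate.
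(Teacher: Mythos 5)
Your proof is correct and follows essentially the same route as the paper's: both establish $\varphi^i_y=0$ and then $\varphi^i_{xx}=0$ on $\{y=0\}$ from the degenerate equation (the paper via the difference quotient $\lim_{y\searrow0}\frac1y\varphi^i_y=\varphi^i_{yy}$, you via differentiating the PDE in $y$, which is the same Taylor-coefficient computation), using (C)--(D) for the boundary regularity, and both deduce surjectivity from piecewise linearity plus the boundary injectivity of Lemma \ref{LemmaPreBijectiveZone} and the infinite terminal rays. Your detour through the expansion $\det(A)\sim C(x)y$ and arclength monotonicity is harmless but unnecessary: once each affine piece of the outline map has nonvanishing derivative (from $|dx|>0$ on $\partial\Sigma^2$), an affine map onto an infinite ray already forces the corresponding $x$-interval to be unbounded.
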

\begin{proof}
	Pushing forward the momentum functions $\varphi^1$, $\varphi^2$ along the injective map $z:\Omega\rightarrow{}z(\Omega)$, the equation $d(\mathcal{V}^{-1/2}J_\Sigma{}d\varphi^i)=0$ of Proposition \ref{PropDivOfVarphi} becomes $y(\varphi^i_{xx}+\varphi^i_{yy})-\varphi^i_y=0$.
	By Hypotheses (C) and (D), at or near segments the metric and momentum functions are smooth.
	Then, because $y=0$ on $\partial\Sigma^2$, we have $\varphi^i_y=0$ along the image of any segment in $\overline{H}{}^2$.
	Then near $y=0$ the expression $\frac1y\varphi^i_y$ is a difference quotient, so by smoothness we have $\lim_{y\searrow0}\frac1y\varphi^i_y=\varphi^i_{yy}$.
	From the equation $\varphi^i_{xx}+\varphi^i_{yy}-\frac1y\varphi^i_y=0$ we obtain
	\begin{eqnarray}
		0
		\;=\;\lim_{y\searrow0}
		\left(
		\varphi^i_{xx}+\varphi^i_{yy}-\frac{1}{y}\varphi^i_y\right)
		\;=\;\lim_{y\searrow0}\varphi^i_{xx}
		\;=\;\varphi^i_{xx}(x,0) \label{EqnDoubleDerivAlongBoundary}
	\end{eqnarray}
	at boundary segments---therefore along the segment $l_i$ there are constants $c_i$, $d_i$ so that $\varphi^i(x,0)=c_ix+d_i$.
	At corners this no longer holds, but we still have that the $\varphi^i$ are continuous.
	Therefore $x\mapsto\varphi^i(x,0)$ is piecewise linear with finitely many Lipschitz points.
	
	Because $(\varphi^1,\varphi^2):\{y=0\}\rightarrow\partial\Sigma^2$ is piecewise linear, its inverse $z:\partial\Sigma^2\rightarrow\{y=0\}$ is piecewise linear and also has finitely many Lipschitz points.
	Because $z:\partial\Sigma^2\rightarrow\{y=0\}$ is both injective and piecewise linear with finitely many Lipschitz points, it is surjective.
\end{proof}
\begin{lemma}[The Bijectivity Zone for $z$] \label{LemmaBijectiveNearBoundary}
	Assume $(\Sigma^2,g_\Sigma)$ obeys (A)-(E).
	Then a neighborhood $\Omega$ of $\partial\Sigma^2$ exists where $z:\Omega\rightarrow{}z(\Omega)$ is bijective, and $z(\Omega)$ is a neighborhood of $\{y=0\}$.
	At the boundary, both $z:\partial\Sigma^2\rightarrow\{y=0\}$  and its inverse, the ``outline map'' $(\varphi^1,\varphi^2):\{y=0\}\rightarrow\partial\Sigma^2$, are bijective and piecewise linear.
\end{lemma}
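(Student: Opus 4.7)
The work to prove this lemma has largely been done in Lemmas \ref{LemmaPreBijectiveZone} and \ref{LemmaPiecewise}. From \ref{LemmaPreBijectiveZone} we already have a neighborhood $\Omega_0$ of $\partial\Sigma^2$ on which $z$ is injective; from \ref{LemmaPiecewise} the boundary map $z:\partial\Sigma^2\to\{y=0\}$ is bijective and piecewise linear with finitely many Lipschitz points, and its inverse $(\varphi^1,\varphi^2):\{y=0\}\to\partial\Sigma^2$ has the same structure. Thus the only remaining content is that we may arrange matters so that $z(\Omega)$ is itself open and contains a neighborhood of $\{y=0\}$ inside $\overline{H}{}^2$.

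First I would observe that the Hopf lemma argument used in \ref{LemmaPreBijectiveZone} gives $|dy|>0$ along each smooth segment of $\partial\Sigma^2$, and since $z=x+\sqrt{-1}y$ is analytic this forces $|dz|>0$ there as well. At every non-corner boundary point $p$ the inverse function theorem then supplies a semi-disk $U_p\subset\Sigma^2$ such that $z:U_p\to z(U_p)$ is a diffeomorphism and $z(U_p)$ is a semi-disk in $\overline{H}{}^2$ centered at $z(p)$. Choosing a locally finite refinement of these semi-disks and intersecting with $\Omega_0$ preserves global injectivity, so the union is a candidate for $\Omega$.

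Next I would treat the finitely many corner points. At such a corner $p$, the metric is only Lipschitz, but Lemma \ref{LemmaPiecewise} shows $z$ is still a continuous bijection from a small arc of $\partial\Sigma^2$ through $p$ onto a small arc of $\{y=0\}$ through $z(p)$. Pick a small semi-disk $U_p\subset\Sigma^2$ whose non-degenerate boundary arc is mapped injectively by $z$ into $H^2$ (possible because $z$ is a local homeomorphism off the corner and continuous at it). The open mapping theorem, applied to the analytic function $z$ on the interior, guarantees that $z(U_p\cap\mathrm{Int}(\Sigma^2))$ is open in $H^2$; combined with the boundary behavior this shows $z(U_p)$ contains a full semi-neighborhood of $z(p)$ in $\overline{H}{}^2$. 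Adjoining these finitely many corner semi-disks to the cover above and again intersecting with $\Omega_0$ yields the desired $\Omega$: for every $q\in\{y=0\}$ its unique preimage lies in some $U_p$, and $z(U_p)$ is an open neighborhood of $q$ in $\overline{H}{}^2$, so $z(\Omega)$ contains the open set $\bigcup_p z(U_p)\supset\{y=0\}$.

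The main obstacle is the corner case: the analytic coordinate $z$ could in principle fail to be an open map at a corner if it were to ``fold'' the polygon onto the half-plane. The argument against this is that on the interior $z$ is a genuine analytic function (hence open by the open mapping theorem), and its restriction to $\partial\Sigma^2$ is already known to be injective and to map into $\{y=0\}$; any folding would force two interior points to collapse to a single value of $z$, contradicting the injectivity on $\Omega_0$ from Lemma \ref{LemmaPreBijectiveZone}. Once this is settled, the remaining assertions in the lemma are immediate restatements of \ref{LemmaPreBijectiveZone} and \ref{LemmaPiecewise}.
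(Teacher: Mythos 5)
Your proposal is correct and follows essentially the same route as the paper: the paper's own proof simply cites Lemma \ref{LemmaPreBijectiveZone} for injectivity of $z$ on a neighborhood $\Omega$ of $\partial\Sigma^2$ and Lemma \ref{LemmaPiecewise} for bijectivity and piecewise linearity of the boundary maps, concluding that $z(\Omega)$ is a neighborhood of $\{y=0\}$. Your additional care in verifying that $z(\Omega)$ is genuinely open near $\{y=0\}$ (via the inverse function theorem on segments and the open mapping theorem at corners) fills in a step the paper leaves implicit, but does not change the argument.
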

\begin{proof}
	The first statement is just Lemma \ref{LemmaPiecewise}.
	For the second statement, Lemma \ref{LemmaPreBijectiveZone} says there is a neighborhood $\Omega$ of $\partial\Sigma^2$ on which $z:\Omega\rightarrow\overline{H}{}^2$ is a bijection onto its image.
	Lemma \ref{LemmaPiecewise} says this image contains all of $\{y=0\}$, so $z:\Omega\rightarrow{}z(\Omega)$ is a bijection between a neighborhood of $\partial\Sigma$ and a neighborhood of $\{y=0\}$.
\end{proof}

\subsection{The bijectivity zone near $\{y=0\}$.} \label{SubSecSecondBijZone}

The map $z:\Omega\rightarrow{}z(\Omega)$ is a bijection onto its image, as we now know.
But $z^{-1}$ restricted to $z(\Omega)$ is not necessarily a bijection because it might not be single-valued; see Figure \ref{FigBijZone}.
We must prove that nothing {\it except} the region $\Omega$ maps to $z(\Omega)$.
This is done using the first of our two ``one component lemmas,'' Lemma \ref{LemmaBoxLemma}, which states that if a domain $\mathcal{R}$ intersects $\partial\Sigma^2$, then $z^{-1}(\mathcal{R})$ has only one component.
\begin{lemma}[The One-Component Lemma] \label{LemmaBoxLemma}
	Assume $\Sigma^2$ satisfies hypotheses (A)-(F).
	Consider any domain $\mathcal{R}\subset\overline{H}{}^2$ that satisfies conditions (1)-(3) from \S\ref{SubsecTheBarriers}.
	Then the pre-image $z^{-1}\left(\mathcal{R}\right)\subset\Sigma^2$ has exactly one component.
\end{lemma}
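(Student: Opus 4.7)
The plan is to argue by contradiction using the barrier $\psi_{\mathcal{R}}$ from Lemma~\ref{LemmaBarrierR}. By Lemma~\ref{LemmaBijectiveNearBoundary}, the map $z$ restricts to a bijection $\partial\Sigma^{2}\to\{y=0\}$, so $S:=z^{-1}\bigl(\mathcal{R}\cap\{y=0\}\bigr)$ is a single connected arc of $\partial\Sigma^{2}$ and therefore lies in exactly one component $U_{0}$ of $z^{-1}(\mathcal{R})$. Any other component $U$ of $z^{-1}(\mathcal{R})$ must be disjoint from $\partial\Sigma^{2}=z^{-1}(\{y=0\})$, hence lies in the interior of $\Sigma^{2}$ where $y>0$; its topological boundary $\partial U$ in $\Sigma^{2}$ is then mapped by $z$ into the non-degenerate portion $\partial\mathcal{R}\setminus\{y=0\}$.

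Now pull back the barrier: set $\tilde\psi:=\psi_{\mathcal{R}}\circ z$ on $U$. Because $z=x+\sqrt{-1}y$ is analytic, and because the PDE $y(\psi_{xx}+\psi_{yy})-\psi_{y}=0$ is the coordinate form of the intrinsic equation $d(y^{-1}J_{\Sigma}d\psi)=0$ (equivalently $\mathrm{div}_{\Sigma}(y^{-1}\nabla_{\Sigma}\psi)=0$) whenever $y=\sqrt{\mathcal{V}}$, this equation pulls back to $U$ and is uniformly elliptic there since $y>0$. Continuity of $z$ together with $\psi_{\mathcal{R}}=0$ on the non-degenerate part of $\partial\mathcal{R}$ gives $\tilde\psi=0$ on $\partial U$. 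Meanwhile the strong maximum principle applied to $\psi_{\mathcal{R}}$ on $\mathcal{R}\cap H^{2}$---where the operator is uniformly elliptic and $\psi_{\mathcal{R}}$ attains the value $1$ on the degenerate boundary segment---forces $\psi_{\mathcal{R}}>0$ on the interior of $\mathcal{R}$, so $\tilde\psi>0$ on $U$. The weak maximum principle on $U$ then gives $\tilde\psi\le\sup_{\partial U}\tilde\psi=0$, contradicting $\tilde\psi>0$.

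The hardest step will be making the maximum-principle application rigorous, since it tacitly requires $U$ to be precompact in $\Sigma^{2}$. Although $z(U)\subset\mathcal{R}$ is bounded in $\overline{H}{}^{2}$, the polygon $\Sigma^{2}$ is itself unbounded in $(\varphi^{1},\varphi^{2})$-coordinates, so a priori $U$ could possess an end on which $\varphi^{1}$ or $\varphi^{2}$ escapes to infinity while $(x,y)$ remains bounded. My plan to address this is to exhaust $U$ by the precompact sets $U_{n}:=U\cap\{|\varphi^{1}|+|\varphi^{2}|\le n\}$, apply the maximum principle on each $U_{n}$, and show that the outer-boundary contribution $\sup_{\partial U_{n}\setminus\partial U}\tilde\psi$ tends to zero as $n\to\infty$---either by combining the bound $\tilde\psi\le 1$ with a parabolicity argument for the weighted operator $\mathrm{div}_{\Sigma}(y^{-1}\nabla_{\Sigma}\cdot)$, or, cleaner if it works, by ruling out the escape case outright by combining the boundedness of $(x,y)$ on $U$ with the piecewise-linear boundary behavior of the $\varphi^{i}$ on $S$ from Lemma~\ref{LemmaPiecewise} and the degenerate-elliptic equations $y(\varphi^{i}_{xx}+\varphi^{i}_{yy})-\varphi^{i}_{y}=0$, which together should force $\varphi^{1},\varphi^{2}$ themselves to remain bounded on $U$ and hence $U$ to be precompact from the outset.
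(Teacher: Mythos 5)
Your setup coincides with the paper's: pull back the barrier to $\Psi=\psi_{\mathcal{R}}\circ z$, observe that every component other than the one meeting the bijectivity zone has $\Psi=0$ on its boundary and $\Psi>0$ inside, and try to contradict the maximum principle. You have also correctly located the crux: the spurious component $U$ need not be precompact in $\Sigma^2$. But that crux is exactly where the paper's real work lies, and neither of your proposed fixes closes it. Your option (b) --- deducing that $\varphi^1,\varphi^2$ stay bounded on $U$ from the boundedness of $(x,y)$ --- is circular: at this stage nothing rules out the momentum functions escaping to infinity over a bounded region of the $z$-plane (that is precisely the ``pole-like'' behavior the surjectivity argument of \S\ref{SubsecSurjectivity} later excludes, and that argument itself rests on the one-component lemmas). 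Moreover the boundary regularity of Lemma \ref{LemmaPiecewise} lives on the component that \emph{does} meet $\partial\Sigma^2$, so it says nothing about $U$. Your option (a) names ``parabolicity'' but supplies no mechanism for making $\sup_{\partial U_n\setminus\partial U}\tilde\psi\to 0$; a bounded solution on a noncompact domain with zero boundary values need not vanish without a Phragm\'en--Lindel\"of ingredient.

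That ingredient is the paper's key idea: use the momentum functions themselves as comparison functions, since they solve the \emph{same} equation $d(\mathcal{V}^{-1/2}J_\Sigma d\,\cdot)=0$ as $\Psi$. When $\Sigma^2$ is not a half-plane one places it in the first quadrant so that $\varphi=\varphi^1+\varphi^2$ is nonnegative, has strictly positive values on $\partial U$, and has \emph{compact sublevel sets}; an open--closed argument in $C$ (closedness by the maximum principle on $\{C\Psi\ge\varphi\}$, openness by the properness of $\varphi$) shows $C\Psi<\varphi$ on $U$ for every $C\ge 0$, which is absurd since $\Psi>0$ somewhere on $U$. When $\Sigma^2$ is the half-plane no such proper positive momentum combination exists, and the paper runs a separate two-barrier argument confining the bad set to half-strips and dominating it by $\pm\epsilon\varphi^1$ before letting $\epsilon\searrow 0$. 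Your proposal omits this case distinction entirely. So the proposal is a correct frame with the decisive step missing.

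(A minor shared imprecision: asserting that $\tilde\psi=0$ on all of $\partial U$ requires knowing that $\overline{U}$, not just $U$, avoids the preimage of the degenerate boundary segment; this deserves a sentence, since a point of $\overline{U}\cap\partial\Sigma^2$ would map to $\{y=0\}$ where $\psi_{\mathcal{R}}=1$.)
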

\begin{proof}
	Let $\Omega\subset\Sigma^2$ be the ``bijectivity zone'' of $z$, the neighborhood of $\partial\Sigma^2$ guaranteed by Lemma \ref{LemmaBijectiveNearBoundary} for which $z:\Omega\rightarrow{}z(\Omega)$ is one-one and onto.
	By the open mapping theorem, $\partial(z(\mathcal{R}))\subseteq{}z(\partial{}\mathcal{R})$ and $z(Int(\mathcal{R}))\subseteq{}Int(z(\mathcal{R}))$.
	\noindent\begin{figure}[h!] 
		\centering
		\includegraphics[scale={0.375}]{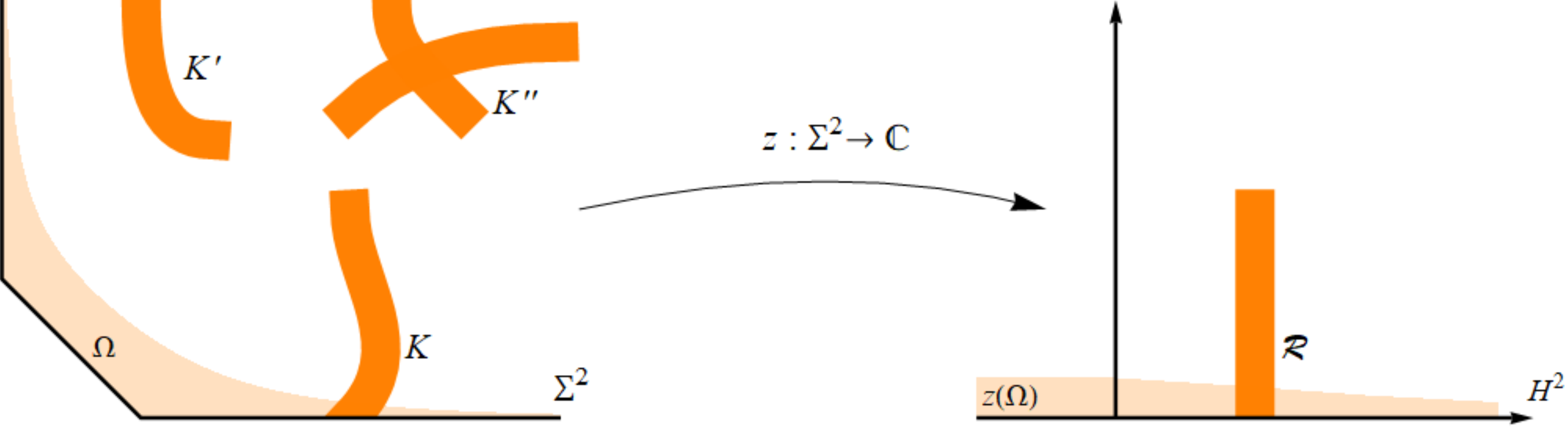}
		\caption{A domain $\mathcal{R}\subset\overline{H}{}^2$ and a possible pre-image with multiple components.
		Exactly one pre-image component $K$ intersects $\partial\Sigma^2$, so $\Psi=\psi_\mathcal{R}\circ{}z$ takes zero boundary values on all others.
		} \label{FigPreIm}
	\end{figure}
	
	For a proof by contradiction, assume two or more distinct components $z^{-1}(\mathcal{R})$ exist, which we call $K$, $K'$, $K''$, $\dots$ (see Figure \ref{FigPreIm}).
	Because $z:\Omega\rightarrow{}z(\Omega)$ is a bijection, exactly one of these components can intersect $\Omega$; we call this component $K$.
	We have the function $\psi_{\mathcal{R}}$ of Lemma \ref{LemmaBarrierR}, so on $z^{-1}(\mathcal{R})$ we have
	\begin{equation}
		\Psi\;=\;\psi_{\mathcal{R}}\circ{}z.
	\end{equation}
	Because the function $\psi_{\mathcal{R}}$ satisfies $y(\psi_{xx}+\psi_{yy})-\psi_y=0$ on $\mathcal{R}\subset\mathbb{C}$, the function $\Psi$ satisfies the elliptic equation $d(\mathcal{V}^{-\frac12}J_\Sigma{}d\Psi)=0$ on $z^{-1}(\mathcal{R})$.
	On $z^{-1}(\mathcal{R})$, the functions $\Psi$, $\varphi^1$, $\varphi^2$ satisfy the degenerate elliptic equation from Proposition \ref{PropDivOfVarphi}:
	\begin{eqnarray}
		\begin{aligned}
			&d\left(\mathcal{V}^{-\frac12}J_\Sigma{}d\Psi\right)=0
			\quad\text{and}\quad
			d\left(\mathcal{V}^{-\frac12}J_\Sigma{}d\varphi^i\right)=0.
		\end{aligned} \label{EqnsEllipticBarrierMoments}
	\end{eqnarray}
	Finally, because none of the components except $K$ intersect the boundary $\partial\Sigma^2$, on each component $K',K'',\dots$ the function $\Psi$ has zero boundary values.
	
	Now we can explain the idea of the proof.
	When $\Sigma^2$ is not a half-plane, we may assume $\Sigma^2$ lies within the quarter-plane (eg. Figures \ref{FigTwoTransforms} or \ref{FigLeBruns}), and therefore
	\begin{equation}
		\varphi\;=\;\varphi^1+\varphi^2. \label{EqnNewMomComb}
	\end{equation}
	is non-negative and all of its sub-levelsets $\{\varphi\le{}c\}$ are compact.
	Let $K'$ be any component of $z^{-1}(\mathcal{R})$ except $K$.
	Then on $K'$, has zero boundary values and $0\le\Psi<1$, whereas $\varphi$ has \textit{positive} boundary values and gets unboundedly large far away.
	Thus the maximum principle shows $\varphi$ dominates not only $\Psi$, but any multiple $C\Psi$ of $\Psi$.
	Clearly this is impossible, so the component $K'$ does not exist.
	
	When $\Sigma^2$ is a half-plane, a non-negative momentum function such as (\ref{EqnNewMomComb}) with compact sub-levelsets does not exist, so we make a different argument specially adapted to that case. \\
	
	\noindent\underline{\it Argument that $K'$ does not exist, in the case $\Sigma^2$ is not a half-plane.}
	
	After an affine transformation of the plane we can assume $\Sigma^2$ is in the first quadrant so that the function $\varphi$ of (\ref{EqnNewMomComb}) is non-negative and has compact sub-levelsets.
	For an open-closed argument, let $\mathcal{S}\subset[0,\infty)$ be the set of values for which $\varphi$ dominates $C\Psi$:
	\begin{eqnarray}
		\mathcal{S}\;=\;
		\big\{\,
		C\in[0,\infty)\;\big|\; C\Psi\,<\,\varphi\;\,\text{on}\,\;K'
		\,\big\}.
	\end{eqnarray}
	We shall prove that $\mathcal{S}$ is open, closed, and contains $0$.
	Then because $\mathcal{S}=[0,\infty)$, we have $\infty<\varphi(p)$ on every $p\in{}K'$, a contradiction.
		
	\;{\it{i}}) \underline{$\mathcal{S}\neq\varnothing$}. Because $0<\varphi$ except at $(0,0)$ which is not in $K'$, $0\in\mathcal{S}$.
	
	{\it{ii}}) \underline{$\mathcal{S}$ is open}. Assume $C\in\mathcal{S}$ and let $\{C_i\}$ be a sequence with $C_i\rightarrow{}C$.
	To prove openness, we show that eventually $C_i\in\mathcal{S}$ for large $i$.
	
	Passing to a subsequence, assume $C_i<C+1$.
	If $C_i\notin\mathcal{S}$ then by definition there is at least one point $p_i\in{}K'$ with $C_i\Psi(p_i)\ge\varphi(p_i)$.
	If the set $\{p_i\}\subset\Sigma^2$ has any cluster points, say $p_\infty$ is a cluster point, then by continuity we must have $C\Psi(p_\infty)\ge\varphi(p_\infty)$, an impossibility because we assumed $C\in\mathcal{S}$.
	Thus $\{p_i\}$ has no cluster points, which means that $p_i$ eventually leaves {\it every} compact set, including the compact set $\{\varphi\le{}C+1\}$.
	This is impossible because we chose the sequence $\{p_i\}$ so that $\varphi(p_i)\le{}C_i\Psi(p_i)<C+1$.
	Therefore no such sequence $\{p_i\}$ exists, and we conclude $C_i\in\mathcal{S}$ for sufficiently large $i$.
	
	{\it{iii}}) \underline{$\mathcal{S}$ is closed}. Assume $C_i\in\mathcal{S}$ and $C_i\rightarrow{}C$; we show $C\in\mathcal{S}$.
	Because $C_i\in\mathcal{S}$ by definition $C_i\Psi<\varphi$ on $K'$, so in the limit $C\Psi\le\varphi$ on $K'$.
	But this inequality is strict on $\partial{}K'$, so by the maximum principle $C\Psi<\varphi$ on $K'$.
	Thus $C\in\mathcal{S}$. \\
	
	\noindent\underline{\it Argument that $K'$ does not exist, in the case $\Sigma^2$ is a half-plane.}
	
	After possible affine recombination of the functions $\varphi^1$, $\varphi^2$, we may assume the polygon is the upper half-plane $\Sigma^2=\{\varphi^2\ge0\}$.
	Again take $\Psi=\psi_{\mathcal{R}}\circ{z}$ and restrict the domain to $K'$.
	As before, $0\le\Psi<1$ on $K'$ and we have boundary values $\Psi=0$ on $\partial{}K'$.
	Let $\mathcal{S}$ be
	\begin{eqnarray}
		\mathcal{S}\;=\;\left\{\,
		C\ge0\;\big|\;C\Psi<{}\varphi^2\;on\;K'\,
		\right\}
	\end{eqnarray}
	Certainly $0\in\mathcal{S}$.
	The argument that $\mathcal{S}$ is closed is {\it precisely} the same as it was above except with $\varphi^2$ in place of $\varphi$---this argument relies only on the maximum principle and that $\varphi^2>0$ on $K'$.
	
	To establish the openness of $\mathcal{S}$ assume $C$ is in the closure of $\mathcal{S}$ and let $\{C_i\}$ be any sequence with $C_i\rightarrow{C}$.
	Certainly we may assume $\max_i{}C_i<C+1$.
	We must show that eventually $C_i\in\mathcal{S}$.
	
	\noindent\begin{figure}[h]
		\centering
		\includegraphics[scale={0.45}]{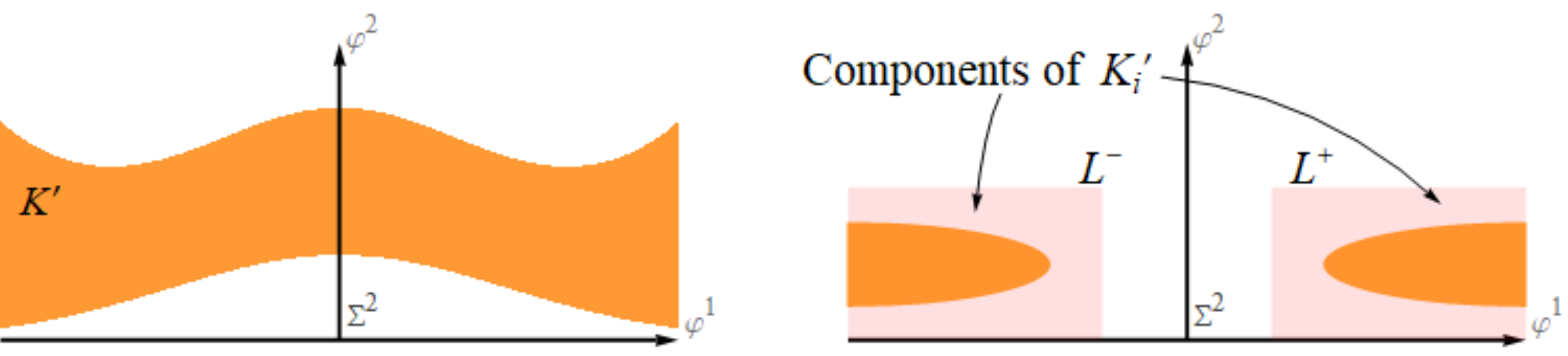}
		\caption{
			On the left, the domain $K'$ of $\Psi$.
			On the right, the domain $K_i'$ of $\Psi_i$ which, for large $i$, must lie within $L^{-}\cup{}L^{+}$.
		} \label{FigKAndHalfStrips}
	\end{figure}
	
	For each $i$, create a new function
	\begin{eqnarray}
		\begin{aligned}
			&\Psi_i\;\triangleq\;C_i\Psi\,-\,\varphi^2 \quad \text{on the domain} \\
			&K_i'\;\triangleq\;\{\,\Psi_i\;\ge\;0\,\}  \;\subseteq\;K'
		\end{aligned}
	\end{eqnarray}
	so that $K'_i$ is the set on which $C_i\Psi$ dominates $\varphi^2$.
	Because $C_i\notin\mathcal{S}$, we know $C_i\Psi$ dominates $\varphi^2$ somewhere so there is at least one point $p_i\in{}K'_i$.
	Each of these $p_i$ is in the strip $0<\varphi^2<C+1$ because $\varphi^2(p_i)\le{}C_i\Psi(p_i)<C+1$.
	Next, $p_i$ has no cluster points, for if $p_\infty$ were a cluster point then by continuity $C\Psi(p_\infty)\ge\varphi^2(p_\infty)$, contradicting $C\in\mathcal{S}$.
	Because there are no cluster points, any such sequence $\{p_i\}\subset{}K'\subset\Sigma^2$ eventually leaves all compact sets and in particular leaves the rectangular region $\{\varphi^1\in[-1,1],\,\varphi^2\in[0,C+1]\}$.
	Thus, as in Figure \ref{FigKAndHalfStrips}, $K'_i$ is eventually in the union of the two half-strips
	\begin{eqnarray}
		\begin{aligned}
		&L^+\;=\;\{\varphi^2\in(0,C+1),\,\varphi^1\;>\;1\;\}, \\
		&L^-\;=\;\{\varphi^2\in(0,C+1),\,\varphi^1\;>\;-1\;\}.
		\end{aligned}
	\end{eqnarray}
	
	To finish the argument we use the other momentum coordinate $\varphi^1$ as a barrier over top of $L^+$ and $-\varphi^1$ as a barrier over $L^-$.
	Certainly $\Psi_i|_{\partial{}K'_i\cap{}L^+}=0$ and $0\le\Psi_i<\Psi<1$.
	Given $\epsilon>0$, we have that $\epsilon\varphi^1|_{\partial{}K'_i\cap{}L^+}>0$ and that $\epsilon\varphi^1$ gets unboundedly large far away, so by the maximum principle $\epsilon\varphi^1$ bounds $\Psi_i$ from above on $K'_i\cap{}L^+$.
	Similarly $-\epsilon\varphi^1$ always bounds $\Psi_i$ from above on $K'_i\cap{}L^-$.
	Sending $\epsilon\searrow0$ shows that $\Psi_i\equiv0$, so $K'_i$ is empty.
	We conclude that $C_i\in\mathcal{S}$.
\end{proof}

\begin{lemma}[The Bijectivity Zone for $z^{-1}$] \label{LemmaCloseBijectivityZInv}
	Let $\Omega$ be the neighborhood of $\partial\Sigma^2$ on which $z:\Omega\rightarrow{}z(\Omega)$ is a bijection.
	Then $z^{-1}$ is single-valued on $z(\Omega)$.
	
	Consequently $z:\Omega\rightarrow{}z(\Omega)$ and $z^{-1}:z(\Omega)\rightarrow\Omega$ are biholomorphisms between a neighborhood $\Omega$ of $\partial\Sigma^2$ and a neighborhood $z(\Omega)$ of $\partial\overline{H}{}^2$.
\end{lemma}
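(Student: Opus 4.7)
The plan is to upgrade the injectivity of $z|_\Omega$ to a global statement by showing, via the One-Component Lemma \ref{LemmaBoxLemma}, that nothing \emph{outside} of $\Omega$ maps into $z(\Omega)$. Fix an arbitrary $q\in z(\Omega)$; I will prove that $z^{-1}(q)\subset\Omega$, after which the injectivity of $z$ on $\Omega$ forces $z^{-1}(q)$ to be a singleton.

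To apply Lemma \ref{LemmaBoxLemma}, I first construct a test domain $\mathcal{R}\subset z(\Omega)$ satisfying conditions (1)--(3) of \S\ref{SubsecTheBarriers}, with $q\in\mathcal{R}$. Since $z(\Omega)$ is an open subset of $\overline{H}{}^2$ containing all of $\{y=0\}$ (Lemma \ref{LemmaBijectiveNearBoundary}) and is path-connected by Hypothesis (B), I can join $q$ to some point $q_0\in\{y=0\}$ by a path $\gamma$ lying entirely in $z(\Omega)$. A sufficiently thin tubular thickening of $\gamma$, together with a small half-disk centered at $q_0$, produces a contractible precompact region $\mathcal{R}\subset z(\Omega)$ that meets $\{y=0\}$ in a single segment around $q_0$.

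By the One-Component Lemma, $K:=z^{-1}(\mathcal{R})$ is connected. Set $K':=K\cap\Omega$. Because $\mathcal{R}\subset z(\Omega)$ and $z|_\Omega\colon\Omega\to z(\Omega)$ is already known to be a homeomorphism, $K'$ is non-empty, open in $K$, and homeomorphic to $\mathcal{R}$ via $z$. To conclude $K=K'$ it suffices to verify that $K'$ is also closed in $K$: if $p_n\in K'$ and $p_n\to p\in K$, then $z(p_n)\to z(p)\in\mathcal{R}\subset z(\Omega)$, and applying the continuous inverse $(z|_\Omega)^{-1}$ to these images yields $p_n=(z|_\Omega)^{-1}(z(p_n))\to(z|_\Omega)^{-1}(z(p))\in\Omega$. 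Since limits in the Hausdorff space $\Sigma^2$ are unique, $p=(z|_\Omega)^{-1}(z(p))\in\Omega$, so $p\in K'$.

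Hence $z^{-1}(\mathcal{R})=K=K'\subset\Omega$; in particular $z^{-1}(q)\subset\Omega$ and is a singleton. Since this holds for every $q\in z(\Omega)$, the map $z^{-1}\colon z(\Omega)\to\Omega$ is globally single-valued, and combined with $|dz|>0$ on $\Omega$ (Lemma \ref{LemmaPreBijectiveZone}), the bijection $z\colon\Omega\to z(\Omega)$ is a biholomorphism. The one non-routine step is the construction of $\mathcal{R}$: one must verify that, despite the \emph{a priori} unknown global geometry of $z(\Omega)$, each point of $z(\Omega)$ can be connected by a sufficiently thin thickened path to a boundary half-disk without leaving $z(\Omega)$. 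Hypothesis (B) on the connectedness of $\partial\Sigma^2$ is what makes this possible; in the degenerate case where $\partial\Sigma^2$ is empty, the statement is vacuous because $\Omega$ may then be taken empty.
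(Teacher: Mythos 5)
Your proof is correct and follows essentially the same route as the paper's: join the point to $\{y=0\}$ by a path inside $z(\Omega)$, thicken it to a domain $\mathcal{R}$ satisfying (1)--(3), invoke the One-Component Lemma, and conclude the unique component of $z^{-1}(\mathcal{R})$ lies in $\Omega$ where $z$ is already injective. Your open-closed argument showing $K\cap\Omega=K$ is a welcome elaboration of a step the paper asserts rather tersely, but it is a refinement of detail, not a different method.
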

\begin{proof}
	Shrinking $\Omega$ if necessary, we may assume $\Omega$ and $z(\Omega)$ are not only open but connected.
	Picking $z_0\in{}z(\Omega)$, we must show that $z^{-1}(z_0)$ consists of a single point.
	Let $\gamma$ be a path in $z(\Omega)$ from $z_0$ to $\{y=0\}$ and let $\mathcal{R}$ be a neighborhood of $\gamma$; shrinking $\mathcal{R}$ if necessary we may assume $\overline{\mathcal{R}}\subset{}z(\Omega)$.
	Because $z:\Omega\rightarrow{}z(\Omega)$ is injective, certainly $z^{-1}(\mathcal{R})$ has at least one component $K$ and $K\subset\Omega$.
	By the One-Component Lemma, this is the only component.
	Now $z^{-1}(\mathcal{R})$ has a single component, which is inside the ``bijectivity zone'' of Lemma \ref{LemmaPreBijectiveZone}, meaning $z:z^{-1}(\mathcal{R})\rightarrow\mathcal{R}$ is injective.
	This the pre-image of $z_0$ is unique.
\end{proof}

\subsection{Global injectivity} \label{SubsecInjectivity}

Global injectivity of $z:\Sigma^2\rightarrow\overline{H}{}^2$ essentially follows from the One-Component Lemma, although the possible presence of ramification points complicates the argument.
We did not have to worry about this in the case of the neighborhood $\Omega$ of $\partial\Sigma^2$ because we selected it specifically to be a neighborhood where $|dy|>0$, as is guaranteed by the Hopf Lemma.

\begin{lemma} \label{LemmaRamificationNoAccum}
	The set of ramification points of the complex variable $z:\Sigma^2\rightarrow\overline{H}{}^2$ has no accumulation points.
\end{lemma}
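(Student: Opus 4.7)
The plan is to combine the fact that $z$ is a holomorphic function on the Riemann surface $(\Sigma^2,J_\Sigma)$ with the boundary regularity already established via the Hopf Lemma. The ramification points are precisely the interior points of $\Sigma^2$ where $dz=0$, equivalently where $dy=0$ (since $dx=J_\Sigma dy$ and $J_\Sigma$ is an isometric isomorphism, so $|dx|=|dy|$).

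First I would handle the possible accumulation at the boundary $\partial\Sigma^2$. In the proof of Lemma \ref{LemmaPreBijectiveZone} it was shown that $y=\sqrt{\mathcal{V}}$ is smooth up to boundary segments and Lipschitz at corners, that $y\equiv 0$ on $\partial\Sigma^2$ and $y>0$ in the interior, and satisfies a linear elliptic equation with appropriate regularity. The classical Hopf Lemma therefore gives $|dy|>0$ at every point of $\partial\Sigma^2$. By continuity, $|dy|>0$ throughout a neighborhood $\Omega$ of $\partial\Sigma^2$, so no ramification point lies in $\Omega$. In particular, no sequence of ramification points can accumulate on $\partial\Sigma^2$.

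Second, I would rule out accumulation in the interior. In any local holomorphic chart $w$ for $(\Sigma^2,J_\Sigma)$, the map $z$ becomes a holomorphic function of $w$, and its derivative $dz/dw$ is again holomorphic. Ramification points are exactly the zeros of this derivative. By the identity theorem for holomorphic functions on the connected Riemann surface $\Sigma^2$, the zeros of $dz/dw$ are either isolated or $dz\equiv 0$; the latter would force $z$ to be locally constant, hence globally constant, contradicting that $y=\mathrm{Im}(z)=\sqrt{\mathcal{V}}$ is not identically zero (it is strictly positive on $\mathrm{Int}(\Sigma^2)$ by hypothesis (E)). Hence the interior zeros of $dz$ are isolated.

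Combining the two steps, the ramification set is a closed, discrete subset of the interior of $\Sigma^2$ that stays away from $\partial\Sigma^2$, so it has no accumulation points in $\Sigma^2$. The only step requiring care is the boundary behavior, which is already packaged by the Hopf-Lemma argument; the interior statement is the standard identity theorem.
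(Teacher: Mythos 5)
Your proposal is correct and follows essentially the same route as the paper: interior accumulation is excluded by the identity theorem for the holomorphic derivative of $z$, and boundary accumulation is excluded by the Hopf-Lemma/bijectivity-zone argument of Lemma \ref{LemmaBijectiveNearBoundary}. The extra detail you supply (that $dz\equiv 0$ would contradict $y=\sqrt{\mathcal{V}}>0$ on the interior) is a worthwhile elaboration of what the paper compresses into "classical analytic continuation."
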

\begin{proof}
	By classical analytic continuation there is no {\it interior} accumulation of ramification points.
	The remaining possibility is that ramification points accumulate near $\partial\Sigma^2$.
	But this is impossible by Lemma \ref{LemmaBijectiveNearBoundary}.
\end{proof}

\begin{proof}[Proof of Proposition \ref{ThmInjectivityOfZ}, global injectivity of $z$.]
	For an argument by contradiction, suppose distinct points $p_0$, $q_0$ have $z(p_0)=z(q_0)$.
	Possibly $p_0$ or $q_0$ is a ramification point, but if so, we can slightly adjust their locations to ensure neither is a ramification point and still retain $z(p_0)=z(q_0)$.
	
	Draw a path $\gamma:[0,1]\rightarrow\Sigma^2$ from any point in $\partial\Sigma^2$ to $p_0$; because ramification points are sparse (Lemma \ref{LemmaRamificationNoAccum}) we can avoid them.
	The image path $z\circ\gamma:[0,1]\rightarrow\overline{H}{}^2$ goes from the boundary $\{y=0\}$ to the common location $z(p_0)=z(q_0)$.
	Even though $z\circ\gamma$ contains no ramification points, it might still self-intersect.
	The first step is to improve the choice of the path $\gamma$, so that $z\circ\gamma$ does not self-intersect.
	
	Because $z^{-1}$ is single-valued on $z(\Omega)$, for some small $\epsilon>0$ the restricted path $z\circ\gamma|_{[0,\epsilon)}$ intersects no other part of the path $z\circ\gamma|_{[0,1]}$.	
	Then let $T_1\in(\epsilon,1)$ be the first value where $z\circ\gamma\big|_{[0,T_1]}$ ceases to be non-self intersecting; in particular there is some strictly smaller $T_0\in[\epsilon,T_1)$ for which $(z\circ\gamma)(T_1)=(z\circ\gamma)(T_0)$.
	We have now found new points $p_1=\gamma(T_0)\neq{}\gamma(T_1)=q_1$ with $z(p_1)=(z\circ\gamma)(T_0)=(z\circ\gamma)(T_1)=z(q_1)$.
	The new path $\gamma:[0,T_0]\rightarrow\Sigma^2$ is non-self-intersecting (because $T_0<T_1$), but its terminal point $p_1=\gamma(T_0)$ retains the property that $z(p_1)=z(q_1)$ for some $q_1\ne{}p_1$.
	
	\noindent
	\begin{figure}[h!] 
		\vspace{-0.1in}
		\includegraphics[scale={0.5}]{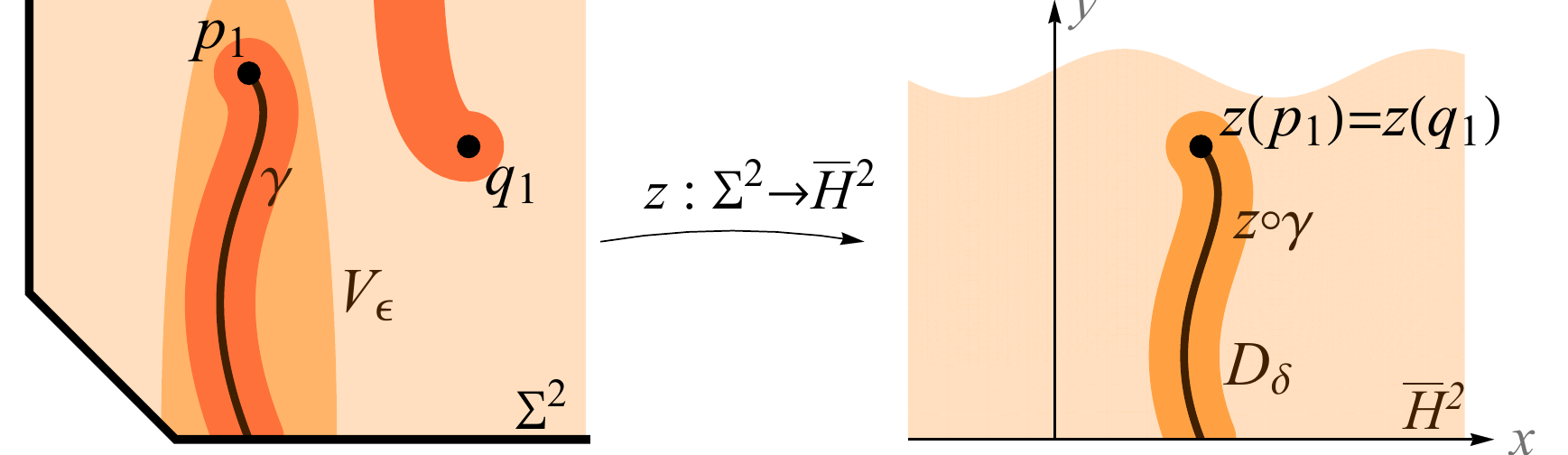}
		\caption{Illustration for the proof of Lemma \ref{ThmInjectivityOfZ}.
			If $z(p_1)=z(q_1)$, then drawing $D_\delta$ around $z\circ\gamma$ for sufficiently small $\delta$, we can separate its pre-image $z^{-1}(D_\delta)$ into distinct components, one each containing $p_1$ and $q_1$.
			This violates the One-Component Lemma.
		}
		\label{FigInjProof}
	\end{figure}
	
	Next we create a tiny neighborhood $D_\delta$ of the path $z\circ\gamma$, with the aim of using the one-component lemma.
	Let $D_\delta\subset\overline{H}{}^2$ be the neighborhood
	\begin{eqnarray}
		D_\delta
		\;=\;
		\{z'\in\overline{H}{}^2
		\;\big|\;
		\text{some $t\in[0,T_0]$ exists so } |(z\circ\gamma)(t)-z'|<\delta
		\}.
	\end{eqnarray}
	That is, $D_\delta$ is the $\delta$-neighborhood of the path $z\circ\gamma$, as measured in the complex coordinate.	
	Certainly $D_\delta$ has compact closure.
	Also $\bigcap_{\delta>0}D_\delta$ is just the path itself.
	
	Next, in the $\Sigma^2$ polygon, let $V_\epsilon$ be the $\epsilon$-neighborhood, as measured in the $g_\Sigma$-metric, around $\gamma$.
	Because $\gamma$ intersects no ramification points, we can choose choose $\epsilon>0$ so small that $z:V_\epsilon\rightarrow\overline{H}{}^2$ also contains no ramification points (by Lemma \ref{LemmaRamificationNoAccum}).
	Because $p_1\ne{}q_1$, we may choose $\epsilon$ so small that $p_1\in{}V_\epsilon$ but $q_1\notin{}V_\epsilon$.
	
	Given $\epsilon>0$ there exists $\delta>0$ so that $D_\delta\subset{}z(V_\epsilon)$; this is because $z(V_\epsilon)$ is precompact (by continuity) and contains $z\circ\gamma$ whereas $\bigcap_{\delta>0}D_\delta={}z\circ\gamma$; see Figure \ref{FigInjProof}.
	Equivalently, at least one component of $z^{-1}(D_\delta)$ lies within $V_\epsilon$.	
	By the One-Component Lemma there is exactly one component of $z^{-1}(D_\delta)$, and as we have just seen this lies in $V_\epsilon$.
	But $q_1\notin{}V_\epsilon$, contradicting $z(q_1)=z(p_1)$.
\end{proof}

\subsection{Global surjectivity} \label{SubsecSurjectivity}

The proof of surjectivity is expressed in Figure \ref{FigSurjProof}.
The idea is that if $z_0$ is any point {\it not} in the image of $z:\Sigma^2\rightarrow\overline{H}{}^2$, using the second of our ``one-component'' lemmas, Lemma \ref{LemmaTwoBoxLemma}, we are able to draw a circle around $z_0$ consisting of points that {\it are} in the image of $z$.
Then by Lemma \ref{LemmaDiskAroundPBPoint}, the ``disk lemma,'' $z_0$ is also in the image of $z$.

\begin{lemma}[The One-Component Lemma for $\mathcal{D}_{y'}$] \label{LemmaTwoBoxLemma}
	Assume $\Sigma^2$ satisfies hypotheses (A)-(F) and let $\mathcal{D}_{y'}$ be any domain of the form (\ref{DomainTwoIntsWithBoundary}) (see Figure \ref{SubFigRectDom}).
	
	Then the pre-image $z^{-1}\left(\mathcal{D}_{y'}\right)\subset\Sigma^2$ has exactly one component.
\end{lemma}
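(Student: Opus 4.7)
The plan is to reduce to the proof of Lemma~\ref{LemmaBoxLemma}, with the new wrinkle that $\mathcal{D}_{y'}$ meets the degenerate boundary $\{y=0\}$ along \emph{two} disjoint segments $l_1$ and $l_2$. A priori, $z^{-1}(\mathcal{D}_{y'})$ could contain three kinds of components: at most one component $K_1$ meeting $z^{-1}(l_1)$, at most one component $K_2$ meeting $z^{-1}(l_2)$ (uniqueness in each case by Lemma~\ref{LemmaCloseBijectivityZInv}), and any number of purely interior components touching neither. I need to rule out the interior components and then force $K_1=K_2$.

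To eliminate a purely interior component $K'$, I pull back the sum barrier $\Psi=(\psi^1_{\mathcal{D}_{y'}}+\psi^2_{\mathcal{D}_{y'}})\circ z$. By construction $\Psi$ satisfies the same degenerate elliptic equation as $\varphi^1$, $\varphi^2$, lies in $[0,1]$, and vanishes on every boundary component of $\mathcal{D}_{y'}$ except $l_1\cup l_2$. Since $K'$ avoids both $z^{-1}(l_1)$ and $z^{-1}(l_2)$, $\Psi$ has zero boundary values on all of $\partial K'$. The open-closed comparison argument from Lemma~\ref{LemmaBoxLemma} now applies verbatim: against $\varphi=\varphi^1+\varphi^2$ in the generic case, and against $\varphi^2$ combined with the auxiliary half-strip comparisons $\pm\epsilon\varphi^1$ in the half-plane case. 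One concludes $C\Psi<\varphi$ for all $C\geq 0$, contradicting $\Psi>0$ somewhere in $K'$.

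To force $K_1=K_2$, suppose they are distinct; then $K_2$ is disjoint from $z^{-1}(l_1)$. I use the single-segment barrier $\Psi_1=\psi^1_{\mathcal{D}_{y'}}\circ z$, which equals $1$ only on $l_1$ and $0$ elsewhere on $\partial\mathcal{D}_{y'}$. On $\partial K_2$ the function $\Psi_1$ is identically zero, while the strong maximum principle for $y\triangle-\partial_y$ on the uniformly elliptic region $\{y>0\}$, combined with the boundary datum $\psi^1_{\mathcal{D}_{y'}}=1$ on $l_1$, guarantees $\psi^1_{\mathcal{D}_{y'}}>0$ throughout the interior of $\mathcal{D}_{y'}$, whence $\Psi_1>0$ somewhere on $K_2$. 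The identical open-closed argument then yields $\Psi_1\leq 0$ on $K_2$, the desired contradiction. Combined with the elimination of purely interior components, this proves $z^{-1}(\mathcal{D}_{y'})$ is connected.

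The one point requiring care is verifying that the comparison machinery of Lemma~\ref{LemmaBoxLemma} transfers cleanly to both barriers above. What that proof needed of its barrier was precisely that it take values in $[0,1]$, solve the degenerate equation, and vanish on every portion of the component boundary sitting either in the interior of $\Sigma^2$ or on parts of $\partial\Sigma^2$ disjoint from the barrier's ``supporting'' degenerate segment. Both $\Psi$ and $\Psi_1$ satisfy exactly these requirements on their respective target components, which is why the original open-closed machinery runs without modification.
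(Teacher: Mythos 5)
Your argument is correct and is essentially the paper's: the paper runs the Lemma \ref{LemmaBoxLemma} comparison once, with the single barrier $\psi^2_{\mathcal{D}_{y'}}\circ z$, against \emph{every} component other than the one whose image contains $l_2$ (which kills the purely interior components and a putative separate $l_1$-component in one stroke), whereas you split these into two cases with the barriers $(\psi^1_{\mathcal{D}_{y'}}+\psi^2_{\mathcal{D}_{y'}})\circ z$ and $\psi^1_{\mathcal{D}_{y'}}\circ z$. The engine in both versions is identical: the pulled-back barrier has zero boundary values on the offending component, and the open--closed domination by $\varphi^1+\varphi^2$ (resp.\ by $\varphi^2$ together with the half-strip comparisons) forces it to vanish there, contradicting its interior positivity.
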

\textbf{Remark.} Referring to Figure \ref{FigSurjProof}, this lemma rules out the third picture.
\begin{proof}
	This proof is essentially the same as the proof of Lemma \ref{LemmaBoxLemma}, so we just give the outline emphasizing the differences.
	Consider the function $\psi^2_{\mathcal{D}_{y'}}$ that is $1$ on the boundary segment $l_2=\{x\in(1,2),\,y=0\}$ and zero on all other boundary points.
	Set $\Psi=\psi^2_{\mathcal{D}_{y'}}\circ{}z$ so that $\Psi:z^{-1}(\mathcal{D}_{y'})\rightarrow[0,1]$.

	Letting $\Omega\subset\Sigma^2$ be the ``bijectivity zone'' where $z:\Omega\rightarrow{}z(\Omega)$ and its inverse are bijections between a neighborhood of $\partial\Sigma^2$ and a neighborhood of $\{y=0\}$, we certainly have at least one component $K$ of $z^{-1}(D_{y'})$ so that $z(K)$ contains the segment $l_2$.
	
	If there is some other component $K'\subset{}z^{-1}(\mathcal{D}_{y'})$, then restricting $\Psi$ to $K'$, we have $\Psi=0$ on $\partial{}K'$, $0\le\Psi<1$ on $K'$, and $\Psi$ satisfies the elliptic equation of (\ref{EqnsEllipticBarrierMoments}).
	Now we are \textit{precisely} in the situation of the proof of Lemma \ref{LemmaBoxLemma} directly after (\ref{EqnsEllipticBarrierMoments}).
	One proceeds, word for word, from that lemma.
\end{proof}

\noindent
\begin{figure}[h!] 
	\vspace{-0.30in}
	\includegraphics[scale={0.39}]{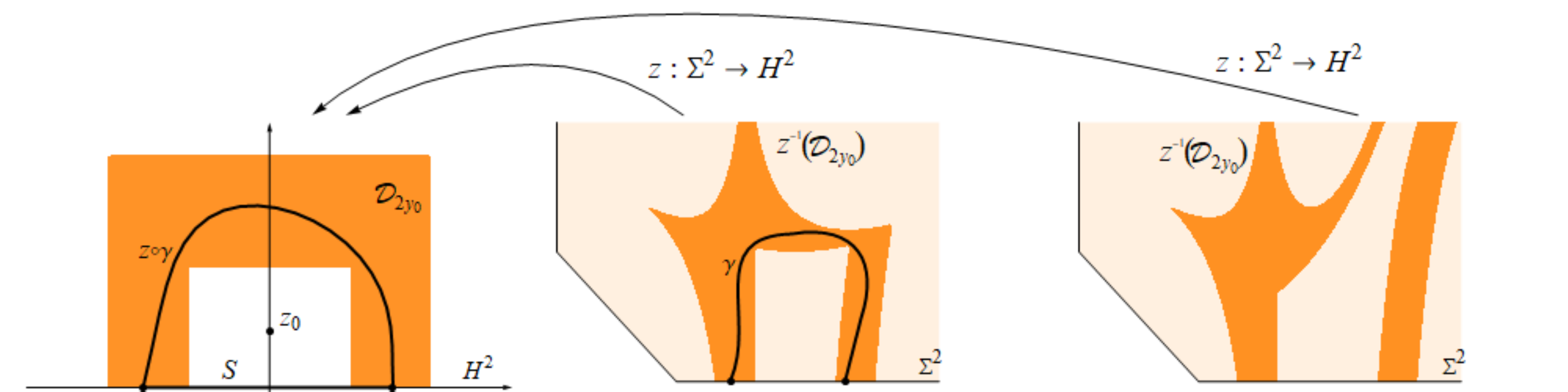}
	\caption{Illustration for the proofs of Lemma \ref{LemmaTwoBoxLemma} and Proposition \ref{ThmSurjectivityOfZ}.
		Lemma \ref{LemmaTwoBoxLemma} rules out the third picture, where the pre-image $z^{-1}(\mathcal{D}_{2y_0})$ is disconnected.
		This means that a loop can be drawn within the pre-image.
		The ``Disk Lemma'' then ensures that all points within this loop are also in the pre-image, completing the proof of Proposition \ref{ThmSurjectivityOfZ}, surjectivity of $z$.
	}
	\label{FigSurjProof}
\end{figure}

\begin{lemma}[The Disk Lemma] \label{LemmaDiskAroundPBPoint}
	Let $D\subset\overline{H}{}^2$ be any precompact domain homeomorphic to the open disk with $C^{0,1}$ boundary.
	
	If $\partial{D}$ lies within the image $z(\Sigma^2)$, then $D$ lies within the image $z(\Sigma^2)$.
\end{lemma}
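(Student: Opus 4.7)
The plan is to build a compact topological disk $\Delta\subset\Sigma^2$ bounded by the pre-image curve $\gamma:=z^{-1}(\partial D)$, and then apply the argument principle to the holomorphic function $z$ on $\mathrm{Int}(\Delta)$. For any $w\in D$ the winding number of $z|_{\partial\Delta}$ about $w$ will be $+1$, which by the argument principle counts pre-images of $w$ in $\mathrm{Int}(\Delta)$, forcing $D\subseteq z(\Sigma^2)$.

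First I would verify that $z$ is locally a homeomorphism at every point of $\gamma$. On $\mathrm{Int}(\Sigma^2)$ this follows from Proposition \ref{ThmInjectivityOfZ}: since $z$ is analytic and globally injective it must be unramified, so $dz\ne 0$ throughout the interior. At points of $\gamma$ on $\partial\Sigma^2$ (and such points must sit there, because $z^{-1}(\{y=0\})=\partial\Sigma^2$ by conditions (\textit{ii}) and (\textit{iii}) at the start of \S\ref{SecGlobalHarmonicCoordBehavior}) Lemma \ref{LemmaBijectiveNearBoundary} provides a bijective chart. Covering the compact set $\partial D\subset z(\Sigma^2)$ by finitely many such local-inverse neighborhoods and using global injectivity to pick the correct pre-image component each time, $\gamma$ lies in a precompact open set $U\subset\Sigma^2$ on which $z:U\to z(U)$ is a genuine homeomorphism. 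Hence $\gamma$ is compact and homeomorphic to $\partial D$, i.e.\ a Lipschitz Jordan curve in $\Sigma^2$.

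Because $\Sigma^2$ is a simply connected planar region, the Jordan curve theorem yields a unique relatively compact topological closed disk $\Delta\subset\Sigma^2$ with $\partial\Delta=\gamma$. When $\partial D$ meets $\{y=0\}$, the curve $\gamma$ acquires arcs on $\partial\Sigma^2$, but the open interior $\mathrm{Int}(\Delta)$ still lies in $\mathrm{Int}(\Sigma^2)$: any arc of $\partial\Sigma^2$ inside $\Delta$ must be part of $\gamma$ because $\Delta$ sits on the interior side of each boundary arc of $\gamma$. Thus $z$ is holomorphic on $\mathrm{Int}(\Delta)$ and continuous on $\Delta$, and $z|_{\partial\Delta}:\partial\Delta\to\partial D$ is an orientation-preserving homeomorphism of Jordan curves (orientation-preserving because $z$ is a holomorphic local diffeomorphism on the interior). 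The argument principle applied to $z-w$ on $\mathrm{Int}(\Delta)$ then equates the $+1$ winding number with the number of pre-images of $w$ in $\mathrm{Int}(\Delta)$, yielding $w\in z(\Sigma^2)$.

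The main obstacle is the case where $\partial D$ touches $\{y=0\}$ and the boundary loop $\gamma$ runs along $\partial\Sigma^2$, where $z$ is only continuous and the classical argument principle does not immediately apply. Lemma \ref{LemmaBijectiveNearBoundary} is what saves the argument: it furnishes a genuine topological homeomorphism on a neighborhood of $\partial\Sigma^2$, allowing one to run the winding-number count in the purely holomorphic subdomain $\Delta\cap\{y\ge\varepsilon\}$ (with boundary loop pushed slightly into $\mathrm{Int}(\Sigma^2)$) and then pass to the limit $\varepsilon\searrow 0$, the limit being controlled by the uniform homeomorphism inside the bijectivity zone.
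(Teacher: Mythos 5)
Your proposal is correct, but it proves the lemma by a genuinely different mechanism than the paper. The paper argues by contradiction and never constructs the disk $\Delta$: it observes that if some point of $D$ were missed, then $z^{-1}(D)$ would contain a divergent sequence and hence be unbounded, that its complement is also unbounded (via the bijectivity zone of Lemma \ref{LemmaCloseBijectivityZInv}), and that two unbounded complementary sets force $\partial\bigl(z^{-1}(D)\bigr)$ to be unbounded --- contradicting $\partial\bigl(z^{-1}(D)\bigr)\subseteq z^{-1}(\partial D)$, which is compact. That is a purely point-set/connectedness argument with no orientation or degree considerations. You instead run a direct counting argument: build the Jordan domain $\Delta$ bounded by $\gamma=z^{-1}(\partial D)$ and use the winding number to produce a preimage of every $w\in D$. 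Both routes rest on the same inputs (global injectivity, unramifiedness in the interior, the boundary bijectivity zone), and your finite-cover justification that $\gamma$ is compact and homeomorphic to $\partial D$ is in fact more explicit than the paper's one-line assertion of the same fact. Your approach yields slightly more --- each $w\in D$ has exactly one preimage and $z^{-1}(D)=\mathrm{Int}(\Delta)$ is precompact --- at the cost of the Jordan curve theorem and the boundary-regularity worries you flag at the end. Two simplifications are available to you there: first, $\mathrm{Int}(\Delta)\subseteq\mathrm{Int}(\Sigma^2)$ follows immediately from $\Delta\subseteq\Sigma^2$ (which holds because $\mathbb{R}^2\setminus\Sigma^2$ is connected and unbounded, hence lies in the unbounded component of $\mathbb{R}^2\setminus\gamma$), since any point of $\mathrm{Int}(\Delta)\cap\partial\Sigma^2$ would have a full neighborhood inside $\Sigma^2$; second, the $\varepsilon$-push toward $\{y\ge\varepsilon\}$ is unnecessary if you replace the holomorphic argument principle by the Brouwer degree of the continuous map $z:\Delta\to\mathbb{C}$ --- a nonzero winding number of $z|_{\partial\Delta}$ about $w$ already forces $w\in z(\Delta)$ with no holomorphy up to the boundary required, and the winding number is $\pm1$ simply because $z|_{\partial\Delta}$ is a homeomorphism onto the Jordan curve $\partial D$.
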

\begin{proof}
	Certainly $\partial{D}$ is homeomorphic to a circle.
	Because $z$ is one-to-one and $\partial{D}\subset{}z(\Sigma^2)$, $z^{-1}(\partial{D})$ is also homeomorphic to a circle and in particular is compact.
	The open mapping theorem and continuity of $z$ provides two further facts: first that $\partial\left(z^{-1}(D)\right)\subseteq{}z^{-1}(\partial{}D)$, and second that $D\cap{}z(\Sigma^2)$ open set.
	
	For the contradiction we assume $\partial{}D\subset{}z(\Sigma^2)$, but there are points in $D$ that are not in $z(\Sigma^2)$.
	Because $D\cap{}z(\Sigma^2)$ is open we can find points $z_i=z(p_i)$ in $D\cap{}z(\Sigma^2)$ converging to some $z_\infty\notin{}z(\Sigma^2)$.
	If the sequence $p_i\in\Sigma^2$ has a cluster point $p_\infty$ then by continuity $z(p_i)\rightarrow{}z_\infty=z(p_\infty)$, contradiction $z_\infty\notin{}z(\Sigma^2)$.
	We conclude that $p_i$ is a divergence sequence.
	Therefore the pre-image $z^{-1}(D)$ is unbounded (meaning it is not contained in any compact subregion).
	
	We can show its complement $\Sigma^2\setminus{}z^{-1}(D)$ is also unbounded.
	The ``bijectivity zone'' of Lemma \ref{LemmaCloseBijectivityZInv} asserts $z:\Omega\rightarrow{}z(\Omega)$ is a bijection.
	Because $D$ and therefore $D\cap{}z(\Omega)$ is precompact, we have that $z^{-1}(D)\cap\Omega$ is precompact by bijectivity.
	Therefore its complement $\Omega\setminus{}z^{-1}(D)$ is unbounded, so $\Sigma^2\setminus{}z^{-1}(D)$ is unbounded.
	
	Both $z^{-1}(D)$ and its complement $\Sigma^2\setminus{}z^{-1}(D)$ are unbounded, and so the boundary $\partial(z^{-1}(D))$ is unbounded.
	But $\partial(z^{-1}(D))\subset{}z^{-1}(\partial{}D)$, so by continuity $\partial{}D$ is unbounded, contradicting the fact that $\partial{}D$ is $C^{0,1}$-equivalent to a circle.
\end{proof}

\begin{proof}[Proof of Proposition \ref{ThmSurjectivityOfZ}, surjectivity of $z$.]
	Let $z_0$ be any point in $H^2$.
	After possibly translating in the $x$-direction we can assume $z_0$ has coordinates $z_0=(0,y_0)$ for some $y_0>0$.
	Then consider the region $\mathcal{D}_{2y_0}$ (discussed in Subsection \ref{SubsecTheBarriers}), given by
	\begin{eqnarray}
		\mathcal{D}_{2y_0}
		\;=\;\mathcal{R}_{2,4y_0}\,\setminus\,\mathcal{R}_{1,2y_0}.
	\end{eqnarray}
	This is depicted in the first image of Figure \ref{FigSurjProof}.
	By construction, $z_0\notin\mathcal{D}_{2y_0}$.
	By Lemma \ref{LemmaTwoBoxLemma}, $z^{-1}(\mathcal{D}_{2y_0})$ has a single component (the second image of Figure \ref{FigSurjProof}).
	
	As $z^{-1}(\mathcal{D}_{2y_0})$ has just one component by Lemma \ref{LemmaTwoBoxLemma}, and because it intersects $\partial\Sigma^2$ in two locations---the line segments $l_1$ and $l_2$--- we can draw a path $\gamma:[0,1]\rightarrow{}z^{-1}(\mathcal{D}_{2y_0})$ so that $\gamma(0)\in{}l_1$ and $\gamma(1)\in{}l_2$.
	Then the path $(z\circ\gamma):[0,1]\rightarrow{}D$ has $(z\circ\gamma)(0)$ in the line segment $\{x\in[-1,-\frac12],y=0\}$ and $(z\circ\gamma)(1)$ in the line segment $\{x\in[-1,-\frac12],y=0\}$.
	Closing the path by letting $S$ be the line segment in $\partial{}H^2$ connecting $(z\circ\gamma)(0)$ and $(z\circ\gamma)(1)$, the set
	\begin{eqnarray}
		C\;=\;(z\circ\gamma)[0,1]\,\cup\,S
	\end{eqnarray}
	is topologically a circle; this is depicted in the first and second images of Figure \ref{FigSurjProof}.
	
	By construction the circle $C$ lies entirely within $z(\Sigma^2)$.
	It therefore bounds a disk which contains $z_0$.
	Thus Lemma \ref{LemmaDiskAroundPBPoint} says $z_0\in{}z(\Sigma^2)$.
\end{proof}

\section{Classification of polygon metrics} \label{SecAnalytic}

The classification works by an ``outline matching'' process, using the labels $s_i$ on the boundary segments $l_i\subset\partial\Sigma^2$ to create an ``outline map'' $\Psi:\{y=0\}\rightarrow\partial\Sigma^2$ that is piecewise linear on each segment.
Then we use a certain repository of known momentum functions that is rich enough to reproduce any such outline.
We compare these constructed functions $\tilde\varphi{}^1$ and $\tilde\varphi{}^2$ to the already-existing momentum functions $(\varphi^1,\varphi^2)$ on $\Sigma^2$, and find that they are identical on the boundary.
The functions $\varphi^1-\tilde\varphi{}^1$ and $\varphi^2-\tilde\varphi{}^2$ are zero on $\{y=0\}$.
Then the Liouville Theorem from \cite{Web2} says $\varphi^i$ differs from the constructed solution $\tilde{\varphi}^i$ by at worst a multiple of $y^2$, completing the classification.

First we must clarify the relationship between the labels and the momentum functions.
Along $\{y=0\}$ the map $x\mapsto(\varphi^1(x,0),\varphi^2(x,0))$ is piecewise linear, so the direction vector $\frac{\partial}{\partial{}x}=\varphi^1_x\frac{\partial}{\partial\varphi^1}+\varphi^2_x\frac{\partial}{\partial\varphi^2}$ has constant length as measured in coordinates.
This is the boundary label: along the segment $l_i$
\begin{equation}
	s_i\;=\;\sqrt{\left(\varphi^1_x\right)^2+\left(\varphi^2_x\right)^2}. \label{EqnMarkings}
\end{equation}
This is the reason we sometime call the label the ``parameterization speed.''
See Figures \ref{FigGeneralCase}, \ref{FigParCase}, or \ref{FigHPCase}.
To see this is constant along $l_i$, because $\varphi^i_{xx}=0$ by (\ref{EqnDoubleDerivAlongBoundary}),
\begin{equation}
	\frac{\partial{}s_i}{\partial{}x}
	\;=\;\frac{\varphi^1_x\varphi^1_{xx}+\varphi^2_x\varphi^2_{xx}}{\sqrt{\left(\varphi^1_x\right)^2+\left(\varphi^2_x\right)^2}}
	\;=\;0.
\end{equation}
We show this ``parameterization speed'' is identical to our previous interpretation, equation (\ref{EqnInterpretationOfMarkings}), which interprets the labels in terms of the action fields on $M^4$.
After affine transformation of $\Sigma^2$, we may assume the segment $l_i$ lies along the $\varphi^1$-axis; then by definition $\varphi^1_x=0$, so that $s_i=|\varphi^1_x|$.
Let $\gamma(t)$ be a unit-speed geodesic with $\gamma(0)\in\{y=0\}$ that moves perpendicularly off the segment in the $y$-direction.
Near $t=0$, $|\mathcal{X}^1|=O(1)$, $|\mathcal{X}^2|=O(t)$, and $\left<\mathcal{X}^1,\mathcal{X}^2\right>=O(t^2)$; therefore $y=\sqrt{|\mathcal{X}^1|^2|\mathcal{X}^2|^2-\left<\mathcal{X}^1,\mathcal{X}^2\right>{}^2}=|\mathcal{X}^1||\mathcal{X}^2|+O(t^2)$.
Then
\begin{equation}
	\begin{aligned}
	s_i&\;=\;|\varphi^1_x|
	\;=\;\left|\left<\frac{\partial}{\partial{x}},\,\nabla\varphi^1\right>\right|
	\;=\;\left|\left<\frac{\partial}{\partial{x}},\,J_4\mathcal{X}^1\right>\right|
	\;=\;\left|\frac{\partial}{\partial{x}}\right|\left|\mathcal{X}^1\right|
	\end{aligned}
\end{equation}
where the last equality follows because $\mathcal{X}^1$ is a Killing field along a totally geodesic submanifold (the zero-set of the other Killing field $\mathcal{X}^2$), so its length only in the direction of the segment, so it is parallel to $\frac{\partial}{\partial{}x}$.
But $\left|\frac{\partial}{\partial{x}}\right|=\frac{1}{|\nabla{}x|}=\frac{1}{|\nabla{}y|}$, so
\begin{equation}
	\begin{aligned}
		s_i&
		\;=\;\frac{1}{|\nabla{}y|}\left|\mathcal{X}^1\right|
		\;=\;\frac{1}{\frac{\partial}{\partial{t}}\left(|\mathcal{X}^1||\mathcal{X}^2|\right)}\left|\mathcal{X}^1\right|
		\;=\;\frac{1}{\frac{\partial}{\partial{t}}|\mathcal{X}^2|}
	\end{aligned}
\end{equation}
where we used $y=|\mathcal{X}^1||\mathcal{X}^2|+O(t^2)$ and that $|\mathcal{X}^2|=O(t)$.
This recovers (\ref{EqnInterpretationOfMarkings}), our previous interpretation.

We indicate this section's milestones.
In Section \ref{SubsecOutlineMatching}, we match momentum functions to any outline, in a boundary-matching process essentially the same as that from \cite{AS}.
In Section \ref{SubsecImprLiouville} we create a slightly improved version of one of the Liouville theorems of \cite{Web2}; this is the essential result that allows for the classification.
In Sections \ref{SubsecClassGen}, \ref{SubsecClassPar}, and \ref{SubsecClassHP} we apply the Liouville theorem to classify all variations of the momentum functions found in Section \ref{SubsecOutlineMatching}.

In Section \ref{SubsecLipzchitzRemark} we make a technical comment concerning the possibility of Lipschitz points of $\varphi^1$, $\varphi^2$ that lie on an edge instead of a corner.
As expected, we find that Lipschitz points internal to an edge produces a curvature singularity, so create an unrealistic model of K\"ahler reduction.

\subsection{Outline matching} \label{SubsecOutlineMatching}

An ``outline map'' is any piecewise linear map from $\{y=0\}$ into the $\varphi^1$-$\varphi^2$ plane that has finitely many Lipshitz points.
In the following theorem we construct maps $(\varphi^1,\varphi^2):\overline{H}{}^2\rightarrow\Sigma^2$ that agree with the outline map when restricted to the boundary of the half-plane $\overline{H}{}^2$.
This theorem is basically due to Abreu and Sena-Dias \cite{AS} who did ``Case I.''

\begin{lemma} \label{LemmaOutlineLemma}
	Given a closed, non-compact labeled polygon $\Sigma^2$ with vertex points $p_1,\dots,p_d$ and labels $s_0,\dots,s_d$, let $\Phi:\{y=0\}\rightarrow\mathbb{R}^2$ be an outline map with parameterization speed $s_i$ along the edge $l_i$.
	
	Then there exist functions $\varphi^1,\varphi^2:\overline{H}{}^2\rightarrow\mathbb{R}$---which we construct explicitly in the proof---with the following properties:
	\begin{itemize}
		\item[{\it{i}})] Restricted to $\{y=0\}$, the map $(\varphi^1,\varphi^2):\{y=0\}\rightarrow\mathbb{R}^2$ is equal to $\Phi$.
		\item[{\it{ii}})] If the outline is the boundary of a closed, non-compact, convex polygon $\Sigma^2$, then the map $(\varphi^1,\varphi^2):\overline{H}{}^2\rightarrow\Sigma^2$ is bijective and non-singular.
		\item[{\it{iii}})] $y(\varphi^i_{xx}+\varphi^i_{yy})-\varphi^i_y=0$.
	\end{itemize}
\end{lemma}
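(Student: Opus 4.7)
The plan is to construct $\varphi^1$ and $\varphi^2$ as explicit linear combinations of elementary solutions to the degenerate equation in $(\textit{iii})$. A direct computation shows that constants, the function $x$, the function $y^2$, and, for each $a\in\mathbb{R}$, the ``kink'' function
\[
K_a(x,y)\;\triangleq\;-(x-a)+\sqrt{(x-a)^2+y^2}
\]
all satisfy $y(\varphi_{xx}+\varphi_{yy})-\varphi_y=0$ in $H^2$ (the computation for $K_a$ is essentially the one used in deriving (\ref{EqnFirstGKtaubnut})). On the boundary $\{y=0\}$ one has $K_a(x,0)=2\max\{a-x,0\}$, so $\tfrac12 K_a$ contributes precisely a unit upward jump to $\partial_x(\cdot)|_{y=0}$ at $x=a$ and is identically zero for $x\ge a$. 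This is the basic impulse needed to reproduce a piecewise-linear outline.

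Fix once and for all preimage locations $x_1<\cdots<x_d$ on $\{y=0\}$ corresponding to the $d$ vertices of $\Sigma^2$ (any choice will do). On segment $l_i$, where $x\in(x_i,x_{i+1})$ with the obvious conventions at the ends, the outline $\Phi$ is an affine map $x\mapsto(a_i^1x+b_i^1,\,a_i^2x+b_i^2)$ with $|(a_i^1,a_i^2)|=s_i$. Set
\[
\varphi^j(x,y)\;=\;\beta^j\,+\,a_d^j x\,+\,\sum_{k=1}^d\frac{a_k^j-a_{k-1}^j}{2}\,K_{x_k}(x,y),
\]
and choose $\beta^j$ so that $\varphi^j(x,0)=a_0^j x+b_0^j$ on $l_0$. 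Property $(\textit{iii})$ is immediate from linearity of the PDE, and $(\textit{i})$ follows from a telescoping slope computation: the slope on $l_i$ is $a_d^j-\sum_{k>i}(a_k^j-a_{k-1}^j)=a_i^j$, and the constants agree automatically because the continuity of $\Phi$ at each $x_k$ forces precisely the linking between the $b_i^j$ provided by the $K_{x_k}$ terms.

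The main obstacle is $(\textit{ii})$: non-singularity and bijectivity of $(\varphi^1,\varphi^2):\overline{H}{}^2\to\Sigma^2$ when the outline closes a convex polygon. I plan to establish non-singularity first by direct calculation. Writing $\tilde x_k=x-x_k$, $r_k=\sqrt{\tilde x_k^2+y^2}$, the gradients $\nabla K_{x_k}=(-1+\tilde x_k/r_k,\,y/r_k)$ expand the Jacobian $\det\bigl(\partial(\varphi^1,\varphi^2)/\partial(x,y)\bigr)$ as a bilinear sum indexed by pairs $k<l$, each term proportional to the $2\times 2$ determinant $(a_k^1-a_{k-1}^1)(a_l^2-a_{l-1}^2)-(a_k^2-a_{k-1}^2)(a_l^1-a_{l-1}^1)$ multiplied by a positive geometric factor built from the $y/r_k$ and $\tilde x_k/r_k$. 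Convexity of the polygon is exactly the statement that the direction increments $(a_k^j-a_{k-1}^j)$ rotate monotonically, which forces all these $2\times2$ determinants to share a common sign; together with the contribution of the linear $a_d^j x$ term this gives $J>0$ on $H^2$. With non-singularity established, $(\varphi^1,\varphi^2)$ is an open map whose continuous extension carries $\partial\overline{H}{}^2$ bijectively onto $\partial\Sigma^2$ by construction, and it is proper since the dominant linear-plus-kink behavior carries coordinate infinity in $\overline{H}{}^2$ to infinity along the unbounded edges of $\Sigma^2$. A proper local diffeomorphism between two contractible $2$-manifolds that restricts to a boundary bijection is itself a global bijection, completing $(\textit{ii})$.

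The sign bookkeeping for the Jacobian is the only truly delicate step; everything else reduces to the explicit formula, linearity of the degenerate equation, and the elementary boundary behavior of $K_a$. I expect the convexity-to-sign reduction to be the cleanest obstruction: it is precisely where the global geometric hypothesis enters the otherwise local and formal construction.
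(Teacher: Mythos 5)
Your construction of $\varphi^1,\varphi^2$ and the verification of (\textit{i}) and (\textit{iii}) coincide with the paper's: the paper's building block (\ref{EqnInterpFunction}) is exactly a normalized difference of two of your kinks $K_a$, and the telescoping-slope check is the same computation. The gap is in (\textit{ii}). The paper does not prove (\textit{ii}) directly at all---it defers to Theorem 4.1 of Abreu--Sena-Dias \cite{AS}, explicitly describing that proof as ``technically intricate'' with no appreciable simplification available---and your proposed shortcut fails at its central step. The claim that convexity forces all pairwise determinants $\det(c_k,c_l)$ of the increments $c_k=a_k-a_{k-1}$ to share a common sign is false: convexity makes the edge \emph{directions} rotate monotonically, but the increments also depend on the labels $s_k=|a_k|$, and they need not rotate monotonically. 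Concretely, take $d=3$ with edge directions at angles $\theta_0=\pi$, $\theta_1=\pi-0.1$, $\theta_2=\pi-0.2$, $\theta_3=\pi/2$ (a legitimate convex outline), with labels $s_0=s_2=s_3=1$ but $s_1=S$ large. Then $c_1\approx S(\cos\theta_1,\sin\theta_1)$ while $c_2\approx -S(\cos\theta_1,\sin\theta_1)$, and one computes $\det(c_1,c_2)=S\sin(\theta_2-\theta_1)+\sin\theta_2-S\sin\theta_1\approx -0.2\,S<0$ whereas $\det(c_2,c_3)\approx 0.9\,S>0$; the linear-term contributions $\det(a_0+a_d,c_l)$ likewise take both signs here. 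So the Jacobian is a sum of terms of mixed sign with positive geometric weights, and your argument gives no control over it.

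For what it is worth, the regrouping that has a chance of working is Abel summation in $k$, which reorganizes the Jacobian as a sum over pairs of \emph{edges} weighted by $\det(a_i,a_j)=s_is_j\sin(\theta_j-\theta_i)$---and these \emph{are} sign-coherent under convexity---at the cost of geometric weights built from differences of the consecutive unit vectors $(x-x_k,y)/r_k$, whose positivity must then be established; that is essentially the intricate content of \cite{AS}, which the paper simply cites. A smaller point: your single formula degenerates in the half-plane case ($d=0$, outline a straight line), returning $\varphi^2\equiv\mathrm{const}$, which satisfies (\textit{i}) and (\textit{iii}) but makes (\textit{ii}) impossible; the paper's Case III handles this separately by taking $\varphi^2=\tfrac12y^2$. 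As written, your proof of (\textit{ii}) is incomplete.
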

\noindent\begin{figure}[h!] 
	\centering
	\vspace{-0.0in}
	\hspace{-0.5in}
	\includegraphics[scale={0.5}]{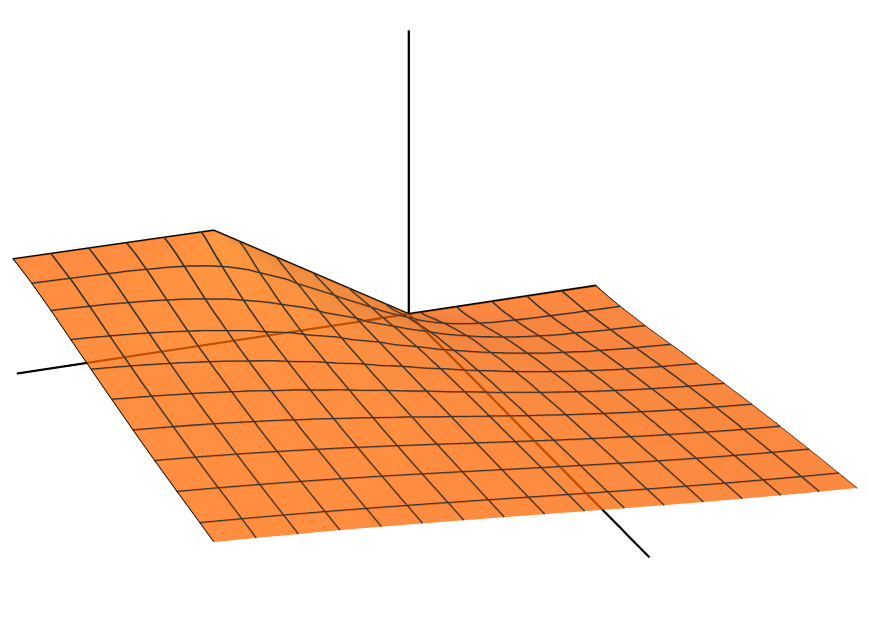}
	\caption{The building block $\varphi$ used for boundary-matching.}
	\hspace{0.5in} \label{FigBuildingBlock}
\end{figure}

\noindent{\bf Remark}. The basic building block for outline matching is the function of the form
\begin{eqnarray}
	\begin{aligned}
	\varphi(x,y)&=
	\frac{1}{2(x_{i+1}-x_i)}\left[\left(x-x_i+\sqrt{(x-x_i)^2+y^2}\right)\right. \\
	&\quad\quad\quad\quad\quad\quad\quad\quad
	-\left.\left(x-x_{i+1}+\sqrt{(x-x_{i+1})^2+y^2}\right)\right].
	\end{aligned} \label{EqnInterpFunction}
\end{eqnarray}
which indeed solves $y\triangle\varphi-\varphi_y=0$ on the upper half-plane.
Crucial for boundary-matching, on $\{y=0\}$ the function is piecewise linear:
\begin{eqnarray}
	\varphi(x,0)
	\;=\;
	\begin{cases}
		\quad\; 0, & x\;\le\;x_i \\
		\frac{x-x_i}{x_{i+1}-x_i} , & x_i\;<\;x\;<\;x_{i+1} \\
		\quad\; 1, & x_{i+1}\;\le\;x
	\end{cases} \label{EqnInterpolation}
\end{eqnarray}
which is the linear interpolation between $0$ and $1$ taking place along the segment $x\in[x_i,x_{i+1}]$.
Summing various functions of the form (\ref{EqnInterpolation}), it is a simple matter to construct any parameterized, piecewise linear map $\{y=0\}\rightarrow\mathbb{R}$ desired, and to use pairs of such functions to create any outline map that we wish.
Then (\ref{EqnInterpFunction}) immediately provides momentum functions on $\overline{H}{}^2$ with this outline.

\begin{proof}
	The polygon has $d+1$ many faces, parameterized at speeds $s_0,\dots,s_d$, and $d$ many vertex points $\vec{p}_1,\dots,\vec{p}_d$ in the $\varphi^1$-$\varphi^2$ plane.
	We take $\vec{p}_i=(m_i,n_i)$.
	Along the line $\{y=0\}$ in $\overline{H}{}^2$ we let the first Lipschitz point be $x_1=0$, and from this we can create the remaining Lipschitz points:
	\begin{eqnarray}
		x_{i+1}\;=\;x_i\,+\,\frac{|\vec{p}_{i+1}-\vec{p}_i|}{s_i}. 
		\label{EqnSpeed}
	\end{eqnarray}
	The proof naturally breaks into three cases: the case that the outline contains parallel rays, the case that the outline is just a single line, and the general case.
	
	\noindent\begin{figure}[h!] 
		\centering
		\vspace{-0.0in}
		\includegraphics[scale={0.425}]{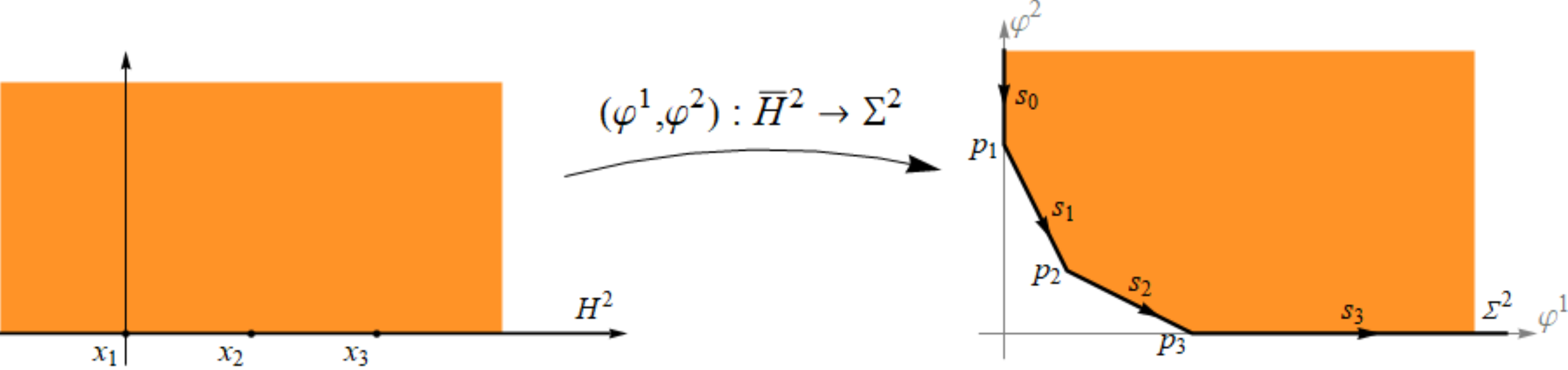}
		\caption{Outline matching in Case I.
		Input data are pivot points $p_1,\dots,p_d$ in the $(\varphi^1,\varphi^2)$-plane and parameterization speeds $s_0,\dots,s_d$ where $s_i\in(0,\infty)$.
		Output is the momentum variables $\varphi^1$, $\varphi^2$ as functions of $x$ and $y$.}
		\hspace{0.5in} \label{FigGeneralCase}
	\end{figure}
	
	\noindent\underline{\it Case I: The general case}.
	
	This is the case where the polygon is not the half-plane, and also has no parallel rays.
	After an affine transformation of the $\varphi^1$-$\varphi^2$ plane, we can assume the two terminal rays lie along the positive $\varphi^1$-axis and the positive $\varphi^2$-axis; see Figure \ref{FigGeneralCase}.
	The momentum functions are
	\begin{eqnarray*}
		\begin{aligned}
			\varphi^1
			&\;=\;
			-\frac{s_0}{2}\left(x-\sqrt{x^2+y^2}\right) \\
			&\quad\quad
			+\sum_{i=1}^{d-1}
			\frac{m_{i+1}-m_i}{2(x_{i+1}-x_i)}
			\left(
			x_i-x_{i+1}+\sqrt{(x-x_i)^2+y^2}
			-\sqrt{(x-x_{i+1})^2+y^2}
			\right)
		\end{aligned}
	\end{eqnarray*}
	and
	\begin{eqnarray*}
		\begin{aligned}
			\varphi^2
			&\;=\;
			\sum_{i=1}^{d-1}
			\frac{n_{i+1}-n_i}{2(x_{i+1}-x_i)}
			\left(
			x_{i+1}-x_i+\sqrt{(x-x_i)^2+y^2}
			-\sqrt{(x-x_{i+1})^2+y^2}
			\right) \\
			&\quad\quad
			+\frac{s_d}{2}\left((x-x_d)+\sqrt{(x-x_d)^2+y^2}\right)
		\end{aligned} \label{EqnMomGeneralCase}
	\end{eqnarray*}
	To verify that the parameterizations along the bounding line $\{y=0\}$ agree with the outline map, we note that the terms of the form $\sqrt{(x-x_i)^2+y^2}$ become $|x-x_i|$ at $y=0$, and we easily verify the tangent vector is
	\begin{eqnarray}
		\begin{aligned}
		\frac{d}{dx}
		=\begin{cases}
			-s_0\frac{\partial}{\partial\varphi^1},
			& x\,<\,0 \\
			\frac{m_{i+1}-m_i}{x_{i+1}-x_i}\frac{\partial}{\partial\varphi^1}+
			\frac{n_{i+1}-n_i}{x_{i+1}-x_i}\frac{\partial}{\partial\varphi^2},
			&x\in(x_i,x_{i+1}),i\in\{1,...,d-1\} \\
			s_d\frac{\partial}{\partial\varphi^2},
			& x\,>\,x_d.
		\end{cases}
		\end{aligned}
	\end{eqnarray}
	along the parameterized path $x\mapsto(\varphi^1(x,0),\varphi^2(x,0))$.
	We remark that from (\ref{EqnSpeed}) we have $s_i=(x_{i+1}-x_i)/|\vec{p}_{i+1}-\vec{p}_i|$ so we see that the parameterization speed is $|d/dx|=s_i$ for $x\in(x_i,x_{i+1})$, as promised. \\
	
	\noindent\begin{figure}[h!] 
		\centering
		\vspace{-0.0in}
		\hspace{-0.6in}
		\includegraphics[scale={0.4}]{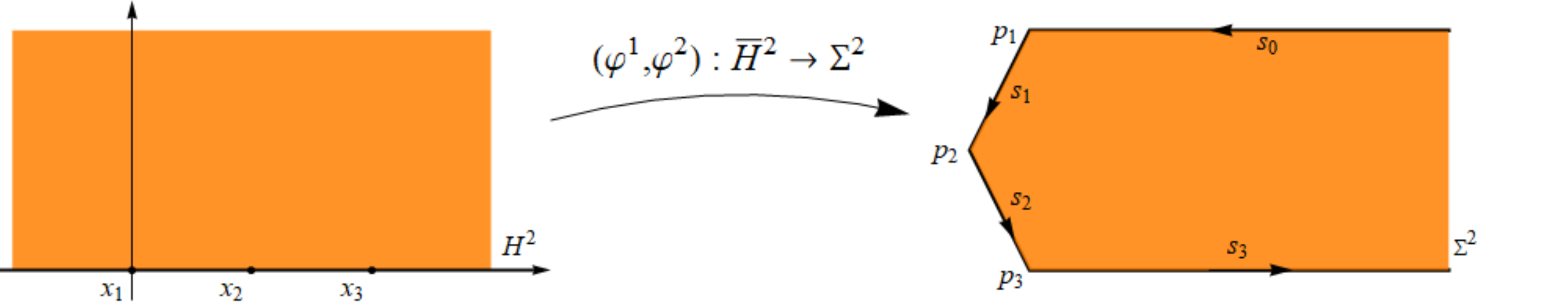}
		\caption{Outline matching in Case II.}
		\hspace{0.5in} \label{FigParCase}
	\end{figure}
	
	\noindent\underline{\it Case II: The outline contains parallel rays}.
	
	In this case the momentum functions are
	\begin{eqnarray*}
		\begin{aligned}
			&\varphi^1
			\;=\;
			\frac{s_0}{2}\left(x-\sqrt{x^2+y^2}\right) \\
			&\quad\quad
			+\sum_{i=1}^{d-1}
			\frac{m_{i+1}-m_i}{2(x_{i+1}-x_i)}
			\left(
			x_{i+1}-x_i+\sqrt{(x-x_i)^2+y^2}
			-\sqrt{(x-x_{i+1})^2+y^2}
			\right) \\
			&\quad\quad
			+\frac{s_d}{2}\left((x-x_d)+\sqrt{(x-x_d)^2+y^2}\right)
		\end{aligned} \label{EqnParMomFuns}
	\end{eqnarray*}
	and
	\begin{eqnarray*}
		\begin{aligned}
			&\varphi^2
			\;=\;
			\sum_{i=1}^{d-1}
			\frac{n_{i+1}-n_i}{2(x_{i+1}-x_i)}
			\left(
			x_{i+1}-x_i+\sqrt{(x-x_i)^2+y^2}
			-\sqrt{(x-x_{i+1})^2+y^2}
			\right)
		\end{aligned}
	\end{eqnarray*}
	To see that the outline is correct, we restrict to the axis $\{y=0\}$, and again use that terms of the form $\sqrt{(x-x_i)^2+y^2}$ become $|x-x_i|$, to find directional derivatives
	\begin{eqnarray}
		\begin{aligned}
			\frac{d}{dx}
			=\begin{cases}
				-s_0\frac{\partial}{\partial\varphi^1},
				& x\,<\,0 \\
				\frac{m_{i+1}-m_i}{x_{i+1}-x_i}\frac{\partial}{\partial\varphi^1}+
				\frac{n_{i+1}-n_i}{x_{i+1}-x_i}\frac{\partial}{\partial\varphi^2},
				& x\in(x_i,x_{i+1}),i\in\{1,...,d-1\} \\
				s_d\frac{\partial}{\partial\varphi^1},
				& x\,>\,x_d.
			\end{cases}
		\end{aligned}
	\end{eqnarray}
	We remark that from (\ref{EqnSpeed}) we have $s_i=(x_{i+1}-x_i)/|\vec{p}_{i+1}-\vec{p}_i|$ so we see that the parameterization speed is $|d/dx|=s_i$ for $x\in(x_i,x_{i+1})$, as promised. \\
	
	\noindent\begin{figure}[h!] 
		\centering
		\vspace{-0.0in}
		\hspace{-0.6in}
		\includegraphics[scale={0.4}]{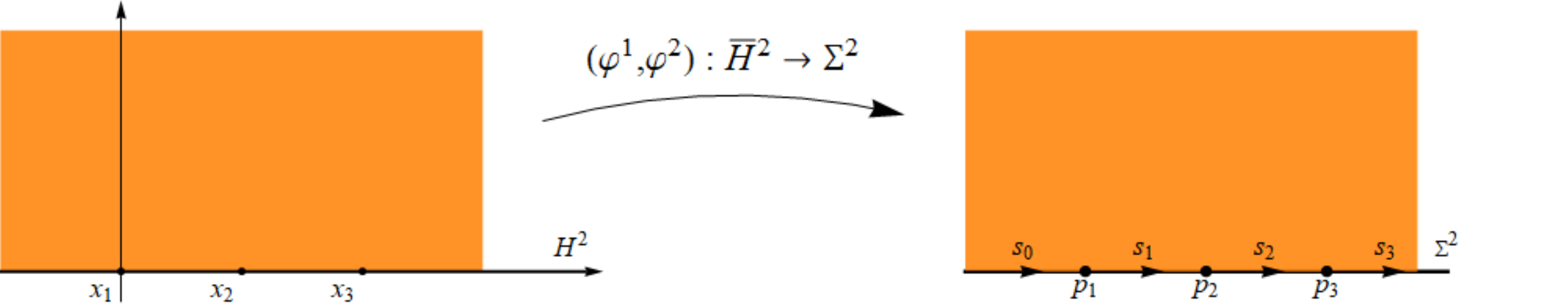}
		\caption{Outline matching in Case III: the ``polygon'' is the half-plane.
			Pivot points will occur only where the parameterization speed changes; see Section \ref{SubsecLipzchitzRemark}.}
		\hspace{0.5in} \label{FigHPCase}
	\end{figure}
	
	\noindent\underline{\it Case III: The ``polygon'' is the half-plane}.
	
	After an affine transformation of the $\varphi^1$-$\varphi^2$ plane we can assume that the ``polygon'' $\Sigma^2$ is the half-plane $\{\varphi^2\ge0\}$.
	We take the momentum functions to be
	\begin{eqnarray*}
		\begin{aligned}
			&\varphi^1
			\;=\;
			\frac{s_0}{2}\left(x-\sqrt{x^2+y^2}\right) \\
			&\quad\quad
			+\sum_{i=1}^{d-1}
			\frac{m_{i+1}-m_i}{2(x_{i+1}-x_i)}
			\left(
			x_{i+1}-x_i+\sqrt{(x-x_i)^2+y^2}
			-\sqrt{(x-x_{i+1})^2+y^2}
			\right) \\
			&\quad\quad
			+\frac{s_d}{2}\left((x-x_d)+\sqrt{(x-x_d)^2+y^2}\right),
			\quad \text{and} \\
			&\varphi^2
			\;=\;\frac12y^2.
		\end{aligned} \label{EqnMomHalfPlane}
	\end{eqnarray*}
	To verify the parameterizations along the bounding line $\{y=0\}$, we use the technique of the previous two cases to verify that
	\begin{eqnarray}
		\begin{aligned}
		\frac{d}{dx}
		=\begin{cases}
		s_0\frac{\partial}{\partial\varphi^1},
		& x\,<\,0 \\
		\frac{m_{i+1}-m_i}{x_{i+1}-x_i}\frac{\partial}{\partial\varphi^1},
		& x\in(x_i,x_{i+1}),\,i\in\{1,\dots,d-1\} \\
		s_d\frac{\partial}{\partial\varphi^1},
		& x\,>\,x_d.
		\end{cases}
	\end{aligned}
	\end{eqnarray}
	along the parameterized path $x\mapsto(\varphi^1(x,0),\varphi^2(x,0))$.
	We note that (\ref{EqnSpeed}) implies $s_i=\frac{m_{i+1}-m_i}{x_{i+1}-x_i}$, and so the parameterization speed is what was promised. \\
	
	\noindent\underline{Proof that $(\varphi^1,\varphi^2):\overline{H}{}^2\rightarrow\Sigma^2$ is bijective and non-singular.}
	
	We have constructed a map $(\varphi^1,\varphi^2):\overline{H}{}^2\rightarrow\Sigma^2$ that agrees with the outline map when restricted to $\{y=0\}$.
	But there is no indication so far that the image is $\Sigma^2$, that the map is surjective, or that it is non-singular.
	
	These facts are proved by Abreu and Sena-Dias in Theorem 4.1 of \cite{AS}.
	Their proof is technically intricate, and the present author has found no appreciable simplification so we think it is best to refer to their paper.
	
	There is a translation issue between their paper and ours: theirs is the Legendre transform of ours.
	The volumetric normal coordinates $(x,y)$ of this paper are the same as the coordinates $(H,r)$ of \cite{AS}.
	One determines the so-called {\it symplectic potential}, which is a convex function ${\bf{u}}:\Sigma^2\rightarrow\mathbb{R}$ satisfying
	\begin{eqnarray}
		\frac{\partial^2{\bf{u}}}{\partial\varphi^i\partial\varphi^j}
		\;=\;\left<\frac{\partial}{\partial\varphi^i},\,\frac{\partial}{\partial\varphi^i}\right>
		\;=\;g_{\Sigma,ij}.
	\end{eqnarray}
	Executing a Legendre transform using the convex function ${\bf{u}}$, the polygon $\Sigma^2$ in the $\varphi^1$-$\varphi^2$ plane is transformed to the $\xi_1$-$\xi_2$ plane by
	\begin{eqnarray}
		\xi_1\;=\;\frac{\partial{\bf{u}}}{\partial\varphi^1}, \quad
		\xi_2\;=\;\frac{\partial{\bf{u}}}{\partial\varphi^2}
	\end{eqnarray}
	which, we remark, is the equation $d{\bf{u}}=\xi_1dx_i+\xi_1dx_i$ in the statement of Theorem 4.1 of \cite{AS}.
	The image of $\Sigma^2$ is the entirety of the $\xi_1$-$\xi_2$ plane.
	Under the Legendre transform $(\varphi^1,\varphi^2)\rightarrow(\xi_1,\xi_2)$, the degenerate-elliptic equation $y\triangle\varphi^i-\varphi^i_y=0$ transforms into the degenerate-elliptic equation
	\begin{eqnarray}
		y\triangle\xi_i\,+\,(\xi_i)_y\;=\;0
	\end{eqnarray}
	which has the opposite sign on the $\partial_y$-term; this equation is the primary consideration of \cite{AS}, as opposed to $y\triangle\varphi^i-(\varphi^i)_y=0$ which is our primary consideration.
	
	The explicit transformation from our expressions of $\varphi^1$, $\varphi^2$ in terms of $x$, $y$ to the corresponding expressions of $\xi_1$, $\xi_2$ in terms of $x$, $y$ are actually given within the proof of Theorem 4.1 of \cite{AS} (unfortunately most of their expressions are not labeled, but are about a page below their equation (7)).
	Having made this translation from our framework to the Abreu-Sena-Dias framework, the Abreu-Sena-Dias proof goes through without change.
\end{proof}

{\bf Remark}. The outline matching in the proof above does not require the outline be convex, although the resulting map $(\varphi^1,\varphi^2):\overline{H}{}^2\rightarrow\Sigma^2$ will be singular if not.
See \ref{SubsecPathExamples} for some pathological examples of this kind.

\subsection{The Improved Liouville theorem} \label{SubsecImprLiouville}

\begin{theorem}[Liouville theorem, {\it cf.} Corollary 1.11 of \cite{Web2}] \label{ThmOrigLiouville}
	Assume $\varphi\in{}C^0(\overline{H}{}^2)\cap{}C^2(H^2)$ is non-negative and solves
	\begin{eqnarray}
		y(\varphi_{xx}+\varphi_{yy})\,-\,\varphi_y\;=\;0
	\end{eqnarray}
	in $H^2$, with boundary condition $\varphi(x,0)=0$ on $\{y=0\}$.
	
	Then $\varphi=C_1y^2$ for some constant $C_1\ge0$.
\end{theorem}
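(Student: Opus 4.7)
The plan is to convert the degenerate-elliptic equation on $H^2$ into a genuine Laplace equation in higher dimensions, where classical Liouville techniques apply. The engine is the substitution $u=\varphi/y^2$: from $\varphi_{xx}=y^2u_{xx}$, $\varphi_y=2yu+y^2u_y$, $\varphi_{yy}=2u+4yu_y+y^2u_{yy}$ one computes
\begin{equation*}
y(\varphi_{xx}+\varphi_{yy})-\varphi_y \;=\; y^3(u_{xx}+u_{yy}) + 3y^2 u_y,
\end{equation*}
so the equation for $\varphi$ is equivalent to $\Delta u + (3/y)u_y=0$ on $H^2$. This is precisely the Laplace equation satisfied by a function $U(x,\mathbf{y})=u(x,|\mathbf{y}|)$ that is axially symmetric in the four-dimensional $\mathbf{y}$-variable on $\mathbb{R}^5=\mathbb{R}_x\times\mathbb{R}^4_{\mathbf{y}}$, with $y=|\mathbf{y}|$ playing the role of the radial coordinate in the codirections. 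Since $\varphi\ge 0$ we have $U\ge 0$, and $U$ is harmonic on $\mathbb{R}^5$ away from the one-dimensional $x$-axis.

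Given this setup, the proof concludes quickly modulo one technical step. The $x$-axis in $\mathbb{R}^5$ has Hausdorff dimension $1<5-2$, so it is a removable set for bounded harmonic functions. If $U$ is locally bounded near the axis, it extends uniquely to a non-negative harmonic function $\tilde U$ on all of $\mathbb{R}^5$, and the classical Liouville theorem for non-negative harmonic functions on Euclidean space (a direct consequence of Harnack's inequality) then forces $\tilde U\equiv C_1$ for some $C_1\ge 0$, giving $\varphi=C_1y^2$.

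The main obstacle is thus showing $\varphi(x,y)\le Cy^2$ in a neighborhood of each boundary point, equivalently local boundedness of $u=\varphi/y^2$ near $\{y=0\}$. My plan is a barrier argument. Fix $(x_0,0)$, take a small rectangle $\mathcal{R}=[x_0-R,x_0+R]\times[0,R]$ with $M=\sup_{\mathcal{R}}\varphi<\infty$, and form the barrier $\Phi=M(1-\psi_{\mathcal{R}})$ using $\psi_{\mathcal{R}}$ from Lemma \ref{LemmaBarrierR}: this solves $y\Delta\Phi-\Phi_y=0$ in the interior, vanishes on the degenerate bottom, equals $M$ on the non-degenerate boundary, and by the maximum principle dominates $\varphi$ throughout $\mathcal{R}$. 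Heuristically one expects $\Phi=O(y^2)$ because $\psi_{\mathcal{R}}(x,0)\equiv 1$ forces $(\psi_{\mathcal{R}})_{xx}(x,0)=0$, and then the PDE $y(\psi_{xx}+\psi_{yy})=\psi_y$ evaluated on $\{y=0\}$ yields $(\psi_{\mathcal{R}})_y(x,0)=0$, so Taylor expansion gives $1-\psi_{\mathcal{R}}=O(y^2)$.

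The hard part will be justifying this quadratic decay rigorously, since \emph{a priori} the construction of $\psi_{\mathcal{R}}$ yields only $C^0$ regularity up to the degenerate boundary. To bypass this, I would work directly with an explicit non-negative separable solution whose quadratic decay is manifest from its closed form---for example $\cosh(\lambda(x-x_0))\,yI_1(\lambda y)$, which is non-negative and behaves like $\tfrac{1}{2}\lambda\cosh(\lambda(x-x_0))y^2$ as $y\to 0^+$---and combine finitely many such functions together with the solution $Cy^2$ to produce an upper barrier that dominates $\varphi$ on the non-degenerate boundary of a small rectangle while retaining the $O(y^2)$ behavior at $\{y=0\}$. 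This gives $\varphi \le Cy^2$ locally, completing the removability step and hence the theorem.
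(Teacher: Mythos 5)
First, note that the paper does not actually prove this statement: it is imported verbatim from Corollary 1.11 of \cite{Web2}, and the present paper only proves the \emph{improved} version by reducing to it. So your argument is necessarily an independent route. The core idea is attractive and correct as far as it goes: the computation showing that $u=\varphi/y^2$ satisfies $\Delta u+\tfrac3y u_y=0$, i.e.\ that $U(x,\mathbf{y})=u(x,|\mathbf{y}|)$ is a non-negative axisymmetric harmonic function on $\mathbb{R}^5$ minus the $x$-axis, is right, and the endgame (removability of a set of Hausdorff dimension $1<5-2$ for locally bounded harmonic functions, then Harnack--Liouville for non-negative entire harmonic functions) is sound \emph{provided} you can show $\varphi\le Cy^2$ locally.

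That step is where the proposal has genuine gaps. First, a concrete error: $\cosh(\lambda(x-x_0))\,yI_1(\lambda y)$ does not solve the equation. Separating variables, $\cos(\lambda x)$ pairs with $yI_1(\lambda y)$ or $yK_1(\lambda y)$, while $\cosh(\lambda x)$ pairs with $yJ_1(\lambda y)$ or $yY_1(\lambda y)$ (exactly the combinations used in (\ref{EqnDefZeroBarrier}) and in Step III of Theorem \ref{ThmImprLiouville}); a direct computation gives $L\bigl[\cosh(\lambda(x-x_0))\,yI_1(\lambda y)\bigr]=2\lambda^2y^2\cosh(\lambda(x-x_0))I_1(\lambda y)>0$, so your candidate is a strict \emph{subsolution} and cannot serve as an upper barrier. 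Second, and more structurally: any upper barrier built from finitely many functions each of which is $O(y^2)$ as $y\to0$ (including $Cy^2$ itself) is $O(y^2)$ along the \emph{entire} bottom edge of your rectangle, and therefore cannot dominate $\varphi$ on the lateral edges near the bottom corners, where the hypotheses give only $\varphi=o(1)$ with no rate. The comparison on $\partial\mathcal{R}$ fails there, so the maximum principle cannot be invoked. The estimate $\varphi\le Cy^2$ is essentially the boundary regularity theory that occupies the other half of \cite{Web2}; it cannot be obtained from barriers that vanish to second order uniformly on the degenerate boundary.

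The gap is repairable within your framework. Use instead the exact non-negative solutions $v_a=\tfrac12\bigl(x-a+\sqrt{(x-a)^2+y^2}\bigr)$ (the paper's interpolation blocks (\ref{EqnInterpFunction})): $v_a$ vanishes to order $y^2$ only on the part $\{x<a\}$ of the bottom edge, while being bounded below by a positive constant on the far lateral edge and on the top. Taking $a=x_0+R/2$, $b=x_0-R/2$ and $\Phi=\tfrac{4M}{R}(v_a+v_b^{-})$ with $v_b^-=\tfrac12(b-x+\sqrt{(x-b)^2+y^2})$, one checks $\Phi\ge M\ge\varphi$ on the top and both sides, $\Phi\ge0=\varphi$ on the bottom, and $\Phi\le 16MR^{-2}y^2$ for $|x-x_0|\le R/4$; the maximum principle then yields the local bound you need. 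Alternatively, you can bypass local boundedness altogether: uniform continuity of $\varphi$ on compacta of $\overline{H}{}^2$ together with $\varphi(\cdot,0)=0$ gives $U=o\bigl(\mathrm{dist}(\cdot,L)^{-2}\bigr)$ locally uniformly near the axis $L$, which is precisely the removability threshold for a codimension-4 affine subspace of $\mathbb{R}^5$ (the capacitary potential of a line decays like $\mathrm{dist}^{-2}$); equivalently, the Riesz decomposition of the non-negative harmonic function $U$ off the polar set $L$ has singular part $G\mu$ with $\mu\ge0$ supported on $L$, and the $o(\mathrm{dist}^{-2})$ bound forces $\mu=0$. Either repair closes the argument.
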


\begin{theorem}[Improved Liouville theorem] \label{ThmImprLiouville}
	Assume $\varphi\in{}C^0(\overline{H}{}^2)\cap{}C^2(H^2)$ solves
	\begin{eqnarray}
		y(\varphi_{xx}+\varphi_{yy})\,-\,\varphi_y\;=\;0
	\end{eqnarray}
	in $H^2$, and assume $\varphi=0$ on the boundary line $\{y=0\}$.
	Further assume $\varphi$ is bounded from below by
	\begin{eqnarray}
		\varphi\;>\;-A\,-\,B|x|\,-\,Cy^\delta
	\end{eqnarray}
	for constants $A,B,C\in\mathbb{R}^{\ge0}$ and $\delta\in[0,2)$.
	
	Then $\varphi\ge0$, and indeed $\varphi=C_1y^2$ for some $C_1\ge0$.
\end{theorem}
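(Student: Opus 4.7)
The plan is to use the substitution $w = \varphi/y^2$ to reinterpret the problem as a Liouville theorem for harmonic functions on $\mathbb{R}^5$; once $w$ is identified, both conclusions $\varphi \geq 0$ and $\varphi = C_1 y^2$ will follow directly (bypassing but also recovering Theorem~\ref{ThmOrigLiouville}).

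First I would establish the Taylor expansion $\varphi(x,y) = A_2(x) y^2 + O(y^4)$ near $\{y=0\}$. Substituting $\varphi = \sum_{k\geq 0} A_k(x) y^k$ into $y(\varphi_{xx}+\varphi_{yy}) - \varphi_y = 0$ and matching coefficients forces $A_0 = A_1 = A_3 = 0$ and $A_4 = -A_2''/8$; elliptic regularity for the degenerate operator $L$ justifies the formal series as an honest expansion on compact subsets. Hence $w = \varphi/y^2$ extends continuously to $\overline{H}{}^2$ with $w(x,0) = A_2(x)$ and $w_y(x,0) = 0$. A direct calculation gives $L(y^2 w) = y^2\bigl(y(w_{xx}+w_{yy}) + 3w_y\bigr)$, so $L\varphi = 0$ is equivalent to $w_{xx} + w_{yy} + (3/y) w_y = 0$, precisely the axisymmetric Laplace equation on $\mathbb{R}^5 = \mathbb{R}_x \times \mathbb{R}^4_{\vec y}$ with $y = |\vec y|$. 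The condition $w_y(x,0) = 0$ is the reflection condition making the axisymmetric extension $W(x,\vec y) := w(x,|\vec y|)$ smooth across the axis, so $W$ is harmonic on all of $\mathbb{R}^5$.

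Next I would translate the hypothesis $\varphi > -A - B|x| - C y^\delta$ into a lower bound on $W$. For $y \geq 1$ one has directly $w > -A - B|x| - C$. For $y \in (0,1)$, continuity of $w$ up to $\{y=0\}$ together with an interior Harnack-type estimate for the harmonic function $W$ on $\mathbb{R}^5$ (comparing $w$ on $\{y<1\}$ to its values on $\{y=1\}$) yields the same qualitative bound. Combining, $W \geq -C'(1+|x|)$ globally on $\mathbb{R}^5$.

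I would then invoke a one-sided Liouville theorem: a harmonic function on $\mathbb{R}^n$ that is bounded below by an affine function of the coordinates must itself be affine. (A standard route subtracts a suitable affine harmonic function to reduce to the positive-harmonic case on each of the complementary half-spaces $\{x \geq 0\}$ and $\{x \leq 0\}$, then applies the classical Liouville theorem for positive harmonic functions.) Axisymmetry in $\vec y$ then forces $W = \alpha + \beta x$, so $\varphi = (\alpha + \beta x) y^2$. The original sub-quadratic lower bound, examined as $y \to \infty$ at each fixed $x$ (using $\delta<2$), forces $\alpha + \beta x \geq 0$ for all $x$; hence $\beta = 0$ and $\alpha \geq 0$, giving $\varphi = C_1 y^2$ with $C_1 \geq 0$. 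The main obstacles I anticipate are verifying the one-sided Liouville on $\mathbb{R}^5$ with a linear-in-$|x|$ (rather than linear-in-a-single-direction) lower bound, and making rigorous the smoothness of $w$ across the axis $\{y=0\}$ needed to justify the axisymmetric extension.
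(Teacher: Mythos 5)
Your reduction is algebraically correct and is a classical observation: writing $\varphi=y^2w$ one indeed gets $y(\varphi_{xx}+\varphi_{yy})-\varphi_y=y^2\bigl(y(w_{xx}+w_{yy})+3w_y\bigr)$, so $w$ satisfies the axisymmetric Laplace equation of $\mathbb{R}^5$, and your final steps (the one-sided Liouville theorem for harmonic functions bounded below by $C(1+|x|)$, and the use of $\delta<2$ to kill the linear term) are fine. But the argument has a genuine gap at its foundation: the claim that $\varphi=A_2(x)y^2+O(y^4)$ and that $w=\varphi/y^2$ extends smoothly (or even continuously, with $w_y(x,0)=0$) across $\{y=0\}$ does not follow from ``elliptic regularity,'' because the operator degenerates exactly on $\{y=0\}$ and the hypotheses give only $\varphi\in C^0(\overline{H}{}^2)\cap C^2(H^2)$ with zero boundary values. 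A priori $\varphi/y^2$ could blow up as $y\searrow0$ (the indicial exponents of $yf''-f'=0$ are $0$ and $2$, and they differ by an integer, so logarithmic behavior must be excluded by an actual regularity argument, not by matching a formal power series). This boundary regularity is precisely the hard analytic content of the companion paper \cite{Web2}; assuming it here makes the proof circular with respect to the result you are trying to bypass.

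There is a second gap in transferring the lower bound into the cylinder. For $y<1$ the hypothesis $\varphi>-A-B|x|-Cy^\delta$ gives \emph{no} lower bound on $w=\varphi/y^2$, and the ``Harnack-type estimate comparing $w$ on $\{y<1\}$ to its values on $\{y=1\}$'' does not exist as stated: the region $\{|\vec y|<1\}\subset\mathbb{R}^5$ is an unbounded solid cylinder, and neither Harnack nor the minimum principle controls a harmonic function inside an infinite cylinder from a bound on its lateral boundary alone (separated solutions $e^{cx}f(|\vec y|)$ show such control fails without an a priori growth bound inside). Closing this step again requires quantitative boundary regularity for $\varphi$ near $\{y=0\}$. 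By contrast, the paper's proof never divides by $y^2$: it works directly with $\varphi$, using explicit supersolutions ($-M_0-M_1\sqrt{(x-x_0)^2+y^2}-M_2y^2$ and a Bessel-function barrier on the strip $0\le y\le B\epsilon^{-1}$) together with the maximum principle on regions where the operator is uniformly elliptic, to upgrade the lower bound to $\varphi\ge0$; only then does it invoke the black-box Liouville theorem of \cite{Web2} to conclude $\varphi=C_1y^2$. If you want to salvage your route, you would need to first prove (or cite) the boundary regularity statement $\varphi=O(y^2)$ with $\varphi/y^2$ continuous up to $\{y=0\}$; with that in hand the $\mathbb{R}^5$ picture does give a clean alternative proof.
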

\begin{proof}
	We use lower barrier functions to progressively improve the lower bound, finally arriving at $\varphi\ge0$.
	
	\underline{\it Step I}: {\it Improving the lower bound to $\varphi\ge-B^2\sqrt{(x-x_0)^2+y^2}$.}
	
	Consider the function $\underline\varphi$ given by
	\begin{eqnarray}
		\underline{\varphi}\;\triangleq\;
		-M_0-M_1\sqrt{(x-x_0)^2+y^2}\,-\,M_2y^2,
	\end{eqnarray}
	which solves $y(\underline{\varphi}{}_{xx}+\underline{\varphi}{}_{yy})-\underline{\varphi}{}_y=0$.
	We take $M_0,M_2>0$, $M_1>B$, and $x_0\in\mathbb{R}$.
	Our aim is to show that $\underline{\varphi}<\varphi$ even as $\epsilon\searrow0$ and $M_2\searrow0$. 
	A simple computation shows that if
	\begin{eqnarray}
		x\notin\left[-\frac{A-M_0-M_1|x_0|}{M_1-B},\,\frac{A-M_0+M_1|x_0|}{M_1-B}\right],
	\end{eqnarray}
	then
	\begin{eqnarray}
		-M_0-M_1|x-x_0|\le-A-B|x|
	\end{eqnarray}
	(this is where we use $M_1>B$).
	Assuming also $y>(C/M_2)^{2-\delta}$ then
	\begin{eqnarray}
		\begin{aligned}
			\underline\varphi
			&\;=\;-M_0-M_1\sqrt{(x-x_0)^2+y^2}-M_2y^2 \\
			&\;\le\;-A-B|x|-Cy^\delta
		\end{aligned}
	\end{eqnarray}
	In other words, outside of the compact region
	\begin{eqnarray*}
		\mathcal{R}=\left\{(x,y)
		\,\Big|\,
		-\frac{A-M_0-M_1|x_0|}{M_1-B}\le{}x\le\frac{A-M_0+M_1|x_0|}{M_1-B}, \; 0\le{}y\le\left(\frac{C}{M_2}\right)^{2-\delta}
		\right\},
	\end{eqnarray*}
	we have $\underline\varphi<\varphi$.
	Finally, by assumption $\varphi$ is continuous and equals 0 on $\{y=0\}$.
	Since $0<\underline{\varphi}$ on $\{y=0\}$ there is some neighborhood $\Omega$ of $\{y=0\}$ on which $\underline\varphi<\varphi$.
	Therefore $\underline\varphi<\varphi$ except possibly on the compact region $\mathcal{R}\setminus\Omega$.
	But the operator $y\triangle-\partial_y$ is uniformly elliptic on $\mathcal{R}\setminus\Omega$, so by the maximum principle we have $\underline\varphi<\varphi$ on all of $\overline{H}{}^2$.
	Finally let $M_0,M_2\searrow0$ and $M_1\searrow{}B$. \\
	
	\noindent\underline{\it Step II}: {\it Improving the lower bound to $\varphi\ge-By$.}
	
	From Step I we have $\varphi\ge-B\sqrt{(x-x_0)^2+y^2}$.
	Therefore
	\begin{eqnarray}
		\varphi\;\ge\;\sup_{x_0\in\mathbb{R}}-B\sqrt{(x-x_0)^2+y^2}
	\end{eqnarray}
	The fact that $\sup_{x_0\in\mathbb{R}}-B\sqrt{(x-x_0)^2+y^2}=-By$ gives the result. \\
	
	\noindent\underline{\it Step III}: {\it Improving the lower bound to $\varphi\ge0$.}
	
	From Step II, $\varphi+\epsilon{}y^2\ge0$ outside the strip $y\in[0,B\epsilon^{-1}]$.
	To show that also $\varphi+\epsilon{}y^2\ge0$ on the strip, we use the barrier
	\begin{equation}
		\begin{aligned}
		&\underline{\varphi}
		\;=\;f(y)g(x), \quad \text{with} \quad
		 g(x)=\cosh\left(\frac12B^{-1}\epsilon{}y_{1,1}\cdot{}x\right) \quad \text{and} \\
		&\hspace{0.5in}f(y)=\frac{\pi}{4}B^{-1}\epsilon{}y_{1,1}\cdot{}y\,\,Y_1\left(\frac12B^{-1}\epsilon{}y_{1,1}\cdot{}y\right)
		\end{aligned}
	\end{equation}
	where $Y_1$ is the Bessel function of the second kind and $y_{1,1}\approx2.20$ is its first zero.
	One easily verifies $y(\underline{\varphi}{}_{xx}+\underline{\varphi}{}_{yy})-\underline{\varphi}{}_y=0$ and $\underline{\varphi}(x,0)=-\cosh(x\cdot{}B^{-1}\epsilon{}y_{1,1})$.
	The function $f(y)$ was chosen specifically so $f(0)=-1$ and $f(y)\le-1$ on $[0,B\epsilon^{-1}]$.
	
	We use the maximum principle to prove that, for any $\delta>0$, we have $\varphi>\delta\underline{\varphi}$ on the strip.	
	Because $f(y)\le-1$ on $[0,B\epsilon^{-1}]$, we have
	\begin{equation}
		\delta\underline{\varphi}(x,y)\;\le\;-\delta\cosh\left(\frac12B^{-1}\epsilon{}y_{1,1}\cdot{}x\right)
	\end{equation}
	on $y\in[0,B\epsilon^{-1}]$.
	In particular $\varphi\ge\delta\underline{\varphi}$ on $\{y=0\}$ and $\{y=B\epsilon^{-1}\}$.
	From step II $\varphi+\epsilon{}y^2\ge-By+\epsilon{}y^2\ge-\frac14B^2\epsilon^{-1}$, so on the two segments $x=\pm\frac{2B}{\epsilon{}y_{1,1}}\cosh^{-1}(\frac{B^2}{4\epsilon\delta})$ we also have $\underline{\varphi}\le\varphi$.
	In other words, $\underline{\varphi}\le\varphi$ on the boundary of the rectangle
	\begin{equation}
		\left\{y\in[0,B\epsilon^{-1}],\quad x\in\left[-\frac{2B\epsilon^{-1}}{y_{1,1}}\cosh^{-1}\left(\frac{B^2}{4\epsilon\delta}\right),\frac{2B\epsilon^{-1}}{y_{1,1}}\cosh^{-1}\left(\frac{B^2}{4\epsilon\delta}\right)\right]
		\right\},
	\end{equation}
	so by the maximum principle $\underline{\varphi}<\varphi$ on the entirety of the rectangle.
	Now sending $\delta\searrow0$ gives $\varphi+\epsilon{}y^2\ge0$ on the entire strip, and therefore on the entire half-plane.
	Finally sending $\epsilon\searrow0$ gives $\varphi\ge0$ on $\overline{H}{}^2$.
\end{proof}

\subsection{The classification in the general case} \label{SubsecClassGen}

We prove the classification theorem in the ``general case'' of Theorem \ref{ThmIntroGeneralPoly}, the case $\Sigma^2$ is not the half-plane and does not have parallel rays.
After an affine recombination of $\mathcal{X}^1$-$\mathcal{X}^2$ we may assume $\Sigma^2$ has one terminal ray along the $\varphi^1$-axis and the other along the $\varphi^2$-axis.
This constitutes Case I from the proof of Lemma \ref{LemmaOutlineLemma}; see Figure \ref{FigGeneralCase}.

The outline of the classification proof is as follows.
From the metric polygon $(\Sigma^2,g_\Sigma)$ we create isothermal coordinates $(x,y)$, which map $\Sigma^2$ bijectively onto the upper half-plane.
The moment functions $\varphi^1,\varphi^2$ now exist as $\varphi^i:\overline{H}{}^2\rightarrow\mathbb{R}$.
Then we create {\it new} moment functions $\tilde{\varphi}^1$, $\tilde\varphi^2$ using the outline-matching procedure from the proof of Lemma \ref{LemmaOutlineLemma}.
Because the functions $\varphi^1$ and $\tilde{\varphi}^1$ (resp. $\varphi^2$ and $\tilde{\varphi}^2$) agree along the boundary line $\{y=0\}$, we have
\begin{eqnarray}
	\varphi^i(x,0)\,-\,\tilde\varphi{}^i(x,0)\;=\;0 \quad on \quad \{y=0\}.
\end{eqnarray}
We can assume $\Sigma^2$ is the first quadrant, we have $\varphi^i\ge0$.
By construction, $\tilde\varphi^i$ has linear growth at worst.
Therefore $\varphi-\tilde{\varphi}\ge-A+B\sqrt{x^2+y^2}$.
This allows us to use the Improved Liouville theorem, Theorem \ref{ThmImprLiouville}, to obtain
\begin{eqnarray}
	\begin{aligned}
		&\varphi{}^1(x,y)\,-\,\tilde\varphi{}^1(x,y)\;=\;C_1y^2, \quad \text{and} \\
		&\varphi{}^2(x,y)\,-\,\tilde\varphi{}^2(x,y)\;=\;C_2y^2
	\end{aligned}
\end{eqnarray}
on $\overline{H}{}^2$, where $C_1$ and $C_2$ are non-negative constants.
Thus $\varphi^1$, $\varphi^2$ are specified up to a 2-parameter family of possible variations.
We have proven the following.
\begin{theorem}[{\it cf.} Theorem \ref{ThmIntroGeneralPoly}] \label{ThmGeneralCase}
	Assume $(\Sigma^2,g_\Sigma)$ is a metric polygon obeying (A)-(F) that is neither the half-plane nor has parallel rays, and has $d$ many vertices.
	
	If boundary data is specified (as discussed in \S\ref{SubsecBoundaryConds}) then $g_\Sigma$ is a member of a 2-parameter family of possible metrics.
	If boundary data is not specified and the polygon has $d$ many vertices, then $g_\Sigma$ is a member of a ($d$+3)-parameter family of possible metrics.
\end{theorem}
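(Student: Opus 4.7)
The plan is to set up a comparison between the true momentum functions $\varphi^1,\varphi^2$ and an explicit pair of "comparison" momentum functions $\tilde\varphi^1,\tilde\varphi^2$ built by outline-matching, and then use the improved Liouville theorem to control the difference. The first step is to invoke Proposition \ref{ThmIntoGlobalZ}, identifying $\Sigma^2$ with $\overline{H}{}^2$ via the volumetric normal coordinate $z=x+\sqrt{-1}y$, so that $\varphi^1,\varphi^2$ become functions on $\overline{H}{}^2$ satisfying the degenerate elliptic equation $y(\varphi^i_{xx}+\varphi^i_{yy})-\varphi^i_y=0$ of Proposition \ref{PropDivOfVarphi}. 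Next, by Lemma \ref{LemmaBijectiveNearBoundary}, $z$ carries $\partial\Sigma^2$ piecewise-linearly and bijectively onto $\{y=0\}$, and the parameterization speeds along the edges are precisely the labels $s_i$ from equation (\ref{EqnMarkings}). Feeding the vertex data $\vec{p}_1,\dots,\vec{p}_d$ and the labels $s_0,\dots,s_d$ into Case I of Lemma \ref{LemmaOutlineLemma} produces explicit functions $\tilde\varphi^1,\tilde\varphi^2$ on $\overline{H}{}^2$ that solve the same degenerate elliptic equation and agree with $\varphi^1,\varphi^2$ on $\{y=0\}$.

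The differences $\psi^i\triangleq\varphi^i-\tilde\varphi^i$ then satisfy $y(\psi^i_{xx}+\psi^i_{yy})-\psi^i_y=0$ with vanishing boundary data on $\{y=0\}$. To apply the Improved Liouville theorem \ref{ThmImprLiouville} I need a lower bound of the form $\psi^i>-A-B|x|-Cy^\delta$ with $\delta<2$. After an affine recombination of $\mathcal{X}^1,\mathcal{X}^2$, I may assume $\Sigma^2$ lies in the closed first quadrant, so $\varphi^i\ge0$. Inspection of the explicit formulas for $\tilde\varphi^i$ in Case I of Lemma \ref{LemmaOutlineLemma}, which are finite sums of terms of the form $c+\sqrt{(x-x_j)^2+y^2}$, shows each $\tilde\varphi^i$ is bounded above by $A+B|x|+B y$ for suitable constants. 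Hence $\psi^i\ge-A-B|x|-By$, so Theorem \ref{ThmImprLiouville} applies and forces
\begin{equation*}
\varphi^1=\tilde\varphi^1+C_1y^2,\qquad \varphi^2=\tilde\varphi^2+C_2y^2
\end{equation*}
with $C_1,C_2\ge0$. By formula (\ref{EqnMetricConstruction}), the metric $g_\Sigma$ is entirely determined by the coordinate transition $(x,y)\mapsto(\varphi^1,\varphi^2)$, so fixing the labels fixes $\tilde\varphi^1,\tilde\varphi^2$ and leaves only the two parameters $(C_1,C_2)\in[0,\infty)^2$; if labels are not specified, the $d+1$ positive labels $s_0,\dots,s_d$ contribute $d+1$ additional parameters, for a total of $d+3$.

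The main obstacle is the lower-bound step: the Liouville theorem from \cite{Web2} as originally stated requires non-negativity of $\varphi$, whereas $\varphi^i-\tilde\varphi^i$ has no a priori sign. This is precisely what Theorem \ref{ThmImprLiouville} is designed to remove, but it still demands strictly sub-quadratic growth on the negative side, so the argument relies on the fact that the outline-matching construction produces $\tilde\varphi^i$ with at most linear growth in $|x|$ and $y$. That linear control, together with the first-quadrant normalization that makes $\varphi^i\ge0$, is the one delicate ingredient; everything else is bookkeeping. A minor secondary point worth checking is that the freedom to translate the $x$-coordinate on $\overline{H}{}^2$ (which shifts the Lipschitz points $x_i$ uniformly) does not add a parameter, since it can be absorbed into the base choice $x_1=0$ made in the proof of Lemma \ref{LemmaOutlineLemma}.
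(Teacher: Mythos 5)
Your proposal is correct and follows essentially the same route as the paper: identify $\Sigma^2$ with $\overline{H}{}^2$ via the volumetric normal coordinate, build comparison momentum functions by Case I outline-matching, observe that the differences vanish on $\{y=0\}$ and are bounded below by an affine-in-$|x|$, linear-in-$y$ expression (using the first-quadrant normalization and the at-worst-linear growth of the comparison functions), and conclude via the Improved Liouville Theorem \ref{ThmImprLiouville} that each difference is $C_iy^2$, yielding the $2$- and $(d+3)$-parameter counts. The only difference is cosmetic: your explicit lower bound $-A-B|x|-By$ (i.e.\ $\delta=1$) is stated more cleanly than the paper's, but the argument is the same.
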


\subsection{Classification in the case $\partial\Sigma^2$ has parallel rays} \label{SubsecClassPar}

Next we examine the classification problem in the case $\partial\Sigma^2$ has parallel rays.
In this case we can make an affine change of coordinates so that the outline $\partial\Sigma^2$ has terminal rays parallel to the $\varphi^1$-axis.
This constitutes Case II from the proof of Lemma \ref{LemmaOutlineLemma}, and is depicted in Figure \ref{FigParCase}.

The proof is similar to the proof in the general case: after outline matching, we determine that the two moment variables have indeterminacy up to summands of the form $Cy^2$.
The difference is that, in this case, the $\varphi^2$ function must remain bounded, as demanded by the fact that the polygon lies within a strip.
This means that $\varphi^2$ cannot be modified by adding any multiple of $y^2$, so the indeterminacy of the pair $(\varphi^1,\varphi^2)$ is reduced by one degree of freedom.
We have proven the following.
\begin{theorem}[{\it cf.} Theorem \ref{ThmIntroParallelPoly}] \label{ThmParallelRayCase}
	Assume $(\Sigma^2,g_\Sigma)$ is a metric polygon obeying (A)-(F) and $\Sigma^2$ has parallel rays and $d$ many vertices.
	
	If boundary data is specified (as discussed in \S\ref{SubsecBoundaryConds}) then $g_\Sigma$ is a member of a 1-parameter family of possible metrics.
	If boundary data is not specified and the polygon has $d$ many vertices, $g_\Sigma$ is a member of a ($d$+2)-parameter family of possible metrics.
\end{theorem}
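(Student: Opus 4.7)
The argument will parallel that of Theorem \ref{ThmGeneralCase}, with one decisive new ingredient: the boundedness of $\varphi^2$ forced by the parallel-ray geometry eliminates one of the two free parameters. First I invoke Proposition \ref{ThmIntoGlobalZ} to identify $\Sigma^2$ with $\overline{H}{}^2$ via the volumetric normal function $z=x+\sqrt{-1}y$, so that the momentum functions become $\varphi^1,\varphi^2:\overline{H}{}^2\to\mathbb{R}$ satisfying the degenerate-elliptic equations $y(\varphi^i_{xx}+\varphi^i_{yy})-\varphi^i_y=0$ of Proposition \ref{PropDivOfVarphi}. After an affine recombination of $\mathcal{X}^1,\mathcal{X}^2$ I arrange the two terminal rays of $\partial\Sigma^2$ to be parallel to the $\varphi^1$-axis, as in Case II of Lemma \ref{LemmaOutlineLemma}; then $\Sigma^2$ is contained in a horizontal strip $\{\varphi^2\in[m,M]\}$, and in particular $\varphi^2$ is a globally bounded function on $\overline{H}{}^2$.

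Next I apply Case II of Lemma \ref{LemmaOutlineLemma} to the data of the $d$ vertices $\vec p_1,\dots,\vec p_d$ and labels $s_0,\dots,s_d$, producing comparison functions $\tilde\varphi^1,\tilde\varphi^2:\overline{H}{}^2\to\mathbb{R}$ which solve the same degenerate-elliptic equation and agree with $\varphi^1,\varphi^2$ along the outline $\{y=0\}$. The differences $\psi^i\triangleq\varphi^i-\tilde\varphi^i$ therefore vanish on $\{y=0\}$ and again satisfy $y(\psi^i_{xx}+\psi^i_{yy})-\psi^i_y=0$. For $\psi^1$, the explicit Case II formula shows $\tilde\varphi^1$ has at most linear growth in $|x|$ and $y$, and a harmless translation makes $\varphi^1\ge 0$, so
\begin{eqnarray*}
\psi^1\;\ge\;-A-B|x|-By
\end{eqnarray*}
for constants $A,B\ge 0$. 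Theorem \ref{ThmImprLiouville} (the improved Liouville theorem) then yields $\psi^1=C_1y^2$ for some $C_1\ge 0$.

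For $\psi^2$, the key observation is global boundedness. The function $\varphi^2$ is bounded by the strip constraint, and $\tilde\varphi^2$ is bounded because each summand in the Case II formula has the form $\sqrt{(x-x_i)^2+y^2}-\sqrt{(x-x_{i+1})^2+y^2}$, which is globally bounded by $|x_{i+1}-x_i|$ (its rationalization has bounded numerator and positive denominator). Hence $\psi^2$ is a bounded solution of the degenerate-elliptic equation vanishing on $\{y=0\}$. Theorem \ref{ThmImprLiouville} gives $\psi^2=C_2y^2$, and boundedness now forces $C_2=0$. Thus $\varphi^2=\tilde\varphi^2$ is completely determined by the vertex data and labels, while $\varphi^1$ carries a single free parameter $C_1\ge 0$; by formula (\ref{EqnMetricConstruction}) the metric $g_\Sigma$ is reconstructed from the pair $(\varphi^1,\varphi^2)$, giving a 1-parameter family when labels are specified. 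When labels are not specified, the $d+1$ labels $s_0,\dots,s_d$ supply $d+1$ additional parameters, combining with $C_1$ to produce the $(d+2)$-parameter family claimed.

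I expect the main subtle point to be the verification of the sub-quadratic lower bound required by Theorem \ref{ThmImprLiouville} applied to $\psi^1$: this reduces to a direct inspection of the explicit Case II formula (\ref{EqnParMomFuns}) and its terms of the form $s_0(x-\sqrt{x^2+y^2})$ and $s_d((x-x_d)+\sqrt{(x-x_d)^2+y^2})$, each of which is at most linear in $|x|$ and $y$. The boundedness check for $\tilde\varphi^2$ is the other technical step, but it is immediate from rationalization. Everything else is a routine adaptation of the general-case proof in \S\ref{SubsecClassGen}.
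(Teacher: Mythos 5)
Your proposal is correct and follows essentially the same route as the paper: outline matching via Case II of Lemma \ref{LemmaOutlineLemma}, the improved Liouville theorem applied to the differences, and the observation that the strip constraint bounds $\varphi^2$ and hence kills the $C_2y^2$ summand, leaving the single parameter $C_1$. Your write-up is in fact more detailed than the paper's (which disposes of this case in one paragraph by reference to the general case), and the lower-bound verifications you flag as the subtle points are handled correctly.
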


\subsection{Classification in the case $\Sigma^2$ is the half-plane} \label{SubsecClassHP}

There is a serious technical issue that separates this case from the other two.
In those cases, it could be arranged that both momentum functions were positive, and then after boundary matching we could use the Improved Liouville Theorem \ref{ThmImprLiouville}.
However in the half-plane case, it can only be arranged that \textit{one} of the momentum functions is positive.
The other can have no lower bound.

\begin{proposition}[Half-plane polygons, {\it cf.} Theorem \ref{ThmIntroHalfPlanePoly}] \label{PropHalfPlaneFlat}
	Assume the metric polygon $(\Sigma^2,g_\Sigma)$ is the closed half-plane $\Sigma^2=\{\varphi^2\ge0\}$ and obeys (A)-(F).
	After possible affine recombination of $\varphi^1$, $\varphi^2$, there exists a constant $M\ge0$ so that
	\begin{eqnarray}
		\varphi^1\;=\;x+Mxy^2,
		\quad \varphi^2\;=\;\frac12y^2. \label{EqnsHalfPlaneTransitions1}
	\end{eqnarray} 
\end{proposition}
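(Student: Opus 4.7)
The plan is to exploit the special structure of the half-plane, applying the original Liouville theorem directly to $\varphi^2$ and the improved Liouville theorem to $\partial\varphi^1/\partial x$ (rather than to $\varphi^1$ itself, which has no global lower bound). By Proposition \ref{ThmIntoGlobalZ} the volumetric normal function $z=x+\sqrt{-1}y$ identifies $\Sigma^2$ bijectively with $\overline{H}{}^2$, and in these coordinates both momentum functions satisfy the degenerate-elliptic equation $y(\varphi^i_{xx}+\varphi^i_{yy})-\varphi^i_y=0$. Since $\Sigma^2=\{\varphi^2\ge 0\}$ and $\partial\Sigma^2$ maps to $\{y=0\}$, we have $\varphi^2\ge 0$ on $\overline{H}{}^2$ with $\varphi^2(x,0)=0$, so the original Liouville theorem (Theorem \ref{ThmOrigLiouville}) gives $\varphi^2=C_2 y^2$ for some $C_2\ge 0$, and nondegeneracy of the distribution (hypothesis (E)) forces $C_2>0$.

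For $\varphi^1$, since the half-plane has no corners, smoothness of $g_\Sigma$ together with equation (\ref{EqnDoubleDerivAlongBoundary}) from Lemma \ref{LemmaPiecewise} gives $\varphi^1_{xx}(x,0)\equiv 0$, so $\varphi^1(x,0)=s_0 x+d$ for constants $s_0>0$ (the boundary parameterization speed) and $d\in\mathbb{R}$. The PDE has $x$-independent coefficients, so $\psi\triangleq\varphi^1_x-s_0$ again satisfies $y(\psi_{xx}+\psi_{yy})-\psi_y=0$, with boundary value $0$ on $\{y=0\}$. Substituting $\varphi^2=C_2 y^2$ into the coordinate transition matrix $A$ from Section \ref{SubSectionReconstructionOfTheMetric} yields $\det A = y\,\varphi^1_x$, so positive-definiteness of the metric (\ref{EqnMetricConstruction}) forces $\varphi^1_x>0$ on $H^2$. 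Hence $\psi>-s_0$ has a constant lower bound, and the Improved Liouville Theorem \ref{ThmImprLiouville} (applied with $A=s_0$, $B=C=0$) yields $\psi=C_1 y^2$ for some $C_1\ge 0$. Integrating in $x$ gives $\varphi^1=s_0 x+C_1 xy^2+g(y)$ for some function $g$; substituting back into the PDE reduces to $y g''(y)-g'(y)=0$, whose solutions are $g(y)=C_3 y^2+d$.

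The affine recombination $\varphi^2\mapsto\varphi^2/(2C_2)$ together with $\varphi^1\mapsto(\varphi^1-d)/s_0 - (2C_3/s_0)\varphi^2$ (using the already-normalized $\varphi^2=\tfrac12 y^2$) eliminates the constant and the $C_3 y^2$ term and rescales $\varphi^1$ to unit parameterization speed, producing exactly $\varphi^1=x+Mxy^2$ with $M=C_1/s_0\ge 0$, as claimed. The main obstacle is the absence of any natural lower bound on $\varphi^1$ itself; this is circumvented by passing to $\varphi^1_x$, whose positivity on $H^2$ comes from the algebraic identity $\det A=y\,\varphi^1_x$ and makes essential use of the already-established form $\varphi^2=C_2 y^2$. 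This observation is what allows the improved Liouville theorem, which requires only a sub-quadratic lower bound, to be applied in a setting where the momentum function itself is globally unbounded below.
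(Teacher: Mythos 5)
Your proposal is correct and follows essentially the same route as the paper's proof: apply the original Liouville theorem to the non-negative function $\varphi^2$, deduce $\varphi^1_x>0$ from positive-definiteness of $g_\Sigma=y^{-1}\det(A)(dx\otimes dx+dy\otimes dy)$, apply the improved Liouville theorem to $\varphi^1_x-s_0$ (which has the constant lower bound $-s_0$), and integrate. The only cosmetic discrepancy is that before normalizing $C_2$ the determinant is $\det A=2C_2\,y\,\varphi^1_x$ rather than $y\,\varphi^1_x$, which does not affect the sign argument.
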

\textbf{Remark.}
	Prior to the possible affine recombination, the raw construction gives constants $C_1,C_2>0$ and $C_3,M_1\ge0$ so that
	\begin{eqnarray}
		\varphi^1\;=\;C_1x+M_1xy^2+C_3y^2, \quad
		\varphi^2\;=\;C_2y^2. \label{EqnsHalfPlaneTransitions2}
	\end{eqnarray} 

\begin{proof}
	After an affine transformation of variables if necessary, we may assume $\Sigma^2=\{\varphi^2\ge0\}$.
	By (C), the functions $\varphi^1$, $\varphi^2$ are $C^\infty(\overline{H}{}^2)$---therefore there are no Lipschitz points along the boundary (see the remark on Lipschitz points along segments in \S\ref{SubsecLipzchitzRemark}).
	
	To take care of the $\varphi^2$ variable, note that since $\varphi^2(0,y)=0$ and $\varphi^2\ge0$ we can use the Liouville theorem of \cite{Web2}, recorded here as Theorem \ref{ThmOrigLiouville}, to guarantee $\varphi^2=C_2y^2$, $C_2>0$.
	Scaling coordinates if we like, we can take $C_2=\frac12$.
	
	The transition matrix from $\left\{\frac{\partial}{\partial{x}},\frac{\partial}{\partial{y}}\right\}$ to $\left\{\frac{\partial}{\partial\varphi^1},\,\frac{\partial}{\partial\varphi^2}\right\}$ is
	\begin{eqnarray}
		A\;=\;
		\left(\begin{array}{cc}
			\frac{\partial\varphi^1}{\partial{}x} & \frac{\partial\varphi^1}{\partial{}y} \\
			0 & y
		\end{array}\right)
	\end{eqnarray}
	so by (\ref{EqnMetricConstruction}) the polygon metric is simply
	\begin{eqnarray}
		g_\Sigma
		\;=\;\frac{\partial\varphi^1}{\partial{}x}
		\left(dx\otimes{}dx\,+\,dy\otimes{}dy\right).
	\end{eqnarray}
	The positive definiteness of $g_\Sigma$ now gives $\frac{\partial\varphi^1}{\partial{}x}>0$.
	
	Next, because $\varphi^1$ solves $y(\varphi^1_{xx}+\varphi^1_{yy})-\varphi^1_y=0$, taking a derivative in the $x$ direction shows that $\varphi^1_x$ solves $y((\varphi^1_x)_{xx}+(\varphi^1_x)_{yy})-(\varphi^1_x)_y=0$.
	Also, along the boundary line $\{y=0\}$ necessarily $\varphi^1_x=s_0$, which is the parameterization speed.
	In particular $\varphi^1_x$ is constant along $\{y=0\}$.
	Now consider the function
	\begin{eqnarray}
		\tilde\varphi\;\triangleq\;\frac{\partial\varphi^1}{\partial{}x}-s_0.
	\end{eqnarray}
	We have the following three facts:
	\begin{itemize}
		\item[{\it{i}})] $\tilde\varphi\;=\;0$ along the boundary $\{y=0\}$,
		\item[{\it{ii}})] $\tilde\varphi$ solves $y\left(\tilde\varphi_{xx}+\tilde\varphi_{xx}\right)-\tilde\varphi_y=0$, and
		\item[{\it{iii}})] $\tilde\varphi>-s_0$.
	\end{itemize}
	By the Improved Liouville theorem, Theorem \ref{ThmImprLiouville}, we have that $\tilde\varphi$ is a non-negative multiple of $y^2$.
	Therefore
	\begin{eqnarray}
		\begin{aligned}
		&\frac{\partial\varphi^1}{\partial{x}}\;=\;s_0\,+\,My^2
		\quad \text{for some} \quad M\ge0.
		\end{aligned}
	\end{eqnarray}
	Integrating in $x$ we have $\varphi^1=s_0x+Mxy^2+C(y)$ where $C(y)$ is some function of $y$ alone.
	Then $C(y)$ solves $yC_{yy}-C_y=0$, meaning $C=C_0+C_3y^2$ and so $\varphi^1=C_0+C_1x+M_1xy^2+C_3y^2$ for constants $C_0$, $C_1>0$, and $M_1,C_3\ge0$.
	Translating in the $x$ direction if necessary, we may assume $C_0=0$.
\end{proof}

\subsection{Removal of Lipschitz points that are internal to edges} \label{SubsecLipzchitzRemark}

The image in Figure \ref{FigHPCase} shows a half-plane with three Lipschitz points on its boundary.
Likewise, other types of polygons might have Lipschitz points inside a segment, where the edge's parameterization speed changes but its direction does not.
We show that this is always pathological, as it always generates a curvature singularity.

\begin{lemma}
	Assume $(\Sigma^2,g_\Sigma)$ is a metric polygon with a point $p\in\partial\Sigma^2$ that is not a vertex of the polygon, but is a Lipschitz point of $\varphi^1$ or $\varphi^2$ as functions of $x$, $y$.
	Then $g_\Sigma$ has a curvature singularity at $p$.
\end{lemma}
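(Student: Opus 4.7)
The plan is to localize near $p$, isolate the non-smooth piece of $\varphi^1$ as a pure multiple of $r\triangleq\sqrt{(x-x_0)^2+y^2}$, read off a direction-dependent angular limit of the conformal factor $\rho\triangleq y^{-1}\det A$, and then invoke (\ref{EqnCompKSigma}) to force an $r^{-2}$ blowup of the Gaussian curvature. Setup: after an affine recombination of $\mathcal{X}^1,\mathcal{X}^2$, the edge through $p$ may be placed along the $\varphi^1$-axis in the moment plane, so $\varphi^2\equiv 0$ on that edge. Because $p$ is not a vertex, the edge does not change direction through $\varphi(p)$---only the parameterization speed jumps. Writing $p=(x_0,0)$ one has $\varphi^1_x(x,0)=s_-$ for $x<x_0$ and $s_+$ for $x>x_0$, with $s_+,s_->0$ and $s_+\ne s_-$; set $\alpha\triangleq(s_+-s_-)/2\ne 0$. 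The Lipschitz singularity is then carried by $\varphi^1$ alone.

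The crucial first step is the decomposition
\[
\varphi^1 \;=\; \alpha\,r \;+\; \psi^1, \qquad \varphi^2 \;=\; y^2\,\tilde Q(x,y),
\]
valid in a neighborhood of $p$, with $\psi^1$ and $\tilde Q$ smooth and $\tilde Q(x_0,0)=q>0$. To obtain it, use the outline-matching building block (\ref{EqnInterpFunction}) to write down an explicit solution of $y\triangle\varphi-\varphi_y=0$ whose boundary trace near $p$ matches $\varphi^1(x,0)$; its non-smooth content is exactly $\alpha\,r$. The difference from $\varphi^1$ then solves the same degenerate equation with zero boundary values on an open subset of $\{y=0\}$ and is locally bounded, so Theorem \ref{ThmImprLiouville} forces it to be a non-negative multiple of $y^2$---hence smooth. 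The asserted form of $\varphi^2$ follows from $\varphi^2|_{y=0}=0$ and $\varphi^2_y|_{y=0}=0$ (the latter from (\ref{EqnDoubleDerivAlongBoundary})), and positivity of $q$ is forced by positive-definiteness of $g_\Sigma$ at nearby regular boundary points.

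Computing $\rho=y^{-1}(\varphi^1_x\varphi^2_y-\varphi^2_x\varphi^1_y)$ directly from the decomposition, the leading contributions arrange so that, in polar coordinates $(r,\theta)$ about $p$ with $\theta\in[0,\pi]$,
\[
\lim_{r\searrow 0}\rho(r,\theta) \;=\; q\bigl(s_+ + s_- + (s_+-s_-)\cos\theta\bigr),
\]
a bounded but non-constant function of $\theta$. Invoking (\ref{EqnCompKSigma}) in the form $K_\Sigma=-\tfrac{1}{2\rho}\triangle_{\mathrm{flat}}\log\rho$, together with $\triangle u = u_{rr}+r^{-1}u_r+r^{-2}u_{\theta\theta}$, a direct differentiation of $\log(a+b\cos\theta)$ at $\theta=\pi/2$ yields $\partial_\theta^2\log\rho(0,\pi/2)=-\bigl(\tfrac{s_+-s_-}{s_++s_-}\bigr)^2\ne 0$. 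Thus $\triangle\log\rho$ diverges like $r^{-2}$ along the vertical ray, and $K_\Sigma(r,\pi/2)\to+\infty$ as $r\searrow 0$; this is the desired curvature singularity.

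The main technical obstacle is the rigid decomposition of $\varphi^1$ at $p$: routing it through the explicit building block (\ref{EqnInterpFunction}) and then applying Theorem \ref{ThmImprLiouville} sidesteps any ad hoc boundary-regularity theory for $y\triangle-\partial_y$. Once that decomposition is secured, the conformal-factor calculation and the polar Laplacian computation are routine, and checking that higher-order corrections cannot cancel the leading $r^{-2}$ singularity is immediate from the explicit angular profile $a+b\cos\theta$ having nontrivial second $\theta$-derivative wherever $b\ne 0$.
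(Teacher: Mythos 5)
Your angular-profile computation is consistent with what the paper finds (the limit $\rho\to q\,(s_++s_-+(s_+-s_-)\cos\theta)$ and the resulting $r^{-2}$ blowup of $K_\Sigma$ via (\ref{EqnCompKSigma}) are correct), but the step that is supposed to secure the decomposition $\varphi^1=\alpha r+\psi^1$ with $\psi^1$ smooth contains a genuine gap. You subtract the building block (\ref{EqnInterpFunction}) from $\varphi^1$ and assert that the difference, because it ``solves the same degenerate equation with zero boundary values on an open subset of $\{y=0\}$ and is locally bounded,'' must be a non-negative multiple of $y^2$ by Theorem \ref{ThmImprLiouville}. That theorem is a \emph{global} Liouville theorem: it requires the solution to be defined on all of $\overline{H}{}^2$, to vanish on the entire line $\{y=0\}$, and to satisfy a global lower bound. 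None of this is available in a neighborhood of $p$, and the local statement you actually need is false as stated: $u=xy^2$ solves $y\triangle u-u_y=0$, vanishes on $\{y=0\}$, and is bounded on any half-disk, yet is not a multiple of $y^2$. So the Liouville theorem cannot deliver the rigidity (nor even the boundary smoothness of $\psi^1$) in a local setting; what you would really need is a local boundary-regularity theorem for $y\triangle-\partial_y$ with piecewise-linear Dirichlet data, which you explicitly decline to develop.

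This is exactly the difficulty the paper's proof is engineered to avoid: it first rescales the polygon and metric and passes to a pointed blow-up limit at $p$, which converts the local picture into a \emph{global} half-plane with a single Lipschitz point; there the boundary-matching of Lemma \ref{LemmaOutlineLemma} together with the global Liouville argument of Proposition \ref{PropHalfPlaneFlat} pins the momentum functions down exactly (up to the $Cxy^2$ and $Cy^2$ summands), the curvature is computed explicitly from (\ref{EqnCompKSigma}), and the singularity is transferred back to the original metric by continuity, since blowing up only decreases curvature. Your endgame would survive if the decomposition were established --- smoothness of $\psi^1$ up to the boundary is all the curvature computation uses, since the angular profile depends only on $\alpha$ and on $\psi^1_x(x_0,0)=\tfrac{s_++s_-}{2}$, which is fixed by the boundary trace --- but as written the decomposition rests on an inapplicable global theorem, so the proof is incomplete. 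Either import a genuine local regularity result for the degenerate operator, or perform the blow-up reduction first so that the global Liouville theorem legitimately applies.
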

\begin{proof}
	We have that $p\in\partial\Sigma^2$ is a point along an edge so that, even though there is no direction change, there is a speed change with speeds $s_0$, $s_1$ on either side of $p$.
	
	First we scale the metric and the polygon, and take a limit.
	Letting $n\rightarrow\infty$, we scale the $(\varphi^1,\varphi^2)$-plane by a factor of $n$ and scale $g_\Sigma$ by a factor of $n^2$.
	Taking a pointed Hausdorff limit of the polygon $\Sigma^2$ and taking a pointed Gromov-Hausdorff limit of $g_\Sigma$ we obtain a metric polygon that is a half-plane.
	The boundary values do not change under this simultaneous scaling, so the limiting polygon $(\Sigma^2,g_\Sigma)$ is a half-plane with a single Lipschitz point.
	
	Using the boundary-matching method of the proof of Lemma \ref{LemmaOutlineLemma}, set
	\begin{eqnarray}
		\begin{aligned}
		&\tilde{\varphi}{}^1
		\;=\;
		\frac{s_0}{2}\left(x-\sqrt{x^2+y^2}\right)
		+\frac{s_1}{2}\left(x+\sqrt{x^2+y^2}\right), \quad
		\tilde{\varphi}{}^2\;=\;\frac12y^2
		\end{aligned}
	\end{eqnarray}
	where $s_0\neq{}s_1$ are the labels.
	The boundary-matching method combined with the Improved Liouville Theorem, as executed in Theorem \ref{PropHalfPlaneFlat}, shows that the functions $\varphi^1$, $\varphi^2$ are equal to $\tilde{\varphi}{}^1$, $\tilde{\varphi}{}^2$ up to a summand of $Cy^2$, $C>0$.
	After possible scaling the moment functions and translating the polygon so $p=(0,0)$, we see that
	\begin{eqnarray}
		\begin{aligned}
		&\varphi^1
		\;=\;
		\frac{s_0}{2}\left(x-\sqrt{x^2+y^2}\right)
		+\frac{s_1}{2}\left(x+\sqrt{x^2+y^2}\right)
		+Cxy^2, \\
		&\varphi^2\;=\;\frac12y^2.
		\end{aligned}
	\end{eqnarray}
	Computing $K_\Sigma$ using (\ref{EqnCompKSigma}), we obtain
	\begin{eqnarray}
		\begin{aligned}
		K_\Sigma
		&\;=\;\frac{s_1-s_0}{2}\cdot
		\frac{s_1(x-\sqrt{x^2+y^2})+s_0(x+\sqrt{x^2+y^2})}{\left(s_1(x-\sqrt{x^2+y^2})+s_0(x+\sqrt{x^2+y^2})\right)^3}.
		\end{aligned}
	\end{eqnarray}
	This expression has an infinite singularity at the origin.
	Because this curvature singularity exists after the blow-up process, by continuity it certainly existed before---recall the blowup process systematically {\it decreases} curvature.
	This concludes the proof that $g_\Sigma$ has a curvature singularity at $p$.
\end{proof}

\section{Consideration of the asymptotic conditions} \label{SecAsymptotics}

Our classification requires only that the polygon be topologically closed.
We show this is true under either of the asymptotic conditions
\begin{itemize}
	\item[A1)] The action fields decay slowly (or not at all): $\sqrt{|\mathcal{X}^1|^2+|\mathcal{X}^2|^2}\;>\;C_1r^{-1+\epsilon}$ for some $\epsilon>0$ and $C_1>\infty$, when $r$ is sufficiently large.
	\item[A2)] Curvature decays quickly: $|\Riem|<(2-\epsilon)r^{-2}$, for any $\epsilon>0$, when $r$ is sufficiently large.
\end{itemize}
The boundary is connected (by condition (B)), and must include \textit{some} point of closure, or else by Theorem \ref{ThmIntroCompletePolygon} the manifold is flat.
From the literature, specifically \cite{Delzant} or \cite{McDSal}, we know the local structure near points of closure on $\Sigma^2$: such a point is locally part of a of a vertex or part of a segment.
Then we take a limit along a segment as we approach a non-included point.
Because the map $\Phi:M^4\rightarrow\Sigma^2$ is smooth, this can only occur if we approach infinity in $M^4$ but remain finite in the coordinate plane.
However the value of $\varphi^i$ can be determined by its gradient $|\nabla\varphi^i|=|\mathcal{X}^i|$, so $\varphi^i$ can only remain finite if the norm $|\mathcal{X}^i|$ gets too small too fast.

\subsection{The asyptotic conditions (A1) and (A2)}

\begin{proposition} \label{PropKillingDecay}
	Assume $(M^4,J,g,\mathcal{X}^1,\mathcal{X}^2)$ is a ZSC toric K\"ahler manifold of finite topology, and assume all manifold ends satisfy (A1) or (A2).
	Then its reduction $(\Sigma^2,g_\Sigma)$ is topologically closed.
\end{proposition}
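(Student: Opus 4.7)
The plan is a proof by contradiction. Assume $\Sigma^2$ is not topologically closed, so there exists a finite limit point $p_\infty\in\overline{\Sigma^2}\setminus\Sigma^2$ and a sequence $q_i\in\Sigma^2$ with $q_i\to p_\infty$ in the coordinate plane. Any choice of pre-images $\tilde q_i\in\Phi^{-1}(q_i)\subset M^4$ must diverge to infinity in $M^4$, because otherwise continuity of $\Phi$ would place $p_\infty$ inside $\Sigma^2$. The goal is to sharpen this into a quantitative contradiction with (A1) or (A2).

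The key step is to refine the approach to a single terminal boundary segment. Since $M^4$ has finite topology, the closure $\overline{\Sigma^2}$ is a convex planar region with finitely many straight edges and finitely many vertices (by the standard Delzant-type local polygonal normal form at included boundary points, see \cite{Delzant} or \cite{McDSal}). This lets me choose a terminal boundary ray or segment $\ell\subset\partial\Sigma^2$ whose ideal endpoint is $p_\infty$. The lift $L^2=\Phi^{-1}(\ell)$ is a totally geodesic surface in $M^4$, and on it some distinguished linear combination $\mathcal{X}=a\mathcal{X}^1+b\mathcal{X}^2$ vanishes identically; a complementary combination $\mathcal{Y}$ is then everywhere nonzero on $L^2$, generates an isometric $\mathbb{S}^1$-action there, and its moment function $\varphi_{\mathcal{Y}}$ provides a monotone parameter along $\ell$ with a finite limit at $p_\infty$.

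Next I would carry out the one-dimensional integral estimate. Choose a unit-speed geodesic $c:[0,\infty)\to L^2$ orthogonal to the $\mathcal{Y}$-orbits whose image in $L^2/\mathcal{Y}$ parameterizes $\ell$ toward $p_\infty$; total geodesicity of $L^2$ ensures $c$ is also a unit-speed geodesic of $M^4$, so $r(c(t))\ge t-O(1)$. Since $\Phi(c(t))$ stays in a bounded region of the plane and $\varphi_{\mathcal{Y}}(p_\infty)$ is finite, the identity $|\nabla\varphi_{\mathcal{Y}}|=|\mathcal{Y}|$ together with the fundamental theorem of calculus gives
\begin{equation*}
\int_0^\infty|\mathcal{Y}(c(t))|\,dt\;=\;|\varphi_{\mathcal{Y}}(p_\infty)-\varphi_{\mathcal{Y}}(c(0))|\;<\;\infty.
\end{equation*}
Under (A1) the contradiction is immediate, since $\mathcal{X}\equiv 0$ on $L^2$ forces $|\mathcal{Y}|$ to be a positive constant multiple of $\sqrt{|\mathcal{X}^1|^2+|\mathcal{X}^2|^2}\ge C_1t^{-1+\epsilon}$, which is non-integrable. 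Under (A2) I would combine the Bochner inequality $(|\mathcal{Y}|^2)''\ge-2\Riem(\mathcal{Y},\dot c,\dot c,\mathcal{Y})+2|\nabla_{\dot c}\mathcal{Y}|^2$ along $c$ with the sharp bound $|\Riem|<(2-\epsilon)r^{-2}$ and the ZSC hypothesis, exploiting the non-negative Gaussian curvature of the conformally-changed metric $\widetilde g_\Sigma=\sqrt{\mathcal{V}}\,g_\Sigma$ from \S\ref{SubSecConfChangeOfMetric} to upgrade the resulting Bochner ODE into a polynomial lower bound on $|\mathcal{Y}|$ that again forbids integrability.

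The main obstacle is justifying the ``single terminal segment'' reduction: a priori $p_\infty$ could be approached only through the interior of $\Sigma^2$, or be the common accumulation of several boundary components, or arise as a limit of infinitely many vertex points. The convexity of $\Sigma^2$ and the finite-topology hypothesis should rule out these pathologies via the local polygonal normal form at included boundary points, but the case analysis must be executed carefully. A secondary difficulty lies in the (A2) argument: the raw Bochner inequality $h''+(2-\epsilon)t^{-2}h\ge 0$ for $h=|\mathcal{Y}|$ does not by itself prevent rapid decay, so one must genuinely blend Bochner with the ZSC-based harmonic structure on $L^2/\mathcal{Y}$---where $y=\sqrt{\mathcal{V}}$ vanishes along $\ell$ and has controlled gradient by Hopf---to close the argument.
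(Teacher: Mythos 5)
Your overall strategy coincides with the paper's: reduce to a non-closed boundary edge $\ell$, lift to the totally geodesic surface $L^2=\Phi^{-1}(\ell)$ on which one constant combination of the fields vanishes, and derive a contradiction by showing the surviving moment coordinate must diverge along $\ell$ because $\int_0^\infty|\mathcal{Y}(c(t))|\,dt$ diverges. The (A1) case is handled correctly and essentially verbatim as in the paper. Your worry about justifying the reduction to a single terminal segment is legitimate, but the paper itself disposes of it only briefly via the Delzant/McDuff--Salamon local normal form at points of closure, so I do not count it against you.

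The (A2) case, however, contains a genuine gap that you yourself flag but do not repair: the one-sided inequality $h''+(2-\epsilon)t^{-2}h\ge0$ for $h=|\mathcal{Y}|$ is indeed satisfied by $h=e^{-t}$, so it cannot prevent rapid decay, and your proposed fix --- blending Bochner with the ZSC/harmonic structure of the reduction --- is not the right ingredient (the paper's (A2) step uses no ZSC input at all). The missing observation is that $\mathcal{Y}$ is not a generic Jacobi field but a Killing field on the \emph{two-dimensional} totally geodesic surface $L^2$: along a unit-speed geodesic orthogonal to its orbits, $h$ satisfies the exact surface-of-revolution identity $h''=-K_{L^2}\,h$, where $K_{L^2}$ is a sectional curvature of $M^4$ by total geodesicity. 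It is then the \emph{lower} curvature bound $K_{L^2}>-(2-\epsilon)r^{-2}$ that matters, giving the reverse inequality $h''\le(2-\epsilon)t^{-2}h$; a Wronskian/Sturm comparison with the decaying solution $f=t^{(1-\sqrt{9-4\epsilon})/2}$ of $f''=(2-\epsilon)t^{-2}f$ shows that a positive $h$ must satisfy $h\ge c\,t^{(1-\sqrt{9-4\epsilon})/2}$. Since this exponent lies in $(-1,0)$, the integral $\int h\,dt$ diverges and $\varphi_{\mathcal{Y}}\to\infty$ along $\ell$, which is the contradiction you need. Without this step your (A2) argument does not close.
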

\begin{proof}
	We show that $\Sigma^2$ includes its boundary.
	Conditions (A1) and (A2) involve the distance $r$.	
	It is ordinarily very difficult to find geodesics without further information about the metric.
	But by (D) all edges of $\Sigma^2$ are geodesics.
	The idea is to show that, under (A1) or (A2), if a boundary segment has infinite metric-length, it must have infinite coordinate-length, too.
	Thus the boundary cannot have any excluded points.
	
	First we rule out the case of no edges.
	But then $span\{\mathcal{X}^1,\,\mathcal{X}^2\}$ is everywhere rank 2 so the moment map $\Phi:M^4\rightarrow\Sigma^2$ is a Riemannian submersion.
	Then $(\Sigma^2,g_\Sigma)$ is a complete Riemannian manifold, so by Corollary \ref{ThmIntroM4CompletePolygon} it is flat.
	
	Therefore $span\{\mathcal{X}^1,\,\mathcal{X}^2\}$ has rank $1$ or $0$ somewhere, meaning the volume function $\mathcal{V}=|\mathcal{X}^1|^2|\mathcal{X}^2|^2-\left<\mathcal{X}^1,\mathcal{X}^2\right>{}^2$ has zeros.
	We first establish the \textit{local} structure of any such zero.
	The local structure can be found, for instance, in \cite{Delzant} or in the book \cite{McDSal}.
	To wit, at a point $p\in{}M^4$ if the rank of the distribution lowers to $1$, then a neighborhood of $p$ maps to a half-disk in the $(\varphi^1,\varphi^2)$-plane with the image of $p$ being on the closed half-disk boundary.
	If the rank reduces to $0$, a neighborhood of $p$ reduces to a wedge with the image of $p$ being at the vertex.
	Therefore at least some edges exist.
	Using (A1) or (A2) we prove that any such edge is closed.
	
	
	Pick a line segment $l\subset\Sigma^2$; after an affine transformation we may assume $l$ is oriented along the $\varphi^1$-axis.
	Over such a line segment is a 2-dimensional submanifold $\Phi^{-1}(l)=L^2\in{}M^4$ on which the Killing field $\mathcal{X}^1$ is not identically zero.
	The momentum function $\varphi^1$ satisfies $\nabla\varphi^1=J\mathcal{X}^1$.
	The manifold $L^2\subset{}M^4$ is a totally geodesic, holomorphic submanifold (as it is a zero-set of the holomorphic killing field $\mathcal{X}^2$).
	See Figure \ref{FigDepictL2}.
	\begin{figure}[h]
		\includegraphics[scale=0.5]{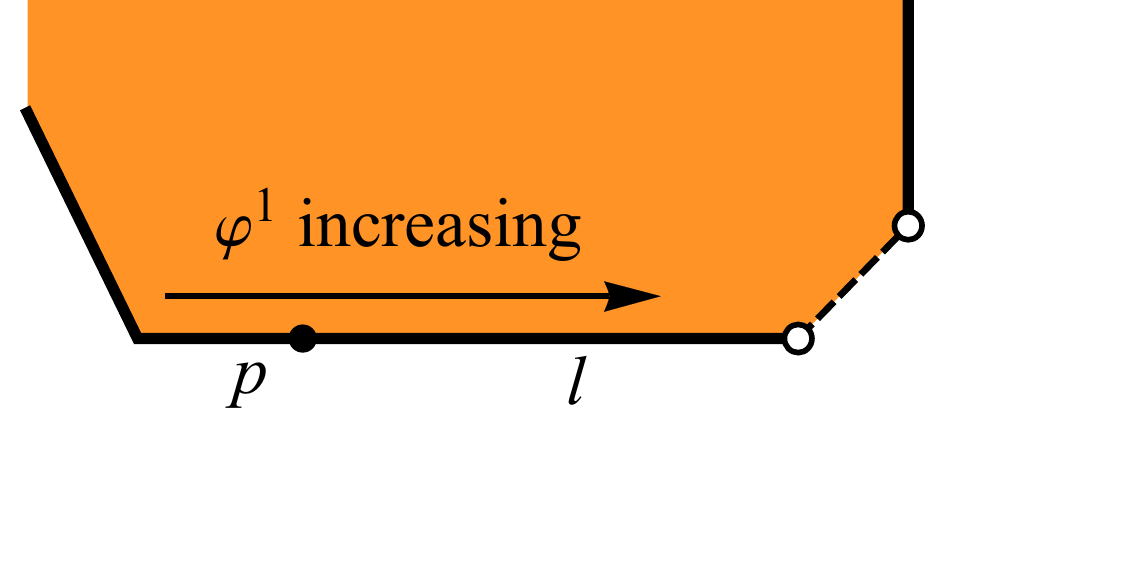}	\includegraphics[scale=0.5]{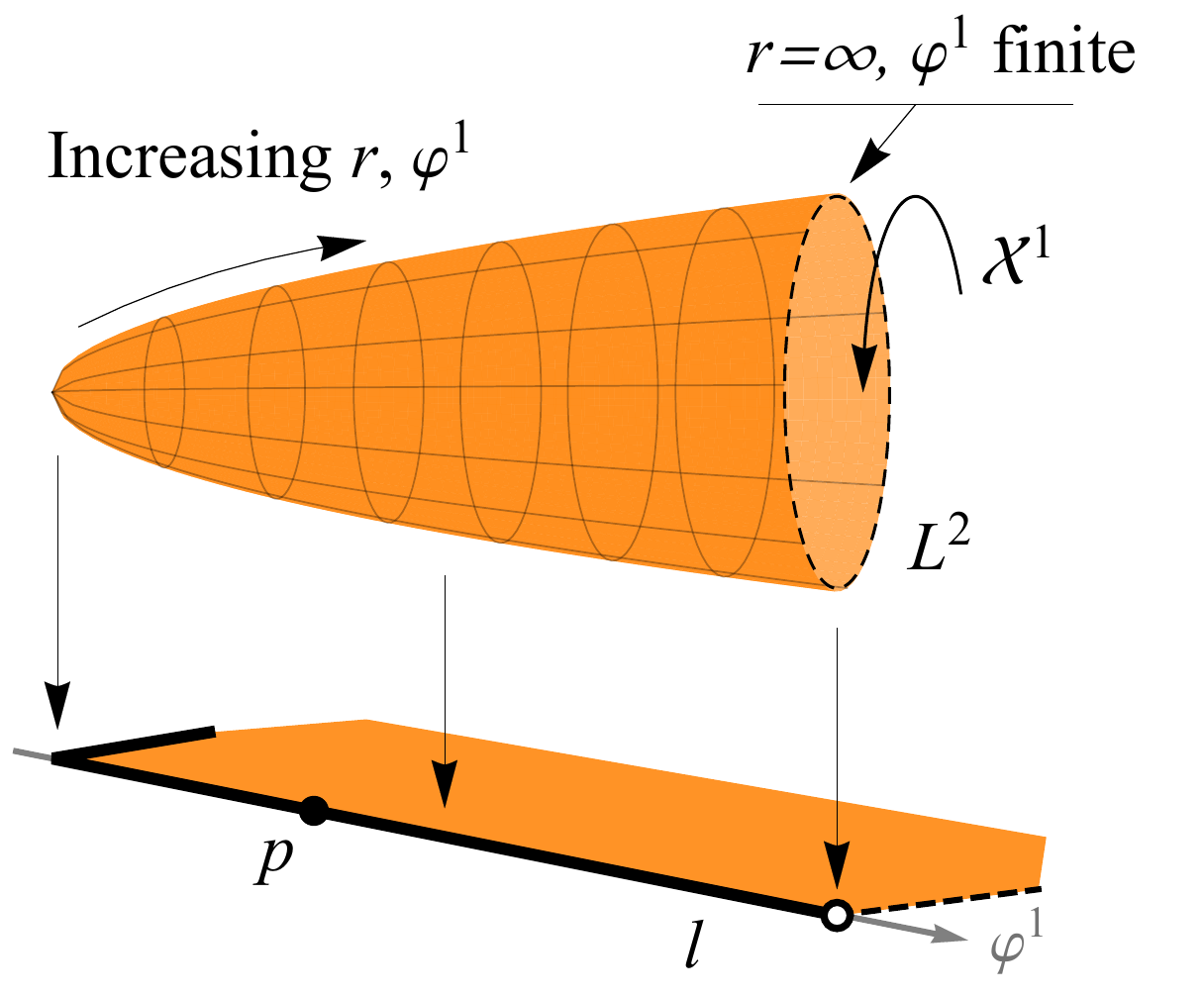}
		\caption{A non-closed segment $l$ and its overlying 2-manifold $L^2=\Phi^{-1}(l)$.
		An open point is at metric infinity, but finite $\varphi^1$.}
		\label{FigDepictL2}
	\end{figure}
	
	If the segment $l\subset\Sigma^2$ is not closed and if a point $p$ moves along $l$ toward a point of closure, then by continuity, all points in the pre-image $\Phi^{-1}(p)$ must have no cluster points, meaning the set $\Phi^{-1}(p)$ must travel infinitely far away.
	Because $l$ and $L^2$ are totally geodesic, $l$ and $L^2$ must both be unbounded in the metric sense.
	
	Either of the hypotheses (A1) or (A2) can show this to be impossible.
	If (A1) is true, then $|\mathcal{X}^1|>\min\{C_0,C_1r^{-1+\epsilon}\}$.
	Because $L^2$ is holomorphic and $\mathcal{X}^1$ is a Killing field on $L^2$, also $\nabla\varphi^1=J\mathcal{X}^1$ is tangent to $L^2$.
	Thus $\nabla\varphi^1=\frac{\partial\varphi^1}{\partial{}r}dr$ where $r$ is any rotationally-invariant distance function along $L^2$.
	Then using $|\nabla\varphi^1|=|\mathcal{X}^1|>C_1r^{-1+\epsilon}$ we have
	\begin{eqnarray}
		\begin{aligned}
		&\frac{\partial\varphi^1}{\partial{}r}
		\;=\;\nabla\varphi^1
		\;\ge\;C_1r^{-1+\epsilon}, \quad \text{so} \quad
		\varphi^1(r)\;=\;O(r^{\epsilon})
		\end{aligned}
	\end{eqnarray}
	Since $\epsilon>0$ we have $\varphi^1\rightarrow\infty$ as $r\rightarrow\infty$, the sought-for contradiction.
	
	Next assume (A2) holds.
	Again $\mathcal{X}^1$ is a Killing field on $L^2$, so in particular it obeys the Jacobi equation
	\begin{equation}
		\nabla_{\frac{\partial}{\partial{}r}}\nabla_{\frac{\partial}{\partial{}r}}\mathcal{X}^1
		+\Riem\left(\mathcal{X}^1,\frac{\partial}{\partial{}r}\right)\frac{\partial}{\partial{}r}\;=\;0.
	\end{equation}
	Because $|\mathcal{X}^1|>0$ (or else there would reach a vertex and the segment would be closed) and because $|\Riem|<(2-\epsilon)r^{-2}$, the usual Rauch comparison says that $|\mathcal{X}^1|>f$ where $f$ is a non-negative solution of the comparison Riccati equation $f_{rr}-(2-\epsilon)r^{-2}f=0$.
	This gives lower boundary $f=C_0r^{\frac{1-\sqrt{9-4\epsilon}}{2}}$ for $|\mathcal{X}^1|$.
	Then because $|\nabla\varphi^1|=|\partial\varphi^1/\partial{}r|$ on $L^2$ (as $\varphi^1$ only changes in the radial direction, not in the direction of the Killing field), we have
	\begin{equation}
		\left|\frac{\partial\varphi^1}{\partial{}r}\right|
		\;=\;|\mathcal{X}^1|
		\;\ge\;C_0r^{\frac{1-\sqrt{9-4\epsilon}}{2}}.
	\end{equation}
	Integrating, we have $\varphi^1=O(r^{\frac{3-\sqrt{9-4\epsilon}}{2}})$.
	The exponent $\frac{3-\sqrt{9-4\epsilon}}{2}$ is positive, so again $\varphi^1\rightarrow\infty$ as $r\rightarrow\infty$, giving the desired contradiction.
\end{proof}

\begin{proposition}
	Assume $(M^4,J,g,\mathcal{X}^1,\mathcal{X}^2)$ is a ZSC toric K\"ahler manifold, and that all ends are asymptotically spheroidal.
	Then its metric reduction $(\Sigma^2,g_\Sigma)$ is topologically closed.
	Its polygon $\Sigma^2$ does not have parallel rays, and is not the half-plane.
\end{proposition}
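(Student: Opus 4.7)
The closure of $\Sigma^2$ is immediate from Proposition \ref{PropKillingDecay}, since asymptotic sphericity strengthens condition (A1). The real content is the last two assertions, and the plan is to produce two boundary rays of $\Sigma^2$ lying in non-parallel lines of the $(\varphi^1,\varphi^2)$-plane. This kills both the half-plane possibility (only one bounding line) and the parallel-ray possibility (which demands the two terminal rays be parallel).

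First I would pass to the universal cover $\tilde E = (R,\infty)\times\mathbb{S}^3$ of an end, where Definition 1 supplies lifted fields $\tilde{\mathcal{X}}^1,\tilde{\mathcal{X}}^2$ whose restrictions to each slice $\{r\}\times\mathbb{S}^3$ generate a standard isometric $T^2$-action. The standard $T^2$-action on $\mathbb{S}^3\subset\mathbb{C}^2$ has exactly two singular orbits: the circles $C_1=\{z_1=0\}$ and $C_2=\{z_2=0\}$, each the fixed set of a distinct circle subgroup. Let $\mathcal{Y}_1,\mathcal{Y}_2$ be generators of those two subgroups, so that $\mathcal{Y}_j|_{C_j}\equiv 0$ and $\{\mathcal{Y}_1,\mathcal{Y}_2\}$ is a second basis of $\mathfrak{t}^2$. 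Writing $\mathcal{Y}_j=b_{j1}\tilde{\mathcal{X}}^1+b_{j2}\tilde{\mathcal{X}}^2$ with $\det(b_{jk})\ne 0$, the deck-equivariance of the $\tilde{\mathcal{X}}^i$ transfers to the $\mathcal{Y}_j$, so these descend to honest Killing fields $b_{j1}\mathcal{X}^1+b_{j2}\mathcal{X}^2$ on $M^4$.

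Next I would identify the two boundary rays. The downstairs field $b_{j1}\mathcal{X}^1+b_{j2}\mathcal{X}^2$ vanishes on the projection of $(R,\infty)\times C_j$, a nonempty totally geodesic 2-submanifold $L_j\subset M^4$. Under the Arnold-Liouville map $\Phi$ the $\mathcal{Y}_{3-j}$-orbit direction on $L_j$ collapses, so $\Phi(L_j)=l_j$ is a 1-dimensional boundary piece of $\Sigma^2$. Since $\mathcal{Y}_j\equiv 0$ on $L_j$, the moment $b_{j1}\varphi^1+b_{j2}\varphi^2$ is constant on each component of $L_j$, so $l_j$ lies in a line $\{b_{j1}\varphi^1+b_{j2}\varphi^2=c_j\}$ in the $(\varphi^1,\varphi^2)$-plane. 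Because $C_j$ persists at every radius $r$, $L_j$ extends to infinity in $M^4$, and the Killing-decay reasoning of Proposition \ref{PropKillingDecay} (applicable since (A1) holds) forces $l_j$ to be an unbounded ray of $\Sigma^2$.

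The conclusion is then immediate: $\det(b_{jk})\ne 0$ makes the normals $(b_{11},b_{12})$ and $(b_{21},b_{22})$ linearly independent, so the lines carrying $l_1$ and $l_2$ have different slopes and the terminal rays $l_1,l_2$ are non-parallel. This rules out both the half-plane case and the parallel-rays case. The main technical point I expect to have to argue carefully is the descent from $\tilde E$ to $M^4$: showing that the zero locus of $\mathcal{Y}_j$ on $\tilde E$ projects to a nonempty $L_j\subset M^4$ whose $\Phi$-image is genuinely an unbounded terminal ray, rather than being folded or collapsed by the deck group. This should follow from the fact that $\mathcal{Y}_j$ is built from deck-invariant lifts, so the deck group preserves the $T^2$-action and hence the distinguished circles $C_j$.
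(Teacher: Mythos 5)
Your proposal is correct and follows essentially the same route as the paper: closure via Proposition \ref{PropKillingDecay}, then the observation that the standard torus action on the $\mathbb{S}^3$ cross-sections has two singular orbits fixed by linearly independent circle subgroups, so the two terminal rays are level sets of independent linear combinations of $\varphi^1,\varphi^2$ and hence non-parallel. The paper's version is much terser; your write-up simply makes explicit the descent from the cover and the identification of the rays' directions with the kernels of the vanishing combinations.
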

\begin{proof}
	$\Sigma^2$ is topologically closed by Proposition \ref{PropKillingDecay}.
	What remain is to understand the moment diagram at infinity.
	But the level-sets of the distance function are $\mathbb{S}^3$, and these level-sets inherit the toric structure.
	But any isometric toric structure on $\mathbb{S}^3$ has two singular orbits.
	These singular orbits are not the zero-set of any one single vector field, and therefore the rays of $\partial\Sigma^2$ they represent are not parallel. 
	(This asymptotic structure is in the first image of in Figure \ref{FigALEFGH}).
\end{proof}

\begin{figure}[h]
	\includegraphics[scale=0.5]{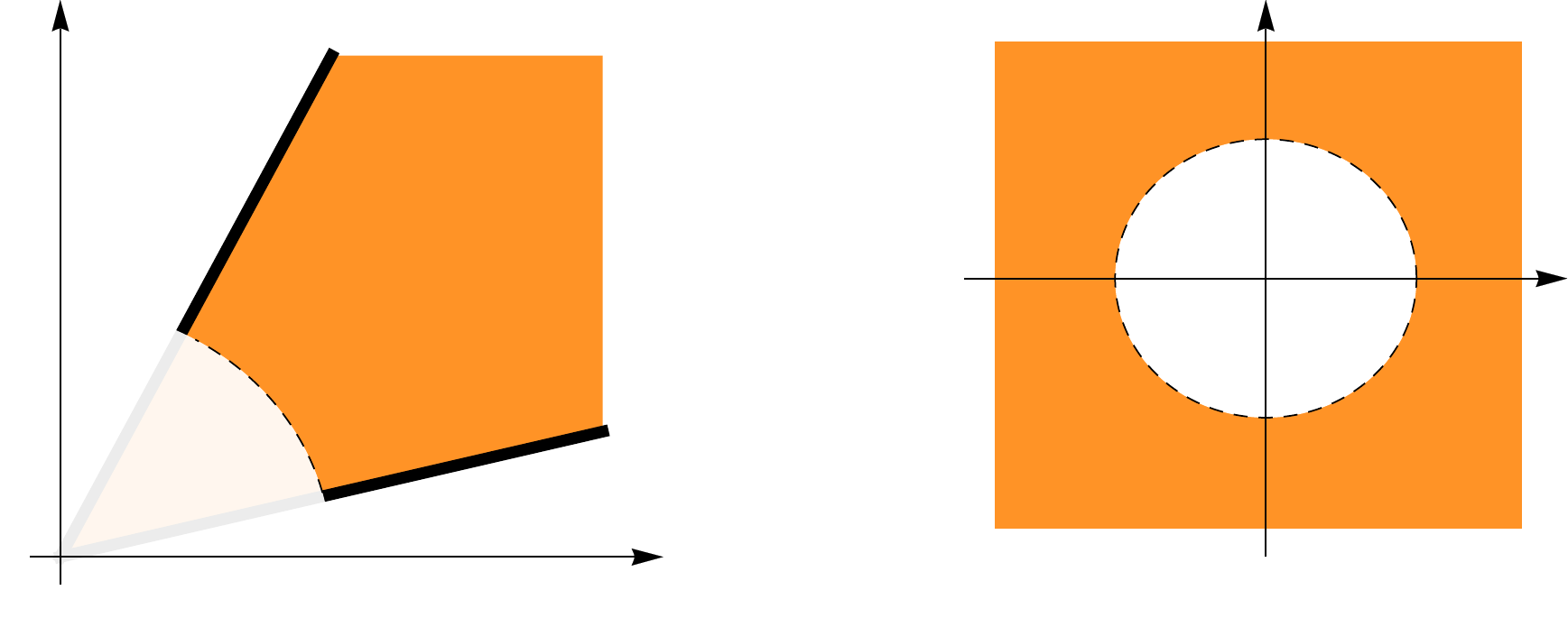}
	\caption{
		On the left is the ``asymptotically spherical'' structure, which is asymptotically a wedge.
		On the right is the ``asymptotically toroidal'' structure, which has no edges and asymptotically fills the plane. }
	\label{FigALEFGH}
\end{figure}

\begin{proposition} \label{PropToroidalFlat}
	Assume $(M^4,J,g,\mathcal{X}^1,\mathcal{X}^2)$ is a toric K\"ahler manifold with non-negative scalar curvature, and at least one end is asymptotically toroidal.
	Then its metric reduction $(\Sigma^2,g_\Sigma)$ is both topologically closed and geodesically complete.
	Consequently, $(M^4,g)$ is a flat manifold.
\end{proposition}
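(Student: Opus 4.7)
The plan is to reduce the proposition to Corollary \ref{CorRTwoClassification}, which delivers flatness of $(M^4, g)$ whenever $s \geq 0$ and $span\{\mathcal{X}^1, \mathcal{X}^2\}$ has rank $2$ everywhere on $M^4$. The main task is therefore to show that the asymptotically toroidal hypothesis forces $\Sigma^2 = \mathbb{R}^2$ with empty boundary; then topological closure is automatic (no segments exist to be missing), the projection $\Phi: M^4 \to \Sigma^2$ becomes a Riemannian submersion and hence transports geodesic completeness from $M^4$ to $\Sigma^2$, and Corollary \ref{CorRTwoClassification} finishes the argument.

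First I would analyze $\Phi$ on the toroidal end $M^4 \setminus K$. By Definition 2 some cover is diffeomorphic to $(R, \infty) \times N^3$ with $N^3$ compact, carrying a free isometric $\mathbb{T}^2$-action generated by the lifts of $\mathcal{X}^1, \mathcal{X}^2$. Freeness implies $\mathcal{V} > 0$ on the cover, hence on the end itself, so $\Phi(M^4 \setminus K)$ lies in the interior of $\Sigma^2$. Since the free action has $2$-dimensional orbits on the compact $3$-manifold $N^3$, the orbit space $N^3/\mathbb{T}^2$ is a compact boundaryless $1$-manifold, hence a circle. For each fixed $r$, the image $\gamma_r = \Phi(\{r\} \times N^3)$ is a compact loop in $\Sigma^2$. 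The (A1) condition, built into asymptotic toroidality, gives $\sqrt{|\mathcal{X}^1|^2+|\mathcal{X}^2|^2} > C_1 r^{-1+\epsilon}$, so integrating $|\nabla \varphi^i| = |\mathcal{X}^i|$ along radial paths forces at least one $|\varphi^i|$ to blow up as $r \to \infty$, and the loops $\gamma_r$ therefore escape every compact subset of $\mathbb{R}^2$.

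Next I would show the union $\bigcup_{r > R} \gamma_r$ fills the complement $\mathbb{R}^2 \setminus B$ of a bounded set $B$. On the cover, $\Phi$ factors through the orbit quotient and descends to a map $(R, \infty) \times S^1 \to \Sigma^2$. For sufficiently large $R$ this descended map is an embedding whose image is an unbounded open subset of $\mathbb{R}^2$ with topological boundary the single loop $\gamma_R$; such a region must be the exterior of the compact region $B$ enclosed by $\gamma_R$. Hence $\mathbb{R}^2 \setminus B$ lies in the interior of $\Sigma^2$. Since $B$ is bounded, the convex hull of $\mathbb{R}^2 \setminus B$ is all of $\mathbb{R}^2$, so convexity of $\Sigma^2$ yields $\Sigma^2 = \mathbb{R}^2$. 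The distribution $span\{\mathcal{X}^1, \mathcal{X}^2\}$ now has rank $2$ everywhere, and Corollary \ref{CorRTwoClassification} applies.

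The main obstacle is verifying that the descended map $(R, \infty) \times S^1 \to \Sigma^2$ really is an embedding onto the exterior of a compact set, rather than exhibiting a pathology such as loops trapped in a strip or half-plane, or loops that wind around one another in a complicated pattern. Handling this will likely require a quantitative convergence of the rescaled end to a flat product model $\mathbb{R} \times N^3$---on which the moment map is manifestly a projection onto the complement of a bounded set---together with use of the degenerate elliptic equation $y(\varphi^i_{xx} + \varphi^i_{yy}) - \varphi^i_y = 0$ and the associated barrier methods from Section \ref{SubsecTheBarriers} to propagate this asymptotic information back to the true metric.
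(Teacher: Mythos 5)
Your opening reduction is the right one, and your key observation---that freeness of the torus action on the toroidal end forces $\mathcal{V}>0$ there, so the image of the end contains no boundary points of $\Sigma^2$---is exactly the heart of the paper's proof. But the way you try to finish creates a genuine gap: you set out to prove the much stronger claim that the descended map $(R,\infty)\times S^1\to\Sigma^2$ is an embedding onto the exterior of a compact set, and you concede in your final paragraph that you cannot rule out loops trapped in a strip, winding, etc., deferring to an unproved ``quantitative convergence of the rescaled end to a flat product model.'' As written, the step ``$\mathbb{R}^2\setminus B$ lies in the interior of $\Sigma^2$'' is unjustified, and everything after it (the convex-hull argument giving $\Sigma^2=\mathbb{R}^2$, and hence closure, completeness, and rank $2$ everywhere) rests on it.

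The fix is to not prove that claim at all. First invoke Proposition \ref{PropKillingDecay}: asymptotic toroidality includes condition (A1), so $\Sigma^2$ is topologically closed. Now $\Sigma^2$ is a closed, convex, unbounded subset of the plane; if its boundary were nonempty, that boundary would necessarily be unbounded (a closed convex set with compact nonempty boundary is bounded). Since $\Phi$ maps compact sets to compact sets, boundary points of $\Sigma^2$ sufficiently far from the origin have preimages outside any given compact $K\subset M^4$, i.e.\ in the end, where the free $T^2$-action gives $\mathcal{V}>0$ and forbids boundary points. Hence $\partial\Sigma^2=\varnothing$, the distribution has rank $2$ everywhere, $\Phi$ is a Riemannian submersion, $(\Sigma^2,g_\Sigma)$ is complete, and Corollary \ref{CorRTwoClassification} (equivalently Theorem \ref{ThmRTwoClassification}) gives flatness. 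This is the paper's argument; it needs only convexity and closedness, not any control of how the loops $\gamma_r$ sit in the plane.
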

\begin{proof}
	$\Sigma^2$ is topologically closed by Proposition \ref{PropKillingDecay}.
	What remains is to understand the moment diagram at infinity.
	
	But the level-sets of the distance function have a free action of $T^2$, so far enough away, the polygon $\Sigma^2$ has no edges.
	Because the polygon is convex and closed, it has no edges at all, and is therefore complete.
	Therefore the metric is flat by Theorem \ref{ThmRTwoClassification}).
\end{proof}

\begin{corollary}[{\it cf.} Corollary \ref{CorIntroOneEnded}] \label{CorOneEnded}
	Assume $(M^4,J,g,\mathcal{X}^1,\mathcal{X}^2)$ is a scalar-flat toric K\"ahler manifold of finite topology that satisfies asymptotic condition (A1) or (A2).
	Let $(\Sigma^2,g_\Sigma)$ be its reduction.
	
	Then $(\Sigma^2,g_\Sigma)$ is either an infinite closed strip or else satisfies conditions (A)-(F), and there are five possibilities:
	\begin{itemize}
		\item[{\it{i}})] $(M^4,g)$ is flat,
		\item[{\it{ii}})] $\Sigma^2$ is an infinite closed strip in the $\varphi^1$-$\varphi^2$ plane,
		\item[{\it{iii}})] $(M^4,g)$ is the exceptional half-plane instanton,
		\item[{\it{iv}})] $\Sigma^2$ has parallel rays, and for given boundary values the metric belongs to a 1-parameter family of possibilities,
		\item[{\it{v}})] or $M^4$ is asymptotically spheroidal (never toroidal), and is
		\begin{itemize}
			\item[\textit{a})] Asymptotically locally Euclidean---and for any set of labels there is precisely one such metric---or
			\item[\textit{b})] Asymptotically spheriodal and asymptotically equivalent to a Taub-NUT, chiral Taub-NUT, or exceptional Taub-NUT; after labels are determined, the metric belongs to a 2-parameter family of possibilities.
		\end{itemize}
	\end{itemize}
\end{corollary}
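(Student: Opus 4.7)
The plan is to assemble the corollary by pulling together the three principal classification theorems of Section \ref{SecAnalytic} with the asymptotic results already established in this section, once we verify that hypotheses (A)--(F) hold (or that we are in the exceptional strip case). First, by Proposition \ref{PropKillingDecay} the reduction $(\Sigma^2,g_\Sigma)$ is topologically closed, which is exactly condition (A); the only loophole is that $\Sigma^2$ might have no edges at all, in which case $\{\mathcal{X}^1,\mathcal{X}^2\}$ is everywhere rank 2 and Corollary \ref{CorRTwoClassification} forces $M^4$ to be flat, placing us in case (\textit{i}). Conditions (C), (D), (E) are automatic for any smooth toric K\"ahler reduction, as noted in the discussion following the statement of hypotheses, and the scalar-flat assumption on $M^4$ translates into (F) via Corollary \ref{CorReducedScalar}. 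The one remaining point is boundary connectedness: if $\partial\Sigma^2$ has more than one component then the discussion of (B) in the introduction identifies $\Sigma^2$ as the infinite closed strip, which is case (\textit{ii}); otherwise (B) holds and $\Sigma^2$ satisfies the full set (A)--(F).

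Next, I would split by the asymptotic character of the end. If any end is asymptotically toroidal then Proposition \ref{PropToroidalFlat} gives flatness and we land in case (\textit{i}). So we may assume every end is asymptotically spheroidal, in which case $\Sigma^2$ has no parallel terminal rays coming from a toroidal end. I then split by the polygonal shape. When $\Sigma^2$ is the closed half-plane, Proposition \ref{PropHalfPlaneFlat} gives $\varphi^1 = x + Mxy^2$, $\varphi^2 = \tfrac{1}{2}y^2$ up to affine recombination; the reductions carried out in \S\ref{SubsecHPAndTaubNUT} absorb $M$ by homothety, leaving a single metric, the exceptional half-plane instanton of case (\textit{iii}). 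When $\Sigma^2$ has parallel rays, Theorem \ref{ThmParallelRayCase} supplies the 1-parameter family of case (\textit{iv}). Otherwise we are in the general case of Theorem \ref{ThmGeneralCase}, which delivers a 2-parameter family of moment functions of the form $\varphi^i = \tilde\varphi{}^i + C_i y^2$, $C_i \ge 0$—this is case (\textit{v}).

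The remaining step is to split case (\textit{v}) into the ALE sub-case (\textit{v}\,\textit{a}) and the Taub-NUT sub-case (\textit{v}\,\textit{b}), and this is the geometric heart of the argument. The dichotomy is controlled by the constants $C_1, C_2$. When $C_1 = C_2 = 0$, the constructed moment functions are the linear-plus-radial expressions from Case I of Lemma \ref{LemmaOutlineLemma}; plugging into (\ref{EqnMetricConstruction}) and computing the asymptotics as $|z| \to \infty$ shows the metric is asymptotically Euclidean, so the labels alone determine a unique metric, yielding (\textit{v}\,\textit{a}). When at least one $C_i > 0$, the additional $y^2$ terms dominate at infinity and, arguing as in the explicit quarter-plane computation of \S\ref{SubsecHPAndTaubNUT} leading to (\ref{EqnFirstGKtaubnut}), produce cubic volume growth with quadratic curvature decay; identification with Donaldson's family \cite{Do2} and with the exceptional case then places the metric in (\textit{v}\,\textit{b}), with the 2-parameter description matching the $(C_1,C_2)$ freedom.

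The main obstacle will be the asymptotic identification in (\textit{v}\,\textit{b}): one must show that the abstract 2-parameter family of Theorem \ref{ThmGeneralCase} really does realize asymptotically spheroidal ends in the Taub-NUT mold, and not some other asymptotic geometry. Concretely, this reduces to substituting the explicit $\tilde\varphi{}^i$ from Lemma \ref{LemmaOutlineLemma} into (\ref{EqnMetricConstruction}) and extracting the leading asymptotics of $g_\Sigma$ and $g$ in the polar-like coordinates $(x,y)$, then matching these against the model Taub-NUT ends. The quarter-plane computation in \S\ref{SubsecHPAndTaubNUT} is the base case; the general-polygon case follows the same principle because the $\tilde\varphi{}^i$ differ from the quarter-plane expressions by summands supported on bounded $x$-intervals and therefore by $O(1)$ terms at infinity, which do not affect the leading-order asymptotics.
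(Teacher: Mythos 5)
Your proposal is correct and follows essentially the same route as the paper: verify (A)--(F) via Proposition \ref{PropKillingDecay} and the reduction structure, peel off the strip, no-edge, half-plane, and parallel-ray cases, and then read the asymptotics of the general case off the explicit boundary-matched momentum functions, whose intermediate interpolation terms become negligible at infinity. The one place the paper is slightly more careful is the ``never toroidal'' claim in (\textit{v}): rather than assuming a spheroidal/toroidal dichotomy of ends, it derives the spheroidal structure directly from the wedge shape of $\Sigma^2$ by identifying the level sets of a Busemann function as lens spaces (unions of two solid tori), a short argument you would want to include.
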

\begin{proof}
	We verify that $\Sigma^2$ obeys conditions (A)-(F).
	By Proposition \ref{PropKillingDecay} the polygon is closed; this verifies condition (A).
	For condition (B), by convexity, if the polygon boundary were disconnected the polygon could only be the strip, so condition (B) holds.
	Conditions (C), (D), and (E) follow directly from the fact that $(\Sigma^2,g_\Sigma)$ is the reduction of a K\"ahler manifold.
	Condition (C) is the fact that $\Phi:M^4\rightarrow\Sigma^2$ is generically a Riemannian submersion.
	Condition (D), the assertion that $\nabla\varphi^i/|\nabla\varphi^i|$ is covariant constant along polygon edges, is simply the fact that the pre-image of any edge in $(M^4,g_\Sigma)$ is totally geodesic, as it is the zero-set of a Killing field.
	Condition (E) is the fact that $[\nabla\varphi^1,\nabla\varphi^2]=0$ on $M^4$ and therefore on $\Sigma^2$.
	Condition (F), the pseudo-ZSC condition, follows from $(M^4,g)$ being ZSC.
	
	The fact that the boundary numbers $s_0,\dots,s_d$ on $\Sigma^2$ are all uniquely determined follows either the definition in \S\ref{SubsecBoundaryConds} or at the beginning of \S\ref{SecAnalytic}.
	
	Since $\Sigma^2$ is topologically closed and has finitely many facets (because $M^4$ has finite topology), there are four possibilities.
	First $\Sigma^2$ has no edges, so $(\Sigma^2,g_\Sigma)$ is flat.
	Second $\Sigma^2$ is the closed half-plane, so $(\Sigma^2,g_\Sigma)$ is a half-plane instanton, as shown in Section \ref{SubsecClassHP}.
	
	Third $\Sigma^2$ might have parallel rays.
	We may assume the rays are parallel to the $\varphi^1$-axis.
	Then the level-sets $\varphi^1=Const$ for large $Const$ have an action of $\mathcal{X}^1$ with no zeros so this action splits off a circle factor, and an action of $\mathcal{X}^2$ with disconnected zero locus so the other factor is $\mathbb{S}^2$.
	The asymptotic structure is therefore $\mathbb{R}\times\mathbb{S}\times\mathbb{S}^2$.
		
	Finally $\Sigma^2$ may be the general case.
	In this case $\Sigma^2$ is asymptotically a wedge, as shown in the first image in Figure \ref{FigALEFGH}.
	Consider level-sets of a buseman $r$; because the rays are not parallel there is no single Killing field that has zeros along both rays.
	The structure of the level-sets, therefore, is a solid torus times a segment, with torus cycles identified to a point over top of the two segment endpoints.
	Thus each level set of $r$ (for $r$ sufficiently large) is a union of two solid tori, so is a lens space.
	This is the asymptotically spheroidal case.
	
	The assertion that $(M^4,J,g)$ belongs to either a 1- or 2-parameter family of possibilities, depending on $\Sigma^2$, now follows from Theorems \ref{ThmIntroGeneralPoly}, \ref{ThmIntroParallelPoly}, and \ref{ThmIntroHalfPlanePoly}.
	
	Finally we check the asymptotic possibilities for case (\textit{v}).
	In the quarter-plane, $\varphi^1$ and $\varphi^2$ proceeding to $\infty$ is the same as $x^2+y^2$ proceeding to $\infty$.
	Referring to Lemma \ref{LemmaOutlineLemma} and adding on the possible multiples of $y^2$, the momentum functions asymptotically approach
	\begin{equation}
		\begin{aligned}
		&\varphi^1\approx-\frac{s_0}{2}\left(-x+\sqrt{x^2+y^2}\right)\,+\,C_1y^2 \\
		&\varphi^2\approx\frac{s_d}{2}\left(x+\sqrt{x^2+y^2}\right)\,+\,C_2y^2
		\end{aligned}
	\end{equation}
	This is because all of the intermediate interpolation functions proceed to a constant so cease to affect the metric, and only the first and last summands continue to make any difference when $x^2+y^2$ are large.
	But, up to constant multiples, these are the momentum equations from (\ref{EqnMomTaubNut}), the Taub-NUT, Generalized Taub-NUTs, and Exceptional Taub-NUT metrics.
	Asymptotically, therefore, the metrics in case (\textit{v}) approach one of these.
\end{proof}

\subsection{The Insufficiency of the ALE-ALF-ALG-ALH schema} \label{SubSecInsufficiency}

The advantage of the concepts ``asymptotically spherical'' and ``asymptotically toroidal'' over the traditional ALE-F-G-H classification is that the traditional classification requires manifold ends to be modeled on fiber bundles over standard base manifolds, and these models are much too inflexible to represent the natural diversity of typical manifold ends.
For example, the chiral Taub-NUT metrics (discovered by Donaldson) have quadratically decaying curvature and 3-dimensional blowdowns.
But quotienting out by the collapsing field at infinity yields (at best!) a cone over an orbifold version of $\mathbb{S}^2$, whereas the ALF model requires the quotient to be a manifold.
Therefore, no matter how closely a chiral Taub-NUT may resemble a standard Taub-NUT in the larger 4-manifold setting, it will never be close to a model end as prescribed in the ALF model.

To describe the Cherkis-Kapustin framework briefly, a {\it model end} is a product $[R,\infty)\times{}N^3$ where $N^3$ is the total space of a fiber bundle $\pi:N^3\rightarrow{}B$ with fiber $F$, and which has metric
\begin{eqnarray}
	\begin{aligned}
	g_{Model}\;=\;dr^2\,+\,r^2g_B\,+\,g_F \label{EqnALEFGHModel}
	\end{aligned}
\end{eqnarray}
A manifold end fits into the schema (see \cite{CK}, \cite{Etesi}) if it ``close'' in an appropriate asymptotic sense to a finite quotient of one of the following models:
\begin{itemize}
	\item[{\it{i}})] {\it ALE} if $N^3=B^3=\mathbb{S}^3$, the fiber is a point, and $g_B$ is the standard round metric.
	\item[{\it{ii}})] {\it ALF} if $N^3=\mathbb{S}^3$ and $\pi:N^3\rightarrow{}B^2$ is the Hopf fibration, with fiber $F^1=\mathbb{S}^1$.
	The metric $g_B$ is the round metric on the 2-sphere of radius $1/2$ and the fiber metric gives $S^1$ a radius of 1.
	\item[{\it{iii}})] {\it ALG} if $N^3$ is a torus bundle over a circle (in other words, it is a solvemanifold) and $\pi:N^3\rightarrow{}B^1$ is the projection onto its base circle.
	The metric $g_B$ gives a circle and $g_F$ gives a flat 2-torus.
	\item[{\it{iv}})] {\it ALH} if $N^3$ is the 3-torus and $\pi:N^3\rightarrow{}B^0$ is the projection onto a point; the metric $g_F$ gives a flat 3-torus.
\end{itemize}
The paper \cite{Web3} explains, in exacting detail, the asymptotics of the chiral Taub-NUT metrics.
The asymptotics are ALF-like in many respects, but definitely cannot be modeled in the form of (\ref{EqnALEFGHModel}).
The basic reason is that the level-sets $N^3$ might not have a collpasing field at infinity that can possibly be modeled on a fiber-bundle over a manifold, or even a fibration over a manifold.
The base of the projection $\pi:N^3\rightarrow{}B$ is often an orbifold, and a ``bad'' orbifold at that.
Worse, the fiber $F$ might have 2-dimensional closure (eg. an irrational subgroup of the torus), so $B$ does not have a manifold structure of any kind.
One could quotient by the closure of the fibers, but then the base would be a closed line segment which is also not part of the model.

That these situations actually happen very naturally gives a concrete illustration of the basic insufficiency of the ALF model, and the need for the terms ``asymptotically spherical'' (if not ``asymptocially toroidal,'' for which there are no examples in the scalar-flat K\"ahler setting as we have proved in Corollary \ref{CorRTwoClassification} and Proposition \ref{PropToroidalFlat}).
Some very naturally occurring metrics with close resemblance to the classic Taub-NUT simply cannot be placed into a form that fits the ALF model.

\section{Examples} \label{SecExamples}

\subsection{Non-pathological examples} \label{SubsecNonPathExamples}

\subsubsection{The LeBrun metrics on $\mathcal{O}(-k)$} \label{SubsubsecOk}

Let $e_1,e_2,e_3$ be the left-invariant frames and $\sigma^1,\sigma^2,\sigma^3$ the corresponding left-invariant coframes on $\mathbb{S}^3$, normalized in the usual way so $d\sigma^i=-\epsilon^i{}_{jk}\sigma^j\wedge\sigma^k$.
From \cite{LebOK} a scalar-flat metric on $\mathcal{O}(-k)$ is
\begin{equation*}
	g=\frac{1}{\left(1-\frac{1}{r^2}\right)\left(1+\frac{k-1}{r^2}\right)}dr^2
	+r^2\left(1-\frac{1}{r^2}\right)\left(1+\frac{k-1}{r^2}\right)(\sigma^1)^2
	+r^2\Big((\sigma^2)^2+(\sigma^3)^2\Big)
\end{equation*}
where $r\in[1,\infty)$.
The K\"ahler form is
\begin{equation}
	\omega\;=\;-rdr\wedge\sigma^1+r^2\sigma^2\wedge\sigma^3. \label{LeBKahlForm}
\end{equation}
From $d(\sigma^2\wedge\sigma^3)=0$ and $d\sigma^1=-2\sigma^2\wedge\sigma^3$, we easily verify $d\omega=0$.
To find commuting Killing fields we must recall the Euler coordinates on $\mathbb{S}^3$ and corresponding polar coordinates on $\mathbb{R}^4\approx\mathbb{C}^2$, given by
\begin{equation}
	(r,\psi,\theta,\varphi)
	\;\longmapsto\;\left(
	r\cos(\theta/2)e^{-\frac{i}{2}(\psi+\varphi)},\;
	r\sin(\theta/2)e^{-\frac{i}{2}(\psi-\varphi)}
	\right) \label{EqnEulerCoords}
\end{equation}
with ranges $r\in[0,\infty)$, $\psi,\varphi\in[0,4\pi)$, and $\theta\in[0,\pi]$.
We choose toric structure
\begin{equation}
	\mathcal{X}^1\;=\;2\frac{\partial}{\partial\psi}, \quad
	\mathcal{X}^2\;=\;2\frac{\partial}{\partial\varphi}. \label{EqnOkKilling}
\end{equation}
In terms of coordinates, the left-invariant 1-forms are
\begin{equation}
	\begin{aligned}
	&\sigma^1=\frac12(d\psi+\cos(\theta)d\varphi), \quad
	\sigma^2=\frac12(\sin(\psi)d\theta-\cos(\psi)\sin(\theta)d\varphi), \\
	&\sigma^3=\frac12(\cos(\psi)d\theta+\sin(\psi)\sin(\theta)d\varphi).
	\end{aligned}
\end{equation}
With these expressions we can plug the fields (\ref{EqnOkKilling}) in to (\ref{LeBKahlForm}) to get
\begin{equation}
	\begin{aligned}
	&d\varphi^1
	\;=\;\omega\left(2\frac{\partial}{\partial\psi},\;\cdot\,\right)
	\;=\;rdr\;=\;d\left(\frac12r^2\right) \\
	&d\varphi^2
	\;=\;\omega\left(2\frac{\partial}{\partial\varphi},\;\cdot\,\right)
	\;=\;r\cos(\theta)\,dr-\frac12r^2\sin(\theta)\,d\theta
	\;=\;d\left(\frac12r^2\cos\theta\right)
	\end{aligned}
\end{equation}
therefore our symplectic coordinates are $(\varphi^1,\varphi^2,\theta_1,\theta_2)$ where $\varphi^1=\frac12r^2$, $\varphi^2=\frac12r^2\cos\theta$, $\theta_1=\frac12\psi$, and $\theta_2=\frac12\varphi$.
The coordinate ranges are $\varphi^1\ge1/2$, $-\varphi^1\le\varphi^2\le\varphi^1$, and $\theta_1,\theta_2\in[0,2\pi)$; see Figure \ref{FigLeBruns}.
The polygon metric is then
\begin{equation}
	(g_\Sigma^{ij})
	\;=\;\left(\begin{array}{cc}
		\frac{(2\varphi^1-1)(2\varphi^1+(k-1))}{2\varphi^1} & \frac{(2\varphi^1-1)(2\varphi^1+(k-1))\varphi^2}{2(\varphi^1)^2} \\
		\frac{(2\varphi^1-1)(2\varphi^1+(k-1))\varphi^2}{2(\varphi^1)^2} & 2\varphi^1+\frac{(2\varphi^1(k-2)-(k-1))(\varphi^2)^2}{2(\varphi^1)^3}
	\end{array}\right).
\end{equation}
The volumetric normal coordinates, using $y=\sqrt{\det{}g_{\Sigma^{ij}}}$ and (\ref{EqnJSigma}) for $J_\Sigma$, are
\begin{equation}
	y\;=\;\frac{1}{\varphi^1}\sqrt{(2\varphi^1-1)(2\varphi^1+k-1)((\varphi^1)^2-(\varphi^2)^2)},
	\quad
	x\;=\;\frac12(k-2)\frac{\varphi^2}{\varphi^1}+2\varphi^2.
\end{equation}
Inverting this gives the $\varphi^i$ as functions of $x$ and $y$:
\begin{equation}
	\begin{aligned}
	\varphi^1
	&\;=\;
	\frac12
	\left(
	1\;-\;\frac{k}{2}
	\;+\;\frac12\left(\sqrt{\left(x-\frac{k}{2}\right)^2+y^2}
	+\sqrt{\left(x+\frac{k}{2}\right)^2+y^2}\right)
	\right), \\
	\varphi^2
	&\;=\;\frac12\left(
	x+\left(\frac12-\frac1k\right)\left(
	\sqrt{\left(x-\frac{k}{2}\right)^2+y^2}
	-\sqrt{\left(x+\frac{k}{2}\right)^2+y^2}
	\right)
	\right).
	\end{aligned}
\end{equation}
One may check directly that $y(\varphi^i_{xx}+\varphi^i_{yy})-\varphi^i_y=0$.
The ``oultine map'' is the restriction of $(\varphi^1,\varphi^2)$ to $\{y=0\}$.
We compute
\begin{equation*}
	(\varphi^1,\varphi^2)\Big|_{y=0}=
	\begin{cases}
		\left(\frac14(2-k-2x),\;\frac14(-2+k+2x)\right),\quad & x<-\frac{k}{2} \\
		\left(\frac12,\;\frac1kx\right),\quad & -\frac{k}{2}\le{}x\le\frac{k}{2} \\
		\left(\frac14(2-k+2x),\;\frac14(2-k+2x)\right),\quad & x>\frac{k}{2}
	\end{cases}
\end{equation*}
\noindent\begin{figure}[h!] 
	\vspace{-0.125in}
	\centering
	\includegraphics[scale={0.375}]{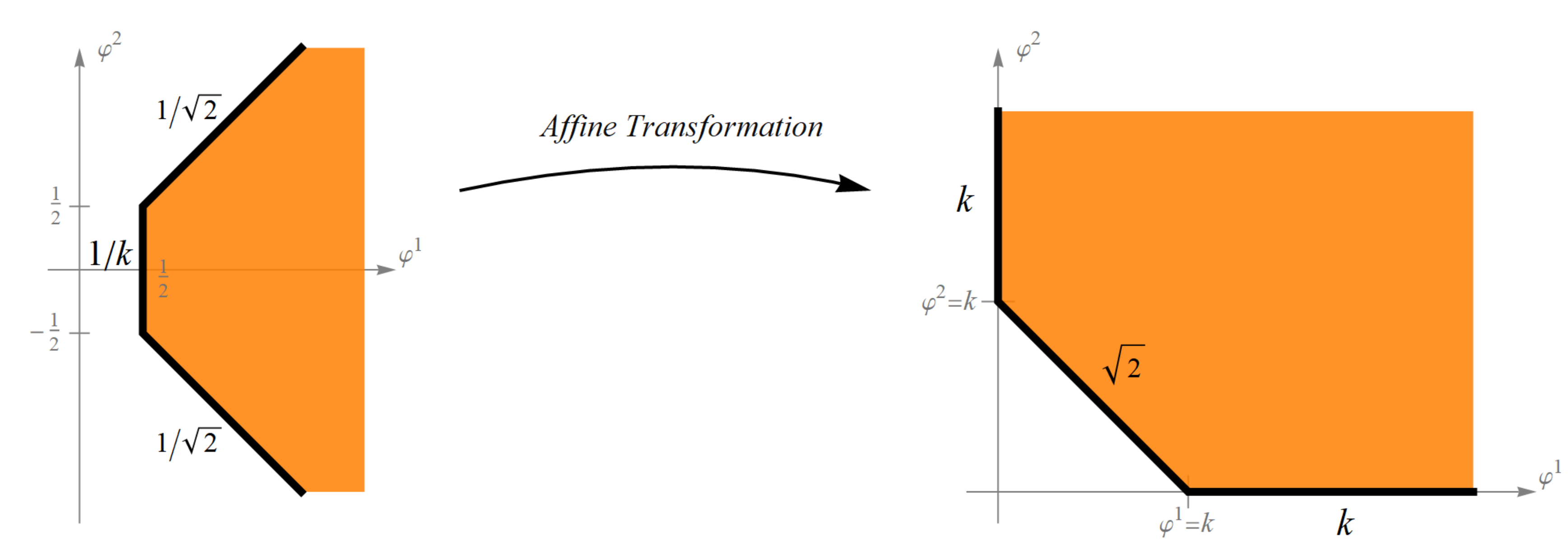}
	\caption{
		Polygons and computed labels for the LeBrun metrics on $\mathcal{O}(-k)$, before and after an affine change of variables.
	}
	\label{FigLeBruns}
\end{figure}

Finally we compute the ``parameterization speeds'' or labels.
Using (\ref{EqnMarkings}),
\begin{equation}
	\begin{aligned}
		&s
		\;=\;\sqrt{(\varphi^1_x)^2+(\varphi^2_x)^2}\Big|_{y=0}
		\;=\;\begin{cases}
			\frac{1}{\sqrt{2}} \quad & x<-\frac{k}{2} \\
			\frac{1}{k} & -\frac{k}{2}<x<\frac{k}{2} \\
			\frac{1}{\sqrt{2}} \quad & x>\frac{k}{2}
		\end{cases}
	\end{aligned}
\end{equation}
Finally note that this polygon is not in the right position for the outline matching of Section \ref{SubsecOutlineMatching}, in the sense that its terminal rays are not parallel to the first quadrant axes.
This is rectified with the affine transformation
\begin{equation}
	\left(\begin{array}{c}
		\varphi^1 \\\varphi^2
	\end{array}\right)
	\;\longmapsto\;
	\left(\begin{array}{cc}
		k & -k \\
		k & k
	\end{array}\right)
	\left(\begin{array}{c}
		\varphi^1 \\\varphi^2
	\end{array}\right).
\end{equation}
Lastly we see how to use the polygon markings to reconstruct the manifold.
We can use either polygon of Figure \ref{FigLeBruns}.
We choose the polygon on the right.
Choose a torus action with $\mathcal{X}_1$, $\mathcal{X}_2$ being standard generators with coordinate ranges $\theta_1,\theta_2\in[0,2\pi{}k)$.
The diagonal action, with its zero-set being the diagonal segment, has range $[0,2\sqrt{2}\pi{}k)$.
Referring to the discussion in Section \ref{SubsecBoundaryConds}, the cone angle along each ray is the range divided by the label, so we see cone angle $2\pi$ along the two rays, but cone angle $2\pi{}k$ along the diagonal segment.
This is rectified by taking a $k$-to-1 torus quotient along the diagonal.
Each of the generators still has range $[0,2\pi{}k)$ so the cone angles along the rays remains $2\pi$, but now the diagonal segment's cone angle is also $2\pi$, so we have a manifold.

\subsubsection{Polygons for $\mathbb{S}^2\times\mathbb{H}^2$} \label{SubsecSTimesH}

Up to isometry there is essentially just one Killing field on $\mathbb{S}^2$ (the rotational field), but on the hyperbolic plane $\mathbb{H}^2$ there are three: the elliptic field (which fixes a point), the parabolic field (which fixes a point on the ideal circle), and the hyperbolic field (which fixes two points on the ideal circle).
The parabolic and hyperbolic fields have no zeros, so if we think of these fields as coming from actions of $\mathbb{R}^1$ on $\mathbb{H}^2$ then we can mod-out by integer translations to obtain $\mathbb{S}^1$ action on $\mathbb{H}^2/\mathbb{Z}$; 

Given the elliptic field on $\mathbb{H}^2$, the moment functions for the product metric are
\begin{eqnarray}
	\begin{aligned}
	&\varphi^1=\frac12\left(
	-1+\sqrt{x^2+y^2}
	+\sqrt{(x-1)^2+y^2}
	\right) \\
	&\varphi^2=\frac{1}{2}\left(
	1-\sqrt{x^2+y^2}+\sqrt{(x-1)^2+y^2}
	\right).
	\end{aligned}
\end{eqnarray}
Given the parabolic field, the moment functions for the product metric are
\begin{eqnarray}
	\begin{aligned}
	&\varphi^1=\frac12\left(
	\sqrt{x^2+y^2}
	\right), \quad
	\varphi^2=\frac{1}{2}\left(
	1+\frac{x}{\sqrt{x^2+y^2}}
	\right).
	\end{aligned}
\end{eqnarray}
The hyperbolic field, on the other hand, produces a unique situation.
Its polygon is the infinite strip and, notably, its volumetric function $z:\Sigma^2\rightarrow\overline{}H{}^2$ is ramified, having a 2-to-1 branch point.
On any subdomain of $\Sigma^2$ on which $z$ is injective we can still perform the constructions of this paper, and obtain moment functions
\begin{eqnarray}
	\begin{aligned}
	&\varphi^1
	\;=\;\frac{1}{\sqrt{2}}\sqrt{1-\left(x^2+y^2\right)^2+\sqrt{\left(1-\left(x^2+y^2\right)^2\right)^2\,+\,4y^2}} \\
	&\varphi^2
	\;=\;\frac{1}{\sqrt{2}}\sqrt{-1+\left(x^2+y^2\right)^2+\sqrt{\left(1-\left(x^2+y^2\right)^2\right)^2\,+\,4y^2}}.
	\end{aligned}
\end{eqnarray}
These moment functions are singular.
The map $(\varphi^1,\varphi^2):\overline{H}{}^2\rightarrow\Sigma^2$, with these moment functions, brings $\overline{H}{}^2$ to precisely one-half of the polygon.
\noindent\begin{figure}[h!] 
	\centering
	\vspace{-0.0in}
	\hspace{-0.6in}
	\includegraphics[scale={0.4}]{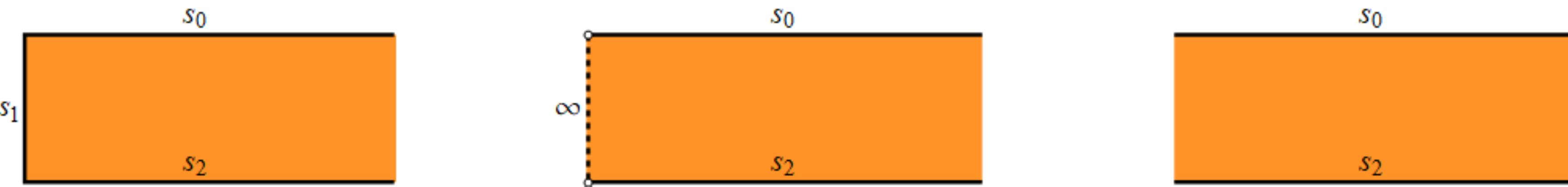} \\
	\hspace{0.1in}
	\includegraphics[scale={0.36}]{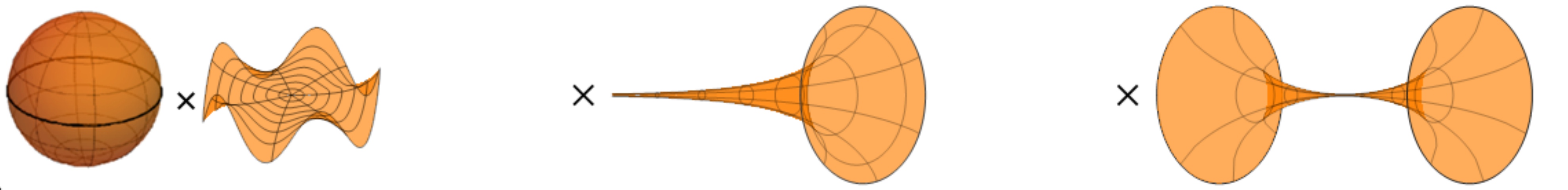}
	\caption{Three polygons that produce scalar-flat metrics on $\mathbb{S}^2\times\mathbb{H}^2$, along with a depiction of the three corresponding rotational structures on $\mathbb{H}^2$: elliptic, parabolic, and hyperbolic.
	}
	\label{FigThreeStrips}
\end{figure}

This construction gives rise to orbifold and conifold metrics as well, which we obtain by adjusting the speeds.
Setting $s_0,s_2>0$ and using 
\begin{eqnarray}
	\begin{aligned}
	&\varphi^1=\frac{s_0}{2}\left(
	-x+\sqrt{x^2+y^2}\right)
	+\frac{s_2}{2}\left(
	x-1+\sqrt{(x-1)^2+y^2}
	\right) \\
	&\varphi^2=\frac12\left(1+\frac{x}{\sqrt{x^2+y^2}}\right).
	\end{aligned}
\end{eqnarray}
the polygon is the semi-open half-strip, depicted in the second image of Figure \ref{FigThreeStrips} or in Figure \ref{FigOrbifolds}, but with parameterization speeds $s_0$, and $s_2$.
When $s_0=s_2$ is rational, the construction gives a ``good orbifold'' on $\mathbb{S}^2$; when $s_0\ne{}s_2$ the construction can give a ``bad orbifold.''
When $s_0$ or $s_2$ are irrational, the construction gives a conifold.

\noindent\begin{figure}[h!] 
	\centering
	\vspace{-0.0in}
	\hspace{-0.0in}
	\includegraphics[scale={0.4}]{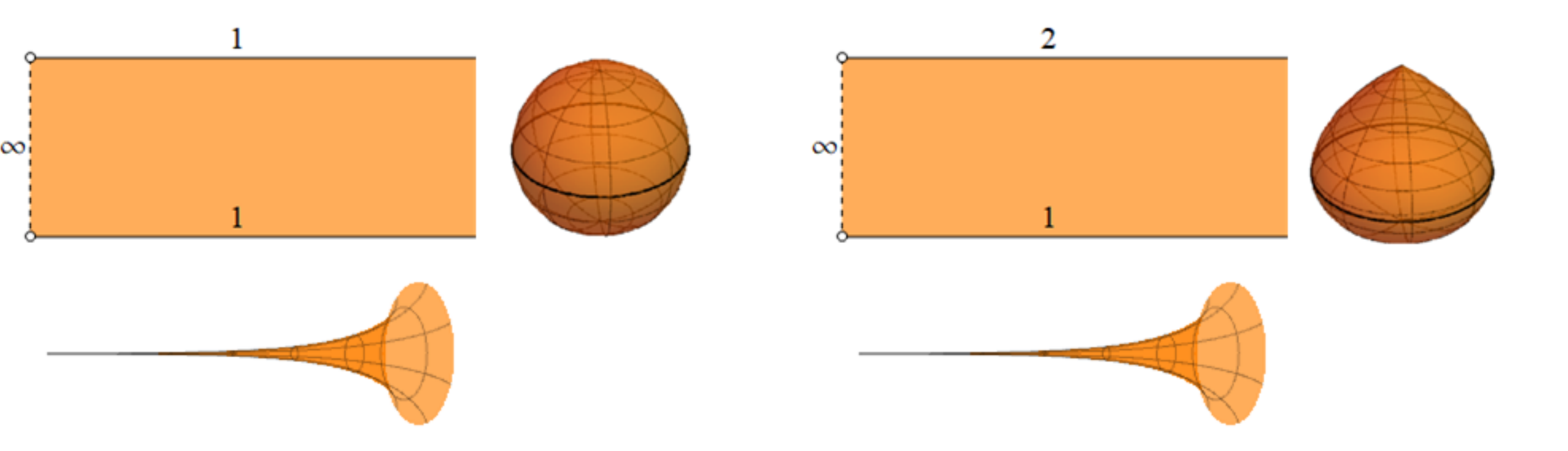}
	\caption{
		Illustration of how polygon labels indicate topology.
		A cross product of smooth $\mathbb{S}^2$ with $\mathbb{H}^2$, and a cross product of a bad orbifold with $\mathbb{H}^2$.
		The latter is a topological product, but, in the scalar-flat case, is never a metric product.
	}
	\label{FigOrbifolds}
\end{figure}


\subsubsection{A two-ended instanton} \label{SubsubsecTwoEnded}

Consider the moment functions
\begin{eqnarray}
	\begin{aligned}
		\varphi^1&\;=\;\frac{1}{2}\left(x+\sqrt{x^2+y^2}\right)
		+\frac12\left(-1+\frac{x}{\sqrt{x^2+y^2}}\right)
		+\frac{M(1-k)}{2}y^2 \\
		\varphi^2&\;=\;\frac{1}{2}\left(-x+\sqrt{x^2+y^2}\right)
		+\frac12\left(1-\frac{x}{\sqrt{x^2+y^2}}\right)
		+\frac{M(1+k)}{2}y^2
	\end{aligned} \label{EqnMomsTwoEndedFull}
\end{eqnarray}
where $M\ge0$ and $k\in[-1,1]$ are constants.
Each term $x(x^2+y^2)^{-\frac12}$ creates a ``jump'' singularity for the moment functions along the line $\{y=0\}$, and produces a non-closed segment on the polygon boundary, which should be considered a segment of infinite parameterization speed.
Figure \ref{FigTwoEndedPlots} depicts these functions and the semi-open polygon they generate.

\noindent\begin{figure}[h!] 
	\centering
	\vspace{-0.0in}
	\hspace{-0.3in}
	\includegraphics[scale={0.4}]{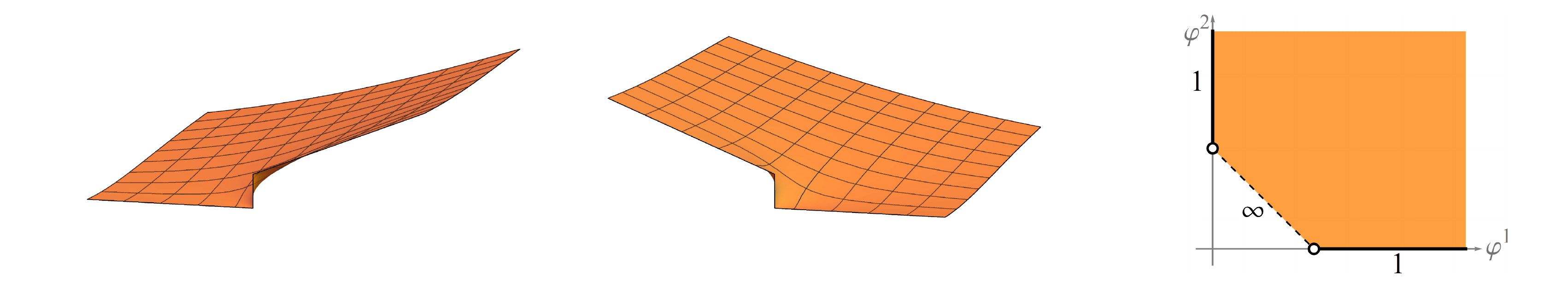}
	\caption{Graphs of $\varphi^1$ and $\varphi^2$ over the upper half-plane, and the image of the map $(x,y)\mapsto(\varphi^1,\varphi^2)$.
	}
	\label{FigTwoEndedPlots}
\end{figure}
A full exploration of this two-ended manifold $(M^4,g,J,\mathcal{X}^1,\mathcal{X}^2)$ would require a lot of space, so we state the overall properties without recording the justifications.

If $M=0$, one end is asymptotically Euclidean (AE) and the other end is cuspidal, whose structure is similar to the cusp-like middle image in Figure \ref{FigThreeStrips}.
This manifold is scalar-flat and has bounded sectional curvature.
It is not Ricci-flat for any values of $M\ge0$, $k\in[-1,1]$.
It has finite energy; in the case $M=0$ we have $\int|\Riem|^2dVol=96\pi^2$.

If $M\ne0$ then this manifold is still two-ended and complete.
Sectional curvature remains bounded.
The cuspidal end remains cuspidal, but when $k\in(-1,1)$ the AE end becomes spheroidal with cubic volume growth---it is technically not ALF unless $k=0$.
The energy $\int|\Riem|^2$ remains finite.
In the $k=-1,1$ case, the end is no longer ALF-like.
It has quartic volume growth, and its asymptotic structure is that of the exceptional Taub-NUT \cite{Web3}.

\subsubsection{A reduced manifold $(\Sigma^2,g_\Sigma)$ which is not a polygon} \label{SubsubsecNonpolygon}

Consider the function
\begin{eqnarray}
	{\bf{u}}(\varphi^1,\varphi^2)\;=\;-\frac12\log\left(1-\left(\varphi^1\right)^2-\left(\varphi^2\right)^2 \right) \label{EqnDefOfUu}
\end{eqnarray}
defined on the open disk $\Sigma^2=\{\left(\varphi^1\right)^2+\left(\varphi^2\right)^2<1\}$.
Interpreting this as a symplectic potential, we obtain the ``polygon'' metric $g_{\Sigma,ij}={\bf{u}}_{ij}d\varphi^i\otimes{}d\varphi^j$ where ${\bf{u}}_{ij}$ given by ${\bf{u}}_{ij}\triangleq\frac{\partial^2{\bf{u}}}{\partial\varphi^i\partial\varphi^j}$.
Explicitly this is
\begin{eqnarray}
	{\bf{u}}_{ij}
	\;=\;
	\left(\begin{array}{cc}
		\frac{1+\left(\varphi^1\right)^2-\left(\varphi^2\right)^2}{\left(1-\left(\varphi^1\right)^2-\left(\varphi^2\right)^2\right)^2} & \frac{\varphi^1\varphi^2}{\left(1-\left(\varphi^1\right)^2-\left(\varphi^2\right)^2\right)^2} \\
		\frac{\varphi^1\varphi^2}{\left(1-\left(\varphi^1\right)^2-\left(\varphi^2\right)^2\right)^2} & \frac{1-\left(\varphi^1\right)^2+\left(\varphi^2\right)^2}{\left(1-\left(\varphi^1\right)^2-\left(\varphi^2\right)^2\right)^2}
	\end{array}\right).
\end{eqnarray}

\noindent\begin{figure}[h!] 
	\centering
	\vspace{-0.0in}
	\hspace{-0.3in}
	\includegraphics[scale={0.4}]{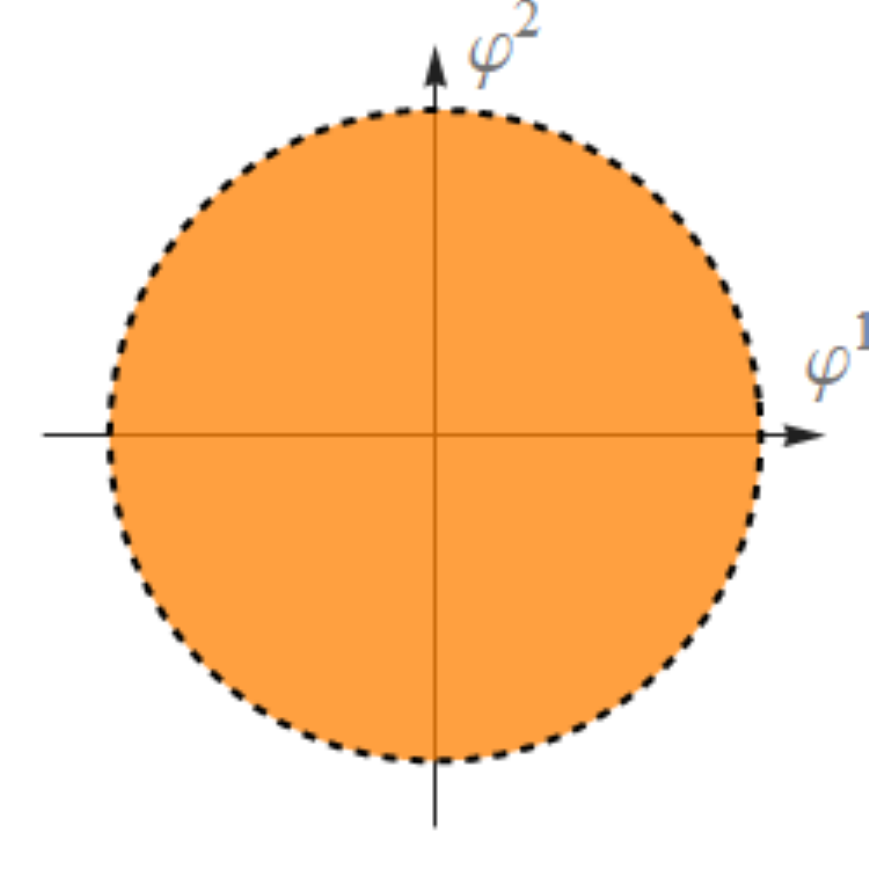}
	\caption{The reduction $\Sigma^2$ of a complete K\"ahler manifold with bounded curvature, whose symplectic potential is given by (\ref{EqnDefOfUu}).
		This is not a polygon.
	}
	\label{FigDiskPolygon}
\end{figure}

The reduced manifold $(\Sigma^2,g_\Sigma)$ of Figure \ref{FigDiskPolygon} with potential ${\bf{u}}$ is a complete 2-manifold, and is the reduction of a complete toric K\"ahler manifold $(M^4,J,g,\mathcal{X}^1,\mathcal{X}^2)$ on which the symplectomorphic fields are never colinear or zero.
Setting $({\bf{u}}^{ij})\triangleq({\bf{u}}_{ij})^{-1}$, the Abreu equation (which is (10) of \cite{Ab1}) gives scalar curvature $R=-\frac12\frac{\partial^2{\bf{u}}^{ij}}{\partial\varphi^i\partial\varphi^j}$.
The scalar curvature of $(M^4,g)$ is not signed; at the origin $R=+8$ and asymptotically $R$ decreases to $-3$.
Computing the full Riemann tensor, one finds that all sectional curvatures are bounded.

\subsection{Pathological examples} \label{SubsecPathExamples}

\subsubsection{Non-convex polygons} \label{SubsubsecNonConvex}
The outline-matching programme of Lemma \ref{LemmaOutlineLemma} does not require that the polygons be convex.
Indeed we can create non-convex shapes and even shapes with self-intersecting outlines as in Figure \ref{FigPath1}.
Although we will not prove it here, all such shapes create pathologies.
In particular their polygon metrics will always be singular.
\noindent\begin{figure}[h!] 
	\centering
	\includegraphics[scale={0.4}]{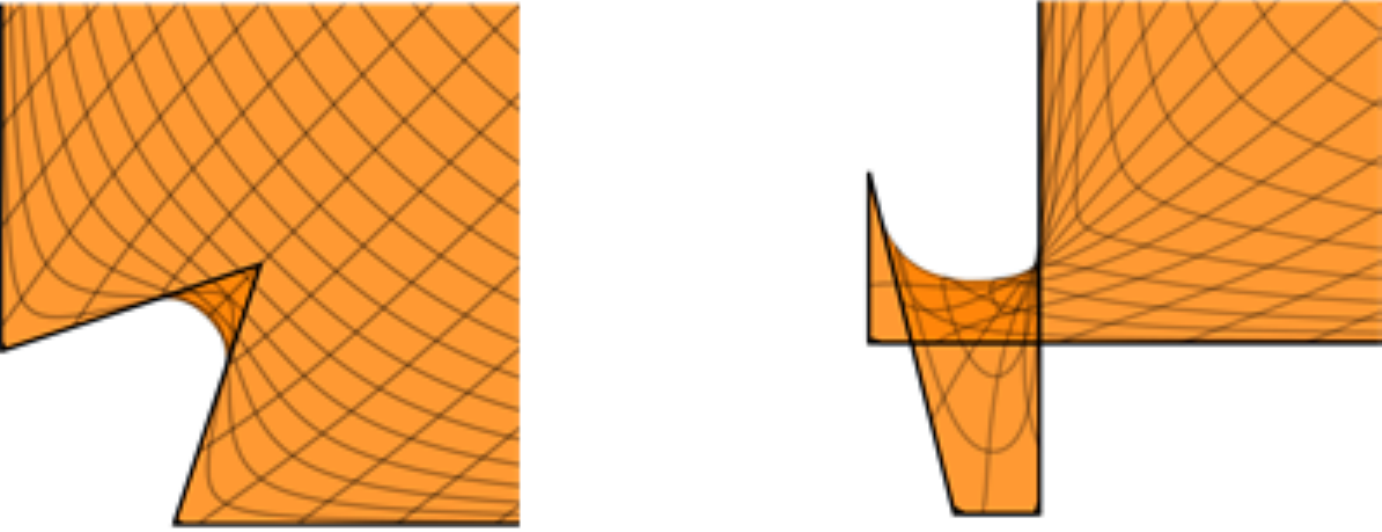}
	\caption{Examples of outline matching with pathological results: moment functions with non-convex outline and moment functions with self-intersecting outline.
	}
	\label{FigPath1}
\end{figure}

\subsubsection{Removed Edges}

To extend the theory of this paper to the case of non-closed polygons, one would have to contend with many potential pathologies.
Removing boundary segments or rays can result in a non-pathological instanton, as the 2-ended instanton of \ref{SubsubsecTwoEnded}, but can create pathologies.
\noindent\begin{figure}[h!] 
	\centering
	\includegraphics[scale={0.6}]{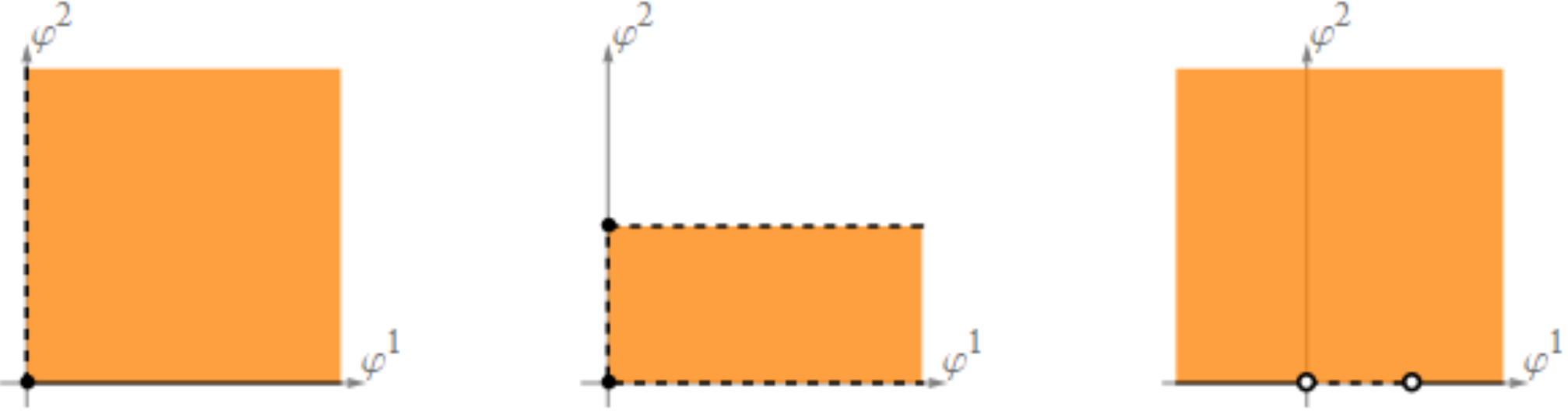}
	\caption{Pathological examples.
	}
	\label{FigPath2}
\end{figure}

Moment functions for the quarter-plane with a removed ray are
\begin{eqnarray}
	\begin{aligned}
		\varphi^1\;=\;x+\sqrt{x^2+y^2}, \quad
		\varphi^2=y^2.
	\end{aligned}
\end{eqnarray}
Moment functions for pathological half-strip are
\begin{eqnarray}
	\begin{aligned}
		\varphi^1\;=\;y^2, \quad
		\varphi^2\;=\;\frac12\left(1-\frac{x}{\sqrt{x^2+y^2}}\right).
	\end{aligned}
\end{eqnarray}
Moment functions for the half-plane with a removed segment are
\begin{eqnarray}
	\begin{aligned}
	\varphi^1\;=\;x+\frac{x}{\sqrt{x^2+y^2}}, \quad
	\varphi^2\;=\;y^2.
	\end{aligned}
\end{eqnarray}
These examples do have associated scalar-flat instantons $(M^4,g,J,\mathcal{X}^1,\mathcal{X}^2)$, but all three have curvature singularities.


\begin{thebibliography}{9}
	
	\bibitem{Ab1}{M. Abreu}, \emph{K\"ahler geometry of toric varieties and extremal metrics}, {International Journal of Mathematics}, {\bf 9} (1998) {641--651}

	\bibitem{Ab2}{M. Abreu}, \emph{K\"ahler geometry of toric manifolds in symplectic coordinates}, {Portugaliae Mathematica} {\bf 67}, No. 2 (2010) {121--153}
	
	\bibitem{AS}{M. Abreu and R. Sena-Dias}, \emph{Scalar-flat K\"ahler metrics on non-compact symplectic toric 4-manifolds}, {Annals of Global Analysis and Geometry}, {\bf 41} No. 2 (2012) {209--239}
	
	\bibitem{Ar}{V. Arnold}, \emph{Mathematical Methods of Classical Mechanics}, {Graduate Texts in Mathematics}, {\bf 60}, {Springer-Verlag}, New York, 1978
	
	\bibitem{CDG}{D. Calderbank, L. David, and P. Gauduchon}, \emph{The Guillemin formula and K\"ahler metrics on toric symplectic manifolds}, {Journal of Symplectic Geometry} {\bf 4} No. 1 (2002) {767--784}
	
	\bibitem{CK}{S. Cherkis and A. Kapustin}, \emph{Hyperk\"ahler metrics from periodic monopoles}, {Physics Letters D} {\bf 65} (2002) 084015
	
	\bibitem{CY}{S.Y. Cheng and S.T. Yau}, \emph{Differential equations on Riemannian manifolds and their geometric applications}, {Communication on Pure and Applied Mathematics}, {\bf 28} (1975) {333--354}
	
	\bibitem{Delzant}{T. Delzant}, \emph{Hamiltoniens p\'eriodique et images convexes de l'application moment}, {Bulletin de la Soci\'et\'e Math\'ematique de France}, {\bf 116} (1988) {315--339}
	
	\bibitem{Do1}{S. Donaldson}, \emph{A generalized Joyce construction for a family of nonlinear partial differential equations}, {Journal of G\"okova Geometry/Topology Conferences}, {\bf 3} (2009)
	
	\bibitem{Do2}{S. Donaldson}, \emph{Constant scalar curvature metrics on toric surfaces}, {Geometric and Functional Analysis}, {\bf 19} No. 1 (2009) {83--136}
	
	\bibitem{Etesi}{G. Etesi}, \emph{The topology of asymptotically locally flat gravitational instantons}, {Physics Letters B} {\bf 641} (2006) {461--465}
	
	
	\bibitem{GT}{D. Gilbarg and N.S. Trudinger}, Elliptic partial differential equations of second order, 2nd Ed., Springer-Verlag, New York, 1983.
	
	\bibitem{G}{V. Guillemin}, \emph{K\"ahler structures on toric varieties}, {Journal of Differential Geometry}, {\bf 40} (1994) {285--309}
	
	\bibitem{GM}{A. Grigor’yan}, \emph{On the existence of positive fundamental solutions of the Laplace equation on Riemannian manifolds}, {Mathematics of the USSR-Sbornik} {\bf 56} No. 2 (1987) 349--358
	
	
	\bibitem{HK}{I. Holopainen and P. Koskela}, \emph{Volume growth and parabolicity}, {Proceedings of the American Mathematical Society}, {\bf 129} No. 11 (2001) {3425--3435}
	
	
	\bibitem{LebOK} {C. LeBrun}, \emph{Counter-examples to the generalized positive action conjecture.} {Communications in mathematical physics} {\bf 118} no. 4 (1988) {591--596}
	
	\bibitem{LeTo}{E. Lerman and S. Tolman}, \emph{Hamiltonian torus actions on symplectic orbifolds and toric varieties}, {Transactions of the American Mathematical Society}, {\bf 349} (1997) {4201--4230}
	
	\bibitem{LT1}{P. Li and L. Tam}, \emph{Positive harmonic functions on complete manifolds with non-negative curvature outside a compact set}, {Annals of Mathematics}, {\bf 125} (1987) {171--207}
	
	\bibitem{LT2}{P. Li and L. Tam}, \emph{Harmonic functions and the structure of complete manifolds}, {Journal of Differential Geometry}, {\bf 35} (1992) {359--383}
	
	\bibitem{McDSal}{D. McDuff and D. Salamon} \emph{Intriduction to Symplectic Topology}, {Oxford Graduate Texts in Mathematics} (2017)
	
	\bibitem{SY}{R. Schoen and S.T. Yau}, Lectures on Differential Geometry, International Press  (1994).
	
	\bibitem{TW}{N. Trudinger and X.J. Wang}, \emph{Bernstein-J\"orgens theorem for a fourth order partial differential equation}, {Journal of Partial Differential Equations} {\bf 15} no. 1 (2002) 78--88.
	
	\bibitem{Web1}{B. Weber}, \emph{Topology of K\"ahler manifolds with weakly pseudoconvex boundary} ArXiv:1110.4571
	
	\bibitem{Web2}{B. Weber}, \emph{Regularity and a Liouville theorem for a class of boundary-degenerate second order equations}, {\it to appear in Journal of Differential Equations}
	
	\bibitem{Web3}{B. Weber}, \emph{Generalized K\"ahler Taub-NUT metrics and two exceptional instantons}, {\it to appear in Communications in Analysis and Geometry}.
	
\end{thebibliography}
\end{document}